\numberwithin{table}{section}
\newcommand{\A}{\mathrm{A}} \newcommand{\AGL}{\mathrm{AGL}} \newcommand{\AGaL}{\mathrm{A\Gamma L}}  \newcommand{\Aut}{\mathrm{Aut}}
 \newcommand{\bbF}{\mathbb{F}} \newcommand{\bfO}{\mathbf{O}}
 \newcommand{\Cen}{\mathbf{C}}  \newcommand{\calC}{\mathcal{C}}  \newcommand{\calO}{\mathcal{O}}   \newcommand{\Co}{\mathrm{Co}} 
\newcommand{\D}{\mathrm{D}}
\newcommand{\G}{\mathrm{G}} \newcommand{\GaG}{\mathrm{\Gamma G}} \newcommand{\GaL}{\mathrm{\Gamma L}} \newcommand{\GaSp}{\mathrm{\Gamma Sp}} \newcommand{\GaO}{\mathrm{\Gamma O}} \newcommand{\GaU}{\mathrm{\Gamma U}}  \newcommand{\GL}{\mathrm{GL}} \newcommand{\GO}{\mathrm{O}}  \newcommand{\GU}{\mathrm{GU}}
\newcommand{\lefthat}{\scalebox{1.3}[1]{\text{$\hat{~}$}}}
\newcommand{\M}{\mathrm{M}} \newcommand{\magma}{\textsc{Magma}}
\newcommand{\N}{\mathrm{N}}
\newcommand{\Nor}{\mathbf{N}}
\newcommand{\Pa}{\mathrm{P}}   \newcommand{\PGaL}{\mathrm{P\Gamma L}}     \newcommand{\POm}{\mathrm{P\Omega}} \newcommand{\PSL}{\mathrm{PSL}}    \newcommand{\PSp}{\mathrm{PSp}} \newcommand{\PSU}{\mathrm{PSU}}
\newcommand{\Q}{\mathrm{Q}}
 \newcommand{\Rad}{\mathbf{R}}  
  \newcommand{\SL}{\mathrm{SL}}  \newcommand{\Soc}{\mathrm{Soc}} \newcommand{\Sp}{\mathrm{Sp}}  \newcommand{\SU}{\mathrm{SU}} \newcommand{\Suz}{\mathrm{Suz}} \newcommand{\Sy}{\mathrm{S}}  
\newcommand{\Tr}{\mathrm{Tr}}
\newtheorem{theorem}{Theorem}[section]
\newtheorem{lemma}[theorem]{Lemma}
\newtheorem{problem}[theorem]{Problem}
\theoremstyle{definition}
\newtheorem*{remark}{Remark}
\begin{document}

\title[Factorizations of almost simple orthogonal groups of plus type]{Factorizations of almost simple orthogonal groups of plus type}

\author[Li]{Cai Heng Li}
\address{(Li) SUSTech International Center for Mathematics and Department of Mathematics\\Southern University of Science and Technology\\Shenzhen 518055\\Guangdong\\P.~R.~China}
\email{lich@sustech.edu.cn}

\author[Wang]{Lei Wang}
\address{(Wang) School of Mathematics and Statistics\\Yunnan University\\Kunming 650091\\Yunnan\\P.~R.~China}
\email{wanglei@ynu.edu.cn}

\author[Xia]{Binzhou Xia}
\address{(Xia) School of Mathematics and Statistics\\The University of Melbourne\\Parkville 3010\\VIC\\Australia}
\email{binzhoux@unimelb.edu.au}

\begin{abstract}
This is the fifth one in a series of papers classifying the factorizations of almost simple groups with nonsolvable factors.
In this paper we deal with orthogonal groups of plus type.

\textit{Key words:} group factorizations; almost simple groups

\textit{MSC2020:} 20D40, 20D06, 20D08
\end{abstract}

\maketitle

\section{Introduction}

An expression $G=HK$ of a group $G$ as the product of subgroups $H$ and $K$ is called a \emph{factorization} of $G$, where $H$ and $K$ are called \emph{factors}. A group $G$ is said to be \emph{almost simple} if $S\leqslant G\leqslant\Aut(S)$ for some nonabelian simple group $S$, where $S=\Soc(G)$ is the \emph{socle} of $G$. In this paper, by a factorization of an almost simple group we mean that none its factors contains the socle. The main aim of this paper is to solve the long-standing open problem:

\begin{problem}\label{PrbXia1}
Classify factorizations of finite almost simple groups.
\end{problem}

Determining all factorizations of almost simple groups is a fundamental problem in the theory of simple groups, which was proposed by Wielandt~\cite[6(e)]{Wielandt1979} in 1979 at The Santa Cruz Conference on Finite Groups. It also has numerous applications to other branches of mathematics such as combinatorics and number theory, and has attracted considerable attention in the literature.
In what follows, all groups are assumed to be finite if there is no special instruction.

The factorizations of almost simple groups of exceptional Lie type were classified by Hering, Liebeck and Saxl~\cite{HLS1987}\footnote{In part~(b) of Theorem~2 in~\cite{HLS1987}, $A_0$ can also be $\G_2(2)$, $\SU_3(3)\times2$, $\SL_3(4).2$ or $\SL_3(4).2^2$ besides $\G_2(2)\times2$.} in 1987.
For the other families of almost simple groups, a landmark result was achieved by Liebeck, Praeger and Saxl~\cite{LPS1990} thirty years ago, which classifies the maximal factorizations of almost simple groups. (A factorization is said to be \emph{maximal} if both the factors are maximal subgroups.)
Then factorizations of alternating and symmetric groups are classified in~\cite{LPS1990}, and factorizations of sporadic almost simple groups are classified in~\cite{Giudici2006}.
This reduces Problem~\ref{PrbXia1} to the problem on classical groups of Lie type.
Recently, factorizations of almost simple groups with a factor having at least two nonsolvable composition factors are classified in~\cite{LX2019}\footnote{In Table~1 of~\cite{LX2019}, the triple $(L,H\cap L,K\cap L)=(\Sp_{6}(4),(\Sp_2(4)\times\Sp_{2}(16)).2,\G_2(4))$ is missing, and for the first two rows $R.2$ should be $R.P$ with $P\leqslant2$.}, and those with a factor being solvable are described in~\cite{LX} and~\cite{BL2021}.

As usual, for a finite group $G$, we denote by $G^{(\infty)}$ its solvable residual, that is, the smallest normal subgroup of $X$ such that $G/G^{(\infty)}$ is solvable.
For factorizations $G=HK$ with nonsolvable factors $H$ and $K$ such that $L=\Soc(G)$ is a classical group of Lie type, the triple $(L,H^{(\infty)},K^{(\infty)})$ is described in~\cite{LWX}. Based on this work, in the present paper we characterize the triples $(G,H,K)$ such that $G=HK$ with $H$ and $K$ nonsolvable, and $G$ is an orthogonal group of plus type.

For groups $H,K,X,Y$, we say that $(H,K)$ contains $(X,Y)$ if $H\geqslant X$ and $K\geqslant Y$, and that $(H,K)$ \emph{tightly contains} $(X,Y)$ if in addition $H^{(\infty)}=X^{(\infty)}$ and $K^{(\infty)}=Y^{(\infty)}$. 
Our main result is the following Theorem~\ref{ThmOmegaPlus}.
Note that it is elementary to determine the factorizations of $G/L$ as this group has relatively simple structure (and in particular is solvable).

\begin{theorem}\label{ThmOmegaPlus}
Let $G$ be an almost simple group with socle $L=\POm_{2m}^+(q)$, where $m\geqslant4$, and let $H$ and $K$ be subgroups of $G$ not containing $L$ such that both $H$ and $K$ have a unique nonsolvable composition factor. Then $G=HK$ if and only if (with $H$ and $K$ possibly interchanged) $G/L=(HL/L)(KL/L)$ and one of the following holds:
\begin{enumerate}[{\rm (a)}]
\item $(H,K)$ tightly contains $(X^\alpha,Y^\alpha)$ for some $(X,Y)$ in Table~$\ref{TabOmegaPlus}$ and $\alpha\in\Aut(L)$;
\item $(H^\alpha)^{(\infty)}=\lefthat(P.S)\leqslant\Pa_m[G]$ with $\alpha\in\Aut(L)$, $P<R$ and $S=\SL_a(q^b)$ ($m=ab$), $\Sp_a(q^b)'$ ($m=ab$), $\G_2(q^b)'$ ($m=6b$, $q$ even) or $\SL_2(13)$ ($m=6$, $q=3$), where $R=q^{m^2}$ is the unipotent radical of $\Pa_m[L]$ and $S$ is defined over $\bbF_{q^b}$ ($b$ is taken to be $1$ if $S=\SL_2(13)$), and $(K^\alpha)^{(\infty)}=\Omega_{2m-1}(q)\leqslant\N_1[G]$ such that $H^\alpha K^\alpha\supseteq R$;
\item $m=4$, $q$ is even, $(A^\tau)^{(\infty)}\leqslant K\leqslant A^\tau$ for some triality $\tau\in\Aut(L)$ and some subgroup $A$ of $G$ of the form $A=S\times\calO$ with $\Sp_6(q)\leqslant S\leqslant\GaSp_6(q)$ and $\calO\leqslant2$, and $H$ is a subgroup of $M:=\Pa_1[S]\times\calO$ such that $HK\supseteq\bfO_2(M)$ and $H\bfO_2(M)/\bfO_2(M)$ contains the field-extension subgroup $\Sp_2(q^2)$ of $M^{(\infty)}/\bfO_2(M^{(\infty)})=\Sp_4(q)$.
\end{enumerate}
\end{theorem}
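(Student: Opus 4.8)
The plan is to prove Theorem~\ref{ThmOmegaPlus} by reducing it to the description of triples $(L,H^{(\infty)},K^{(\infty)})$ obtained in~\cite{LWX}, and then refining that description to the level of the subgroups $H$ and $K$ themselves (rather than their solvable residuals). The necessity direction is the substantial one: assuming $G=HK$ with $H,K$ having a unique nonsolvable composition factor, we first pass to $X=H^{(\infty)}$ and $Y=K^{(\infty)}$, which are the unique nonsolvable normal subgroups of $H$ and $K$ up to the obvious solvable padding. By~\cite{LWX} the triple $(L,X,Y)$ (equivalently $(L,H^{(\infty)},K^{(\infty)})$) appears in an explicit list. One then goes case by case through that list: in most cases $X$ and $Y$ are themselves quasisimple or close to it, the containment $X\leqslant M$, $Y\leqslant N$ in maximal factorizations $L=MN$ from~\cite{LPS1990} pins down $H$ and $K$ between $X^{(\infty)}$ and $N_G(X)$, $N_G(Y)$, and one checks that the factorization $G=HK$ forces $(H,K)$ to tightly contain one of the listed $(X^\alpha,Y^\alpha)$ in Table~\ref{TabOmegaPlus}, yielding conclusion~(a). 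The parabolic cases $\Pa_m[L]$ need separate treatment: here $H^{(\infty)}$ is not quasisimple but has shape $\lefthat(P.S)$ with $P$ a proper subgroup of the unipotent radical $R=q^{m^2}$, and the key point is that the factorization $G=HK$ with $K^{(\infty)}=\Omega_{2m-1}(q)$ forces the product $HK$ to contain all of $R$ — this is an order/orbit-counting argument using that $\Omega_{2m-1}(q)$ acts with the right orbits on the cosets of $\Pa_m$. This gives conclusion~(b). The triality case $m=4$, $q$ even is genuinely exceptional because of the extra automorphism: one applies a triality $\tau$ to convert the factorization into one involving a subgroup $A$ containing $\Sp_6(q)$ (the point stabilizer $\N_1$, which is a $\Sp_6(q)$-type subgroup after triality), and a parabolic $\Pa_1[S]$ of it; again one must control the $\bfO_2$-part of the product, giving conclusion~(c).

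For the structural bookkeeping I would set up notation for the classical group $L=\POm_{2m}^+(q)$ acting on the natural module $V=\bbF_q^{2m}$ with a quadratic form of plus type, recalling that the geometric maximal subgroups relevant to factorizations are the parabolics $\Pa_k$ (stabilizers of totally singular $k$-spaces), the stabilizers $\N_k$ of nondegenerate $k$-spaces, the imprimitive and field-extension subgroups, and a small list of exceptional almost simple maximal subgroups; the maximal factorizations of $L$ from~\cite[Tables 1--4]{LPS1990} involve $\Pa_m$ (or $\Pa_{m-1}$) paired with $\N_1$, various $\N_1\times$ subfield or symplectic-type combinations, and — only for $m=4$ — the triality-twisted analogues. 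For each relevant maximal factorization $L=MN$ one records the precise structure of $M^{(\infty)}$ and $N^{(\infty)}$ and which of the subgroups of~\cite{LWX} can sit inside, then checks the numerical factorization condition $|L|=|X||Y|/|X\cap Y|$ at the level of $H$ and $K$ to determine the possible $H$ with $X^{(\infty)}=H^{(\infty)}\leqslant H\leqslant N_G(X^{(\infty)})$. The sufficiency direction is comparatively routine: for each $(X,Y)$ in Table~\ref{TabOmegaPlus} one exhibits the factorization $L=X^{(\infty)}Y^{(\infty)}$ (or $XY$) by an order computation or by citing~\cite{LWX,LPS1990}, lifts it to $G$ using the hypothesis $G/L=(HL/L)(KL/L)$ together with a standard lemma that $G=HK$ whenever $L=(H\cap L)(K\cap L)$ and $G=HL=KL$ and the quotient factorizes — more precisely one uses that $(H,K)$ tightly containing $(X^\alpha,Y^\alpha)$ with a genuine factorization of $G/L$ is enough, since $H^{(\infty)}K^{(\infty)}\supseteq X^{(\infty)}Y^{(\infty)}$ already covers $L$.

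The main obstacle I anticipate is in conclusions~(b) and~(c): showing that the product $HK$ contains the full unipotent radical $R$ (respectively $\bfO_2(M)$). When $H^{(\infty)}$ only contains a \emph{proper} subgroup $P<R$, the factorization $G=HK$ is not automatic from $L=\Pa_m[L]\N_1[L]$, and one must argue that the Levi-complement part $S\leqslant\SL_m(q)$ (or its field-extension / $\Sp$ / $\G_2$ subforms) acts on $R/P$ and on the $K$-side with compatible enough orbits that the product still sweeps out $R$. This requires a careful analysis of the action of $\Omega_{2m-1}(q)$ on the coset space $L/\Pa_m[L]$ — which is the set of totally singular $m$-subspaces of $V$ — and of how a single $\Omega_{2m-1}(q)$-orbit meets the $\Pa_m$-orbits, together with the module structure of $R$ as an $\SL_m(q)$-module (it is $\wedge^2$ of the natural module, and the subgroups $S$ act on it in a controlled way). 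Pinning down exactly which pairs $(P,S)$ still give $HK=G$, and separating the genuinely new families from those already covered by~(a), is where the bulk of the casework — and the risk of exceptional small-field configurations such as the $\SL_2(13)\leqslant\SL_6(3)$ case — will lie. I would handle the triality case~(c) last, translating everything through $\tau$ so that the computation becomes one about $\Sp_6(q)=\Omega_7(q)$ and its parabolic $\Pa_1$, reusing the orbit analysis from~(b).
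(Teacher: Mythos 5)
Your overall strategy coincides with the paper's: reduce to the list of triples $(L,H^{(\infty)},K^{(\infty)})$ from~\cite{LWX}, organize the necessity casework by the maximal factorizations of~\cite{LPS1990} (the paper splits according to whether $K^{(\infty)}$ is $\Omega_{2m-1}(q)$, $\Omega_{2m-2}^-(q)$ or $\lefthat\SU_m(q)$), treat the parabolic and triality configurations by separate reduction lemmas, and verify sufficiency row by row via intersection and order computations, supplemented by \magma\ for the small cases.

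Two points in your sketch need repair. First, your justification of sufficiency --- that $H^{(\infty)}K^{(\infty)}\supseteq X^{(\infty)}Y^{(\infty)}$ ``already covers $L$'' --- is false for several rows of Table~\ref{TabOmegaPlus}: for instance in rows~2--3, 8--10 and 12--14 the product of the solvable residuals has index at least $2$ in $L$, and the outer pieces carried by $X$ and $Y$ are genuinely needed for the factorization. What actually makes sufficiency work is that tight containment in particular gives $H\supseteq X$ and $K\supseteq Y$, so $HK\supseteq XY\supseteq L$ and Lemma~\ref{LemXia02} applies; correspondingly, on the necessity side one must determine \emph{exactly} which overgroups of $X^{(\infty)}$ and $Y^{(\infty)}$ still yield a factorization (the content of Lemmas~\ref{LemOmegaPlus14}--\ref{LemOmegaPlus16}, \ref{LemOmegaPlus19}, \ref{LemOmegaPlus18} and~\ref{LemOmegaPlus22}), a nontrivial portion of the work that your outline compresses into ``numerical checks at the level of $H$ and $K$''. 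Second, in case~(b) the containment $HK\supseteq R$ is trivially necessary (it follows from $G=HK$) and is \emph{retained as a hypothesis} in the statement rather than resolved, so no orbit analysis is needed to establish it; the substantive necessity step is instead that $G=H\,\N_1[G]$ forces $H$ to act transitively on a $G$-orbit of nonsingular $1$-spaces, whence the Hering-type classification in~\cite[Lemmas~4.5 and~3.1]{LPS2010} pins down $S$. This is the dual of, and considerably cleaner than, your proposed analysis of $\Omega_{2m-1}(q)$-orbits on totally singular $m$-subspaces. With these corrections your plan matches the paper's proof.
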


\begin{remark}
Here are some remarks on Tables~\ref{TabOmegaPlus}:
\begin{enumerate}[{\rm(I)}]
\item The column $Z$ gives the smallest almost simple group with socle $L$ that contains $X$ and $Y$. In other words, $Z=\langle L,X,Y\rangle$.
It turns out that $Z=XY$ for all pairs $(X,Y)$.
\item The groups $X$, $Y$ and $Z$ are described in the corresponding lemmas whose labels are displayed in the last column. 
Information on $X\cap Y$ is also provided in these lemmas.
\item The description of groups $X$ and $Y$ are up to conjugations in $Z$ (see Lemma~\ref{LemXia04}(b) and Lemma~\ref{LemXia03}).
\end{enumerate}
\end{remark}

\begin{table}[htbp]
\captionsetup{justification=centering}
\caption{$(X,Y)$ for orthogonal groups of plus type}\label{TabOmegaPlus}
\begin{tabular}{|l|l|l|l|l|}
\hline
 & $Z$ & $X$ & $Y$ & Remark\\
\hline
1 & $\POm_{2m}^+(q)$ & $\lefthat(q^{m(m-1)/2}.\SL_a(q^b))$ ($m=ab$), & $\Omega_{2m-1}(q)$ & \ref{LemOmegaPlus02}--\ref{LemOmegaPlus08}\\
 & & $\lefthat(q^{m(m-1)/2}{:}\Sp_a(q^b)')$ ($m=ab$), & & \\
 & & $\lefthat(q^{m(m-1)/2}{:}\G_2(q^b)')$ ($m=6b$, $q$ even), & & \\
 & & $\lefthat\SL_m(q)$, $\lefthat\SU_m(q)$ ($m$ even), $\PSp_m(q)$ & & \\
\hline
2 & $\Omega_{2m}^+(2)$ & $\Omega_m^+(4).2$, $\SL_{m/2}(4).2$, $\Sp_{m/2}(4).2$, & $\Sp_{2m-2}(2)$ & \ref{LemOmegaPlus11}, \ref{LemOmegaPlus10}--\ref{LemOmegaPlus45}\\
 & & $\Sp_{m/2}(4).4$, $\SU_{m/2}(4).4$ ($m/2$ even) & & \\
3 & $\Omega_{2m}^+(4).2$ & $\Omega_m^+(16).4$, $\SL_{m/2}(16).4$, $\Sp_{m/2}(16).4$, & $\GaSp_{2m-2}(4)$ & \ref{LemOmegaPlus11}, \ref{LemOmegaPlus10}--\ref{LemOmegaPlus45}\\
 & & $\Sp_{m/2}(16).8$, $\SU_{m/2}(16).8$ ($m/2$ even) & & \\
\hline
4 & $\POm_8^+(q)$ & $\Omega_8^-(q^{1/2})$, $\Omega_7(q)$, $\Omega_6^+(q)$, $\Omega_6^-(q)$, $\Omega_5(q)$, & $\Omega_7(q)$ & \ref{LemOmegaPlus26}, \ref{LemOmegaPlus27}, \ref{LemOmegaPlus42}, \ref{LemOmegaPlus28}\\
 & & $q^5{:}\Omega_5(q)$ ($q$ odd), $q^4{:}\Omega_4^-(q)$ ($q$ odd) & & \\
5 & $\Omega_{12}^+(q)$ & $\G_2(q)$ & $\Sp_{10}(q)$ & $q$ even; \ref{LemOmegaPlus06} \\
6 & $\POm_{16}^+(q)$ & $\Omega_9(q)$ & $\Omega_{15}(q)$ & \ref{LemOmegaPlus29} \\
\hline
7 & $\POm_{2m}^+(q)$ & $\lefthat(q^{m(m-1)/2}{:}\SL_m(q))$ & $\Omega_{2m-2}^-(q)$ & \ref{LemOmegaPlus13}\\
8 & $\Omega_{2m}^+(2).c$ & $\SL_m(2).2$ & $\Omega_{2m-2}^-(2).c$ & $c=(2,m-1)$; \ref{LemOmegaPlus14}\\
9 & $\Omega_{2m}^+(2)$ & $\SL_m(2)$ & $\Omega_{2m-2}^-(2).2$ & \ref{LemOmegaPlus15}\\
10 & $\Omega_{2m}^+(4).2$ & $\SL_m(4).2$ & $\Omega_{2m-2}^-(4).4$ & \ref{LemOmegaPlus16}\\
\hline
11 & $\POm_{2m}^+(q)$ & $q^{2m-2}{:}\Omega_{2m-2}^+(q)$ & $\lefthat\SU_m(q)$ & $m$ even; \ref{LemOmegaPlus17}\\
12 & $\Omega_{2m}^+(2)$ & $\Omega_{2m-2}^+(2)$ & $\SU_m(2).2$ & $m$ even; \ref{LemOmegaPlus19}\\
13 & $\Omega_{2m}^+(2)$ & $\Omega_{2m-2}^+(2).2$ & $\SU_m(2)$ & $m$ even; \ref{LemOmegaPlus18}\\
14 & $\Omega_{2m}^+(4).2$ & $\Omega_{2m-2}^+(4).2$ & $\SU_m(4).4$ & $m$ even; \ref{LemOmegaPlus19}, \ref{LemOmegaPlus22}\\
\hline
15 & $\Omega_8^+(2)$ & $\Sy_5$, $\A_5{:}4$, $2^4{:}\A_5$, $\A_6$, $2^5{:}\A_6$, & $\Sp_6(2)$ & \ref{LemOmegaPlus30} \\
 & & $\A_7$, $2^6{:}\A_7$, $\A_8$, $\A_9$ & & \\
16 & $\Omega_8^+(2)$ & $2^6{:}\A_7$, $\A_8$, $\A_9$ & $\SU_4(2)$ & \ref{LemOmegaPlus30} \\
17 & $\Omega_8^+(2)$ & $\A_8$ & $\SU_4(2).2$ & \ref{LemOmegaPlus30} \\
18 & $\Omega_8^+(2)$ & $2^4{:}\A_5$, $2^5{:}\A_6$, $2^6{:}\A_7$, $\A_8$, $2^6{:}\A_8$ & $\A_9$ & \ref{LemOmegaPlus30} \\
\hline
19 & $\POm_8^+(3)$ & $3^4{:}\Sy_5$, $3^4{:}(4\times\A_5)$, $(3^5{:}2^4){:}\A_5$, & $\Omega_7(3)$ & \ref{LemOmegaPlus31} \\
 & & $\A_9$, $\SU_4(2)$, $\Sp_6(2)$, $\Omega_8^+(2)$ & & \\
20 & $\POm_8^+(3)$ & $2^6{:}\A_7$, $\A_8$, $2^6{:}\A_8$, $\A_9$, & $3^6{:}\PSL_4(3)$ & \ref{LemOmegaPlus31} \\
 &  & $2.\PSL_3(4)$, $\Sp_6(2)$, $\Omega_6^-(3)$ & & \\
21 & $\POm_8^+(3)$ & $3^6{:}\SL_3(3)$, $3^{3+3}{:}\SL_3(3)$, & $\Omega_8^+(2)$ & \ref{LemOmegaPlus31} \\
 & & $3^{6+3}{:}\SL_3(3)$, $3^6{:}\PSL_4(3)$ & & \\
\hline
22 & $\Omega_8^+(4).2$ & $(\SL_2(16)\times5).4$, $\GaL_2(16)\times3$, & $\GaSp_6(4)$ & \ref{LemOmegaPlus40}, \ref{LemOmegaPlus25}\\
 & & $\GaL_2(16)$, $\SL_2(16).4$ & & \\
23 & $\Omega_8^+(4).2$ & $\Omega_8^-(2).2$ & $\SU_4(4).4$ & \ref{LemOmegaPlus23}\\
24 & $\Omega_8^+(16).4$ & $\Omega_8^-(4).4$ & $\SU_4(16).8$ & \ref{LemOmegaPlus23}\\
25 & $\Omega_{12}^+(2)$ & $3.\PSU_4(3)$, $3.\M_{22}$ & $\Sp_{10}(2)$ & \ref{LemOmegaPlus34}\\
26 & $\POm_{12}^+(3)$ & $3^{15}.\PSL_2(13)$ & $\Omega_{11}(3)$ & \ref{LemOmegaPlus02}\\
27 & $\Omega_{16}^+(2)$ & $\Omega_8^-(2).2$, $\Sp_6(4).2$, $\Omega_6^+(4).2$, & $\Sp_{14}(2)$ & \ref{LemOmegaPlus35}\\
 & & $\Omega_6^-(4).2$, $\Sp_4(4).2$ & & \\
28 & $\Omega_{16}^+(4).2$ & $\Omega_8^-(4).4$, $\Sp_6(16).4$, $\Omega_6^+(16).4$, & $\GaSp_{14}(4)$ & \ref{LemOmegaPlus35}\\
 & & $\Omega_6^-(16).4$, $\Sp_4(16).4$ & & \\
29 & $\Omega_{24}^+(2)$ & $\G_2(4).2$, $\G_2(4).4$, $3.\Suz$, $\Co_1$ & $\Sp_{22}(2)$ & \ref{LemOmegaPlus36}, \ref{LemOmegaPlus38}\\
30 & $\Omega_{24}^+(4).2$ & $\G_2(16).4$, $\G_2(16).8$ & $\GaSp_{22}(4)$ & \ref{LemOmegaPlus36}\\
31 & $\Omega_{32}^+(2)$ & $\Sp_8(4).2$ & $\Sp_{30}(2)$ & \ref{LemOmegaPlus09}\\
32 & $\Omega_{32}^+(4).2$ & $\Sp_8(16).4$ & $\GaSp_{30}(4)$ & \ref{LemOmegaPlus09}\\
\hline
\end{tabular}
\vspace{3mm}
\end{table}

\section{Preliminaries}

In this section we collect some elementary facts regarding group factorizations.

\begin{lemma}\label{LemXia01}
Let $G$ be a group, let $H$ and $K$ be subgroups of $G$, and let $N$ be a normal subgroup of $G$. Then $G=HK$ if and only if $HK\supseteq N$ and $G/N=(HN/N)(KN/N)$.
\end{lemma}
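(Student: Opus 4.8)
The plan is to prove the two implications separately; the forward implication is immediate, and the reverse one requires only a short coset manipulation exploiting the normality of $N$.

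For the forward direction, suppose $G=HK$. Then trivially $HK=G\supseteq N$. Applying the natural projection $\pi\colon G\to G/N$ to the equality $G=HK$ and using $\pi(HK)=\pi(H)\pi(K)=(HN/N)(KN/N)$ gives $G/N=(HN/N)(KN/N)$.

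For the reverse direction, assume $HK\supseteq N$ and $G/N=(HN/N)(KN/N)$; since $HK\subseteq G$ always holds, it suffices to show $G\subseteq HK$. Let $g\in G$. By the hypothesis on the quotient there exist $h\in H$ and $k\in K$ with $gN=hkN$, so $g=hkn$ for some $n\in N$. As $N$ is normal in $G$ we have $kn\in kN=Nk$, hence $kn=n_1k$ for some $n_1\in N$ and thus $g=hn_1k$. Now $n_1\in N\subseteq HK$, so $n_1=h_1k_1$ with $h_1\in H$ and $k_1\in K$, whence $g=(hh_1)(k_1k)\in HK$ because $H$ and $K$ are subgroups. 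Therefore $G=HK$.

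The only point requiring care is that $HK$ need not be a subgroup of $G$, so one cannot simply invoke closure of a subgroup; instead the argument moves the $N$-part of a general element past $k$ using normality of $N$ and then absorbs it into $H$ and $K$ using the containment $N\subseteq HK$. Beyond this elementary bookkeeping there is no substantial obstacle.
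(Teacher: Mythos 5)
Your proof is correct and follows essentially the same route as the paper: the paper writes $G=(HN)(KN)=HNK\subseteq H(HK)K=HK$ at the level of set products, while you carry out the identical manipulation element-by-element (moving $n$ past $k$ via normality and absorbing it using $N\subseteq HK$). No gaps.
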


\begin{proof}
If $G=HK$, then $HK\supseteq N$, and taking the quotient modulo $N$ we obtain
\[
G/N=(HN/N)(KN/N).
\]
Conversely, suppose that $HK\supseteq N$ and $G/N=(HN/N)(KN/N)$. Then 
\[
G=(HN)(KN)=HNK 
\]
as $N$ is normal in $G$. Since $N\subseteq HK$, it follows that $G=HNK\subseteq H(HK)K=HK$, which implies $G=HK$.
\end{proof}

Let $L$ be a nonabelian simple group. We say that $(H,K)$ is a \emph{factor pair} of $L$ if $H$ and $K$ are subgroups of $\Aut(L)$ such that $HK\supseteq L$. For an almost simple group $G$ with socle $L$ and subgroups $H$ and $K$ of $G$, Lemma~\ref{LemXia01} shows that $G=HK$ if and only if $G/L=(HL/L)(KL/L)$ and $(H,K)$ is a factor pair.
As the group $G/L$ has a simple structure (and in particular is solvable), it is elementary to determine the factorizations of $G/L$.
Thus to know all the factorizations of $G$ is to know all the factor pairs of $L$.
Note that, if $(H,K)$ is a factor pair of $L$, then any pair of subgroups of $\Aut(L)$ containing $(H,K)$ is also a factor pair of $L$.
Hence we have the following:

\begin{lemma}\label{LemXia02}
Let $G$ be an almost simple group with socle $L$, and let $H$ and $K$ be subgroups of $G$ such that $(H,K)$ contains some factor pair of $L$. Then $G=HK$ if and only if $G/L=(HL/L)(KL/L)$.
\end{lemma}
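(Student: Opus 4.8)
The plan is to derive Lemma~\ref{LemXia02} as an immediate consequence of Lemma~\ref{LemXia01} applied with $N=L$. The only extra input needed is the observation that the hypothesis on $(H,K)$ already forces $HK\supseteq L$, so that one of the two conditions appearing in Lemma~\ref{LemXia01} becomes automatic and can be discarded.

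First I would unpack the hypothesis. By definition, the statement that $(H,K)$ contains a factor pair $(X,Y)$ of $L$ means $X\leqslant H$, $Y\leqslant K$ and $XY\supseteq L$. Since $G$ is almost simple with socle $L$ we may regard $H$ and $K$ as subgroups of $\Aut(L)$, so that the notion of factor pair is meaningful for $(H,K)$ as well; in any case, the only fact used is $HK\supseteq XY\supseteq L$.

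Next, since $L=\Soc(G)$ is normal in $G$, Lemma~\ref{LemXia01} with $N=L$ yields: $G=HK$ if and only if $HK\supseteq L$ and $G/L=(HL/L)(KL/L)$. By the previous step the condition $HK\supseteq L$ holds unconditionally under our hypotheses, so this equivalence reduces to $G=HK$ if and only if $G/L=(HL/L)(KL/L)$, which is precisely the assertion.

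I do not anticipate any real obstacle: the statement is purely formal once Lemma~\ref{LemXia01} is available. The only minor point to verify is that passing from the factor pair $(X,Y)$ to the possibly larger pair $(H,K)$ preserves the containment of $L$ in the product, which is clear since $HK\supseteq XY\supseteq L$.
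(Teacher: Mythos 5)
Your proposal is correct and matches the paper's argument exactly: the paper likewise observes that any pair of subgroups containing a factor pair is itself a factor pair (so $HK\supseteq L$), and then invokes Lemma~\ref{LemXia01} with $N=L$. No gaps.
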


In light of Lemma~\ref{LemXia02}, the key to determine the factorizations of $G$ with nonsolvable factors is to determine the minimal ones (with respect to the containment) among factor pairs of $L$ with nonsolvable subgroups.

\begin{lemma}\label{LemXia03}
Let $L$ be a nonabelian simple group, and let $(H,K)$ be a factor pair of $L$.
Then $(H^\alpha,K^\alpha)$ and $(H^x,K^y)$ are factor pairs of $L$ for all $\alpha\in\Aut(L)$ and $x,y\in L$.
\end{lemma}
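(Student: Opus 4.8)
The plan is to verify directly that each of the two operations preserves the defining property of a factor pair, namely the containment $HK\supseteq L$, handling the two assertions separately.

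For $(H^\alpha,K^\alpha)$ with $\alpha\in\Aut(L)$, I would first record the standard fact that $L=\Soc(\Aut(L))$ is characteristic in $\Aut(L)$, so every automorphism of $L$ normalizes $L$, i.e.\ $L^\alpha=L$. Since conjugation by a fixed element of $\Aut(L)$ is an automorphism of the group $\Aut(L)$, it is multiplicative on set products, whence $(HK)^\alpha=H^\alpha K^\alpha$. Combining these, $H^\alpha K^\alpha=(HK)^\alpha\supseteq L^\alpha=L$, which is precisely the assertion that $(H^\alpha,K^\alpha)$ is a factor pair.

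For $(H^x,K^y)$ with $x,y\in L$, I would argue by a short coset computation. Write $H^xK^y=x^{-1}Hx\,y^{-1}Ky=x^{-1}H(xy^{-1})Ky$, and note that the ``middle'' element $xy^{-1}$ lies in $L$, hence in $HK$ by the factor-pair hypothesis, say $xy^{-1}=h_0k_0$ with $h_0\in H$ and $k_0\in K$. Then the one-sided products $Hh_0$ and $k_0K$ collapse to $H$ and $K$, so $H^xK^y=x^{-1}HKy\supseteq x^{-1}Ly=L$, the final equality holding because $x,y\in L$. (Equivalently, one can isolate a single statement — if $(A,B)$ is a factor pair then so is $(A^g,B)$ and $(A,B^g)$ for every $g\in L$, proved by the same manipulation — and chain it twice to pass from $(H,K)$ to $(H^x,K^y)$.)

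I expect no genuine obstacle here: the lemma is elementary, and the only points requiring a little care are the bookkeeping of one-sided cosets and set products in the second part, together with the (routine) use of $L$ being characteristic in $\Aut(L)$ in the first part. The statement will then feed into Lemma~\ref{LemXia02}, reducing the determination of all factorizations with nonsolvable factors to finding the minimal factor pairs up to the action of $\Aut(L)$.
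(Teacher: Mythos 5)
Your argument is correct and coincides with the paper's own proof: the first part uses $(HK)^\alpha=H^\alpha K^\alpha\supseteq L^\alpha=L$, and the second part uses the identical coset computation $H^xK^y=x^{-1}H(xy^{-1})Ky=x^{-1}HKy\supseteq x^{-1}Ly=L$ after writing $xy^{-1}=h_0k_0$. No issues.
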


\begin{proof}
It is evident that $H^\alpha K^\alpha=(HK)^\alpha\supseteq L^\alpha=L$. Hence $(H^\alpha,K^\alpha)$ is a factor pair.
Since $xy^{-1}\in L\subseteq HK$, there exist $h\in H$ and $k\in K$ such that $xy^{-1}=hk$. Therefore, 
\[
H^xK^y=x^{-1}Hxy^{-1}Ky=x^{-1}HhkKy=x^{-1}HKy\supseteq x^{-1}Ly=L,
\]
which means that $(H^x,K^y)$ is a factor pair.
\end{proof}

The next lemma is~\cite[Lemma~2(i)]{LPS1996}.

\begin{lemma}\label{LemXia05}
Let $G$ be an almost simple group with socle $L$, and let $H$ and $K$ be subgroups of $G$ not containing $L$. If $G=HK$, then $HL\cap KL=(H\cap KL)(K\cap HL)$. 
\end{lemma}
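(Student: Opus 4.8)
The plan is to prove the two inclusions $HL\cap KL\supseteq(H\cap KL)(K\cap HL)$ and $HL\cap KL\subseteq(H\cap KL)(K\cap HL)$ separately, using only that $L=\Soc(G)$ is normal in $G$, so that $HL$ and $KL$ are subgroups of $G$.

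For the first inclusion I would argue directly: given $a\in H\cap KL$ and $b\in K\cap HL$, the element $ab$ lies in $HL$ because $a\in H\subseteq HL$ and $b\in HL$, and it lies in $KL$ because $a\in KL$ and $b\in K\subseteq KL$; hence $ab\in HL\cap KL$. This step costs nothing beyond knowing that $HL$ and $KL$ are groups.

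For the reverse inclusion I would do a short element chase, which is the only place the hypothesis $G=HK$ gets used. Take $g\in HL\cap KL$ and write $g=hl$ with $h\in H$ and $l\in L$; since $l\in L\subseteq G=HK$, write $l=h'k'$ with $h'\in H$ and $k'\in K$, so that $g=h_0k'$ with $h_0:=hh'\in H$ and $k'\in K$. Then $k'=h_0^{-1}g\in H(HL)=HL$ gives $k'\in K\cap HL$, and $h_0=g(k')^{-1}\in(KL)K=KL$ gives $h_0\in H\cap KL$; hence $g=h_0k'\in(H\cap KL)(K\cap HL)$, as required.

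I do not expect any real obstacle here: the points to be careful about are that $HL$ and $KL$ are genuinely subgroups (this is exactly where normality of the socle enters) and that the factorization $l=h'k'$ is inserted on the correct side so that each of the two resulting factors lands in the intended intersection. The hypothesis that neither $H$ nor $K$ contains $L$ is not actually needed for the identity and merely records the setting of interest. If one preferred, the same conclusion follows from a counting argument combining $|G|=|H||K|/|H\cap K|$ with the set-cardinality formula $|AB|=|A||B|/|A\cap B|$ applied to the pairs $(H,KL)$, $(K,HL)$ and $(HL,KL)$, together with $(H\cap KL)\cap(K\cap HL)=H\cap K$; but the element chase above is shorter and more transparent, so that is the route I would take.
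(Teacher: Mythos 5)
Your proof is correct. The paper itself gives no argument for this lemma --- it is quoted verbatim as Lemma~2(i) of the cited work of Liebeck, Praeger and Saxl \cite{LPS1996} --- so there is nothing internal to compare against; your element chase is a clean self-contained verification of the cited fact. Both inclusions check out: the containment $(H\cap KL)(K\cap HL)\subseteq HL\cap KL$ uses only that $HL$ and $KL$ are subgroups, and in the reverse direction the decomposition $g=hl=h(h'k')=h_0k'$ followed by $k'=h_0^{-1}g\in H\cdot HL=HL$ and $h_0=g(k')^{-1}\in KL\cdot K=KL$ is exactly where the hypothesis $G=HK$ and the normality of $L$ (needed for $KL\cdot K=KL$ and for $HL,KL$ to be subgroups at all) enter. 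Your observation that the hypothesis that neither factor contains $L$ is irrelevant to the identity is also accurate.
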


The following lemma implies that we may consider specific representatives of a conjugacy class of subgroups when studying factorizations of a group.

\begin{lemma}\label{LemXia04}
Let $G=HK$ be a factorization. Then for all $x,y\in G$ we have $G=H^xK^y$ with $H^x\cap K^y\cong H\cap K$.
\end{lemma}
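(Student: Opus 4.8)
The plan is to split the statement into its two assertions --- the factorization $G=H^xK^y$ and the isomorphism $H^x\cap K^y\cong H\cap K$ --- and to derive both from a single application of $G=HK$ to the element $xy^{-1}$, in the spirit of the proof of Lemma~\ref{LemXia03}.

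First I would fix $x,y\in G$ and, using $xy^{-1}\in G=HK$, write $xy^{-1}=hk$ with $h\in H$ and $k\in K$; equivalently $x=hky$. For the factorization part I would then compute directly
\[
H^xK^y=x^{-1}Hx\,y^{-1}Ky=x^{-1}H(hk)Ky=x^{-1}(Hh)(kK)y=x^{-1}HKy=x^{-1}Gy=G,
\]
using $Hh=H$, $kK=K$ and $G=HK$.

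For the isomorphism part I would exploit the extra information in the equation $x=hky$ rather than settle for the order count $|H^x\cap K^y|=|H||K|/|G|=|H\cap K|$, which only yields equality of orders. Since $h\in H$ we have $H^x=H^{hky}=H^{ky}$, so pulling the conjugation by $y$ out of the intersection gives
\[
H^x\cap K^y=H^{ky}\cap K^y=(H^k\cap K)^y.
\]
Finally, since $k\in K$ we have $K^k=K$, hence $H^k\cap K=H^k\cap K^k=(H\cap K)^k$, and therefore $H^x\cap K^y=(H\cap K)^{ky}$. Thus $H^x\cap K^y$ is a $G$-conjugate of $H\cap K$, and in particular $H^x\cap K^y\cong H\cap K$.

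There is essentially no obstacle here; the only mildly delicate point is to resist the order-count shortcut and instead record the explicit conjugating element $ky$, which the decomposition $xy^{-1}=hk$ supplies for free, so that the conclusion is genuine isomorphism (indeed $G$-conjugacy) rather than merely equicardinality.
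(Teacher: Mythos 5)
Your proposal is correct and follows essentially the same route as the paper: both decompose $xy^{-1}=hk$ with $h\in H$, $k\in K$, verify $H^xK^y=x^{-1}HKy=G$, and reduce $H^x\cap K^y$ to $(H\cap K)^k$ (up to conjugation by $y$) using $H^h=H$ and $K^k=K$. Your version merely records the single conjugating element $ky$ explicitly rather than passing through two successive isomorphisms, which is a cosmetic difference.
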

  
\begin{proof}
As $xy^{-1}\in G=HK$, there exists $h\in H$ and $k\in K$ such that $xy^{-1}=hk$. Thus
\[
H^xK^y=x^{-1}Hxy^{-1}Ky=x^{-1}HhkKy=x^{-1}HKy=x^{-1}Gy=G,
\]
and
\[
H^x\cap K^y=(H^{xy^{-1}}\cap K)^y\cong H^{xy^{-1}}\cap K=H^{hk}\cap K=H^k\cap K=(H\cap K)^k\cong H\cap K.\qedhere
\]
\end{proof}

\section{Notation}\label{SecOmegaPlus01}

We follow the convention as in~\cite{LPS1990} to denote $\SL_n^+(q)=\SL_n(q)$ and $\SL_n^-(q)=\SU_n(q)$. 
For an almost simple group $G$, a $\max^-$ subgroup of $G$ refers to a maximal one among the core-free subgroups of $G$.

Throughout this paper, let $q=p^f$ be a power of a prime $p$, let $m\geqslant4$ be an integer, let $\,\overline{\phantom{\varphi}}\,$ be the homomorphism from $\GaO_{2m}^+(q)$ to $\mathrm{P\Gamma O}_{2m}^+(q)$ modulo scalars, let $\tau$ be a triality automorphism of $\POm_8^+(q)$, let $V$ be a vector space of dimension $2m$ over $\bbF_q$ equipped with a nondegenerate quadratic form $Q$ of plus type, whose associated bilinear form is $\beta$, let $\perp$ denote the perpendicularity with respect to $\beta$, let $e_1,f_1,\dots,e_m,f_m$ be a standard basis for $V$ as in~\cite[2.2.3]{LPS1990}, let $\phi\in\GaO(V)$ such that
\[
\phi\colon a_1e_1+b_1f_1+\dots+a_me_m+b_mf_m\mapsto a_1^pe_1+b_1^pf_1+\dots+a_m^pe_m+b_m^pf_m
\]
for $a_1,b_1\dots,a_m,b_m\in\bbF_q$, let $\gamma$ be the involution in $\GO(V)$ swapping $e_i$ and $f_i$ for all $i\in\{1,\dots,m\}$, let $U=\langle e_1,\dots,e_m\rangle_{\bbF_q}$, let $U_1=\langle e_2,\dots,e_m\rangle_{\bbF_q}$, let $U_2=\langle e_3,\dots,e_m\rangle_{\bbF_q}$, and let $W=\langle f_1,\dots,f_m\rangle_{\bbF_q}$.
From~\cite[3.7.4~and~3.8.2]{Wilson2009} we see that $\Omega(V)_U=\Pa_m[\Omega(V)]$ has a subgroup $R{:}T$, where
\[
R=q^{m(m-1)/2}
\]
is the kernel of $\Omega(V)_U$ acting on $U$, and
\[
T=\SL_m(q)
\]
stabilizes both $U$ and $W$. The action of $T$ on $U$ determines that on $W$ in the way described in~\cite[Lemma~2.2.17]{BG2016}, from which we can see that $\gamma$ normalizes $T$ and induces the transpose inverse automorphism of $T$.
For $w\in V$ with $Q(w)\neq0$, let $r_w$ be the reflection in $w$ defined by
\[
r_w\colon V\to V,\quad v\mapsto v-\frac{\beta(v,w)}{Q(w)}w.
\]
Take $\mu\in\bbF_q$ such that the polynomial $x^2+x+\mu$ is irreducible over $\bbF_q$ (note that this implies $\mu\neq0$), and let
\begin{align*}
&u=e_1+e_2+\mu f_2,\\
&u'=(1-\mu^2)e_1+(\mu^2-1+\mu^{-1})e_2+\mu^2f_1+\mu^2f_2.
\end{align*}
Then $Q(e_1+f_1)=\beta(e_1+f_1,u)=\beta(e_1+f_1,u')=1$ and $Q(u)=Q(u')=\mu$. Hence $\langle e_1+f_1,u\rangle_{\bbF_q}$ and $\langle e_1+f_1,u'\rangle_{\bbF_q}$ are both nondegenerate $2$-subspaces of minus type in $V$, and there exists $y\in\Omega(V)$ (respectively, $y\in\mathrm{O}(V)\setminus\Omega(V)$) such that
\[
(e_1+f_1)^y=e_1+f_1\ \text{ and }\ u^y=u'.
\]

If $m=2\ell$ is even, then let $V_\sharp$ be a vector space of dimension $m$ over $\bbF_{q^2}$ with the same underlying set as $V$, let $\Tr$ be the trace of the field extension $\bbF_{q^2}/\bbF_q$, and let $\lambda\in\bbF_{q^2}$ such that
\[
\Tr(\lambda)=\lambda+\lambda^q=1.
\]
In this case, we fix the notation for some field-extension subgroups of $\mathrm{O}(V)$ as follows.

Firstly, we may equip $V_\sharp$ with a nondegenerate unitary form $\beta_\sharp$ such that $Q(v)=\beta_\sharp(v,v)$ for all $v\in V$, and thus $\GU(V_\sharp)<\mathrm{O}(V)$. Take a standard $\bbF_{q^2}$-basis $E_1,F_1,\dots,E_\ell,F_\ell$ for $V_\sharp$, so that
\[
\beta_\sharp(E_i,E_j)=\beta_\sharp(F_i,F_j)=0,\quad\beta_\sharp(E_i,F_j)=\delta_{i,j}
\]
for all $i,j\in\{1,\dots,\ell\}$. Then we have
\begin{align}
&Q(\lambda E_1)=\beta_\sharp(\lambda E_1,\lambda E_1)=0,\label{EqnOmegaPlus01}\\
&Q(F_1)=\beta_\sharp(F_1,F_1)=0,\label{EqnOmegaPlus02}\\
&Q(\lambda E_1+F_1)=\beta_\sharp(\lambda E_1+F_1,\lambda E_1+F_1)=\lambda+\lambda^q=1,\nonumber
\end{align}
and hence
\begin{equation}\label{EqnOmegaPlus03}
\beta(\lambda E_1,F_1)=Q(\lambda E_1+F_1)-Q(\lambda E_1)-Q(F_1)=1.
\end{equation}
From~\eqref{EqnOmegaPlus01}--\eqref{EqnOmegaPlus03} we see that $(\lambda E_1,F_1)$ is a hyperbolic pair with respect to $Q$.
Thus we may assume without loss of generality that
\[
e_1=\lambda E_1\ \text{ and }\ f_1=F_1.
\]
Then $Q(e_1+f_1)=\beta_\sharp(e_1+f_1,e_1+f_1)=1$, which shows that $e_1+f_1$ is a nonsingular vector in both $V$ (with respect to $Q$) and $V_\sharp$ (with respect to $\beta_\sharp$). Moreover,~\eqref{EqnOmegaPlus01} shows that $e_1$ is a singular vector in both $V$ and $V_\sharp$.
Let $\xi\in\GaU(V_\sharp)$ such that
\[
\xi\colon a_1E_1+b_1F_1+\dots+a_\ell E_\ell+b_\ell F_\ell\mapsto a_1^pE_1+b_1^pF_1+\dots+a_\ell^pE_\ell+b_\ell^pF_\ell
\]
for $a_1,b_1\dots,a_\ell,b_\ell\in\bbF_{q^2}$.

Secondly, we may equip $V_\sharp$ with a nondegenerate quadratic form $Q_\sharp$ of plus type such that $Q(v)=\Tr(Q_\sharp(v))$ for all $v\in V$, and thus $\mathrm{O}(V_\sharp)<\mathrm{O}(V)$. For $w\in V_\sharp$ with $Q_\sharp(w)\neq0$, let $r'_w$ be the reflection in $w$ defined by
\[
r'_w\colon V_\sharp\to V_\sharp,\quad v\mapsto v-\frac{Q_\sharp(v+w)-Q_\sharp(v)-Q_\sharp(w)}{Q_\sharp(w)}w.
\]
Take a standard $\bbF_{q^2}$-basis $E_1,F_1,\dots,E_\ell,F_\ell$ for $V_\sharp$. Then we have
\begin{align}
&Q(\lambda E_1)=\Tr(Q_\sharp(\lambda E_1))=\Tr(0)=0,\label{EqnOmegaPlus04}\\
&Q(F_1)=\Tr(Q_\sharp(F_1))=\Tr(0)=0,\label{EqnOmegaPlus05}\\
&Q(\lambda E_1+F_1)=\Tr(Q_\sharp(\lambda E_1+F_1))=\Tr(\lambda)=\lambda+\lambda^q=1,\nonumber
\end{align}
and hence
\begin{equation}\label{EqnOmegaPlus06}
\beta(\lambda E_1,F_1)=Q(\lambda E_1+F_1)-Q(\lambda E_1)-Q(F_1)=1.
\end{equation}
From~\eqref{EqnOmegaPlus04}--\eqref{EqnOmegaPlus06} we see that $(\lambda E_1,F_1)$ is a hyperbolic pair with respect to $Q$.
Thus we may assume without loss of generality that
\[
e_1=\lambda E_1\ \text{ and }\ f_1=F_1.
\]
It follows that $Q(e_1+f_1)=1$ and $Q_\sharp(e_1+f_1)=\lambda$, whence $e_1+f_1$ is a nonsingular vector in both $V$ (with respect to $Q$) and $V_\sharp$ (with respect to $Q_\sharp$). Let $\psi\in\GaO(V_\sharp)$ such that
\[
\psi\colon a_1E_1+b_1F_1+\dots+a_\ell E_\ell+b_\ell F_\ell\mapsto a_1^pE_1+b_1^pF_1+\dots+a_\ell^pE_\ell+b_\ell^pF_\ell
\]
for $a_1,b_1\dots,a_\ell,b_\ell\in\bbF_{q^2}$.

\section{Factorizations with socle $\Sp_6(q)$ and a factor normalizing $\mathrm{G}_2(q)$}

For applications in the next section, we need to know the factorizations of almost simple groups with socle $\Sp_6(q)$ for even $q$ such that a factor normalizes $\mathrm{G}_2(q)$. This will be classified in this section (Lemma~\ref{LemSymplectic12}).

\begin{lemma}\label{LemSymplectic07}
Let $Z=\Sp_6(q)$ with $q$ even, and let $Y=\G_2(q)<Z$. 
\begin{enumerate}[{\rm (a)}]
\item If $X=\Omega_6^\varepsilon(q)<Z$ with $\varepsilon\in\{+,-\}$, then $Z=XY$ with $X\cap Y=\SL_3^\varepsilon(q)$.
\item If $X=\Omega_5(q)<\Omega_6^\varepsilon(q)<Z$ with $\varepsilon\in\{+,-\}$, then $Z=XY$ with $X\cap Y=\SL_2(q)$.
\end{enumerate}
\end{lemma}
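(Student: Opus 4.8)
The plan is to reduce both parts to the elementary order argument for factorizations: if $A$ and $B$ are subgroups of a finite group $G$ with $G=AB$, then $|A\cap B|=|A|\,|B|/|G|$, so once the factorization is in hand it suffices to exhibit a subgroup $C\leqslant A\cap B$ with $|C|=|A|\,|B|/|G|$ in order to conclude $A\cap B=C$. Throughout, by Lemma~\ref{LemXia04} we may freely replace $X$ by any $Z$-conjugate, so we may use whichever representatives are convenient; in particular we realise $\G_2(q)$ inside $\Sp_6(q)$ via $\G_2(q)<\mathrm{O}_7(q)\cong\Sp_6(q)=\Sp(V)$ (valid as $q$ is even), with $\G_2(q)$ acting on the natural $6$-dimensional module $V$ as the $6$-dimensional section of the $7$-dimensional orthogonal module.

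For part~(a), I would first record the order identity $|\G_2(q)|\,|\Omega_6^\varepsilon(q)|=|\Sp_6(q)|\,|\SL_3^\varepsilon(q)|$, a routine computation (recall $q$ even, so no $\gcd$-factor occurs in $|\Omega_6^\varepsilon(q)|$, and $|\SL_3^\varepsilon(q)|=q^3(q^2-1)(q^3-1)$ or $q^3(q^2-1)(q^3+1)$). Next, using the subgroup structure of $\G_2(q)$: its maximal subgroups $\SL_3(q){:}2$ and $\SU_3(q){:}2$ act on $V$ in the standard way --- $\SL_3(q)$ preserving a decomposition $V=W\oplus W^*$ into complementary totally singular $3$-spaces paired by the symplectic form, equivalently a plus-type quadratic form refining $\beta$; and $\SU_3(q)$, acting on $V$ as the restriction of scalars of its natural $\bbF_{q^2}$-unitary module, preserving the minus-type quadratic form $v\mapsto h(v,v)$ (the type being confirmed by a count of singular vectors). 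Hence $\SL_3^\varepsilon(q)\leqslant\G_2(q)\cap\Omega_6^\varepsilon(q)$ --- this copy of $\SL_3^\varepsilon(q)$ is perfect (for $q=2$ one argues directly from $\SU_3(2)\leqslant\SU_4(2)\cong\Omega_6^-(2)$) and so lies in $\Omega_6^\varepsilon(q)$, not merely in $\mathrm{O}_6^\varepsilon(q)$. The factorization $\Sp_6(q)=\G_2(q)\,\mathrm{O}_6^\varepsilon(q)$ is one of the maximal factorizations of $\Sp_6(q)$ in~\cite{LPS1990}, and it descends to $\Omega_6^\varepsilon(q)$ once one checks that $\G_2(q)\cap\mathrm{O}_6^\varepsilon(q)=\SL_3^\varepsilon(q){:}2$ is not contained in $\Omega_6^\varepsilon(q)$ --- for $\varepsilon=+$ the ``$.2$'' element swaps $W$ and $W^*$, hence swaps the two families of maximal totally singular subspaces and has nontrivial Dickson invariant, and $\varepsilon=-$ is similar. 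The order argument then gives $Z=XY$ with $X\cap Y=\SL_3^\varepsilon(q)$.

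For part~(b), write $\SL_3^\varepsilon(q)=\G_2(q)\cap\Omega_6^\varepsilon(q)$ as in~(a), and take $\Omega_5(q)$ to be the stabiliser in $\Omega_6^\varepsilon(q)$ of a nonsingular vector $w\in V$ chosen so that the stabiliser of $w$ in $\SL_3^\varepsilon(q)$ is isomorphic to $\SL_2(q)$: for $\varepsilon=+$ take $w=w_0+f_0$ with $w_0\in W$, $f_0\in W^*$ and $f_0(w_0)\neq0$ (so $w$ is nonsingular), whence the stabiliser of $w$ in $\SL_3(q)$ is the copy of $\SL_2(q)$ fixing $w_0$ and acting on $\ker f_0$; the case $\varepsilon=-$ is analogous with a nonsingular vector of the unitary module. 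An orbit count --- using that for $q$ even the stabiliser of a nonsingular vector in $\Omega_6^\varepsilon(q)$ has order $|\Sp_4(q)|$, the reflection in $w$ lying outside $\Omega_6^\varepsilon(q)$ --- shows the $\SL_3^\varepsilon(q)$-orbit of $w$ has the same size as the $\Omega_6^\varepsilon(q)$-orbit of $w$, whence $\Omega_6^\varepsilon(q)=\SL_3^\varepsilon(q)\,\Omega_5(q)$ with $\SL_3^\varepsilon(q)\cap\Omega_5(q)=\SL_2(q)$. Combining with part~(a), $Z=\G_2(q)\,\Omega_6^\varepsilon(q)=\G_2(q)\,\SL_3^\varepsilon(q)\,\Omega_5(q)=\G_2(q)\,\Omega_5(q)$ since $\SL_3^\varepsilon(q)\leqslant\G_2(q)$; and $\Omega_5(q)\cap\G_2(q)=\Omega_5(q)\cap\Omega_6^\varepsilon(q)\cap\G_2(q)=\Omega_5(q)\cap\SL_3^\varepsilon(q)=\SL_2(q)$, as required.

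The main obstacle is the geometric bookkeeping behind part~(a): correctly pinning down the action of the $\SL_3^\varepsilon(q)$ subgroups of $\G_2(q)$ on $V$ and the orthogonal type of the form they fix, together with the characteristic-$2$ subtleties of orthogonal geometry (every vector lies in its own perpendicular space, the $\Omega$-versus-$\mathrm{O}$ distinction, stabilisers of nonsingular vectors, the behaviour of reflections). One should also handle $q=2$ separately, where $\SU_3(2)$ is solvable and $\Sp_4(2)\cong\Sy_6$ is not simple, and confirm that the copy of $\SL_3^\varepsilon(q)$ used in the orbit count in~(b) coincides with the one produced in~(a).
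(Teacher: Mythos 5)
Your argument follows the same skeleton as the paper's: for (a), start from the maximal factorization $\Sp_6(q)=\G_2(q)\,\mathrm{O}_6^\varepsilon(q)$ with $\G_2(q)\cap\mathrm{O}_6^\varepsilon(q)=\SL_3^\varepsilon(q).2$ (this is~\cite[5.2.3(b)]{LPS1990}, which you should cite rather than re-derive), show that the extra involution is lost on passing to $\Omega_6^\varepsilon(q)$, and finish by counting orders; for (b), reduce to the factorization $\Omega_6^\varepsilon(q)=\Omega_5(q)\,\SL_3^\varepsilon(q)$ with intersection $\SL_2(q)$ and chain the two factorizations together. Where you differ is only in how two sub-steps are certified: the paper rules out $X\cap Y=\SL_3^\varepsilon(q).2$ by observing from the tables of~\cite{BHR2013} that $\Omega_6^\varepsilon(q)$ has no such subgroup, whereas you compute the Dickson invariant of the outer element directly (clean for $\varepsilon=+$, but your ``$\varepsilon=-$ is similar'' does conceal a genuine computation for the semilinear element normalizing $\SU_3(q)$); and the paper imports $\Omega_6^\varepsilon(q)=\Omega_5(q)\,\SL_3^\varepsilon(q)$ from the companion papers on linear and unitary groups, whereas you prove it by an orbit count on nonsingular vectors, which checks out ($q^2(q^3-\varepsilon 1)$ on both sides). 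Your version is more self-contained at the cost of more geometric bookkeeping; the one place you must be genuinely careful is $(q,\varepsilon)=(2,-)$, where $\SU_3(2)$ is not perfect, so both the claim that the relevant copy of $\SU_3(2)$ lies in $\Omega_6^-(2)$ and the identification of the index-two subgroup of $\SU_3(2).2$ arising as $X\cap Y$ need a direct verification rather than the perfection argument — you flag this, and it does work out, but it should not be left implicit.
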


\begin{proof}
First assume that $X$ is defined in part~(a). Let $M=\mathrm{O}_6^\varepsilon(q)$ be a subgroup of $Z$ containing $X$. It is shown in~\cite[5.2.3(b)]{LPS1990} that $M\cap Y=\SL_3^\varepsilon(q).2$.
Thus $X\cap Y=\SL_3^\varepsilon(q)$ or $\SL_3^\varepsilon(q).2$ as $X$ has index $2$ in $M$. However, as $q$ is even, we see from~\cite[Tables~8.8--8.11]{BHR2013} that $X$ has no subgroup isomorphic to $\SL_3^\varepsilon(q).2$. Hence $X\cap Y=\SL_3^\varepsilon(q)$, and it follows that
\[
\frac{|Y|}{|X\cap Y|}=\frac{|\G_2(q)|}{|\SL_3^\varepsilon(q)|}=q^3(q^3+\varepsilon1)=\frac{|\Sp_6(q)|}{|\Omega_6^\varepsilon(q)|}=\frac{|Z|}{|X|}.
\]
Consequently, $Z=XY$.

Next assume that $X$ is defined in part~(b). Let $M=\Omega_6^\varepsilon(q)<Z$ such that $X<M$. Part~(a) asserts that $Z=MY$ with $M\cap Y=\SL_3^\varepsilon(q)$. By~\cite[Lemma~4.5]{LWX-Linear} and~\cite[Lemma~4.7]{LWX-Unitary} we have $M=X(M\cap Y)$ with $X\cap(M\cap Y)=X\cap\SL_3^\varepsilon(q)=\SL_2(q)$. Hence $Z=XY$ with $X\cap Y=\SL_2(q)$.
\end{proof}

\begin{remark}
If we let $Y=\G_2(q)'$ in Lemma~\ref{LemSymplectic07}, then computation in \magma~\cite{BCP1997} shows that the conclusion $Z=XY$ would not hold for $(q,\varepsilon)=(2,+)$.
\end{remark}

Note that, if $\Sp_6(q)\leqslant G\leqslant\GaSp_6(q)$ with even $q$ and $A=\Pa_1[G]$, then the largest normal $2$-subgroup of $A$ is $\bfO_2(A)=q^5$ and the quotient $A^{(\infty)}/\bfO_2(A)$ is the symplectic group $\Sp_4(q)$.

\begin{lemma}\label{LemSymplectic09}
Let $L=\Sp_6(q)$ with $q\geqslant4$ even, let $G$ be an almost simple group with socle $L$, let $A=\Pa_1[G]$, let $H$ be a subgroup of $A$, and let $K$ be a subgroup of $G$ with $K^{(\infty)}=\G_2(q)$. Then $G=HK$ if and only if $G/L=(HL/L)(KL/L)$, $HK\supseteq\bfO_2(A)$ and $H\bfO_2(A)/\bfO_2(A)$ contains the field-extension subgroup $\Sp_2(q^2)$ of $A^{(\infty)}/\bfO_2(A)$.
\end{lemma}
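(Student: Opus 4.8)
The plan is to prove the two implications separately, in both cases working inside the Levi quotient $A/\bfO_2(A)$ (recall that $A^{(\infty)}/\bfO_2(A)=\Sp_4(q)$) and reducing to a factorization statement for an almost simple group with socle $\Sp_4(q)$; throughout, for $X\leqslant A$ I write $\overline X:=X\bfO_2(A)/\bfO_2(A)$.

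\emph{Necessity.} Suppose $G=HK$. Lemma~\ref{LemXia01} applied with the normal subgroup $L$ gives $G/L=(HL/L)(KL/L)$ and $HK\supseteq L$, and since $\bfO_2(A)$ is the unipotent radical of $\Pa_1[L]=A\cap L$ (so in particular $\bfO_2(A)\leqslant L$) we obtain $HK\supseteq\bfO_2(A)$. Next, $H\leqslant A$ forces $A=H(A\cap K)$: if $a=hk\in A$ with $h\in H$ and $k\in K$, then $k=h^{-1}a\in A\cap K$. Passing to $\overline A$, and then modding out the normal subgroup $C\cong C_{q-1}$ of $\overline A$ coming from the central torus $\GL_1(q)$ of the Levi, we get a factorization of the almost simple group $\overline A/C$ (which has socle $\Sp_4(q)$) one of whose factors is the image of $\overline H$; we may assume this factor is core-free, as otherwise $\overline H\supseteq\Sp_4(q)\supseteq\Sp_2(q^2)$ and we are done. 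Put $G_2:=K^{(\infty)}$, a subgroup of $L$ isomorphic to $\G_2(q)$. Then $A\cap K\supseteq A\cap G_2=\Pa_1[L]\cap G_2$, which by the maximal factorization $\Sp_6(q)=\Pa_1[\Sp_6(q)]\cdot\G_2(q)$ of~\cite{LPS1990} is a maximal parabolic $q^5{:}\GL_2(q)$ of $\G_2(q)$; moreover $A\cap K$ normalizes this parabolic, since $G_2\trianglelefteq K$. Determining the image of $\Pa_1[\G_2(q)]$ under the embedding $\Pa_1[\G_2(q)]<\Pa_1[\Sp_6(q)]$ in characteristic $2$ — computing $\Pa_1[\G_2(q)]\cap\bfO_2(A)$ and the subgroup $Y_0\leqslant\Sp_4(q)$ it induces on the Levi, which turns out to yield an exact factorization $\Sp_4(q)=\Sp_2(q^2)\,Y_0$ — pins down the $K$-side factor up to its field-automorphism part, and then the classification of factorizations of almost simple groups with socle $\Sp_4(q)$ (extracted from~\cite{LPS1990}) forces the image of $\overline H$ in $\overline A/C$ to contain a conjugate of the field-extension subgroup $\Sp_2(q^2)$. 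Since $\SL_2(q^2)$ is perfect with trivial Schur multiplier (using $q\geqslant4$), this containment lifts back through the central quotient $\overline A/C\to\overline A$, giving $\overline H\supseteq\Sp_2(q^2)$.

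\emph{Sufficiency.} Assume the three stated conditions. By Lemma~\ref{LemXia01} it is enough to show $L\subseteq HK$. Let $\Sp_2(q^2)$ denote the field-extension subgroup of $\Sp_4(q)=\overline{A^{(\infty)}}$ contained in $\overline H$, and let $B\leqslant A^{(\infty)}$ be its full preimage, so $B=\bfO_2(A){:}\Sp_2(q^2)\leqslant\Pa_1[L]$, $\overline B=\Sp_2(q^2)\leqslant\overline H$, and hence $B\leqslant H\bfO_2(A)$. The single factorization $\Pa_1[L]=B\cdot(\Pa_1[L]\cap G_2)$, with $G_2=K^{(\infty)}\cong\G_2(q)$, follows from the same Levi-quotient computation used above; combined with $L=\Pa_1[L]\cdot\G_2(q)$ and Lemma~\ref{LemXia03} (to match up the relevant conjugacy classes of factors), this gives $L=B\cdot G_2$. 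Finally, $\bfO_2(A)\subseteq HK$ implies $H\bfO_2(A)\subseteq HK$, so $B\subseteq HK$, and therefore $L=B\cdot G_2\subseteq(HK)\,K^{(\infty)}\subseteq(HK)K=HK$; together with the hypothesis on $G/L$, Lemma~\ref{LemXia01} yields $G=HK$.

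The \textbf{main obstacle} is the characteristic-$2$ analysis of the embedding $\Pa_1[\G_2(q)]<\Pa_1[\Sp_6(q)]$ — locating its intersection with $\bfO_2(A)$ and the subgroup $Y_0$ it induces on the Levi factor $\Sp_4(q)$ — together with reading off from the classification of factorizations of almost simple groups with socle $\Sp_4(q)$ that the only subgroups that can serve as $\overline H$ are those containing a conjugate of $\Sp_2(q^2)$; this is precisely what makes the field-extension condition both necessary and sufficient. The remaining ingredients — the reductions modulo $\bfO_2(A)$ and $C$, the conjugacy bookkeeping via Lemmas~\ref{LemXia03} and~\ref{LemXia04}, and the splitting of the central extension — are routine.
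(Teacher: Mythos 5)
Your proposal is correct and follows essentially the same route as the paper: pass to the Levi quotient $A/\bfO_2(A)$, use that $\Pa_1[L]\cap\G_2(q)$ projects onto a vector stabilizer $q^3{:}\Sp_2(q)$ of $\Sp_4(q)$ so that the $K$-side factor lies in a point stabilizer, invoke the classification of point-transitive subgroups of $\Sp_4(q)$ (Hering's theorem, which is exactly what the paper cites via \cite[Lemma~3.1]{LPS2010}) for necessity, and use transitivity of $\Sp_2(q^2)$ on nonzero vectors to recover the factorization for sufficiency. The extra detour through the central torus $C_{q-1}$ and the Schur-multiplier remark are harmless but unnecessary, since the direct-product structure of the Levi already gives the lift.
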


\begin{proof}
By~\cite[Lemma~4.3]{LWX-Linear} we have $A^{(\infty)}K^{(\infty)}\supseteq L$ with $A^{(\infty)}\cap K^{(\infty)}=q^{2+3}{:}\Sp_2(q)$. Moreover, we see from~\cite[4.3.7]{Wilson2009} that $\bfO_2(A)\cap K^{(\infty)}=q^2$, that is, $(A^{(\infty)}\cap K^{(\infty)})\cap\bfO_2(A)=q^2$. Hence
\[
(A^{(\infty)}\cap K^{(\infty)})\bfO_2(A)/\bfO_2(A)=q^3{:}\Sp_2(q)
\]
is the stabilizer of a vector in $A^{(\infty)}/\bfO_2(A)=\Sp_4(q)$. Since $A^{(\infty)}\cap K^{(\infty)}$ is normal in $A\cap K$, it follows that $(A\cap K)\bfO_2(A)/\bfO_2(A)$ has a normal subgroup $q^3{:}\Sp_2(q)$.

First suppose that $G=HK$. Then $G/L=(HL/L)(KL/L)$ and $HK\supseteq\bfO_2(A)$. Since $H\leqslant A$, we obtain $A=H(A\cap K)$ and thus
\[
A/\bfO_2(A)=\big(H\bfO_2(A)/\bfO_2(A)\big)\big((A\cap K)\bfO_2(A)/\bfO_2(A)\big).
\]
This implies that $H\bfO_2(A)/\bfO_2(A)$ acts transitively on the set of $1$-spaces in $\bbF_q^4$. By a result of Hering (see~\cite[Lemma~3.1]{LPS2010}), it then follows that $H\bfO_2(A)/\bfO_2(A)$ contains the field-extension subgroup $\Sp_2(q^2)$ of $A^{(\infty)}/\bfO_2(A)$.

Now suppose that $G/L=(HL/L)(KL/L)$, $HK\supseteq\bfO_2(A)$ and $H\bfO_2(A)/\bfO_2(A)$ contains the field-extension subgroup $\Sp_2(q^2)$ of $A^{(\infty)}/\bfO_2(A)$. By~\cite[Lemma~4.2]{LWX-Linear}, $H\bfO_2(A)/\bfO_2(A)\geqslant\Sp_2(q^2)$ is transitive on the set of vectors in $\bbF_q^4$. Hence
\begin{align*}
A^{(\infty)}/\bfO_2(A)=\Sp_4(q)&\subseteq\big(H\bfO_2(A)/\bfO_2(A)\big)\big((A^{(\infty)}\cap K^{(\infty)})\bfO_2(A)/\bfO_2(A)\big)\\
&=\big(H(A^{(\infty)}\cap K^{(\infty)})\big)\bfO_2(A)/\bfO_2(A).
\end{align*}
This together with the condition $HK\supseteq\bfO_2(A)$ implies that $HK\supseteq A^{(\infty)}$. Hence $HK\supseteq A^{(\infty)}K^{(\infty)}=L$, and so by Lemma~\ref{LemXia01} we obtain $G=HK$ as $G/L=(HL/L)(KL/L)$.
\end{proof}

The next lemma is verified by computation in \magma~\cite{BCP1997}.

\begin{lemma}\label{LemSymplectic10}
Let $Z=\GaSp_6(4)$, and let $Y=\GaG_2(4)<Z$.
\begin{enumerate}[{\rm (a)}]
\item If $X=(\SL_2(16)\times5).4<\GaO_6^+(4)<Z$ (there are exactly two conjugacy classes of such subgroups $X$), then $Z=XY$ with $X\cap Y=5$.
\item If $X=\GaL_2(16)\times3<\GaO_6^-(4)<Z$ (there is a unique conjugacy class of such subgroups $X$), then $Z=XY$ with $X\cap Y=3$.
\end{enumerate}
\end{lemma}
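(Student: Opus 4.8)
The plan is to obtain $Z=XY$ in each case from the known maximal factorization $\GaSp_6(4)=\GaO_6^\varepsilon(4)\cdot\GaG_2(4)$ and then to read off the structure of $X\cap Y$ by a counting argument. Set $M=\GaO_6^+(4)$ in part~(a) and $M=\GaO_6^-(4)$ in part~(b), so that $X\leqslant M$ by hypothesis and $Y=\GaG_2(4)$. The factorization $Z=MY$ is available: at the level of $\Sp_6(4)$ it is Lemma~\ref{LemSymplectic07}(a), that is, $\Sp_6(4)=\Omega_6^\varepsilon(4)\G_2(4)$, and since both factors have an $\bbF_2$-rational form the field automorphism can be chosen to normalise both, whence $\GaSp_6(4)=MY$; this is also one of the maximal factorizations of $\GaSp_6(4)$ listed in~\cite{LPS1990}. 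Since $Z=MY$ and $(M\cap Y)Y=Y$, the single identity $M=X(M\cap Y)$ forces $Z=MY=X(M\cap Y)Y=XY$, so everything reduces to establishing $M=X(M\cap Y)$.

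Granting $Z=XY$, the product formula gives $|X\cap Y|=|X|\,|Y|/|Z|$, and a direct arithmetic check (using $|\GaSp_6(4)|=2|\Sp_6(4)|$, $|\GaG_2(4)|=2|\G_2(4)|$ and the orders of the prescribed $X$) shows that this quotient equals $5$ in part~(a) and $3$ in part~(b); a group of prime order being cyclic, $X\cap Y$ is then determined. One could instead run the implication in reverse: since the only nonabelian composition factor of $X$ is $\PSL_2(16)$ while that of $M\cap Y$ is $\PSL_3^\varepsilon(4)$, and $17$ divides $|\PSL_2(16)|$ but not $|M\cap Y|$, the subgroup $X\cap Y\leqslant X\cap(M\cap Y)$ is solvable, and combining this with the bound $|X\cap Y|\geqslant|X|\,|Y|/|Z|$ and an inspection of the solvable subgroups of $X$ would pin it down and simultaneously yield $XY=Z$. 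Either way, the crux is the embedding analysis below.

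For the equality $M=X(M\cap Y)$ I would work inside $M$ after the identifications $\Omega_6^+(4)\cong\SL_4(4)$ and $\Omega_6^-(4)\cong\SU_4(4)$ (both centreless, since $\gcd(4,3)=\gcd(4,5)=1$), so that $M$ is $\SL_4^\varepsilon(4)$ extended by field (and, for $\varepsilon=+$, graph) automorphisms. Here $X$ is essentially the stabiliser in $M$ of an orthogonal decomposition $6=4\perp2$ of the natural orthogonal module, with the $4$-part of minus type, whose $\Omega$ is $\Omega_4^-(4)\cong\SL_2(16)$, and the $2$-part contributing the torus $\Omega_2^-(4)\cong\mathrm{C}_5$ in part~(a) and $\Omega_2^+(4)\cong\mathrm{C}_3$ in part~(b), the whole extended by the Galois part; and $M\cap Y$, determined by Lemma~\ref{LemSymplectic07}(a) together with the action of the field automorphism, has socle $\SL_3^\varepsilon(4)$. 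The identity $M=X(M\cap Y)$ is then a factorization of $\POm_6^\varepsilon(4)=\PSL_4^\varepsilon(4)$ extended by outer automorphisms, with two geometric subgroups as factors; its plausibility can be checked against the maximal factorizations of the low-dimensional linear and unitary groups in~\cite{LPS1990}, and in part~(a) it must be verified for representatives of both conjugacy classes of $X$.

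The hard part is precisely this last step: controlling the outer-automorphism content of $X$, $Y$ and $M$ over the tiny field $\bbF_4$ — the graph-field and isoclinism-type subtleties in $\GaO_6^\pm(4)$, the two conjugacy classes of $X$ in part~(a), and the relative position of the two subspace stabilisers — is delicate to carry out rigorously by hand, so the most reliable route, and the one taken here, is a direct computation in \magma: build $Z=\GaSp_6(4)$, locate its maximal subgroups $M=\GaO_6^\pm(4)$ and $Y=\GaG_2(4)$, enumerate inside $M$ the conjugacy classes of subgroups isomorphic to the prescribed $X$, and for a representative of each confirm $|X|\,|Y|=|Z|\,|X\cap Y|$ (equivalently $XY=Z$) and compute the isomorphism type of $X\cap Y$.
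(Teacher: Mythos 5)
Your proposal is correct and ultimately coincides with the paper's proof: the paper simply states that this lemma is verified by computation in \magma, which is exactly the route you settle on after (rightly) judging the hand analysis of the outer-automorphism content of $X$, $Y$ and $M$ over $\bbF_4$ too delicate to carry out rigorously. The preliminary reduction $Z=MY=X(M\cap Y)Y$ and the order count giving $|X\cap Y|=5$ or $3$ are sound, but as you acknowledge, the substance of the verification is the machine computation.
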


We can now characterize factorizations of $6$-dimensional symplectic groups with a factor normalizing $\G_2(q)$.

\begin{lemma}\label{LemSymplectic12}
Let $L=\Sp_6(q)$ with $q\geqslant4$ even, let $G$ be an almost simple group with socle $L$, let $A=\Pa_1[G]$, let $H$ be a subgroup of $G$ not containing $L$ such that $H$ has a unique nonsolvable composition factor, and let $K$ be a subgroup of $G$ with $K^{(\infty)}=\G_2(q)$. Then $G=HK$ if and only if $G/L=(HL/L)(KL/L)$ and one of the following holds:
\begin{enumerate}[{\rm (a)}]
\item $H$ is a subgroup of $A$ such that $HK\supseteq\bfO_2(A)$ and $H\bfO_2(A)/\bfO_2(A)$ contains the field-extension subgroup $\Sp_2(q^2)$ of $A^{(\infty)}/\bfO_2(A)=\Sp_4(q)$;
\item $(H,K)$ tightly contains $(\Omega_6^+(q),\G_2(q))$, $(\Omega_6^-(q),\G_2(q))$ or $(\Sp_4(q),\G_2(q))$;
\item $G=\GaSp_6(4)$, and $(H,K)$ tightly contains some $(X,Y)$ as in Lemma~$\ref{LemSymplectic10}$.
\end{enumerate}
\end{lemma}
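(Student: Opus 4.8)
The plan is to prove the \emph{if} direction immediately from the earlier lemmas, and to attack the \emph{only if} direction by locating $H$ inside a $\max^-$ subgroup via the maximal factorizations of $\Sp_6(q)$. For the \emph{if} direction: part~(a) is exactly Lemma~\ref{LemSymplectic09}; for part~(b) we note that, by Lemma~\ref{LemSymplectic07}, the pairs $(\Omega_6^+(q),\G_2(q))$, $(\Omega_6^-(q),\G_2(q))$ and $(\Omega_5(q),\G_2(q))$ are factor pairs of $L$, and since $q$ is even $\Omega_5(q)\cong\Sp_4(q)$, so whenever $(H,K)$ tightly contains one of them Lemma~\ref{LemXia02} promotes the hypothesis $G/L=(HL/L)(KL/L)$ to $G=HK$; part~(c) is handled identically, with Lemma~\ref{LemSymplectic10} supplying the relevant factor pairs.

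For the \emph{only if} direction, assume $G=HK$, so $G/L=(HL/L)(KL/L)$ by Lemma~\ref{LemXia01}. The first step is to locate $H$. Since $\G_2(q)=K^{(\infty)}$ is a nonabelian simple normal subgroup of $K$ contained in $L$, the group $K\cap L$ normalizes it, and as $\G_2(q)$ is a maximal subgroup of $L=\Sp_6(q)$ (here $q\geqslant4$) while $K\not\geqslant L$, this forces $K\cap L=\G_2(q)$ and $K\leqslant\Nor_G(\G_2(q))$. Choosing a $\max^-$ subgroup $M$ of $G$ with $H\leqslant M$, we get $G=MK$; since $\Nor_G(\G_2(q))$ is also a $\max^-$ subgroup, the classification of maximal factorizations of almost simple groups with socle $\Sp_6(q)$ in~\cite{LPS1990} shows that, for $q\geqslant4$ even, $M\cap L$ must be of type $\Pa_1$, $\GO_6^+(q)$ or $\GO_6^-(q)$. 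In the first case, after a simultaneous $G$-conjugation (permissible by Lemma~\ref{LemXia04}) we may assume $M=A=\Pa_1[G]$, and Lemma~\ref{LemSymplectic09} applied to $H\leqslant A$ and $K$ yields precisely the conditions of part~(a).

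The remaining, and main, case is $M\cap L=\GO_6^\varepsilon(q)$ with $\varepsilon\in\{+,-\}$. Here $\GO_6^\varepsilon(q)=\Omega_6^\varepsilon(q).2$ with $\Omega_6^\varepsilon(q)\cong\PSL_4^\varepsilon(q)$ (as $q$ is even), and since $\Omega_6^\varepsilon(q)$ acts irreducibly on $V$ we have $\Cen_L(\Omega_6^\varepsilon(q))=1$, so $M$ is almost simple with socle $\Omega_6^\varepsilon(q)$; hence $M^{(\infty)}=\Omega_6^\varepsilon(q)$ and $H^{(\infty)}\leqslant\Omega_6^\varepsilon(q)$. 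From $H\leqslant M\leqslant G=HK$ and Dedekind's law, $M=H(M\cap K)$; and using $K\cap L=\G_2(q)$ together with the identities $\GO_6^\varepsilon(q)\cap\G_2(q)=\SL_3^\varepsilon(q).2$ and $\Omega_6^\varepsilon(q)\cap\G_2(q)=\SL_3^\varepsilon(q)$ inside $\Sp_6(q)$ (from the proof of Lemma~\ref{LemSymplectic07}), one gets $(M\cap K)^{(\infty)}=\SL_3^\varepsilon(q)$, a Levi factor of a maximal parabolic of $\PSL_4^\varepsilon(q)$. If $\Omega_6^\varepsilon(q)\leqslant H$ then $H^{(\infty)}=\Omega_6^\varepsilon(q)$ and we are in part~(b). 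Otherwise $M=H(M\cap K)$ is a factorization of the almost simple group $M$ with core-free factors, and I would feed it into the classification of factorizations of almost simple groups with socle $\PSL_4(q)$ resp.~$\PSU_4(q)$ from the earlier parts of this series (cf.~\cite{LWX-Linear,LWX-Unitary}); combined with the subgroup structure of $\PSL_4^\varepsilon(q)$ this should force either $H^{(\infty)}$ to equal the subgroup $\Omega_5(q)\cong\Sp_4(q)$ of $\Omega_6^\varepsilon(q)$, giving part~(b), or $q=4$ with $(H,K)$ tightly containing one of the pairs of Lemma~\ref{LemSymplectic10}, giving part~(c).

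I expect this last step to be the main obstacle: one must read off from the $\PSL_4^\pm(q)$ classification exactly which subgroups $H^{(\infty)}$ complete the Levi subgroup $\SL_3^\pm(q)$ to a factorization, and carefully separate the generic outcomes ($\Omega_6^\varepsilon(q)$ and $\Sp_4(q)$) from the sporadic ones, which arise only for $q=4$, involve a factor with socle $\PSL_2(16)$, and require the full group $\GaSp_6(4)$ — these being exactly the pairs of Lemma~\ref{LemSymplectic10}, verified by computer. Two subsidiary points also need care: confirming that the list of maximal factorizations of $\Sp_6(q)$ with a $\G_2(q)$-factor used above is complete for every even $q\geqslant4$ (in particular $q=4$), so that no configuration is overlooked, and checking that the $q=4$ sporadic factorizations really fail in every proper subgroup of $\GaSp_6(4)$. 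The verification that each configuration obtained does yield $G=HK$ — via Lemmas~\ref{LemXia01},~\ref{LemXia02},~\ref{LemSymplectic07},~\ref{LemSymplectic09} and~\ref{LemSymplectic10} — together with the elementary bookkeeping on the solvable quotient $G/L$, is then routine.
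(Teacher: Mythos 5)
Your overall strategy (deduce the ``if'' direction from Lemmas~\ref{LemXia01},~\ref{LemSymplectic07},~\ref{LemSymplectic09} and~\ref{LemSymplectic10}, then bound $H$ by a maximal subgroup $M$ and invoke the maximal factorizations of $\Sp_6(q)$) is essentially the paper's, but your list of possibilities for $M\cap L$ is incomplete, and this is a genuine gap. For $q\geqslant4$ even the maximal factorizations $G=MK$ with $K$ normalizing $\G_2(q)$ also admit $M\cap L=\N_2[L]=\Sp_4(q)\times\Sp_2(q)$, not only $\Pa_1$ and $\mathrm{O}_6^{\pm}(q)$. The omitted case is not subsumed by the others: a subgroup $H$ with $H\cap L=\Sp_4(q)\times\D_{2(q+1)}$ has a unique nonsolvable composition factor, lies in no conjugate of $\Pa_1[G]$ or of an $\mathrm{O}_6^{\pm}(q)$-subgroup, and does satisfy $G=HK$; it is precisely the source of the pair $(\Sp_4(q),\G_2(q))$ in part~(b), where $\Sp_4(q)$ is the direct factor of $\N_2[L]$. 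That $\Sp_4(q)$ is \emph{not} conjugate to the subgroup $\Omega_5(q)<\Omega_6^{\varepsilon}(q)$ you plan to extract from the $\mathrm{O}_6^{\varepsilon}(q)$ case: in characteristic $2$ the $5$-dimensional orthogonal module is a nonsplit extension, so $\Omega_5(q)$ fixes a $1$-space of $V$ and hence $\Omega_5(q)\leqslant\N_1[M]\leqslant\Pa_1[G]$; such configurations are already covered by part~(a), and indeed the paper disposes of the entire $\mathrm{O}_6^{\varepsilon}(q)$ case (for $q\geqslant8$) by showing it forces $H\leqslant\Pa_1[G]$ or $H\supseteq\Omega_6^{\varepsilon}(q)$, producing nothing new. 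Treating the missing $\N_2$ case also needs an ingredient you do not mention: one must exclude $H\leqslant(\Sp_2(q^2).2)\times\Sp_2(q)$, i.e.\ $H$ whose nonsolvable composition factor is $\Sp_2(q^2)$, which the paper does via the classification in~\cite{LX2019} of factorizations with a factor having at least two nonsolvable composition factors.

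Two smaller points. The paper does not run your uniform argument at $q=4$: it verifies the whole statement for $q=4$ by computer and argues theoretically only for $q\geqslant8$, which dissolves your concerns about the sporadic pairs of Lemma~\ref{LemSymplectic10} and about the completeness of the maximal factorization list at $q=4$. Also, in the ``if'' direction your justification of part~(b) for $(\Sp_4(q),\G_2(q))$ rests on Lemma~\ref{LemSymplectic07}(b), which concerns the class $\Omega_5(q)<\Omega_6^{\varepsilon}(q)$; the class actually produced by the $\N_2$ case requires a separate verification (the paper cites \cite[Lemma~4.10]{LWX-Linear}), even though both classes do in fact factorize with $\G_2(q)$.
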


\begin{proof}
By Lemmas~\ref{LemXia01},~\cite[Lemma~4.10]{LWX-Linear} and Lemmas~\ref{LemSymplectic07}--\ref{LemSymplectic10}, it suffices to prove that if $G=HK$ then either $H\leqslant\Pa_1[G]$ or one of~(b) or~(c) of the lemma holds. For $q=4$, this is directly verified by computation in \magma~\cite{BCP1997}.
Thus we assume $q\geqslant8$ in what follows. Suppose $G=HK$ with $H\nleqslant\Pa_1[G]$. Let $M$ be a maximal subgroup of $G$ containing $H$.
Then $G=MK$ with $M\neq\Pa_1[G]$, and so we derive from~\cite[Theorem~A]{LPS1990} and~\cite{LPS1996} that $M\cap L=\mathrm{O}_6^+(q)$, $\mathrm{O}_6^-(q)$ or $\N_2[L]$.

First assume that $M\cap L=\mathrm{O}_6^\varepsilon(q)$ with $\varepsilon\in\{+,-\}$. Then $M$ is an almost simple group with socle $\Omega_6^\varepsilon(q)$ and $M=H(M\cap K)$. By Lemma~\ref{LemSymplectic07}, the group $M\cap K$ has a normal subgroup $\SL_3^\varepsilon(q)$. Note the isomorphism $\Omega_6^\varepsilon(q)\cong\SL_4^\varepsilon(q)$ from the Klein correspondence.
Then by~\cite[Theorem~A]{LPS1990} and~\cite{LPS1996}, we conclude from the factorization $M=H(M\cap K)$ that either $\varepsilon=+$ and $H\leqslant\N_1[M]$, or $\varepsilon=-$ and $H\leqslant\N_1[M]$ or $\Pa_1[M]$.
However, this yields that $H\leqslant\Pa_1[G]$, a contradiction.

Next assume that $M\cap L=\N_2[L]=\Sp_4(q)\times\Sp_2(q)$. As $H$ has a unique nonsolvable composition factor,~\cite[Theorem~5.1]{LWX} shows that either $H^{(\infty)}\in\{\Sp_4(q),\Omega_6^+(q),\Omega_6^-(q)\}$ or the unique nonsolvable composition factor of $H$ is $\Sp_2(q^2)$. Obviously, $H^{(\infty)}$ is not isomorphic to $\Omega_6^+(q)$ or $\Omega_6^-(q)$ as $H\leqslant M$. If $H$ has $\Sp_2(q^2)$ as a composition factor, then $H\leqslant N$ for some subgroup $N$ of $M$ such that $N\cap L=(\Sp_2(q^2).2)\times\Sp_2(q)$. However, $G$ does not have such a factorization $G=NK$ by~\cite[Theorem~1.1]{LX2019}. Hence $H^{(\infty)}=\Sp_4(q)$, and so $(H,K)$ tightly contains $(H^{(\infty)},K^{(\infty)})=(\Sp_4(q),\G_2(q))$, as in part~(b) of the lemma.
\end{proof}

\section{Factorizations with socle $\POm_8^+(q)$ and a factor normalizing $\Omega_7(q)$}\label{SecOmegaPlus02}

As a special case of Theorem~\ref{ThmOmegaPlus}, the factorizations $G=HK$ of almost simple groups $G$ with socle $\POm_8^+(q)$ will be classified in this section for those with $H\leqslant A$ and $(A^\tau)^{(\infty)}\leqslant K\leqslant A^\tau$ for certain subgroups $A$ of $G$ (see Lemma~\ref{LemOmegaPlus42}). By~\cite[Propositions~4.1.6~and~4.1.7]{KL1990} we have $\N_1[\POm_8^+(q)]=\Omega_7(q)$. 

\begin{lemma}\label{LemOmegaPlus26}
Let $Z=\POm_8^+(q)$ with $m=4$, let $Y=\N_1[Z]=\Omega_7(q)$, and let $X=Y^\tau$. Then $Z=XY$ with $X\cong Y=\Omega_7(q)$ and $X\cap Y=\G_2(q)$.
\end{lemma}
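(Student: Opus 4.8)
The plan is to exploit the triality automorphism $\tau$ to transform the asserted factorization $Z=XY$ with $X=Y^\tau$ into the known factorization $Z=Y^{\tau^{-1}}Y$, and then read off the intersection from the maximal factorizations list. First I would recall that $Y=\N_1[Z]=\Omega_7(q)$ is a $\max^-$ subgroup of $Z=\POm_8^+(q)$, and that under a triality automorphism the three classes of subgroups $\Omega_7(q)$ (point stabilizers, and two classes of $\mathrm{O}_7$-type subgroups arising as stabilizers of the two families of maximal totally singular subspaces, all isomorphic via triality) are permuted in a $3$-cycle. In particular $X=Y^\tau$ and $Y$ lie in two different such classes, so $Z=XY$ is precisely one of the factorizations $\POm_8^+(q)=\Omega_7(q)\,\Omega_7(q)$ appearing in~\cite[Theorem~A and the $\POm_8^+$ tables]{LPS1990}.

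The key steps, in order: (1) identify $X$ and $Y$ as members of two distinct $\tau$-orbit classes of $\Omega_7(q)$ subgroups of $Z$, noting $X\cong Y\cong\Omega_7(q)$ since $\tau$ is an automorphism; (2) quote from~\cite{LPS1990} (specifically the maximal factorizations of $\POm_8^+(q)$, using the description in~\cite[5.1.x]{LPS1990}) that $Z=XY$ holds and that the intersection of two such triality-related $\Omega_7(q)$'s is $\G_2(q)$; (3) alternatively, or as a consistency check, verify the factorization by the order count $|Z|/|Y|=|X|/(X\cap Y)$, i.e.\ that $|\POm_8^+(q)|/|\Omega_7(q)|=q^6(q^4-1)(q^3-1)/\cdots$ equals $|\Omega_7(q)|/|\G_2(q)|$, which is the standard identity $|\Omega_7(q):\G_2(q)|=q^6(q^4-1)$ matching $|\POm_8^+(q):\Omega_7(q)|$. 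Since the ambient group here is exactly $Z=\POm_8^+(q)$ with no outer part, there is no quotient-level condition to check, so steps (1)--(3) suffice.

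The main obstacle I anticipate is bookkeeping about \emph{which} conjugacy classes $X$ and $Y$ fall into and ensuring that $X\cap Y$ is genuinely $\G_2(q)$ rather than, say, $\G_2(q)$ extended by a small outer piece or a different class of $\Omega_7(q)\cap\Omega_7(q)$ intersection. This is controlled by the fact that $\G_2(q)$ is the stabilizer in $\Omega_7(q)$ of a nonsingular point configuration dual under triality, and that $\tau$ cyclically permutes the three $\Omega_7(q)$-classes with pairwise intersections all equal to $\G_2(q)$; one should cite~\cite[Propositions~4.1.6 and~4.1.7]{KL1990} for $\N_1[\POm_8^+(q)]=\Omega_7(q)$ and~\cite{LPS1990} for the intersection. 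A clean way to finish is: having $Z=XY$ from~\cite{LPS1990}, compute $|X\cap Y|=|X|\,|Y|/|Z|=|\Omega_7(q)|^2/|\POm_8^+(q)|=|\G_2(q)|$, and then identify the subgroup $X\cap Y\leqslant Y=\Omega_7(q)$ of this order as $\G_2(q)$ using that $\G_2(q)$ is (up to conjugacy) the unique subgroup of $\Omega_7(q)$ of that order arising in a factorization, again by~\cite[Theorem~A]{LPS1990} applied to $\Omega_7(q)$.
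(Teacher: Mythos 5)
Your proposal is correct, but it runs in the opposite logical direction from the paper. The paper first pins down the intersection, quoting Kleidman's determination of the maximal subgroups of $\POm_8^+(q)$ (\cite[Proposition~3.1.1(iv)]{Kleidman1987}) for the fact that two triality-related copies of $\Omega_7(q)$ meet in $\G_2(q)$, and then \emph{deduces} the factorization $Z=XY$ from the order count $|X|/|X\cap Y|=|Z|/|Y|$. You instead import the factorization itself (and the intersection) wholesale from the maximal-factorization tables of~\cite{LPS1990} --- which is legitimate, since row~1 of the paper's own Table~\ref{TabMaxOmegaPlus2} records exactly this triality pair --- and treat the order count as a consistency check. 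Both routes are sound; the paper's has the advantage of needing only the subgroup-structure reference plus elementary arithmetic, while yours leans on the heavier factorization theorem but avoids having to justify the intersection independently. Two small cautions: your closing alternative, identifying $X\cap Y$ as $\G_2(q)$ purely from its order via ``uniqueness of a subgroup of that order arising in a factorization,'' is not really needed (you already get the intersection from~\cite{LPS1990} or~\cite{Kleidman1987}) and as stated is somewhat circular; and your quoted index is off --- the correct identity is $|\Omega_7(q):\G_2(q)|=q^3(q^4-1)/(2,q-1)=|\POm_8^+(q):\Omega_7(q)|$, not $q^6(q^4-1)$, so if you actually carry out the consistency check you must use the former.
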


\begin{proof}
According to~\cite[Proposition~3.1.1(iv)]{Kleidman1987}, $X\cap Y=\G_2(q)$. Therefore,
\[
\frac{|X|}{|X\cap Y|}=\frac{|\Omega_7(q)|}{|\G_2(q)|}=\frac{q^3(q^4-1)}{(2,q-1)}=\frac{(2,q-1)q^3(q^4-1)}{(4,q^4-1)}=\frac{|\POm_8^+(q)|}{|\Omega_7(q)|}=\frac{|Z|}{|Y|},
\]
and so $Z=XY$.
\end{proof}

\begin{lemma}\label{LemOmegaPlus27}
Let $Z=\POm_8^+(q)$ with $m=4$, let $Y=\N_1[Z]=\Omega_7(q)$, let $M=Y^\tau$, and let $X$ be a subgroup of $M$ isomorphic to $\Omega_6^+(q)$, $\Omega_6^-(q)$ or $\Omega_5(q)$. Then $Z=XY$ with $X=\Omega_7(q)$ and
\[
X\cap Y=
\begin{cases}
\SL_3(q)&\textup{if }X\cong\Omega_6^+(q)\\
\SU_3(q)&\textup{if }X\cong\Omega_6^-(q)\\
\SL_2(q)&\textup{if }X\cong\Omega_5(q).
\end{cases}
\]
\end{lemma}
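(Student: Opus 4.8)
The plan is to combine the triality-symmetric factorization $Z=Y^\tau Y$ already at hand from Lemma~\ref{LemOmegaPlus26} with a factorization of the socle $\Omega_7(q)=M^{(\infty)}$ of $M$ obtained from the known factorizations of $7$-dimensional orthogonal groups. Concretely, Lemma~\ref{LemOmegaPlus26} gives $Z=MY$ with $M\cap Y=\G_2(q)$, where $M=Y^\tau\cong\Omega_7(q)$. Since $X\leqslant M$, to prove $Z=XY$ it suffices, by Lemma~\ref{LemXia01} applied with normal subgroup $M^{(\infty)}$ of $M$ (or simply by counting), to show that $M=X(M\cap Y)=X\cdot\G_2(q)$ and to identify $X\cap Y=X\cap(M\cap Y)=X\cap\G_2(q)$. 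Thus the whole statement reduces to the single factorization $\Omega_7(q)=X\cdot\G_2(q)$ inside $M$, together with the computation of the intersection, after which the equality $Z=XY$ follows from the index identity $|X|/|X\cap Y|=|Z|/|Y|$ exactly as in the proof of Lemma~\ref{LemOmegaPlus26}.

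For the factorization $\Omega_7(q)=X\cdot\G_2(q)$, I would invoke the classification of maximal factorizations of $\Omega_7(q)$ from~\cite[Theorem~A]{LPS1990}: the group $\G_2(q)$ is a maximal subgroup of $\Omega_7(q)$ (for $q>2$; the relevant cases here have $q$ a prime power with the triality in play), and it appears in a maximal factorization $\Omega_7(q)=\N_1[\Omega_7(q)]\cdot\G_2(q)$ where $\N_1[\Omega_7(q)]\cap\Omega_7(q)$ is an $\mathrm{O}_6^\pm(q)$-type subgroup, and also in the factorization with the point stabilizer $\N_1^-[\Omega_7(q)]$. The subgroups $\Omega_6^+(q)$, $\Omega_6^-(q)$ and $\Omega_5(q)$ of $M\cong\Omega_7(q)$ are precisely (the derived subgroups of) the stabilizers of a nonsingular $1$-space of minus type, of plus type, and of a nondegenerate $2$-space, respectively; in each case~\cite[Theorem~A]{LPS1990} (refined by~\cite{LPS1996} for the exact intersections with $\Omega$ rather than $\mathrm{O}$) yields $\Omega_7(q)=X\cdot\G_2(q)$. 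The intersection $X\cap\G_2(q)$ is then read off from~\cite[5.1.x]{LPS1990}: a nonsingular hyperplane of $\mathrm{O}_7(q)$ meets $\G_2(q)$ in $\SL_3(q).2$ or $\SU_3(q).2$ according to type, and the $\mathrm{O}_5\times\mathrm{O}_2$ decomposition meets it in a group with socle $\SL_2(q)$; passing from $\mathrm{O}$ to $\Omega$ and using that for the relevant $q$ the groups $\SL_3^\varepsilon(q)$, $\SL_2(q)$ have no suitable index-$2$ overgroup inside $X$ (compare the argument in Lemma~\ref{LemSymplectic07}) gives $X\cap\G_2(q)=\SL_3(q)$, $\SU_3(q)$, $\SL_2(q)$ respectively. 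A uniform alternative is to verify the orders directly: $|\G_2(q)|/|\SL_3^\varepsilon(q)|=q^3(q^3-\varepsilon1)$ equals $|\Omega_7(q)|/|\Omega_6^{-\varepsilon}(q)|$, and $|\G_2(q)|/|\SL_2(q)|=q^3(q^4-1)(q^2+q+1)(q^2-q+1)/\bigl((2,q-1)\bigr)$ — wait, better to just cite the index $|\Omega_7(q)|/|\Omega_5(q)|=q^4(q^6-1)(q^2-1)/\bigl((q^2-1)(2,q-1)\bigr)$ matching $|\G_2(q)|/|\SL_2(q)|$; these index matches together with containment $X\cdot\G_2(q)\subseteq\Omega_7(q)$ force equality.

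The main obstacle I anticipate is not the existence of the factorization — that is essentially immediate from the tables of~\cite{LPS1990} once one recognizes the three subgroups $X$ as the appropriate non-parabolic maximal (or submaximal) reducible subgroups of $\Omega_7(q)$ — but rather the precise determination of $X\cap Y$ as an exact group (not merely up to a factor of $2$ or a central/outer glitch), since~\cite{LPS1990} records intersections at the level of $\mathrm{O}$ and sometimes of $\mathrm{P}\Gamma\mathrm{O}$. Handling this cleanly requires, for the $\Omega_6^\pm$ cases, the observation (as in the proof of Lemma~\ref{LemSymplectic07}, via~\cite[Tables~8.8--8.11]{BHR2013}) that $\Omega_6^\varepsilon(q)\cong\SL_4^\varepsilon(q)$ contains no copy of $\SL_3^\varepsilon(q).2$ in the relevant position, so the intersection is forced down to $\SL_3^\varepsilon(q)$; and for the $\Omega_5(q)\cong\Sp_4(q)$ case, that the reducible subgroup of $\G_2(q)$ meeting $\Omega_5(q)$ has socle exactly $\SL_2(q)$, with the intersection with $\Omega_5(q)$ (as opposed to $\mathrm{O}_5(q)$) equal to $\SL_2(q)$. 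Once the intersection is pinned down the order computation closes the proof verbatim as in Lemma~\ref{LemOmegaPlus26}. I would also note in passing that $X$ is visibly contained in $M=Y^\tau$ by construction, so no separate existence/conjugacy discussion is needed beyond citing~\cite[Propositions~4.1.6 and~4.1.7]{KL1990} for the subgroup structure of $\Omega_7(q)$.
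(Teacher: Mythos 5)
Your proposal is correct and follows essentially the same route as the paper: both reduce via Lemma~\ref{LemOmegaPlus26} to the factorization $M=X(M\cap Y)$ with $M\cap Y=\G_2(q)$, i.e.\ to $\Omega_7(q)=X\cdot\G_2(q)$ together with the exact intersections $X\cap\G_2(q)=\SL_3^{\pm}(q)$ or $\SL_2(q)$. The only difference is sourcing: the paper simply cites its companion results (Lemma~\ref{LemSymplectic07} for $q$ even and \cite[Lemmas~4.3, 4.4, 5.1]{LWX-Omega} for the general case) for that inner factorization and its intersections, whereas you rederive them from \cite{LPS1990} with the index-$2$ refinement — which is essentially how those cited lemmas are proved anyway.
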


\begin{proof}
It follows from Lemma~\ref{LemOmegaPlus26} that $Z=MY$ with $M\cap Y=\G_2(q)$. Then by~\cite[Lemmas~4.3,~4.4,~5.1]{LWX-Omega} and Lemma~\ref{LemSymplectic07}, we have $Z=MY=X(M\cap Y)Y=XY$ with
\[
X\cap Y=X\cap(M\cap Y)=X\cap\G_2(q)=\SL_2(q)=
\begin{cases}
\SL_3(q)&\textup{if }X\cong\Omega_6^+(q)\\
\SU_3(q)&\textup{if }X\cong\Omega_6^-(q)\\
\SL_2(q)&\textup{if }X\cong\Omega_5(q).
\end{cases}
\qedhere
\]
\end{proof}

\begin{lemma}\label{LemOmegaPlus40}
Let $Z=\Omega(V){:}\langle\phi\rangle=\Omega_8^+(4){:}2$ with $m=q=4$, let $Y=\GaSp_6(4)<Z$ (there are precisely three conjugacy classes of such subgroups $Y$).
\begin{enumerate}[{\rm (a)}]
\item If $X=(\SL_2(16)\times5).4<\GaO_6^+(4)<Y^\tau$, then $Z=XY$ with $X\cap Y=5$.
\item if $X=\GaL_2(16)\times3<\GaO_6^-(4)<Y^\tau$, then $Z=XY$ with $X\cap Y=3$.
\end{enumerate}
\end{lemma}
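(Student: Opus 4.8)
\textbf{Proof proposal for Lemma~\ref{LemOmegaPlus40}.}

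The plan is to reduce everything to the symplectic factorization already established in Lemma~\ref{LemSymplectic10} by using triality, exactly as Lemma~\ref{LemOmegaPlus27} reduces to Lemma~\ref{LemSymplectic07}. First I would recall from Lemma~\ref{LemOmegaPlus26} applied at the level of $Z=\Omega_8^+(4){:}2$ that $Z=Y^\tau Y$, where $Y=\GaSp_6(4)$ sits inside $Z$ as the normalizer of an $\Omega_7(4)$ (a $\max^-$ subgroup of this type) and $Y^\tau$ is its triality image; here one needs the triality automorphism $\tau$ of $\POm_8^+(4)$ to be compatible with the field automorphism $\phi$ generating the outer $2$, so that $\tau$ extends to (or at least normalizes a suitable copy of) $Z$, and that $Y^\tau\cap Y=\GaG_2(4)$. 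This last intersection is the $\langle\phi\rangle$-extension of the statement $\Omega_7(q)^\tau\cap\Omega_7(q)=\G_2(q)$ from Lemma~\ref{LemOmegaPlus26}; I would cite Kleidman's description of the $\G_2$-subgroups of $\POm_8^+$ and check that the field automorphism acts on both $\Omega_7(4)$ factors simultaneously, giving $\GaG_2(4)=\G_2(4){:}2$ as the intersection inside $Z$.

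With $Z=Y^\tau Y$ and $Y^\tau\cap Y=\GaG_2(4)=:Y_0$ in hand, the second step is purely a factorization-inside-$Y^\tau$ argument. Since $X\leqslant Y^\tau$, the product $XY$ equals $Z$ if and only if $Y^\tau=X(Y^\tau\cap Y)=XY_0$: indeed if $Y^\tau=XY_0$ then $XY\supseteq XY_0Y=Y^\tau Y=Z$, and conversely $Z=XY$ forces $Y^\tau=Y^\tau\cap XY=X(Y^\tau\cap Y)$ by Dedekind's modular law since $X\leqslant Y^\tau$. Now $Y^\tau\cong\GaSp_6(4)$ and $Y_0\cong\GaG_2(4)$ has $(\GaG_2(4))^{(\infty)}=\G_2(4)$, so the factorization $Y^\tau=XY_0$ is precisely a factorization of $\GaSp_6(4)$ with one factor normalizing $\G_2(4)$ — this is the content of Lemma~\ref{LemSymplectic10}. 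In case (a), $X=(\SL_2(16)\times5).4<\GaO_6^+(4)<Y^\tau$ matches Lemma~\ref{LemSymplectic10}(a), which gives $Y^\tau=XY_0$ with $X\cap Y_0=5$; in case (b), $X=\GaL_2(16)\times3<\GaO_6^-(4)<Y^\tau$ matches Lemma~\ref{LemSymplectic10}(b), giving $Y^\tau=XY_0$ with $X\cap Y_0=3$. In either case $Z=XY$ follows, and moreover $X\cap Y=X\cap Y^\tau\cap Y=X\cap Y_0$ equals $5$ or $3$ respectively, which is the asserted intersection.

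The one point that needs care — and the step I expect to be the main obstacle — is the very first reduction: verifying that $Z=\Omega(V){:}\langle\phi\rangle$ genuinely admits the factorization $Z=Y^\tau Y$ with $Y^\tau\cap Y=\GaG_2(4)$, i.e.\ that triality and the field automorphism $\phi$ interact correctly at the level of the almost simple group $Z$ rather than just its socle $\POm_8^+(4)$. Concretely one must pin down which of the three conjugacy classes of $\GaSp_6(4)$-subgroups of $Z$ is being used, check that $\tau$ (chosen as in the Notation section, normalizing $\langle\phi\rangle$) maps this class to a class of $\Omega_7(4)$-type subgroups extended by $\phi$, and compute the intersection index $|Y^\tau|/|Y^\tau\cap Y|=|\GaSp_6(4)|/|\GaG_2(4)|=q^3(q^4-1)$ against $|Z|/|Y|$ to confirm $Z=Y^\tau Y$. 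Since $q=4$ is small, this last verification — and indeed the whole lemma — can alternatively be confirmed directly by computation in \magma~\cite{BCP1997}, which I would do as a cross-check; but the conceptual proof via Lemma~\ref{LemOmegaPlus26} and Lemma~\ref{LemSymplectic10} is the one I would write up.
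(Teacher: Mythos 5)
Your proposal is correct and takes essentially the same route as the paper: deduce $Z=Y^\tau Y$ with $Y^\tau\cap Y=\GaG_2(4)$ from Lemma~\ref{LemOmegaPlus26} (the paper does this by passing from $Z'=(Y^\tau)'Y'$ with $(Y^\tau)'\cap Y'=\G_2(4)$ to the overgroups), and then apply Lemma~\ref{LemSymplectic10} to obtain $Y^\tau=X(Y^\tau\cap Y)$ with intersection $5$ or $3$. The extra verifications you flag (the interaction of $\tau$ with $\phi$ and the \magma\ cross-check) are sensible but the paper treats them as implicit in the cited lemmas.
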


\begin{proof}
By Lemma~\ref{LemOmegaPlus26} we have $Z'=(Y^\tau)'Y'$ with $(Y^\tau)'\cap Y'=\G_2(4)$. Hence $Z=Y^\tau Y$ with $Y^\tau\cap Y=\GaG_2(4)$.
Moreover, Lemma~\ref{LemSymplectic10} shows that $Y^\tau=X(Y^\tau\cap Y)$ with $X\cap(Y^\tau\cap Y)=5$ or $3$ according to $X=(\SL_2(16)\times5).4$ or $\GaL_2(16)\times3$, respectively. Thus $Z=XY$ with $X\cap Y=5$ or $3$, respectively, in these two cases.
\end{proof}

We are now ready to give the main result of this section.

\begin{lemma}\label{LemOmegaPlus42}
Let $L=\POm_8^+(q)$ with $q\geqslant4$, let $G$ be an almost simple group with socle $L$, let $(A^\tau)^{(\infty)}\leqslant K\leqslant A^\tau$ for some subgroup $A$ of $G$ of the form $A=S\times\calO$, where $S$ is almost simple with socle $\Omega_7(q)$ and $\calO\leqslant(2,q)$, and let $H$ be a subgroup of $A$ with a unique nonsolvable composition factor. Then $G=HK$ if and only if $G/L=(HL/L)(KL/L)$ and one of the following holds:
\begin{enumerate}[{\rm (a)}]
\item $q$ is even, $H$ is a subgroup of $M:=\Pa_1[S]\times\calO$ such that $HK\supseteq\bfO_2(M)$ and $H\bfO_2(M)/\bfO_2(M)$ contains the field-extension subgroup $\Sp_2(q^2)$ of $M^{(\infty)}/\bfO_2(M^{(\infty)})=\Sp_4(q)$;
\item $H^{(\infty)}=\Omega_6^+(q)$, $\Omega_6^-(q)$, $\Omega_5(q)$, $q^5{:}\Omega_5(q)$ ($q$ odd) or $q^4{:}\Omega_4^-(q)$ ($q$ odd);
\item $q=4$, and $(H,K)$ tightly contains some $(X,Y)$ as in Lemma~$\ref{LemOmegaPlus40}$.
\end{enumerate}
\end{lemma}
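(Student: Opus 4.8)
The plan is to reduce Lemma~\ref{LemOmegaPlus42} to the already-established Lemmas~\ref{LemOmegaPlus26}--\ref{LemOmegaPlus40}, exactly as was done in the proof of Lemma~\ref{LemSymplectic12}. First I would observe that $K$ normalizes $(A^\tau)^{(\infty)}\cong\Omega_7(q)$, so applying $\tau^{-1}$ we may assume $H$ is a subgroup of a fixed $A$ with socle $\Omega_7(q)$ and $K\leqslant A^\tau$ with $K^{(\infty)}=\Omega_7(q)$ in the guise $\N_1[L]$. The ``if'' direction is immediate: in case~(a) apply Lemma~\ref{LemOmegaPlus42}(a)-type reasoning via Lemma~\ref{LemSymplectic09} (applied to the socle-$\Sp_6(q)$ structure of $A$, using the triality-transported identification $\Pa_1[S]\leftrightarrow$ the appropriate parabolic); in case~(b) invoke Lemmas~\ref{LemOmegaPlus27} and the odd-characteristic analogues of Lemmas~\ref{LemOmegaPlus28}, \ref{LemOmegaPlus42} to get $(H^{(\infty)},K^{(\infty)})$ a factor pair, then conclude by Lemma~\ref{LemXia02}; in case~(c) use Lemma~\ref{LemOmegaPlus40} together with Lemma~\ref{LemXia02}.

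For the ``only if'' direction, assume $G=HK$. The small case $q=4$ is settled directly by \magma~\cite{BCP1997}, yielding case~(c) (or a subcase of (a)/(b)), so assume $q\geqslant5$ (equivalently $q\geqslant7$ if $q$ is odd, $q\geqslant8$ if $q$ is even). Since $A$ is (up to the central factor $\calO$) almost simple with socle $\Omega_7(q)$, the factorization $A=H(A\cap K)$ means $H$ lies in a $\max^-$ subgroup $N$ of $A$ with $A=N(A\cap K)$. Here $A\cap K$ contains $(A\cap K)^{(\infty)}$, and from Lemma~\ref{LemOmegaPlus26} we know $(A^\tau)^{(\infty)}\cap A^{(\infty)}=\G_2(q)$ (after triality, this is the intersection $\Omega_7(q)\cap\Omega_7(q)^\tau$ inside $\POm_8^+(q)$), so $(A\cap K)^{(\infty)}\supseteq\G_2(q)$. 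Now I would apply the classification of factorizations of almost simple groups with socle $\Omega_7(q)$ and one factor normalizing $\G_2(q)$ — this is precisely the content of~\cite[Theorem~A]{LPS1990} and~\cite{LPS1996} together with the $\G_2(q)$-factor analysis (or, more directly, the results of~\cite{LWX-Omega} cited in Lemma~\ref{LemOmegaPlus27}). The possible $N\cap\Omega_7(q)$ are then $\mathrm{O}_6^+(q)$, $\mathrm{O}_6^-(q)$, $\mathrm{O}_5(q)$, the parabolic $\Pa_1[\Omega_7(q)]=q^5{:}\Omega_5(q)$, the parabolic $q^4{:}(\GL_1(q)\times\Omega_4^-(q))$ type subgroup, or (for even $q$) a $\G_2(q)$-type subgroup; crucially, $N$ must itself factorize with the $\G_2(q)$-normalizing factor.

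The bulk of the argument is then a case analysis on $N$. If $N\cap\Omega_7(q)\in\{\mathrm{O}_6^\pm(q),\mathrm{O}_5(q),q^5{:}\Omega_5(q),q^4{:}\Omega_4^-(q)\}$, then since $H$ has a unique nonsolvable composition factor and $H\leqslant N$, I would invoke~\cite[Theorem~5.1]{LWX} (as in Lemma~\ref{LemSymplectic12}) to pin down $H^{(\infty)}$; for the reducible $N$ one argues that $H^{(\infty)}$ must be the full $\Omega_6^\pm(q)$, $\Omega_5(q)$, $q^5{:}\Omega_5(q)$ or $q^4{:}\Omega_4^-(q)$ (ruling out the field-extension subgroup $\Sp_2(q^2)$-type intermediate factors via~\cite[Theorem~1.1]{LX2019}), landing in case~(b). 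The remaining possibility is that $N$ is (the normalizer of) a $\Sp_6(q)$ inside $\Omega_7(q)$ for even $q$ — but $\Sp_6(q)=\Omega_7(q)$ as abstract groups when $q$ is even, so this is not a proper subgroup; instead the relevant even-$q$ phenomenon is that $A$ itself, being $\Sp_6(q)$-type, has a parabolic $\Pa_1$, and if $H$ lands in $\Pa_1[S]\times\calO=M$ one gets case~(a) by Lemma~\ref{LemSymplectic09} transported through triality. The main obstacle I anticipate is bookkeeping the triality identification cleanly — making sure that ``$\Pa_1[S]$'' on the $\Omega_7(q)$ side corresponds under $\tau$ to the correct parabolic of the ambient $\POm_8^+(q)$, and that the field-extension condition ``$H\bfO_2(M)/\bfO_2(M)\geqslant\Sp_2(q^2)$'' is exactly the transitivity condition forced by $A=H(A\cap K)$ via Hering's theorem as in the proof of Lemma~\ref{LemSymplectic09}. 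Once the triality dictionary is fixed, the even-$q$ and odd-$q$ subcases run in parallel to Lemmas~\ref{LemSymplectic09} and~\ref{LemSymplectic12}, and no genuinely new factorization can appear beyond those recorded in Lemmas~\ref{LemOmegaPlus26}--\ref{LemOmegaPlus40}.
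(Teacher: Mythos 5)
Your proposal is correct and follows essentially the same route as the paper: reduce $G=HK$ to the factorization $A=H(A\cap K)$ with $(A\cap K)^{(\infty)}=\G_2(q)$ via Lemma~\ref{LemOmegaPlus26}, dispose of $q=4$ by computation, cite the $\Omega_7(q)$ classification for odd $q$, and for even $q$ run the $\Sp_6(q)$-with-a-$\G_2(q)$-factor analysis (Hering's theorem for the $\Pa_1$ case as in Lemma~\ref{LemSymplectic09}). The only real difference is that you inline the case analysis that the paper packages once and for all as Lemma~\ref{LemSymplectic12} and then simply invokes, so your references in the ``if'' direction (e.g.\ to analogues of Lemmas~\ref{LemOmegaPlus28} and~\ref{LemOmegaPlus42} itself) should be replaced by direct appeals to Lemmas~\ref{LemSymplectic09}, \ref{LemSymplectic12} and \cite[Theorem~1.2]{LWX-Omega}.
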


\begin{proof}
Since $K^{(\infty)}=(A^{(\infty)})^\tau\cong\Omega_7(q)$, Lemma~\ref{LemOmegaPlus26} shows that $L=A^{(\infty)}K^{(\infty)}$ with $A^{(\infty)}\cap K^{(\infty)}=\G_2(q)$. Since $H\leqslant A$, we have $G/L=(HL/L)(KL/L)=(AL/L)(KL/L)$. It follows that $G=AK$ with $(A\cap K)^{(\infty)}=\G_2(q)$.
Now $G=HK$ if and only if $A=H(A\cap K)$.

First assume that $q$ is odd. In this case, $A$ is an almost simple group with socle $\Omega_7(q)$. We conclude from~\cite[Theorem~1.2]{LWX-Omega} that $A=H(A\cap K)$ if and only if $H^{(\infty)}=\Omega_6^+(q)$, $\Omega_6^-(q)$, $\Omega_5(q)$, $q^5{:}\Omega_5(q)$ or $q^4{:}\Omega_4^-(q)$. Hence $G=HK$ if and only if part~(b) of the lemma holds.

Next assume that $q$ is even. For $q=4$, computation in \magma~\cite{BCP1997} shows that $G=HK$ if and only if one of parts~(a)--(c) of the lemma holds.
Thus assume $q\geqslant8$. If part~(a) of the lemma appears, then $HK\supseteq\bfO_2(M)\supseteq\calO$ and Lemma~\ref{LemSymplectic12} implies $A/\calO=(H\calO/\calO)((A\cap K)\calO/\calO)$, which in conjunction with Lemma~\ref{LemXia01} leads to $A=H(A\cap K)$ and hence $G=HK$. If part~(b) of the lemma appears, then we derive from Lemma~\ref{LemSymplectic12} that $H(A\cap K)\supseteq H^{(\infty)}(A\cap K)^{(\infty)}=A^{(\infty)}$ and so $HK\supseteq H(A\cap K)K^{(\infty)}\supseteq A^{(\infty)}K^{(\infty)}=L$, which together with the condition $G/L=(HL/L)(KL/L)$ yields $G=HK$ by Lemma~\ref{LemXia01}.
Conversely, if $G=HK$, then $A=H(A\cap K)$ and so $A/\calO=(H\calO/\calO)((A\cap K)\calO/\calO)$, whence by Lemma~\ref{LemSymplectic12}, either~(a) or~(b) of the lemma appears. This shows that $G=HK$ if and only if~(a) or~(b) of the lemma holds, completing the proof.
\end{proof}

\begin{remark}
From the proof of Lemma~\ref{LemOmegaPlus42} we see that the intersection $H^{(\infty)}\cap K^{(\infty)}$ for the group $H^{(\infty)}$ as described in part~(b) lies in the table:
\[
\begin{array}{|c|ccccc|}
\hline
H^{(\infty)} & \Omega_6^+(q) & \Omega_6^-(q) & \Omega_5(q) & q^5{:}\Omega_5(q) & q^4{:}\Omega_4^-(q) \\
H^{(\infty)}\cap K^{(\infty)} & \SL_3(q) & \SU_3(q) & \SL_2(q) & [q^5]{:}\SL_2(q) & [q^3] \\
\hline
\end{array}
\]
\end{remark}

\section{Infinite families of $(X,Y)$ in Table~\ref{TabOmegaPlus}}

In this section, we construct the infinite families of pairs $(X,Y)$ in Table~\ref{TabOmegaPlus}, namely, the pairs $(X,Y)$ in rows~1--14 of Table~\ref{TabOmegaPlus}. Recall the notation introduced in Section~\ref{SecOmegaPlus01}.

\begin{lemma}\label{LemOmegaPlus01}
Let $G=\Omega(V)=\Omega_{2m}^+(q)$, let $M=R{:}T$, and let $K=G_{e_1+f_1}$. Then the following statements hold:
\begin{enumerate}[{\rm (a)}]
\item the kernel of $M\cap K$ acting on $U$ is $R\cap K=q^{(m-1)(m-2)/2}$;
\item the induced group by the action of $M\cap K$ on $U$ is $\SL(U)_{U_1,e_1+U_1}=q^{m-1}{:}\SL_{m-1}(q)$;
\item $T\cap K=\SL_{m-1}(q)$;
\item if $H=R{:}S$ with $S\leqslant T$, then $H\cap K=(R\cap K).S_{U_1,e_1+U_1}$.
\end{enumerate}
\end{lemma}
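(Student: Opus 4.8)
The plan is to work inside the natural module $V=\bbF_q^{2m}$ with the fixed basis $e_1,f_1,\dots,e_m,f_m$, and to analyze the action of $M\cap K$ and $H\cap K$ on the totally singular subspace $U=\langle e_1,\dots,e_m\rangle$ together with the flag data singled out by $K=G_{e_1+f_1}$. Recall $M=R{:}T=\Pa_m[G]$, where $R$ is the kernel of $M$ on $U$ and $T=\SL_m(q)=\SL(U)$ stabilizes both $U$ and $W=\langle f_1,\dots,f_m\rangle$. The key geometric observation is that the nonsingular vector $e_1+f_1$ has the property that its stabilizer in $\SL(U)$ (acting on $W$ via the transpose-inverse, as recorded in the excerpt) fixes the hyperplane $U_1=\langle e_2,\dots,e_m\rangle$ of $U$ and the coset $e_1+U_1$; more precisely, an element $t\in T$ fixes $e_1+f_1$ if and only if it fixes $e_1+f_1$ coordinatewise in the $U\oplus W$ decomposition, which forces $t$ to stabilize $U_1$ and to act trivially on $U/U_1$, i.e. $e_1\mapsto e_1 \bmod U_1$. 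That is precisely the parabolic $\SL(U)_{U_1,e_1+U_1}=q^{m-1}{:}\SL_{m-1}(q)$ appearing in~(b).

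First I would prove~(c): since $T=\SL(U)$ and $\gamma$ (or rather the induced transpose-inverse action) pins down how $T$ acts on $W$, an element of $T$ centralizing $e_1+f_1$ must fix $e_1$ in its action on $W$ modulo the remaining coordinates — working this out shows it fixes $e_1$ and $f_1$ outright, hence stabilizes $U_1=\langle e_1\rangle^{\perp}\cap U$ (using $\langle e_1,\dots,e_m, f_1\rangle$ and perpendicularity) and acts trivially on the line $\langle e_1\rangle = U/U_1$; thus $T\cap K\leqslant\SL(U_1)=\SL_{m-1}(q)$, and conversely every element of $\SL(U_1)$ extended by $e_1\mapsto e_1$, $f_1\mapsto f_1$ lies in $\Omega$ and fixes $e_1+f_1$. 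For~(a), I would compute $R\cap K$ directly from the explicit form of $R$: $R$ consists of the Siegel transformations associated to $\wedge^2 U^{*}$ (equivalently, $R\cong\bigwedge^2 U$ as a $T$-module), an element of which fixes $e_1+f_1$ iff the corresponding alternating form kills $e_1$, i.e. lies in $\bigwedge^2 U_1$; this gives $R\cap K=q^{\binom{m-1}{2}}=q^{(m-1)(m-2)/2}$, and one checks this is exactly the kernel of $M\cap K$ on $U$ (anything in $M\cap K$ acting trivially on $U$ lies in $R$, hence in $R\cap K$).

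For~(b), having~(a) and~(c), I would note $|M\cap K|/|R\cap K|$ equals the order of the group induced on $U$, and that this induced group sits inside $\SL(U)_{U_1,e_1+U_1}$ (since $M\cap K$ stabilizes $U_1$ and the coset $e_1+U_1$ — the latter because the $W$-action is dual). An order count, using that the full preimage of $\SL(U)_{U_1,e_1+U_1}=q^{m-1}{:}\SL_{m-1}(q)$ in $M$ meets $K$ in the expected subgroup, or alternatively a direct surjectivity argument (lift each element of $q^{m-1}{:}\SL_{m-1}(q)$ to an element of $T$ and correct it by an element of $R$ to fix $e_1+f_1$), gives the claimed equality. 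Finally~(d) is formal: if $H=R{:}S$ with $S\leqslant T$, then $H\cap K=(R{:}S)\cap K$; an element $rs$ ($r\in R$, $s\in S$) fixes $e_1+f_1$ iff, projecting to the action on $U$, $s\in S_{U_1,e_1+U_1}$ and then $r$ is constrained to $R\cap K$ translated appropriately — writing this out yields $H\cap K=(R\cap K).S_{U_1,e_1+U_1}$, using that $R\cap K$ is normalized by $S_{U_1,e_1+U_1}$.

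The main obstacle I anticipate is~(b): identifying the induced group on $U$ \emph{exactly} as the parabolic $q^{m-1}{:}\SL_{m-1}(q)$ rather than merely a subgroup of it. The inclusion in $\SL(U)_{U_1,e_1+U_1}$ is easy, but surjectivity requires care — one must check that every element of the unipotent radical $q^{m-1}$ (the transvections $e_1\mapsto e_1+\sum c_ie_i$) can be realized by an element of $M$ that fixes $e_1+f_1$, which amounts to solving for a compensating Siegel transformation in $R$; the constraint $Q(e_1+f_1)$-preservation translates into a linear condition that one must verify is always solvable. The dual nature of the $T$-action on $W$ (transpose-inverse) is exactly what makes $e_1+U_1$ — and not some other coset — the invariant one, so keeping the $U$- and $W$-coordinates carefully distinguished throughout is the delicate bookkeeping point.
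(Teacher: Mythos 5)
Your proposal is correct and follows essentially the same route as the paper: identify $U_1=U\cap(e_1+f_1)^{\perp}$ and the coset $e_1+U_1$ as the invariant flag data for $M\cap K$ on $U$, compute $R\cap K=q^{(m-1)(m-2)/2}$, establish $(M\cap K)^U=\SL(U)_{U_1,e_1+U_1}$ by containment plus a surjectivity/order argument, and deduce (d) from the fact that the fiber over each element of $S_{U_1,e_1+U_1}$ is a coset of $R\cap K$. The only cosmetic differences are that the paper computes $R\cap K$ by reducing to a pointwise stabilizer in $\Omega(\langle e_2,f_2,\dots,e_m,f_m\rangle_{\bbF_q})$ rather than via the $\bigwedge^2U$ description of $R$, and settles surjectivity in (b) by the unconditional bound $|M\cap K|\geqslant|M||K|/|G|$ rather than by explicitly solving for a compensating element of $R$ — both of which your sketch also covers.
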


\begin{proof}
We first calculate $R\cap K$, the kernel of $M\cap K$ acting on $U$. For each $r\in R\cap K$, since $r$ fixes $e_1$ and $e_1+f_1$, we deduce that $r$ fixes $\langle e_1,e_1+f_1\rangle_{\bbF_q}=\langle e_1,f_1\rangle_{\bbF_q}$ pointwise. Hence $R\cap K$ is isomorphic to the pointwise stabilizer of $U_1$ in $\Omega(\langle e_2,f_2,\dots,e_m,f_m\rangle_{\bbF_q})$, and so $R\cap K=q^{(m-1)(m-2)/2}$.

As $K$ fixes $e_1+f_1$, it stabilizes $(e_1+f_1)^\perp$. Thus $M\cap K$ stabilizes $U\cap(e_1+f_1)^\perp=U_1$.
For arbitrary $h\in M\cap K$, write $e_1^h=\zeta e_1+e$ with $\zeta\in\bbF_q$ and $e\in U_1$. Then
\[
\zeta=\beta(\zeta e_1+e,e_1+f_1)=\beta(e_1^h,(e_1+f_1)^h)=\beta(e_1,e_1+f_1)=1.
\]
This means that $M\cap K$ stabilizes $e_1+U_1$. Hence the induced group of $M\cap K$ on $U$ is contained in $\SL(U)_{U_1,e_1+U_1}$, that is, $(M\cap K)^U\leqslant\SL(U)_{U_1,e_1+U_1}$. Now
\[
(M\cap K)/(R\cap K)\cong(M\cap K)^U\leqslant\SL(U)_{U_1,e_1+U_1}=q^{m-1}{:}\SL_{m-1}(q),
\]
while
\begin{align*}
|M\cap K|\geqslant\frac{|M||K|}{|G|}&=\frac{|q^{m(m-1)/2}{:}\SL_m(q)||\Omega_{2m-1}(q)|}{|\Omega_{2m}^+(q)|}\\
&=q^{(m-1)(m-2)/2}|\SL_{m-1}(q)|=|R\cap K||\SL(U)_{U_1,e_1+U_1}|.
\end{align*}
Thus $(M\cap K)^U=\SL(U)_{U_1,e_1+U_1}=q^{m-1}{:}\SL_{m-1}(q)$.

For each $t\in T\cap K$, we have $e_1^t\in U^t=U$ and $f_1^t\in W^t=W$, and then it follows from
\[
e_1^t+f_1^t=(e_1+f_1)^t=e_1+f_1
\]
that $e_1^t=e_1$ and $f_1^t=f_1$. Hence we conclude that
\[
T\cap K=T_{e_1,f_1}=T_{e_1,U_1}=\SL_{m-1}(q).
\]

Finally, let $H=R{:}S$ with $S\leqslant T$. Since $(M\cap K)^U=\SL(U)_{U_1,e_1+U_1}=(M_{U_1,e_1+U_1})^U$, we obtain $M_{U_1,e_1+U_1}=(M\cap K)R$ as $R$ is the kernel of $M$ acting on $U$. This implies that $H_{U_1,e_1+U_1}=(H\cap K)R$, and so
\[
(H\cap K)/(R\cap K)\cong(H\cap K)R/R=H_{U_1,e_1+U_1}/R=S_{U_1,e_1+U_1}R/R\cong S_{U_1,e_1+U_1}.
\]
Therefore, $H\cap K=(R\cap K).S_{U_1,e_1+U_1}$.
\end{proof}

The next five lemmas give the pairs $(X,Y)$ in row~1 of Table~\ref{TabOmegaPlus}. 

\begin{lemma}\label{LemOmegaPlus02}
Let $G=\Omega(V)=\Omega_{2m}^+(q)$, let $H=R{:}S$ with $S\leqslant\SL_a(q^b)\leqslant T$, where $\SL_a(q^b)$ is a field-extension subgroup of $T$ and $S=\SL_a(q^b)$ ($m=ab$), $\Sp_a(q^b)'$ ($m=ab$), $\G_2(q^b)'$ ($m=6b$, $q$ even) or $\SL_2(13)$ ($m=6$, $q=3$), let $K=G_{e_1+f_1}$, and let $(Z,X,Y)=(\overline{G},\overline{H},\overline{K})$. Then
\[
H\cap K=
\begin{cases}
(q^{(m-1)(m-2)/2}.q^{m-b}){:}\SL_{a-1}(q^b)&\textup{if }S=\SL_a(q^b)\\
2^{3+2}{:}\Sp_2(2)&\textup{if }S=\Sp_a(q^b)'\textup{ with }(a,b,q)=(4,1,2)\\
(q^{(m-1)(m-2)/2}.[q^{m-b}]){:}\Sp_{a-2}(q^b)&\textup{if }S=\Sp_a(q^b)'\textup{ with }(a,b,q)\neq(4,1,2)\\
2^{10+2+2}{:}\SL_2(2)&\textup{if }S=\G_2(q^b)'\textup{ with }(m,q)=(6,2)\\
(q^{(m-1)(m-2)/2}.q^{2b+3b}){:}\SL_2(q^b)&\textup{if }S=\G_2(q^b)'\textup{ with }(m,q)\neq(6,2),\\
3^{10+1}&\textup{if }S=\SL_2(13)\textup{ with }(m,q)=(6,3),
\end{cases}
\]
and $Z=XY$ with $Z=\POm_{2m}^+(q)$, $X=\lefthat(q^{m(m-1)/2}{:}S)$ and $Y\cong K=\Omega_{2m-1}(q)$.
\end{lemma}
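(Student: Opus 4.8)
The plan is to reduce everything to Lemma~\ref{LemOmegaPlus01} together with one orbit computation. Since $H=R{:}S$ with $S\leqslant T$, parts~(a) and~(d) of Lemma~\ref{LemOmegaPlus01} give at once that $R\cap K=q^{(m-1)(m-2)/2}$ and $H\cap K=(R\cap K).S_{U_1,e_1+U_1}$, so the only real content is the structure of $S_{U_1,e_1+U_1}$ in each of the four cases for $S$. Writing $\ell$ for the $\bbF_q$-linear functional $v\mapsto\beta(v,e_1+f_1)$ on $U$, one has $U_1=\ker\ell$ and $\ell(e_1)=1$, and a direct check (if $s$ stabilises $U_1=\ker\ell$ then $\ell\circ s=\kappa\ell$ for some $\kappa\in\bbF_q^*$, and $s$ stabilises $e_1+U_1$ exactly when $\kappa=1$) shows that $S_{U_1,e_1+U_1}$ is the stabiliser in $S$ of the point $\ell$ of the dual space $U^{*}$.

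I would then transport this to the $\bbF_{q^b}$-structure under which $S\leqslant\SL_a(q^b)$ acts $\bbF_{q^b}$-linearly on $U\cong\bbF_{q^b}^{a}$; here the field-extension subgroup is set up so that $U_1$ contains an $\bbF_{q^b}$-hyperplane of $U$, which forces $\ell=\Tr_{\bbF_{q^b}/\bbF_q}\circ\,\ell'$ for some nonzero $\bbF_{q^b}$-linear functional $\ell'$ on $U$. Since a nonzero $\bbF_{q^b}$-linear functional has image the whole of $\bbF_{q^b}$, on which $\Tr$ is not identically zero, for $\bbF_{q^b}$-linear $s$ we get $\ell\circ s=\ell$ if and only if $\ell'\circ s=\ell'$. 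Hence $S_{U_1,e_1+U_1}$ is the stabiliser in $S$ of a point of the $\bbF_{q^b}$-dual module of $S$, equivalently --- using the invariant symplectic form in the $\Sp$ and $\G_2$ cases --- of a nonzero vector of the natural $\bbF_{q^b}$-module.

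For $S=\SL_a(q^b)$, $S=\Sp_a(q^b)$, and $S=\G_2(q^b)$ with $q$ even and $q^b>2$, the group $S$ is transitive on the nonzero vectors of its natural module, so $S_{U_1,e_1+U_1}$ is the generic point stabiliser, namely $q^{m-b}{:}\SL_{a-1}(q^b)$, $[q^{m-b}]{:}\Sp_{a-2}(q^b)$ and $q^{2b+3b}{:}\SL_2(q^b)$ respectively (using $b(a-1)=m-b$, and $m=6b$, $5b=2b+3b$ for $\G_2$); combining with $R\cap K$ as in Lemma~\ref{LemOmegaPlus01}(d) gives the displayed forms of $H\cap K$. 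The remaining three configurations $(a,b,q)=(4,1,2)$ with $S=\Sp_4(2)'=\A_6$, $(m,q)=(6,2)$ with $S=\G_2(2)'=\PSU_3(3)$, and $(m,q)=(6,3)$ with $S=\SL_2(13)$ involve only bounded groups, and there I would compute $S_{U_1,e_1+U_1}$ directly in \magma, obtaining $2^2{:}\Sp_2(2)$, $2^{2+2}{:}\SL_2(2)$ and a group of order $3$ respectively, hence the stated $H\cap K$. Finally, in every case $|S:S_{U_1,e_1+U_1}|=q^m-1$, so $|H:H\cap K|=(|R|/|R\cap K|)(q^m-1)=q^{m-1}(q^m-1)=|G:K|$, whence $G=HK$; applying $\overline{\phantom{x}}$ and noting that $-1\notin K$ when $q$ is odd while $\overline{\phantom{x}}$ is trivial when $q$ is even, we conclude $Z=\overline{G}=XY$ with $Z=\POm_{2m}^+(q)$, $X=\overline{H}=\lefthat(q^{m(m-1)/2}{:}S)$ and $Y=\overline{K}\cong K=\Omega_{2m-1}(q)$.

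The step I expect to be the main obstacle is the descent to the $\bbF_{q^b}$-structure: making precise how the field-extension subgroup $\SL_a(q^b)$ sits relative to the standard basis $e_1,\dots,e_m$ so that $\ell$ is genuinely the trace of an $\bbF_{q^b}$-linear functional, and then reading off the correct non-abelian structure of the unipotent radicals of the vector stabilisers in $\Sp_a(q^b)$ and $\G_2(q^b)$. The three exceptional small cases are a secondary obstacle: there $\A_6\neq\Sp_4(2)$ and $\PSU_3(3)\neq\G_2(2)$, so the generic stabiliser formula fails and a direct computation is needed.
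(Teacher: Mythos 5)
Your proposal is correct and follows the same skeleton as the paper's proof: reduce via Lemma~\ref{LemOmegaPlus01}(a),(d) to the computation of $S_{U_1,e_1+U_1}$, then conclude $G=HK$ by the order count $|H:H\cap K|=q^{m-1}(q^m-1)=|G:K|$ and pass to the quotient. The one place where you diverge is the middle step: the paper simply cites~\cite[Lemmas~4.1, 4.2, 4.4 and~5.5]{LWX-Linear} for the structure of $S_{U_1,e_1+U_1}$, whereas you derive it directly by identifying $S_{U_1,e_1+U_1}$ with the stabilizer of the functional $\ell=\beta(\,\cdot\,,e_1+f_1)|_U$, writing $\ell=\Tr_{\bbF_{q^b}/\bbF_q}\circ\ell'$ for a nonzero $\bbF_{q^b}$-functional $\ell'$ (which is automatic, since $\ell'\mapsto\Tr\circ\ell'$ is a bijection between the two dual spaces, so uniqueness gives $S_\ell=S_{\ell'}$ for $\bbF_{q^b}$-linear $S$), and then invoking transitivity of $S$ on nonzero vectors of the natural $\bbF_{q^b}$-module together with the known point stabilizers in $\SL_a(q^b)$, $\Sp_a(q^b)$ and $\G_2(q^b)$. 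This argument is sound and has the merit of making the lemma self-contained up to the three bounded exceptional configurations ($\Sp_4(2)'$, $\G_2(2)'$, $\SL_2(13)$), which you correctly flag as needing separate (computational) treatment since the generic stabilizer formula does not apply to the derived subgroups; the cost is that one must verify the unipotent-radical structure of the point stabilizers by hand rather than quoting it. The concluding remark about $-1\notin K$ is unnecessary (a homomorphic image of a factorization is a factorization), but harmless.
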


\begin{proof}
It is clear that $Z=\POm_{2m}^+(q)$, $X=\lefthat(q^{m(m-1)/2}{:}S)$ and $Y\cong K=\Omega_{2m-1}(q)$.
From Lemma~\ref{LemOmegaPlus01} we obtain $H\cap K=q^{(m-1)(m-2)/2}.S_{U_1,e_1+U_1}$.
By~\cite[Lemmas~4.1,~4.2,~4.4 and~5.5]{LWX-Linear} we have
\[
S_{U_1,e_1+U_1}=
\begin{cases}
q^{m-b}{:}\SL_{a-1}(q^b)&\textup{if }S=\SL_a(q^b)\\
2^2{:}\Sp_2(2)&\textup{if }S=\Sp_a(q^b)'\textup{ with }(a,b,q)=(4,1,2)\\
[q^{m-b}]{:}\Sp_{a-2}(q^b)&\textup{if }S=\Sp_a(q^b)'\textup{ with }(a,b,q)\neq(4,1,2)\\
2^{2+2}{:}\SL_2(2)&\textup{if }S=\G_2(q^b)'\textup{ with }(m,q)=(6,2)\\
q^{2b+3b}{:}\SL_2(q^b)&\textup{if }S=\G_2(q^b)'\textup{ with }(m,q)\neq(6,2)\\
3&\textup{if }S=\SL_2(13)\textup{ with }(m,q)=(6,3).
\end{cases}
\]
This proves the conclusion of this lemma on $H\cap K$ and implies that
\[
\frac{|H|}{|H\cap K|}=\frac{|q^{m(m-1)/2}{:}S|}{|q^{(m-1)(m-2)/2}.S_{U_1,e_1+U_1}|}=q^{m-1}(q^m-1)=\frac{|\Omega_{2m}^+(q)|}{|\Omega_{2m-1}(q)|}=\frac{|G|}{|K|}.
\]
Thus $G=HK$, and so $Z=\overline{G}=\overline{H}\,\overline{K}=XY$.
\end{proof}

\begin{lemma}\label{LemOmegaPlus03}
Let $G=\Omega(V)=\Omega_{2m}^+(q)$, let $H=T$, let $K=G_{e_1+f_1}$, and let $(Z,X,Y)=(\overline{G},\overline{H},\overline{K})$.
Then $H\cap K=\SL_{m-1}(q)$, and $Z=XY$ with $Z=\POm_{2m}^+(q)$, $X=\lefthat\SL_m(q)$ and $Y\cong K=\Omega_{2m-1}(q)$.
\end{lemma}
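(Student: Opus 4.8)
The plan is to follow the scheme of the proof of Lemma~\ref{LemOmegaPlus02}, which becomes shorter here because $H=T\cong\SL_m(q)$ has no unipotent part, so that the only structural input needed is Lemma~\ref{LemOmegaPlus01}(c) together with an order count.

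First I would pin down $Z$, $X$ and $Y$. Since $T$ stabilizes the totally singular $m$-subspace $U=\langle e_1,\dots,e_m\rangle_{\bbF_q}$ (and $W$), its image $\overline{T}$ lies in the subgroup of $\POm_{2m}^+(q)$ of type $\mathrm{GL}_m(q)$, and reading off the scalars one gets $X=\overline{H}=\lefthat\SL_m(q)$. The vector $e_1+f_1$ is nonsingular, so $K=G_{e_1+f_1}=\Omega(V)_{e_1+f_1}$ is the stabilizer of a nonsingular point, which is $\Omega_{2m-1}(q)=\N_1[L]$; moreover $-I_V$ (relevant only for $q$ odd) sends $e_1+f_1$ to $-(e_1+f_1)\neq e_1+f_1$, so $K$ meets the scalars of $\mathrm{O}(V)$ trivially, whence $Y=\overline{K}\cong K=\Omega_{2m-1}(q)$. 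Consequently $Z=\langle L,X,Y\rangle=\overline{G}=\POm_{2m}^+(q)$.

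Next I would compute the intersection: by Lemma~\ref{LemOmegaPlus01}(c) we have $H\cap K=T\cap K=\SL_{m-1}(q)$, giving the first assertion. Finally I would verify the factorization exactly as at the end of the proof of Lemma~\ref{LemOmegaPlus02}, namely by the order count
\[
\frac{|H|}{|H\cap K|}=\frac{|\SL_m(q)|}{|\SL_{m-1}(q)|}=q^{m-1}(q^m-1)=\frac{|\Omega_{2m}^+(q)|}{|\Omega_{2m-1}(q)|}=\frac{|G|}{|K|},
\]
so that $|H\cap K|=|H||K|/|G|$ forces $G=HK$; passing to the quotient modulo scalars then yields $Z=\overline{G}=\overline{H}\,\overline{K}=XY$.

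There is essentially no genuine obstacle in this lemma: the substantive input, Lemma~\ref{LemOmegaPlus01}(c), is already available, and the only point requiring a little care is the bookkeeping of scalars used to identify $\overline{H}$ with $\lefthat\SL_m(q)$ and to confirm $\overline{K}\cong K$; everything else reduces to the displayed arithmetic of group orders.
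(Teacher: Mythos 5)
Your proposal is correct and follows essentially the same route as the paper: identify $Z$, $X$, $Y$, quote Lemma~\ref{LemOmegaPlus01}(c) for $H\cap K=\SL_{m-1}(q)$, and conclude $G=HK$ by the order count $|H|/|H\cap K|=q^{m-1}(q^m-1)=|G|/|K|$ before passing to the quotient modulo scalars. The extra bookkeeping you supply for the scalars (the paper simply states these identifications as clear) is accurate and harmless.
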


\begin{proof}
It is clear that $Z=\POm_{2m}^+(q)$, $X=\lefthat\SL_m(q)$ and $Y\cong K=\Omega_{2m-1}(q)$.
The conclusion $H\cap K=\SL_{m-1}(q)$ is the statement of Lemma~\ref{LemOmegaPlus01}(c). This implies that
\[
\frac{|H|}{|H\cap K|}=\frac{|\SL_m(q)|}{|\SL_{m-1}(q)|}=q^{m-1}(q^m-1)=\frac{|\Omega_{2m}^+(q)|}{|\Omega_{2m-1}(q)|}=\frac{|G|}{|K|}.
\]
Consequently, $G=HK$. Thus $Z=\overline{G}=\overline{H}\,\overline{K}=XY$.
\end{proof}



\begin{lemma}\label{LemOmegaPlus07}
Let $G=\Omega(V)=\Omega_{2m}^+(q)$ with even $m$, let $H=\SU(V_\sharp)$, let $K=G_{e_1+f_1}$, and let $(Z,X,Y)=(\overline{G},\overline{H},\overline{K})$.
Then $H\cap K=\SU_{m-1}(q)$, and $Z=XY$ with $Z=\POm_{2m}^+(q)$, $X=\lefthat\SU_m(q)$ and $Y\cong K=\Omega_{2m-1}(q)$.
\end{lemma}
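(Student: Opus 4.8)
The plan is to mirror the proof of Lemma~\ref{LemOmegaPlus02} and Lemma~\ref{LemOmegaPlus03}, replacing the parabolic subgroup $T=\SL_m(q)$ by the field-extension subgroup $H=\SU(V_\sharp)\cong\SU_m(q)$ sitting inside $\mathrm{O}(V)$ via the unitary form $\beta_\sharp$ constructed in Section~\ref{SecOmegaPlus01}. First I would record the cosmetic facts: passing to the quotient modulo scalars gives $Z=\overline{G}=\POm_{2m}^+(q)$, $X=\overline{H}=\lefthat\SU_m(q)$ (the hat accounting for the intersection of $\SU(V_\sharp)$ with the scalars in $\Omega(V)$), and $Y=\overline{K}\cong K=G_{e_1+f_1}=\Omega_{2m-1}(q)$, the latter by~\cite[Propositions~4.1.6 and 4.1.7]{KL1990} since $Q(e_1+f_1)=1$ makes $e_1+f_1$ a nonsingular vector.

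The substantive step is the computation of $H\cap K$. Since $H=\SU(V_\sharp)$ acts on $V_\sharp$ and $K=G_{e_1+f_1}$ is the stabilizer of $e_1+f_1$, the intersection $H\cap K$ is the stabilizer in $\SU(V_\sharp)$ of the vector $e_1+f_1\in V_\sharp$. Recalling from Section~\ref{SecOmegaPlus01} that $e_1=\lambda E_1$ and $f_1=F_1$ for a standard $\bbF_{q^2}$-basis $E_1,F_1,\dots,E_\ell,F_\ell$ of $V_\sharp$, and that $e_1+f_1$ is a \emph{nonsingular} vector of $V_\sharp$ with respect to $\beta_\sharp$ (indeed $\beta_\sharp(e_1+f_1,e_1+f_1)=Q(e_1+f_1)=1$), the stabilizer $\SU(V_\sharp)_{e_1+f_1}$ is the pointwise stabilizer of the nondegenerate $1$-space $\langle e_1+f_1\rangle_{\bbF_{q^2}}$, hence acts faithfully as $\SU$ on the $(m-1)$-dimensional nondegenerate complement. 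Therefore $H\cap K=\SU_{m-1}(q)$. From this the factorization follows by the usual index count:
\[
\frac{|H|}{|H\cap K|}=\frac{|\SU_m(q)|}{|\SU_{m-1}(q)|}=q^{m-1}(q^m-(-1)^m)\cdot\frac{1}{?}
\]
— more precisely one checks $|\SU_m(q)|/|\SU_{m-1}(q)|=q^{m-1}(q^m-(-1)^m)$ and that this equals $q^{m-1}(q^m-1)=|\Omega_{2m}^+(q)|/|\Omega_{2m-1}(q)|=|G|/|K|$, using that $m$ is even so $(-1)^m=1$. Hence $|H||K|=|G||H\cap K|$, which gives $G=HK$, and passing to the quotient yields $Z=\overline{G}=\overline{H}\,\overline{K}=XY$.

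The main obstacle is not the index arithmetic but making the identification of $H\cap K$ airtight: one must be sure that $\SU(V_\sharp)\cap G_{e_1+f_1}$ really is the \emph{full} unitary stabilizer $\SU(V_\sharp)_{e_1+f_1}$ and not something smaller (it is, because $K=G_{e_1+f_1}$ is defined as a stabilizer inside $G=\Omega(V)$ and $H\leqslant\Omega(V)$, so $H\cap K=H_{e_1+f_1}$ literally), and that $e_1+f_1$ is nonsingular \emph{in $V_\sharp$}, which is exactly what the identities~\eqref{EqnOmegaPlus01}--\eqref{EqnOmegaPlus03} in Section~\ref{SecOmegaPlus01} were set up to guarantee. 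I would also invoke~\cite[Lemma~4.7]{LWX-Unitary} (the unitary analogue of~\cite[Lemma~4.3]{LWX-Linear} already used in Lemma~\ref{LemOmegaPlus02}) for the precise statement that the stabilizer of a nonsingular vector in $\SU_m(q)$ is $\SU_{m-1}(q)$, so that the argument is uniform with the preceding lemmas. Everything else is routine once these identifications are in place.
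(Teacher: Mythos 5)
Your proposal is correct and follows essentially the same route as the paper's proof: identify $e_1+f_1$ as a nonsingular vector of $V_\sharp$ with respect to $\beta_\sharp$ (via the setup of Section~\ref{SecOmegaPlus01}), conclude $H\cap K=\SU(V_\sharp)_{e_1+f_1}=\SU_{m-1}(q)$, and verify $|H|/|H\cap K|=q^{m-1}(q^m-1)=|G|/|K|$ before passing to the quotient. The only cosmetic blemish is the placeholder fraction in your first display, which your subsequent sentence already corrects.
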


\begin{proof}
It is evident that $Z=\POm_{2m}^+(q)$ and $X=\lefthat\SU_m(q)$. Since $e_1+f_1$ is a nonsingular vector in both $V$ (with respect to $Q$) and $V_\sharp$ (with respect to $\beta_\sharp$), we have $Y\cong K=G_{e_1+f_1}=\Omega_{2m-1}(q)$ and
\[
H\cap K=\SU(V_\sharp)\cap G_{e_1+f_1}=\SU(V_\sharp)_{e_1+f_1}=\SU_{m-1}(q).
\]
It follows that
\[
\frac{|H|}{|H\cap K|}=\frac{|\SU_m(q)|}{|\SU_{m-1}(q)|}=q^{m-1}(q^m-1)=\frac{|\Omega_{2m}^+(q)|}{|\Omega_{2m-1}(q)|}=\frac{|G|}{|K|},
\]
and so $G=HK$. Hence $Z=\overline{G}=\overline{H}\,\overline{K}=XY$.
\end{proof}

Recall the definition of $T$ in Section~\ref{SecOmegaPlus01}.

\begin{lemma}\label{LemOmegaPlus44}
Let $G=\Omega(V)=\Omega_{2m}^+(q)$ with even $m$, let $H=\Sp_m(q)<M<G$ with $M=T$ or $\SU(V_\sharp)$, let $K=G_{e_1+f_1}$, and let $(Z,X,Y)=(\overline{G},\overline{H},\overline{K})$. Then $H\cap K=\Sp_{m-2}(q)$, and $Z=XY$ with $Z=\POm_{2m}^+(q)$, $X=\PSp_m(q)$ and $Y\cong K=\Omega_{2m-1}(q)$.
\end{lemma}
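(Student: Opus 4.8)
The plan is to pin down the three groups $Z$, $X$, $Y$, then establish $G=HK$ by a single order count once $H\cap K$ is known, and finally pass to the quotient modulo scalars. Since $G=\Omega(V)$, we have $Z=\overline{G}=\POm_{2m}^+(q)$ by definition. As $\Sp_m(q)\leqslant\Omega(V)$ contains $-I$ while the scalar subgroup of $\GO(V)$ is $\langle-I\rangle$, it follows that $X=\overline{H}=\Sp_m(q)/\langle-I\rangle=\PSp_m(q)$; and as $-I$ moves the nonsingular vector $e_1+f_1$, it does not lie in $K=G_{e_1+f_1}$, so $Y=\overline{K}\cong K=\Omega(V)_{e_1+f_1}=\Omega_{2m-1}(q)$, exactly as in the proofs of Lemmas~\ref{LemOmegaPlus03} and~\ref{LemOmegaPlus07}.

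The substantive step is the identity $H\cap K=\Sp_{m-2}(q)$. Because $H\leqslant M$, we have $H\cap K=H\cap(M\cap K)$, and $M\cap K$ is already available: it is $T_{e_1,f_1}=\SL_{m-1}(q)$ when $M=T$ (Lemma~\ref{LemOmegaPlus01}(c)), and $\SU(V_\sharp)_{e_1+f_1}=\SU_{m-1}(q)$ when $M=\SU(V_\sharp)$ (the computation in the proof of Lemma~\ref{LemOmegaPlus07}, using that $e_1+f_1$ is nonsingular in $V_\sharp$). In either case I would unwind the embedding $\Sp_m(q)\leqslant M$ at the level of modules: restricted to $\Sp_m(q)$, the natural module of $M$ is the sum of the natural $\Sp_m(q)$-module and its dual, and the $\Sp_m(q)$-invariant alternating form identifies the dual with the natural module once more, so $V$ becomes two copies of the natural $\Sp_m(q)$-module with $\Sp_m(q)$ acting diagonally. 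Under this identification $e_1$ and $f_1$ are carried to a hyperbolic pair $\{e_1,e_2\}$ of the natural module, one vector from each copy, because $f_1$ is the coordinate functional dual to $e_1$, which the symplectic form matches to the symplectic partner of $e_1$. Consequently $g\in\Sp_m(q)$ fixes $e_1+f_1$ precisely when it fixes both $e_1$ and $e_2$, so $H\cap K$ is the stabilizer in $\Sp_m(q)$ of a nondegenerate hyperbolic line, namely $\Sp_{m-2}(q)$. Alternatively, one may simply cite the relevant stabilizer computations for $\Sp_m(q)\leqslant\SL_m(q)$ and $\Sp_m(q)\leqslant\SU_m(q)$ from~\cite{LWX-Linear} and~\cite{LWX-Unitary}.

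With $H\cap K=\Sp_{m-2}(q)$ in hand, the order count finishes everything:
\[
\frac{|H|}{|H\cap K|}=\frac{|\Sp_m(q)|}{|\Sp_{m-2}(q)|}=q^{m-1}(q^m-1)=\frac{|\Omega_{2m}^+(q)|}{|\Omega_{2m-1}(q)|}=\frac{|G|}{|K|},
\]
so $G=HK$, and hence $Z=\overline{G}=\overline{H}\,\overline{K}=XY$. The delicate point, and the main obstacle, is verifying that $e_1+f_1$ really does translate into a hyperbolic pair of the natural $\Sp_m(q)$-module under whichever of the two embeddings $\Sp_m(q)\leqslant M$ is being used; for $M=\SU(V_\sharp)$ this requires tracking how the subgroup $\Sp_m(q)$ of $\SU(V_\sharp)$ sits relative to the choices $e_1=\lambda E_1$ and $f_1=F_1$ fixed in Section~\ref{SecOmegaPlus01}. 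That bookkeeping is where the care lies; everything else is immediate from the lemmas already proved.
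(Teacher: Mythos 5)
Your proposal is correct and follows essentially the same route as the paper: both reduce to the factorizations $G=MK$ with $M\cap K=\SL_{m-1}(q)$ or $\SU_{m-1}(q)$ supplied by Lemmas~\ref{LemOmegaPlus03} and~\ref{LemOmegaPlus07}, and then identify $H\cap(M\cap K)=\Sp_{m-2}(q)$ via~\cite[Lemma~4.5]{LWX-Linear} and~\cite[Lemma~4.7]{LWX-Unitary}, which is precisely the citation you offer as your fallback. Your hands-on module-theoretic computation of the intersection works for $M=T$ but, as you concede, is incomplete for $M=\SU(V_\sharp)$, so the cited lemmas carry that step just as they do in the paper; the only cosmetic difference is that you finish with an order count where the paper composes the two factorizations directly.
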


\begin{proof}
It is clear that $Z=\POm_{2m}^+(q)$, $X=\PSp_m(q)$ and $Y\cong K=\Omega_{2m-1}(q)$. By Lemmas~\ref{LemOmegaPlus03} and~\ref{LemOmegaPlus07} we have $G=MK$ with $M\cap K=\SL_{m-1}(q)$ or $\SU_{m-1}(q)$ according to $M=T$ or $\SU(V_\sharp)$, respectively. Moreover, by~\cite[Lemma~4.5]{LWX-Linear} and~\cite[Lemma~4.7]{LWX-Unitary} we have $H\cap(M\cap K)=\Sp_{2m-2}(q)$. This leads to $G=HK$ with $H\cap K=\Sp_{2m-2}(q)$, and so $Z=\overline{G}=\overline{H}\,\overline{K}=XY$.
\end{proof}

If $m$ is even, then $\N_1[\POm_{2m}^+(q)]=\Omega_{2m-1}(q)$ by~\cite[Propositions~4.1.6~and~4.1.7]{KL1990}.

\begin{lemma}\label{LemOmegaPlus08}
Let $Z=\POm_{2m}^+(q)$ with even $m$, let $X=\PSp_m(q)<\PSp_2(q)\otimes\PSp_m(q)<Z$, and let $Y=\N_1[Z]=\Omega_{2m-1}(q)$. Then $Z=XY$ with $Y=\Omega_{2m-1}(q)$ and $X\cap Y=\PSp_{m-2}(q).(2,q-1)<\N_2[X]$.
\end{lemma}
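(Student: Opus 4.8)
The plan is to deduce this factorization of $Z=\POm_{2m}^+(q)$ from the one established in Lemma~\ref{LemOmegaPlus44}, exploiting the containment $X=\PSp_m(q)<\PSp_2(q)\otimes\PSp_m(q)$ inside the group $\PSp_m(q)$ sitting in $Z$ as the image of $\Sp_m(q)<M<G=\Omega(V)$. First I would fix $K=G_{e_1+f_1}=\Omega_{2m-1}(q)$ (using that $e_1+f_1$ is nonsingular and $m$ even, so $\N_1[\POm_{2m}^+(q)]=\Omega_{2m-1}(q)$ as recalled just above the statement), so that $Y\cong\overline K=\Omega_{2m-1}(q)$. By Lemma~\ref{LemOmegaPlus44} we have $G=\Sp_m(q)\,K$ with $\Sp_m(q)\cap K=\Sp_{m-2}(q)$; passing to the quotient modulo scalars gives $Z=\PSp_m(q)\,Y$ with $\PSp_m(q)\cap Y$ of order $|\Sp_{m-2}(q)|/(2,q-1)$, i.e.\ $\PSp_{m-2}(q).(2,q-1)$. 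So the work is to pin down how this intersection sits relative to the further tensor-product overgroup $\PSp_2(q)\otimes\PSp_m(q)$, so as to justify the assertion $X\cap Y=\PSp_{m-2}(q).(2,q-1)<\N_2[X]$.

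The key computational step is the order count. With $Y=\Omega_{2m-1}(q)$ and $|X|=|\PSp_m(q)|$, I would verify
\[
\frac{|X|}{|\PSp_{m-2}(q).(2,q-1)|}=\frac{|\PSp_m(q)|\,(2,q-1)}{|\PSp_{m-2}(q)|}=q^{m-1}(q^m-1)=\frac{|\POm_{2m}^+(q)|}{|\Omega_{2m-1}(q)|}=\frac{|Z|}{|Y|},
\]
so that once $|X\cap Y|$ is identified the equality $Z=XY$ follows from $|X||Y|=|Z||X\cap Y|$. The numerical identity $|\Sp_m(q)|/|\Sp_{m-2}(q)|=q^{m-1}(q^m-1)$ is the same one used in Lemmas~\ref{LemOmegaPlus02}--\ref{LemOmegaPlus44}, so this is routine; the point is simply that $Z=XY$ is equivalent to $X\cap Y$ having the claimed order.

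To finish, I would identify $X\cap Y$ as a subgroup of $\N_2[X]$. Here $\N_2[X]=\PSp_2(q)\otimes\PSp_m(q)$-type geometry is not quite right; rather $\N_2[\PSp_m(q)]$ is the stabilizer in $\PSp_m(q)$ of a nondegenerate $2$-space of the natural symplectic module, of shape $(\Sp_2(q)\times\Sp_{m-2}(q)).(\text{diagonal})$ modulo scalars. I would argue that the hyperplane $(e_1+f_1)^\perp$ in $V$ meets the $4$-dimensional $\Sp_4(q)$-summand (coming from the $\PSp_2(q)\otimes\PSp_m(q)$ decomposition) in a way that forces $X\cap Y$ to stabilize a nondegenerate $2$-space for the $\Sp_m(q)$-action — concretely, tracing through the embedding $\Sp_m(q)<\Sp_2(q)\otimes\Sp_m(q)<\mathrm{O}_{2m}^+(q)$ and the fact that $\Sp_m(q)\cap G_{e_1+f_1}=\Sp_{m-2}(q)$ is exactly the stabilizer of the $2$-space spanned by the first hyperbolic pair of the symplectic form. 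This gives $X\cap Y\leqslant\N_2[X]$, and combined with the order computation yields $X\cap Y=\PSp_{m-2}(q).(2,q-1)$ and $Z=XY$. The main obstacle is this last identification — keeping track of the tensor-decomposition coordinates and confirming the fixed vector $e_1+f_1$ corresponds under the embedding to a nonsingular vector lying in the correct tensor factor so that the induced intersection is the $2$-space stabilizer rather than something smaller; once the bookkeeping is set up correctly the rest is immediate from Lemma~\ref{LemOmegaPlus44}.
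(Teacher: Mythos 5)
The paper's own proof is a single appeal to \cite[Lemma~3.4]{LX2019}, which computes $X\cap Y=\PSp_{m-2}(q).(2,q-1)<\N_2[X]$ directly for the copy of $\PSp_m(q)$ inside the tensor-product subgroup, followed by the order count; your plan to rederive the intersection from Lemma~\ref{LemOmegaPlus44} is therefore a genuinely different route, but it has two real gaps. The first is that Lemma~\ref{LemOmegaPlus44} concerns the copy of $\Sp_m(q)$ lying inside $T=\SL_m(q)$ or $\SU(V_\sharp)$, whereas the $X$ of the present lemma is the copy inside $\PSp_2(q)\otimes\PSp_m(q)$. Your opening sentence simply asserts that the latter sits inside the image of the former (since the orders agree, this would force equality of the two subgroups), but this conjugacy is never proved; moreover, for odd $q$ one would also have to check that a conjugating element can be chosen compatibly with the relevant class of nonsingular $1$-spaces. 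This identification is precisely what the citation of \cite[Lemma~3.4]{LX2019} replaces, and the fact that the paper keeps Lemma~\ref{LemOmegaPlus44} and Lemma~\ref{LemOmegaPlus08} as separate statements indicates it is not being taken for granted.

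The second gap is the bookkeeping with the centre, which goes wrong for odd $q$. You claim $\PSp_m(q)\cap Y$ has order $|\Sp_{m-2}(q)|/(2,q-1)$, ``i.e.\ $\PSp_{m-2}(q).(2,q-1)$'', but $|\PSp_{m-2}(q).(2,q-1)|=|\Sp_{m-2}(q)|$, so the two halves of that sentence disagree by a factor of $(2,q-1)$. The correct order is $|\Sp_{m-2}(q)|$: the subgroup $H\cap K=\Sp_{m-2}(q)$ of Lemma~\ref{LemOmegaPlus44} meets $\langle -1_V\rangle$ trivially because its elements fix the vector $e_1+f_1$, so it maps isomorphically into $\POm_{2m}^+(q)$, while $H=\Sp_m(q)$ contains $-1_V$; one then still has to argue (for instance via the order count) that $\overline{H}\cap\overline{K}$ is no larger than $\overline{H\cap K}$. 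Your displayed chain of equalities compounds the slip: $|Z|/|Y|=q^{m-1}(q^m-1)/(2,q-1)$, not $q^{m-1}(q^m-1)$, and $|X|/|\PSp_{m-2}(q).(2,q-1)|$ equals the former, not the latter. Had the intersection really had the order you state, the counting identity $|X|\,|Y|=|Z|\,|X\cap Y|$ would fail by a factor of $2$ for odd $q$. (The concluding identification of $X\cap Y$ inside $\N_2[X]$ is also only sketched, as you acknowledge, but that part is repairable once the tensor coordinates are written down.)
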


\begin{proof}
By~\cite[Lemma~3.4]{LX2019} we have $X\cap Y=\PSp_{m-2}(q).(2,q-1)<\N_2[X]$. Hence
\[
\frac{|X|}{|X\cap Y|}=\frac{|\PSp_m(q)|}{|\PSp_{m-2}(q).(2,q-1)|}=\frac{q^{m-1}(q^m-1)}{(2,q-1)}=\frac{|\POm_{2m}^+(q)|}{|\Omega_{2m-1}(q)|}=\frac{|Z|}{|Y|},
\]
and so $Z=XY$.
\end{proof}

The next five lemmas are devoted to the construction of $(X,Y)$ in rows~2--3 of Table~\ref{TabOmegaPlus}. 
Recall the definition of $\gamma$ and $\xi$ in Section~\ref{SecOmegaPlus01}.



\begin{lemma}\label{LemOmegaPlus11}
Let $Z=\Omega(V){:}\langle\phi\rangle=\Omega_{2m}^+(q){:}f$ with $m=2a$ and $q\in\{2,4\}$, and let $Y=Z_{e_1+f_1}=\GaSp_{2m-2}(q)$.
\begin{enumerate}[{\rm (a)}]
\item If $M=T{:}\langle\rho\rangle=\SL_m(q){:}\langle\rho\rangle$ with $\rho\in\{\phi,\phi\gamma\}$ and $X=\SL_a(q^2){:}(2f)<M$ is the group $H$ defined in~\cite[Lemmas~4.6~or~4.7]{LWX-Linear}, then $Z=XY$ with $X\cap Y=\SL_{a-1}(q^2)$.
\item If $M=\SU(V_\sharp){:}\langle\xi\rangle=\SU_m(q){:}(2f)$ and $X=\SL_a(q^2){:}(2f)<M$ is the group $H$ defined in~\cite[Lemma~4.4]{LWX-Unitary}, then $Z=XY$ with $X\cap Y=\SL_{a-1}(q^2)$.
\item If $q=2$, $M=\SU(V_\sharp)$ and $X=\SL_a(4){:}2<M$ is the group $H$ defined in~\cite[Lemma~4.3]{LWX-Unitary}, then $Z=XY$ with $X\cap Y=\SL_{a-1}(4)$.
\end{enumerate}
\end{lemma}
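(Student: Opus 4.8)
The plan is to reduce all three parts to the factorization $Z = (Y^{\tau\text{-analogue}})Y$ established earlier, or more precisely, to the already-proved factorizations $Z = MY$ of the relevant parabolic/field-extension subgroups $M$ against the reflection stabilizer $Y$, and then to intersect with $X$. First I would record that $Z = \Omega(V){:}\langle\phi\rangle$ with $q\in\{2,4\}$, so $f = \log_2 q \in\{1,2\}$, and that $Y = Z_{e_1+f_1}$ is indeed $\GaSp_{2m-2}(q)$: this follows because $e_1+f_1$ is a nonsingular vector, $\Omega(V)_{e_1+f_1}=\Omega_{2m-1}(q)\cong\Sp_{2m-2}(q)$ (as $q$ is even), and $\phi$ normalizes this stabilizer and induces the field automorphism. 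Then for each part I would produce the corresponding factorization $Z = MY$ from the linear/unitary dimension $\Omega_{2m}^+(q)$ analogue: in part~(a), $M = T{:}\langle\rho\rangle$ with $T=\SL_m(q)$, and Lemma~\ref{LemOmegaPlus03} already gives $\Omega(V)=T\,K$ with $T\cap K=\SL_{m-1}(q)$; lifting by $\langle\phi\rangle$ (and noting $\rho\in\{\phi,\phi\gamma\}$ normalizes $T$ and $Y$) yields $Z = MY$ with $M\cap Y = \SL_{m-1}(q){:}\langle\rho\rangle$-type group. Similarly, part~(b) uses Lemma~\ref{LemOmegaPlus07} ($\Omega(V)=\SU(V_\sharp)K$ with intersection $\SU_{m-1}(q)$) and lifts by $\xi$; part~(c) is the $q=2$ case of the same with no field automorphism adjoined to $M$.

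The second step is the passage from $M\cap Y$ down to $X\cap Y$. Here $X$ is, in each case, the explicit field-extension subgroup $H$ of $M$ constructed in the cited lemmas of~\cite{LWX-Linear} and~\cite{LWX-Unitary} — namely $X=\SL_a(q^2){:}(2f)$ inside $\SL_m(q){:}(2f)$ or $\SU_m(q){:}(2f)$ (or $\SL_a(4){:}2<\SU_m(2)$ in part~(c)). Those same lemmas assert that $M = X(M\cap K_0)$ for the appropriate reflection-type stabilizer $K_0$ inside $M$, and describe $X\cap(M\cap K_0)$; combined with $M\cap Y$ computed above, this should give $M = X(M\cap Y)$ with $X\cap(M\cap Y)=\SL_{a-1}(q^2)$. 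Then $Z = MY = X(M\cap Y)Y = XY$, and $X\cap Y = X\cap(M\cap Y)=\SL_{a-1}(q^2)$, which is exactly the claim. As a sanity check I would verify the order equation $|X|/|X\cap Y| = |\SL_a(q^2){:}(2f)|/|\SL_{a-1}(q^2)| = q^{2(a-1)}(q^{2a}-1)\cdot(2f) = |Z|/|Y|$, using $m=2a$ and $|Z:\Omega(V)|=f$.

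The main obstacle I anticipate is bookkeeping the field-automorphism ``tails'': making sure that the factor $(2f)$ adjoined to $\SL_a(q^2)$ in $X$, the factor $\langle\rho\rangle$ (order $f$) in $M$, and the factor $\langle\phi\rangle$ (order $f$) giving $Y=\GaSp_{2m-2}(q)$ all interact consistently — in particular that $\langle\phi\rangle$ or $\langle\rho\rangle$ is absorbed into $X\cap Y$ exactly once rather than twice or not at all, so that the intersection is the ``bare'' group $\SL_{a-1}(q^2)$ without an extra factor of $f$. This is precisely the kind of subtlety handled in the cited lemmas of~\cite{LWX-Linear,LWX-Unitary}, so I would lean on their explicit statements for the values of $X\cap K_0$ and just transport them through the Klein-type/field-extension identifications; the orthogonal-to-linear and orthogonal-to-unitary restrictions of $\phi$ to $\langle\rho\rangle$ and $\langle\xi\rangle$ are set up in Section~\ref{SecOmegaPlus01} precisely so that this transport is clean. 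For $q\in\{2,4\}$ one could alternatively confirm the three intersections by direct computation in \magma~\cite{BCP1997} as a backstop.
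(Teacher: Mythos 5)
Your proposal matches the paper's proof essentially step for step: in each part one first lifts the factorization $\Omega(V)=MK$ from Lemma~\ref{LemOmegaPlus03} (resp.\ Lemma~\ref{LemOmegaPlus07}) to $Z=MY$ with the appropriate field-automorphism tail on $M\cap Y$, and then invokes the cited lemmas of~\cite{LWX-Linear} and~\cite{LWX-Unitary} to get $M=X(M\cap Y)$ with $X\cap(M\cap Y)=\SL_{a-1}(q^2)$, whence $Z=XY$ and $X\cap Y=\SL_{a-1}(q^2)$. Your attention to how the $(2f)$, $\langle\rho\rangle$ and $\langle\phi\rangle$ tails are absorbed is exactly the delicate point the paper delegates to those cited lemmas, and your order check (using $2f=q$ for $q\in\{2,4\}$) is consistent.
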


\begin{proof}
First assume that $M$ and $X$ are as in part~(a). By Lemma~\ref{LemOmegaPlus03} we have $Z=TZ_{e_1+f_1}$ with $T_{e_1+f_1}=T\cap Z_{e_1+f_1}=\SL_{m-1}(q)$.
Hence $Z=T\langle\rho\rangle Z_{e_1+f_1}=MY$ with $M\cap Y=T_{e_1+f_1}{:}\langle\rho\rangle$. Moreover, we conclude from~\cite[Lemmas~4.6~and~4.7]{LWX-Linear} that $M=X(M\cap Y)$ with $X\cap(M\cap Y)=X\cap(T_{e_1+f_1}\langle\rho\rangle)=\SL_{a-1}(q^2)$. Thus $Z=XY$ with $X\cap Y=X\cap(M\cap Y)=\SL_{a-1}(q^2)$.

Next assume that $M$ and $X$ are given in part~(b). From Lemma~\ref{LemOmegaPlus07} we obtain $Z'=M'Y'$ with $M'\cap Y'=\SU_{m-1}(q)$. Thus $Z=MY$ with $M\cap Y=\SU_{m-1}(q).(2f)$. Now~\cite[Lemma~4.4]{LWX-Unitary} implies that $M=X(M\cap Y)$ with $X\cap(M\cap Y)=\SL_{a-1}(q^2)$. Therefore, $Z=XY$ with $X\cap Y=\SL_{a-1}(q^2)$.

Finally, let $M$ and $X$ be given in part~(c) with $q=2$. By Lemma~\ref{LemOmegaPlus07} we have $Z=MY$ with $M\cap Y=\SU_{m-1}(2)$, while by~\cite[Lemma~4.3]{LWX-Unitary} we have $M=X(M\cap Y)$ with $X\cap Y=X\cap(M\cap Y)=\SL_{a-1}(4)$. Hence $Z=XY$ with $X\cap Y=\SL_{a-1}(4)$.
\end{proof}

The next lemma will also be needed for symplectic groups.

\begin{lemma}\label{LemSymplectic13}
Let $G=\GaSp_{2m}(q)$ with $m$ even and $q\in\{2,4\}$, let $H=\GaSp_m(q^2)$ be a field-extension subgroup of $G$, and let $K=\Sp_{2m-2}(q){:}f<\N_2[G]$ such that $K\cap\Soc(G)=\Sp_{2m-2}(q)$. Then $G=HK$ with $H\cap K=\Sp_{m-2}(q^2)<\N_2[H']$.
\end{lemma}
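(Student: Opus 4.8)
The plan is to use the same index-computation strategy that underlies Lemmas~\ref{LemOmegaPlus02}--\ref{LemOmegaPlus08}: exhibit the intersection $H\cap K$ explicitly, verify that $|H|/|H\cap K|=|G|/|K|$, and conclude $G=HK$ from Lemma~\ref{LemXia01} (or directly from the index criterion). Write $V$ for the natural $\bbF_q$-space of dimension $2m$ carrying the symplectic form preserved by $\Soc(G)=\Sp_{2m}(q)$, and let $V_\sharp$ be the $\bbF_{q^2}$-space of dimension $m$ with $\GaSp_m(q^2)=H$ acting naturally, where the $\bbF_{q^2}$-form restricts (via the trace) to the $\bbF_q$-form on $V$. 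The subgroup $K$ is the stabilizer in $G$ of a nondegenerate $2$-dimensional $\bbF_q$-subspace $W\leqslant V$ of the form $W=\langle e_1+f_1\rangle^{\perp\perp}$ (or, more concretely, $W$ spanned by a hyperbolic pair chosen compatibly with the field-extension structure, as was arranged in Section~\ref{SecOmegaPlus01}), so that $K\cap\Soc(G)=\Sp_{2m-2}(q)\times\Sp_2(q)$ truncated to the first factor together with the field automorphism $\phi^{\,}$ of order $f$.

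First I would pin down $H\cap K$. Choosing the hyperbolic pair defining $W$ inside a single $\bbF_{q^2}$-coordinate $\langle E_1,F_1\rangle_{\bbF_{q^2}}$ of $V_\sharp$ (exactly as $e_1=\lambda E_1$, $f_1=F_1$ were chosen in Section~\ref{SecOmegaPlus01}), the $\bbF_q$-subspace $W$ becomes an $\bbF_q$-form of the nondegenerate $\bbF_{q^2}$-line $\langle E_1,F_1\rangle_{\bbF_{q^2}}$; hence an element of $H=\GaSp_m(q^2)$ stabilizing $W$ must stabilize its $\bbF_{q^2}$-span, so it lies in the stabilizer in $\GaSp_m(q^2)$ of a nondegenerate $\bbF_{q^2}$-line, which is $(\Sp_2(q^2)\times\Sp_{m-2}(q^2)){:}f$. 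Intersecting this further with the condition of fixing $W$ pointwise-compatibly with $K$ (i.e.\ inducing on $W$ only symplectic maps coming from $K\cap\Soc(G)$, together with the shared field automorphism) leaves, on the $(m-2)$-dimensional $\bbF_{q^2}$-complement, the full group $\Sp_{m-2}(q^2)$ and kills the field-automorphism part (the $f$ is absorbed into $K$'s own field automorphism and into the action on $W$). This gives $H\cap K=\Sp_{m-2}(q^2)$, sitting inside the $\N_2$-subgroup $\Sp_2(q^2)\times\Sp_{m-2}(q^2)$ of $H'=\Sp_m(q^2)$, as claimed; the more careful bookkeeping here mirrors~\cite[Lemmas~4.6--4.7]{LWX-Linear} and Lemma~\ref{LemOmegaPlus11}.

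With the intersection identified, the numerical check is routine:
\[
\frac{|H|}{|H\cap K|}=\frac{|\Sp_m(q^2)|}{|\Sp_{m-2}(q^2)|}=q^{2(m-1)}(q^{2m}-1),
\]
while
\[
\frac{|G|}{|K|}=\frac{|\Sp_{2m}(q)|}{|\Sp_{2m-2}(q)|}=q^{2m-1}(q^{2m}-1),
\]
and these are \emph{not} equal, so the bare index count does not close the argument — this is the main obstacle, and it signals that one must instead argue that $HK$ already contains $\Soc(G)$ and then invoke Lemma~\ref{LemXia01} together with the easy factorization $G/\Soc(G)=(H\Soc(G)/\Soc(G))(K\Soc(G)/\Soc(G))$ (both images contain the field-automorphism generator $\phi$, whose order is $f$, so their product is all of $G/\Soc(G)$). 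Concretely, I would restrict to the socle: by Lemma~\ref{LemXia01} it suffices to show $\Soc(G)=(H\cap\Soc(G))^{g}(K\cap\Soc(G))$ up to the scalar/field adjustments, i.e.\ that $\Sp_{2m}(q)=\Sp_m(q^2)\cdot(\Sp_{2m-2}(q)\times\Sp_2(q))$ with the correct intersection $\Sp_{m-2}(q^2)$. Here $|\Sp_m(q^2)|\,|\Sp_{2m-2}(q)\times\Sp_2(q)|/|\Sp_{m-2}(q^2)|=|\Sp_{2m}(q)|$, so the index identity \emph{does} hold at socle level; the apparent mismatch above came only from comparing $H$ with $K$ rather than their intersections with $\Soc(G)$. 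Thus the genuine work is: (i) establish the socle factorization $\Sp_{2m}(q)=\Sp_m(q^2)(\Sp_{2m-2}(q)\times\Sp_2(q))$ — this is a known maximal factorization, available from~\cite[Theorem~A]{LPS1990} (the $\mathcal{C}_3$ field-extension factor times an $\N_2$-type factor); (ii) compute the intersection at socle level to be $\Sp_{m-2}(q^2)$ via the line-stabilizer argument sketched above; (iii) lift to $G=\GaSp_{2m}(q)$ by checking $HK\supseteq\Soc(G)$ and applying Lemma~\ref{LemXia01}, using that both $H$ and $K$ surject onto the cyclic group $\GaSp_{2m}(q)/\Sp_{2m}(q)$ of order $f$. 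I expect step~(ii) — precisely controlling how the field automorphism in $H$ interacts with the one in $K$ on the shared $2$-space $W$ — to be the delicate point; everything else is bookkeeping of orders and invocation of the classification of maximal factorizations.
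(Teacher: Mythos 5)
There is a genuine gap, and it starts with an arithmetic slip that derails the whole strategy. You compute $|H|/|H\cap K|$ as $|\Sp_m(q^2)|/|\Sp_{m-2}(q^2)|=q^{2m-2}(q^{2m}-1)$, but $H=\GaSp_m(q^2)=\Sp_m(q^2){:}(2f)$, so the correct value is $2f\,q^{2m-2}(q^{2m}-1)$. Since $q\in\{2,4\}$ forces $q=2f$, this equals $q^{2m-1}(q^{2m}-1)=|G|/|K|$, and the ``bare index count'' closes the argument after all — this is exactly the paper's proof. The only genuine content is the identification $H\cap K=\Sp_{m-2}(q^2)$, and there your bookkeeping is too loose: you need to know that \emph{no} element of $H\setminus H'$ lies in $\N_2[G]$ (the paper quotes~\cite[3.2.1(a)]{LPS1990} for $H\cap\N_2[G]=H'\cap\N_2[G]$); saying the field automorphism is ``absorbed'' into $K$ does not establish this, and if some outer element of $H$ did lie in $K$ the intersection would be larger and the count would fail.

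The fallback route you propose to repair the (illusory) mismatch would not work. The socle-level factorization $\Sp_{2m}(q)=\Sp_m(q^2)\bigl(\Sp_2(q)\times\Sp_{2m-2}(q)\bigr)$ does not hold: with $H_0=\Sp_m(q^2)$ and $K_0=\Sp_2(q)\times\Sp_{2m-2}(q)$ one has $H_0\cap K_0=\Sp_2(q)\times\Sp_{m-2}(q^2)$, so $|H_0K_0|=|H_0||K_0|/|H_0\cap K_0|=|\Sp_{2m}(q)|/q$, short by a factor of $q$ (your displayed identity $|\Sp_m(q^2)||\Sp_{2m-2}(q)\times\Sp_2(q)|/|\Sp_{m-2}(q^2)|=|\Sp_{2m}(q)|$ is off by $q^2-1$, and in any case assumes the wrong intersection). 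This is not a repairable detail: the lemma is precisely one of those factorizations in which $(H\cap L)(K\cap L)\subsetneq L$ and the containment $HK\supseteq L$ required by Lemma~\ref{LemXia01} is achieved only by products involving the outer cosets of $H$ and $K$ — which is why the hypothesis $q\in\{2,4\}$ (i.e.\ $2f=q$) and the full groups $\GaSp_m(q^2)$ and $\Sp_{2m-2}(q){:}f$ are essential. Reducing ``$HK\supseteq\Soc(G)$'' to a factorization of $\Soc(G)$ by $H\cap\Soc(G)$ and $K\cap\Soc(G)$ is a sufficient condition, not a necessary one, and here it is false.
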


\begin{proof}
It is proved in~\cite[3.2.1(a)]{LPS1990} that $H\cap\N_2[G]=H'\cap\N_2[G]$. Hence $H\cap K=H'\cap K$. Then since $H'<\Soc(G)$ and $K\cap\Soc(G)=K'=\Sp_{2m-2}(q)$, we derive that
\[
H\cap K=H'\cap(K\cap\Soc(G))=H'\cap K'.
\]
Let $E_1',F_1',\dots,E_{m/2}',F_{m/2}'$ be a standard basis for a symplectic space over $\bbF_{q^2}$ on which $H'=\Sp_m(q^2)$ is defined.
Then $\Soc(G)=\Sp_{2m}(q)$ is defined on the symplectic space $\bbF_q$ with a basis $E_1',\mu E_1',F_1',\mu F_1',\dots,E_{m/2}',\mu E_{m/2}',F_{m/2}',\mu F_{m/2}'$, where $\mu\in\bbF_{q^2}\setminus\bbF_q$, and we may assume without loss of generality that $K'=\Soc(G)_{E_1',F_1'}$. Therefore,
\[
H\cap K=H'\cap K'=(H')_{E_1',F_1'}=\Sp_{m-2}(q^2)<\N_2[H'],
\]
and it follows that
\[
\frac{|H|}{|H\cap K|}=\frac{|\Sp_m(q^2){:}(2f)|}{|\Sp_{m-2}(q^2)|}=2fq^{2m-2}(q^{2m}-1)=q^{2m-1}(q^{2m}-1)=\frac{|\GaSp_{2m}(q)|}{|\Sp_{2m-2}(q){:}f|}=\frac{|G|}{|K|}
\]
as $q\in\{2,4\}$. Thus $G=HK$.
\end{proof}

\begin{remark}
In fact, the conclusion of Lemma~\ref{LemSymplectic13} also holds for $m=2$.
\end{remark}

\begin{lemma}\label{LemOmegaPlus10}
Let $Z=\Omega(V){:}\langle\phi\rangle=\Omega_{2m}^+(q){:}f$ with $m=4a\geqslant8$ and $q\in\{2,4\}$, let $X=\Sp_{2a}(q^2){:}(2f)<\Sp_m(q){:}f<(\Sp_2(q)\otimes\Sp_m(q)){:}f<Z$ such that $X\cap\Omega(V)=\Sp_{2a}(q^2){:}2$, and let $Y=Z_{e_1+f_1}=\N_1[Z]=\GaSp_{2m-2}(q)$. Then $Z=XY$ with $X\cap Y=\Sp_{2a-2}(q^2)<\N_2[X']$.
\end{lemma}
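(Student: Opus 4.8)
The strategy mirrors the proofs of Lemmas~\ref{LemOmegaPlus11} and~\ref{LemSymplectic13}: reduce the orthogonal factorization to a known symplectic one via an intermediate subgroup, then use the established symplectic factorization together with an order count. First I would fix the intermediate subgroup $M=\Sp_m(q){:}f$ sitting inside $Z$ as $(\Sp_2(q)\otimes\Sp_m(q)){:}f<Z$, chosen so that $M\cap\Omega(V)=\Sp_m(q)$, and recall from Lemma~\ref{LemOmegaPlus08} (applied at the level of $\POm$, then lifted through the field automorphism) that $\Omega(V)=\Sp_m(q)\cdot\Omega(V)_{e_1+f_1}$ with $\Sp_m(q)\cap\Omega(V)_{e_1+f_1}=\Sp_{m-2}(q).(2,q-1)=\Sp_{m-2}(q)$ since $q$ is even. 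Incorporating the field automorphism $\phi$ on both sides gives $Z=MY$ with $M\cap Y=\Sp_{m-2}(q){:}f=\GaSp_{m-2}(q)$; here I use that $\phi$ normalizes the chosen copy of $\Sp_m(q)$ and that $Y=Z_{e_1+f_1}$ by definition of $\N_1[Z]$ for even $m$ (stated just before Lemma~\ref{LemOmegaPlus08}).

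Next I would apply Lemma~\ref{LemSymplectic13} to the group $M=\GaSp_m(q)$ (identifying $M$ with $\GaSp_m(q)$ via $m=4a$, so $m$ is even): it gives $M=H(M\cap Y)$ with $H\cap(M\cap Y)=\Sp_{2a-2}(q^2)<\N_2[H']$, where $H=\GaSp_{2a}(q^2)=\Sp_{2a}(q^2){:}(2f)$ is the field-extension subgroup of $M$ and $M\cap Y=\GaSp_{m-2}(q)$ plays the role of the $\N_2$-type subgroup $K$ in Lemma~\ref{LemSymplectic13}. One must check that the copy of $\GaSp_{m-2}(q)$ arising as $M\cap Y$ is conjugate in $M$ to the subgroup called $K$ in Lemma~\ref{LemSymplectic13}, i.e. that it meets $\Soc(M)=\Sp_m(q)$ in $\Sp_{m-2}(q)$ stabilizing a hyperbolic pair; this holds because $U_1^\perp\cap U_1$-type considerations identify $\Sp_m(q)\cap\Omega(V)_{e_1+f_1}$ with the stabilizer in $\Sp_m(q)$ of the nondegenerate $2$-space $\langle e_1,f_1\rangle_{\bbF_q}$, which is exactly $\Sp_{m-2}(q)$. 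Then Lemma~\ref{LemXia04} lets me replace $X$ by the specific conjugate realized inside $M$, so $X=H$, and combining the two factorizations yields $Z=MY=H(M\cap Y)Y=XY$ with $X\cap Y=X\cap(M\cap Y)=\Sp_{2a-2}(q^2)<\N_2[X']$.

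Alternatively — and this is probably cleaner to write — I would avoid the intermediate-subgroup juggling and just verify $Z=XY$ by the order formula, using the intersection computed above. We have $|X|=|\Sp_{2a}(q^2){:}(2f)|=2f\,|\Sp_{2a}(q^2)|$ and $|X\cap Y|=|\Sp_{2a-2}(q^2)|$, so
\[
\frac{|X|}{|X\cap Y|}=2f\,\frac{|\Sp_{2a}(q^2)|}{|\Sp_{2a-2}(q^2)|}=2f\,q^{2(m-2)}(q^{2\cdot 2a}-1)=q^{2m-1}(q^{2m-1}+1)(q^{m-1})\big/\cdots
\]
and this should match $|Z|/|Y|=|\Omega_{2m}^+(q){:}f|\big/|\GaSp_{2m-2}(q)|=q^{m-1}(q^m-1)$ after using $2f=q$ for $q\in\{2,4\}$ and simplifying; establishing this numerical identity, then invoking $|XY|=|X||Y|/|X\cap Y|=|Z|$ gives $Z=XY$.

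The main obstacle I anticipate is not the order count (routine once the intersection is known) but pinning down $X\cap Y$ correctly, namely confirming that the field-extension subgroup $X$ can be placed inside the stabilizer chain so that $X\cap Y$ is computed via Lemma~\ref{LemSymplectic13} rather than being something larger. Concretely, one must ensure that the $\Sp_m(q)$ used to build $X$ (through the tensor embedding $\Sp_2(q)\otimes\Sp_m(q)$) and the $\Sp_m(q)$ arising from $\Omega(V)_{e_1+f_1}$-considerations are the *same* subgroup up to $Z$-conjugacy, and that the nonsingular vector $e_1+f_1$ corresponds under this identification to a hyperbolic-pair-spanning configuration of the $\Sp_m(q)$-space — this is what makes the $\N_2$-type hypothesis of Lemma~\ref{LemSymplectic13} applicable. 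Once that compatibility is verified (via the explicit basis of Section~\ref{SecOmegaPlus01} and the tensor decomposition), the rest follows formally from Lemmas~\ref{LemOmegaPlus08}, \ref{LemSymplectic13} and~\ref{LemXia04}.
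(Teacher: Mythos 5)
Your primary argument is essentially the paper's proof: pass to the intermediate subgroup $M=\Sp_m(q){:}f$ with $M\cap\Omega(V)=\Sp_m(q)$, obtain $Z=MY$ with $M\cap Y=\Sp_{m-2}(q).f$ from Lemma~\ref{LemOmegaPlus08} (via Lemma~\ref{LemXia01}), and then apply Lemma~\ref{LemSymplectic13} inside $M\cong\GaSp_m(q)$ to get $M=X(M\cap Y)$ with $X\cap(M\cap Y)=\Sp_{2a-2}(q^2)<\N_2[X']$, whence $Z=XY$. The supplementary order count in your last display is garbled as written (the correct common value is $q^{m-1}(q^m-1)$ after using $2f=q$), but it is not needed once the chain of factorizations is in place, so the proposal stands.
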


\begin{proof}
Let $M=\Sp_m(q){:}f<(\Sp_2(q)\otimes\Sp_m(q)){:}f<Z$ such that $X<M$. From $X\cap\Omega(V)=\Sp_{2a}(q^2){:}2$ we deduce that $M\Omega(V)/\Omega(V)\geqslant X\Omega(V)/\Omega(V)=f$ and thus $M\Omega(V)=Z$. This implies $M\cap\Omega(V)=\Sp_m(q)$ and $M\cong\GaSp_m(q)$. Then by Lemmas~\ref{LemXia01} and~\ref{LemOmegaPlus08} we have $Z=MY$ with $M\cap Y=\Sp_{m-2}(q).f$. Now, by Lemma~\ref{LemSymplectic13}, $M=X(M\cap Y)$ with $X\cap(M\cap Y)=\Sp_{2a-2}(q^2)<\N_2[X']$. Hence $Z=XY$ with $X\cap Y=X\cap(M\cap Y)<\N_2[X']$.
\end{proof}



Recall the definition of $\psi$ and $r'_w$ in Section~\ref{SecOmegaPlus01}.

\begin{lemma}\label{LemOmegaPlus12}
Let $Z=\Omega(V){:}\langle\phi\rangle=\Omega_{2m}^+(q){:}f$ with $q\in\{2,4\}$ and $m$ even, let $X=\Omega(V_\sharp){:}\langle\rho\rangle$ with $\rho\in\{\psi,\psi r'_{E_1+F_1}\}$, and let $Y=Z_{e_1+f_1}$. Then $Z=XY$ with $X=\Omega_m^+(q^2){:}(2f)$, $Y=\GaSp_{2m-2}(q)$ and $X\cap Y=\Omega_{m-1}(q^2)$.
\end{lemma}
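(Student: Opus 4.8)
The plan is to follow the same template used in Lemmas~\ref{LemOmegaPlus10} and~\ref{LemOmegaPlus11}: pass to a convenient intermediate overgroup $M$, use an already-established factorization $Z=MY$, and then relativize inside $M$ using a known factorization of orthogonal/unitary type. Concretely, first I would take $M=\mathrm{O}(V_\sharp){:}\langle\phi\rangle$ (the stabilizer in $Z$ of the $\bbF_{q^2}$-structure together with the Frobenius $\phi$), so that $X\leqslant M$ with $M\cong\GaO_m^+(q^2)$ and $M\cap\Omega(V)=\mathrm{O}_m^+(q^2)$, while $X\cap\Omega(V)=\Omega_m^+(q^2){:}2$ and $X=\Omega_m^+(q^2){:}(2f)$. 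Since $e_1=\lambda E_1$ is singular and $e_1+f_1=\lambda E_1+F_1$ is a nonsingular vector in both $V$ (w.r.t.\ $Q$) and $V_\sharp$ (w.r.t.\ $Q_\sharp$), by the computations~\eqref{EqnOmegaPlus04}--\eqref{EqnOmegaPlus06} in Section~\ref{SecOmegaPlus01}, the stabilizer $Y=Z_{e_1+f_1}=\GaSp_{2m-2}(q)$ is the normalizer of $\N_1[\Omega(V)]=\Omega_{2m-1}(q)$ (using that $q$ is even, so $\Omega_{2m}^+(q)$ acts on nonsingular $1$-spaces with stabilizer $\Sp_{2m-2}(q)$).

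The key steps, in order, are: (1) establish $Z=MY$ with $M\cap Y$ of the expected shape. For this I would invoke Lemma~\ref{LemOmegaPlus08} (or rather its $\phi$-extended analogue, in the style of Lemma~\ref{LemOmegaPlus10}): the socle factorization $\POm_{2m}^+(q)=\PSp_m(q)\cdot\Omega_{2m-1}(q)$ coming from $\PSp_m(q)<\PSp_2(q)\otimes\PSp_m(q)$ generalizes to the orthogonal field-extension subgroup because $\mathrm{O}_m^+(q^2)$ contains $\Sp_m(q^2)$ and hence contains (a conjugate of) the relevant $\Sp_m(q)$; alternatively one checks directly via orders that $Z=MY$, since $M\cap Y=\mathrm{O}_{m-1}(q^2){:}f=\Omega_{m-1}(q^2){:}f$ (as $q^2$ is even, so $\mathrm{O}_{m-1}(q^2)=\Omega_{m-1}(q^2)\cong\Sp_{m-2}(q^2)$) is the stabilizer in $M$ of the nonsingular vector $e_1+f_1\in V_\sharp$. (2) Relativize inside $M$: apply Lemma~\ref{LemSymplectic13} (via the exceptional isomorphism $\mathrm{O}_m^+(q^2)$-world $\to$ $\Sp_{m-2}(q^2)$-world, or more simply Lemma~\ref{LemOmegaPlus08}'s argument one dimension down) to get $M=X(M\cap Y)$ with $X\cap(M\cap Y)=\Omega_{m-1}(q^2)$. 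Actually the cleanest route is: $M=X$ already when $\calO$-type factors are absorbed, so instead I would note $X\cap Y=X\cap(M\cap Y)$ and compute $|X|/|X\cap Y|=|\Omega_m^+(q^2){:}(2f)|/|\Omega_{m-1}(q^2)|=2f\cdot q^{m-2}(q^{m}-1)=q^{2m-2}(q^m-1)\cdot(\text{check})$, matching $|Z|/|Y|=|\Omega_{2m}^+(q){:}f|/|\GaSp_{2m-2}(q)|$. (3) Conclude $Z=XY$ from the index count once $X\cap Y$ is pinned down, or directly from $Z=MY$ and $M=X(M\cap Y)$ as in Lemma~\ref{LemOmegaPlus11}.

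The main obstacle I expect is step~(2): correctly identifying $X\cap Y$ as exactly $\Omega_{m-1}(q^2)$ rather than an extension of it by an outer piece, and simultaneously confirming that $M\Omega(V)=Z$ (i.e.\ that $X$ genuinely surjects onto the $\langle\phi\rangle$-quotient). The subtlety is that $\rho$ is allowed to be either $\psi$ or $\psi r'_{E_1+F_1}$: both generate, modulo $\Omega(V_\sharp)$, a group mapping onto $\langle\bar\phi\rangle$ in $Z/\Omega(V)$, so $X\Omega(V)=Z$ in either case, but one must check that $r'_{E_1+F_1}$ — the reflection fixing $e_1+f_1$ — does lie in $Y$, which forces $X\cap Y$ to pick up no extra factor of $2$ beyond $\Omega_{m-1}(q^2)$ when $\rho=\psi r'_{E_1+F_1}$, whereas when $\rho=\psi$ one uses instead that $\psi$ fixes $e_1+f_1$ (as $\psi$ acts entry-wise by $p$-th powers and $\lambda E_1+F_1$ has coordinates in the prime field only after the normalization $\Tr(\lambda)=1$ — here one may need to replace $\psi$ by a suitable conjugate, exactly the caveat recorded in the remark after Lemma~\ref{LemXia04} and in remark~(III) after Theorem~\ref{ThmOmegaPlus}). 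I would handle this by the same device as in Lemma~\ref{LemOmegaPlus11}: reduce to the known factorization of $M$ (an almost simple orthogonal group of plus type in dimension $m$ over $\bbF_{q^2}$ with a graph-field automorphism) supplied by the earlier lemmas, so that the outer-automorphism bookkeeping is inherited rather than redone.
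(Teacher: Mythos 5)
There is a genuine gap, and the mechanism you propose for the crucial step is in fact the reverse of what is true. The order count gives $|X|/|\Omega_{m-1}(q^2)|=2f\,q^{m-2}(q^m-1)=q^{m-1}(q^m-1)=|Z|/|Y|$ (using $q=2f$), so $Z=XY$ holds \emph{if and only if} $X\cap Y$ equals $\Omega(V_\sharp)_{e_1+f_1}=\Omega_{m-1}(q^2)$ exactly, with no outer piece at all. You assert that $\psi$ fixes $e_1+f_1$, and that $r'_{E_1+F_1}$ is ``the reflection fixing $e_1+f_1$'' and lies in $Y$; both claims are false, and if either were true the lemma would be \emph{dis}proved, since then $|X\cap Y|\geqslant2|\Omega_{m-1}(q^2)|$ and $|X|\,|Y|<|Z|\,|X\cap Y|$. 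Indeed $e_1+f_1=\lambda E_1+F_1$ with $\Tr(\lambda)=\lambda+\lambda^q=1$, so $\lambda\notin\bbF_q$ and $(\lambda E_1+F_1)^\psi=\lambda^pE_1+F_1\neq\lambda E_1+F_1$; and $r'_{E_1+F_1}$ is the reflection in $E_1+F_1$, not in $\lambda E_1+F_1$, and sends $\lambda E_1+F_1$ to $E_1+\lambda F_1$. Replacing $\psi$ by a conjugate does not rescue this: the content of the lemma is precisely that \emph{no} element of $X\setminus\Omega(V_\sharp)$ stabilizes $e_1+f_1$, and that is the statement requiring proof.

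The paper proves exactly this missing step: after the order count it supposes $y\in(X\setminus\Omega(V_\sharp))\cap Y$, reduces (squaring when $q=4$) to an element $\rho^fs\in Y$ with $s\in\Omega(V_\sharp)$, and shows such an element would force $\lambda=Q_\sharp(\lambda E_1+F_1)=Q_\sharp(\lambda^qE_1+F_1)=\lambda^q$, contradicting $\lambda+\lambda^q=1$. Your detour through $M=\mathrm{O}(V_\sharp){:}\langle\phi\rangle$ does not avoid this work, because identifying $M\cap Y$ carries the identical subtlety; moreover the proposed derivation of $Z=MY$ from Lemma~\ref{LemOmegaPlus08} rests on the false containment of $\Sp_m(q^2)$ in $\mathrm{O}_m^+(q^2)$ (for even $q$ the inclusion goes the other way). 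So your skeleton --- order count plus pinning down $X\cap Y$ --- matches the paper's, but the step that actually carries the proof is absent, and the heuristics offered in its place point in the wrong direction.
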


\begin{proof}
It is clear that $X=\Omega_m^+(q^2){:}(2f)$ and $Y=\GaSp_{2m-2}(q)$. Let $S=\Omega(V_\sharp)$.
Since $e_1+f_1$ is nonsingular in both $V$ and $V_\sharp$, we have
\[
S\cap Y=\Omega(V_\sharp)\cap Z_{e_1+f_1}=\Omega(V_\sharp)_{e_1+f_1}=\Omega_{m-1}(q^2).
\]
$S\cap K^{(\infty)}=H^{(\infty)}\cap K^{(\infty)}=\SL_{m-1}(q^2)=S\cap K$. Now as $q\in\{2,4\}$ we have $q=2f$, and so
\[
\frac{|X|}{|S\cap Y|}=\frac{|\Omega_m^+(q^2){:}(2f)|}{|\Sp_{m-2}(q^2)|}=2fq^{m-2}(q^m-1)=q^{m-1}(q^m-1)=\frac{|\Omega_{2m}^+(q){:}f|}{|\GaSp_{2m-2}(q)|}=\frac{|Z|}{|Y|}.
\]
Thus it suffices to prove $X\cap Y=S\cap Y$, or equivalently, $(X\setminus S)\cap Y=\emptyset$.

Suppose for a contradiction that there exists $y\in(X\setminus S)\cap Y$. Since $\rho$ has order $2f$, we have
\[
X=S{:}\langle\rho\rangle=S\cup\rho S\cup\dots\cup\rho^{2f-1}S.
\]
If $y\in\rho^fS$, then $y=\rho^fs$ for some $s\in S$.
If $y\notin\rho^fS$, then $q=4$ and $y^2\in\rho^fS$, which means that $y^2=\rho^fs$ for some $s\in S$.
In either case, there exists $s\in S$ such that $\rho^fs\in Y$. Since $Y=Z_{e_1+f_1}=Z_{\lambda E_1+F_1}$, it follows that
\[
\lambda E_1+F_1=(\lambda E_1+F_1)^{\rho^fs}=(\lambda E_1+F_1)^{\psi^f\psi^f\rho^fs}=(\lambda^qE_1+F_1)^{\psi^f\rho^fs}.
\]
Then as $\psi^f\rho^fs\in\{s,(r'_{E_1+F_1})^fs\}\subset\mathrm{O}(V_\sharp)$, we obtain
\[
\lambda=Q_\sharp(\lambda E_1+F_1)=Q_\sharp(\lambda^qE_1+F_1)=\lambda^q,
\]
contradicting the condition $\lambda^q+\lambda=1$.
\end{proof}

Recall the definition of $\xi$ in Section~\ref{SecOmegaPlus01}.

\begin{lemma}\label{LemOmegaPlus45}
Let $Z=\Omega(V){:}\langle\phi\rangle=\Omega_{2m}^+(q){:}f$ with $m=4a$ and $q\in\{2,4\}$, and let $Y=Z_{e_1+f_1}=\GaSp_{2m-2}(q)$.
\begin{enumerate}[{\rm (a)}]
\item If $M=T{:}\langle\rho\rangle=\SL_m(q){:}\langle\rho\rangle$ with $\rho\in\{\phi,\phi\gamma\}$ and $X=\Sp_{2a}(q^2){:}(2f)<M$ is the group $H$ defined in~\cite[Lemmas~4.8~or~4.9]{LWX-Linear}, then $Z=XY$ with $X\cap Y=\Sp_{2a-2}(q^2)$.
\item If $M=\SU(V_\sharp){:}\langle\xi\rangle=\SU_m(q){:}(2f)$ and $X=\Sp_{2a}(q^2){:}(2f)<M$ is the group $H$ defined in~\cite[Lemma~4.6]{LWX-Unitary}, then $Z=XY$ with $X\cap Y=\Sp_{2a-2}(q^2)$.
\item If $q=2$, $M=\SU(V_\sharp)$ and $X=\Sp_{2a}(4){:}2<M$ is the group $H$ defined in~\cite[Lemma~4.5]{LWX-Unitary}, then $Z=XY$ with $X\cap Y=\Sp_{2a-2}(4)$.
\item If $M=\Omega(V_\sharp){:}\langle\psi\rangle=\Omega_m^+(q^2){:}(2f)$ and $X=\SU_{2a}(q^2){:}(4f)<M$, then $Z=XY$ with $X\cap Y=\SU_{2a-1}(q^2).2$.
\item If $M=\SU_{2a}(q^2){:}(4f)<\Omega(V_\sharp){:}\langle\psi\rangle=\Omega_m^+(q^2){:}(2f)$ and $X=\Sp_{2a}(q^2){:}(4f)<M$, then $Z=XY$ with $X\cap Y=\Sp_{2a-2}(q^2).2$.
\end{enumerate}
\end{lemma}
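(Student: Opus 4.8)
The plan is to prove all five parts by the same two-step reduction already used for Lemma~\ref{LemOmegaPlus11}. In each part there is an intermediate subgroup $M$ with $X\leqslant M\leqslant Z$ (namely the group called $M$ in the statement, except that in part~(d) we take $M=\Omega(V_\sharp){:}\langle\psi\rangle$ and in part~(e) we take $M=\SU_{2a}(q^2){:}(4f)$). First I would establish $Z=MY$ with an explicit intersection $M\cap Y$; then I would quote a known factorization of $M$ to get $M=X(M\cap Y)$ with an explicit intersection $X\cap(M\cap Y)$. Combining, $Z=X(M\cap Y)Y=XY$, and since $X\subseteq M$ we have $X\cap Y=X\cap(M\cap Y)$, which is the stated intersection; the index identities that make each product exhaust $Z$ all reduce to $q=2f$, which holds because $q\in\{2,4\}$ (and for these $q$ the diagonal automorphism group of $\Omega_{2m}^+(q)$ is trivial, so $Z$ really is the full group in question).

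For parts~(a)--(c) this goes as follows. In part~(a), Lemma~\ref{LemOmegaPlus03} gives $\Omega(V)=T\,\Omega(V)_{e_1+f_1}$ with $T\cap\Omega(V)_{e_1+f_1}=\SL_{m-1}(q)$; since $\rho\in\{\phi,\phi\gamma\}$ fixes $e_1+f_1$ and normalizes $T$, an element $t\rho^i$ of $M$ fixes $e_1+f_1$ if and only if $t$ does, so $Z=MY$ with $M\cap Y=\SL_{m-1}(q){:}\langle\rho\rangle$, and then~\cite[Lemmas~4.8~or~4.9]{LWX-Linear} supply $M=X(M\cap Y)$ with $X\cap(M\cap Y)=\Sp_{2a-2}(q^2)$. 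Part~(b) is identical with Lemma~\ref{LemOmegaPlus03} replaced by the derived-subgroup form of Lemma~\ref{LemOmegaPlus07} (giving $Z=MY$ with $M\cap Y=\SU_{m-1}(q).(2f)$) and with~\cite[Lemma~4.6]{LWX-Unitary} in place of~\cite[Lemmas~4.8~or~4.9]{LWX-Linear}. Part~(c) is the $q=2$ specialization, where $M=\SU(V_\sharp)$ carries no field automorphism: Lemma~\ref{LemOmegaPlus07} gives $Z=MY$ with $M\cap Y=\SU_{m-1}(2)$, and~\cite[Lemma~4.5]{LWX-Unitary} gives $M=X(M\cap Y)$ with $X\cap(M\cap Y)=\Sp_{2a-2}(4)$. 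These three parts are precisely the $\Sp$-analogues of Lemma~\ref{LemOmegaPlus11}(a)--(c).

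For parts~(d) and~(e) the first step is supplied by results proved earlier in this section. In part~(d), taking $\rho=\psi$ in Lemma~\ref{LemOmegaPlus12} gives $Z=MY$ with $M=\Omega_m^+(q^2){:}(2f)$ and $M\cap Y=\Omega_{m-1}(q^2)$; since $m=4a$ has $m/2=2a$ even, $\Omega_{m-1}(q^2)=\N_1[\Omega_m^+(q^2)]$, and the required factorization $M=X(M\cap Y)$ with $X\cap(M\cap Y)=\SU_{2a-1}(q^2).2$ is an instance of the maximal factorization $\mathrm{O}_{2n}^+(q_0)=\GU_n(q_0)\cdot\mathrm{O}_{2n-1}(q_0)$ ($n$ even) of~\cite{LPS1990}, applied over the field $q^2$ and extended to $\Omega_m^+(q^2){:}(2f)$ via the corresponding lemma for extended groups in~\cite{LWX-Unitary}. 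In part~(e) the first step is part~(d) itself, applied with $M=\SU_{2a}(q^2){:}(4f)$ now playing the role of the group called $X$ in part~(d): this gives $Z=MY$ with $M\cap Y=\SU_{2a-1}(q^2).2$; the second step is the unitary maximal factorization $\GU_n(q_0)=\Sp_n(q_0)\cdot\GU_{n-1}(q_0)$ ($n$ even) of~\cite{LPS1990}, in the form $\SU_{2a}(q^2){:}(4f)=(\Sp_{2a}(q^2){:}(4f))(\SU_{2a-1}(q^2).2)$ with intersection $\Sp_{2a-2}(q^2).2$, again via the appropriate extended-group lemma of~\cite{LWX-Unitary}. Hence $Z=XY$ with the stated intersection.

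The main obstacle is the second step in each part: one must identify the subgroup $M\cap Y$ computed on the orthogonal side with the specific factor appearing in the quoted factorization of $M$. By Lemma~\ref{LemXia04} this identification is only needed up to conjugacy in $M$, which is harmless; but one must then carry the intersection data across keeping exact track of the outer parts ($\langle\rho\rangle$ and the $(2f)$ in~(a) and~(b), the absence of a field automorphism in~(c), and the $(2f)$'s, $(4f)$'s and extra $2$'s in~(d) and~(e)) rather than merely the socle factors. For parts~(d) and~(e) there is the additional preliminary of pinning down which of the available factorization lemmas for $\Omega_m^+(q^2)$ and for $\SU_{2a}(q^2)$ over $\bbF_{q^2}$ (with $2a$ even) yields exactly the $\N_1$-versus-$\GU$ and $\Sp$-versus-$\N_1$ factorizations in the extended form required, and of checking that the chains of embeddings $\Sp_{2a}(q^2)<\SU_{2a}(q^2)<\Omega(V_\sharp)<\Omega(V)$ are the ones fixed in Section~\ref{SecOmegaPlus01}. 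Once these matchings are in place, each product is seen to be all of $Z$ by the index identity, which reduces to $q=2f$.
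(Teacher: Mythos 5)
Your proposal is correct and follows essentially the same route as the paper: parts (a)--(c) by the two-step reduction of Lemma~\ref{LemOmegaPlus11} using the quoted lemmas of~\cite{LWX-Linear} and~\cite{LWX-Unitary}, part~(d) via Lemma~\ref{LemOmegaPlus12} followed by the $\Omega_{2n}^+=\SU_n\cdot\N_1$ factorization (the paper invokes its own Lemma~\ref{LemOmegaPlus07} over $\bbF_{q^2}$ where you cite the underlying maximal factorization from~\cite{LPS1990}), and part~(e) by bootstrapping from part~(d) together with the $\GU_n=\Sp_n\cdot\GU_{n-1}$ factorization (the paper's citation is~\cite[Lemma~4.7]{LWX-Unitary}). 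The bookkeeping of the outer $(2f)$, $(4f)$ and $.2$ parts that you flag as the main obstacle is exactly what those cited lemmas supply.
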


\begin{proof}
Based on~\cite[Lemmas~4.8~and~4.9]{LWX-Linear} and~\cite[Lemmas~4.5~and~4.6]{LWX-Unitary}, similar arguments as in the proof of Lemma~\ref{LemOmegaPlus11} lead to parts~(a)--(c) of the lemma.

Assume that $M$ and $X$ are as in part~(d). According to Lemma~\ref{LemOmegaPlus12}, $Z=MY$ with $M\cap Y=\Omega_{m-1}(q^2)$. Moreover, Lemma~\ref{LemOmegaPlus07} implies that $M=X(M\cap Y)$ with $X\cap(M\cap Y)=\SU_{2a-1}(q^2).2$. Therefore, $Z=XY$ with $X\cap Y=\SU_{2a-1}(q^2).2$.

Now assume that $M$ and $X$ are as in part~(e). By part~(d) we have $Z=MY$ with $M\cap Y=\SU_{2a-1}(q^2).2$. Moreover,~\cite[Lemma~4.7]{LWX-Unitary} implies that $M=X(M\cap Y)$ with $X\cap(M\cap Y)=\Sp_{2a-2}(q^2).2$. Thus $Z=XY$ with $X\cap Y=\Sp_{2a-2}(q^2).2$.
\end{proof}





Next we construct the pair $(X,Y)$ in row~5 of Table~\ref{TabOmegaPlus}.

\begin{lemma}\label{LemOmegaPlus06}
Let $Z=\Omega(V)=\Omega_{12}^+(q)$ with $m=6$ and $q$ even, let $X=\G_2(q)<\Sp_6(q)<M<Z$ with $M=T$, $\SU(V_\sharp)$ or $\Sp_2(q)\otimes\Sp_6(q)$, and let $Y=Z_{e_1+f_1}$. Then $Z=XY$ with $Y=\Sp_{10}(q)$ and $X\cap Y=\SL_2(q)$.
\end{lemma}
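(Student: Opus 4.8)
The plan is to reduce the factorization $Z=XY$ with $X=\G_2(q)$ to the already-established factorization with the symplectic group $\Sp_6(q)$ in the middle, by chaining the containments $\G_2(q)<\Sp_6(q)<M$ against the orthogonal factor $Y=Z_{e_1+f_1}$. First I would observe that since $e_1+f_1$ is a nonsingular vector of $V$ with respect to $Q$, we have $Y=Z_{e_1+f_1}=\Omega_{2m-1}(q)=\Omega_{11}(q)=\Sp_{10}(q)$ (the last isomorphism holding because $q$ is even), so the claimed identifications of $Y$ and $X\cap Y$ are at least consistent dimension-wise.

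The core of the argument proceeds in two steps. In the first step I would invoke the appropriate earlier lemma to get $Z=MY$ together with the intersection $M\cap Y$: by Lemma~\ref{LemOmegaPlus03} (if $M=T=\SL_m(q)$), by Lemma~\ref{LemOmegaPlus07} (if $M=\SU(V_\sharp)$), or by Lemma~\ref{LemOmegaPlus08} (if $M=\Sp_2(q)\otimes\Sp_6(q)$, using $\Sp_6(q)<M$), obtaining $M\cap Y=\SL_{m-1}(q)=\SL_5(q)$, $\SU_5(q)$, or $\Sp_4(q).(2,q-1)=\Sp_4(q)$ (since $q$ is even) respectively. In each case the intermediate group $\Sp_6(q)$ sits inside $M$ and the relevant factorization $\Sp_6(q)=\Sp_6(q)\cap M \cdot (M\cap Y)$-type decomposition reduces matters to knowing that $\G_2(q)$ together with $M\cap Y$ (restricted to $\Sp_6(q)$) covers $\Sp_6(q)$. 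The second step is then to use Lemma~\ref{LemSymplectic07}: the factorization $\Sp_6(q)=\Omega_6^\varepsilon(q)\cdot\G_2(q)$ or $\Sp_6(q)=\Omega_5(q)\cdot\G_2(q)$ with intersections $\SL_3^\varepsilon(q)$ or $\SL_2(q)$, combined with the linear/unitary lemmas (\cite[Lemma~4.5]{LWX-Linear}, \cite[Lemma~4.7]{LWX-Unitary}) already cited in the proofs of Lemmas~\ref{LemOmegaPlus27} and~\ref{LemSymplectic07}, to conclude that $\G_2(q)$ intersects $M\cap Y$ inside $\Sp_6(q)$ in $\SL_2(q)$ and that the product is all of $\Sp_6(q)$.

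Putting these together, $Z=MY=X\cdot(M\cap Y)\cdot Y=XY$, and $X\cap Y=X\cap(M\cap Y)$. The remaining routine verification is the order count: one checks $|X|/|X\cap Y|=|\G_2(q)|/|\SL_2(q)|=q^5(q^6-1)(q^4-1)/\ (q(q^2-1))$, which should match $|Z|/|Y|=|\Omega_{12}^+(q)|/|\Sp_{10}(q)|$; I would present this as a single displayed chain of equalities analogous to those in Lemmas~\ref{LemOmegaPlus06}'s siblings.

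The main obstacle I anticipate is handling the three cases for $M$ uniformly — in particular making sure that $\Sp_6(q)$ really does lie in the stated position inside each $M$ (it is a field-extension or tensor-decomposition subgroup in different guises), and that the intersection $\Sp_6(q)\cap(M\cap Y)$ is exactly the subgroup to which Lemma~\ref{LemSymplectic07}(b) applies, namely $\Omega_5(q)=\Sp_4(q)$. This is essentially the same bookkeeping already carried out in the proof of Lemma~\ref{LemOmegaPlus27}, so I would structure the proof to parallel that one: first reduce to $Z=MY$ with the symplectic factorization $\Sp_6(q)=X\cdot(\Sp_6(q)\cap M\cap Y)$ supplied by Lemma~\ref{LemSymplectic07}, then chain the products, then finish with the index computation.
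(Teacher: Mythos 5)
Your overall strategy is the paper's: establish $Z=MY$ with $M\cap Y$ computed by Lemmas~\ref{LemOmegaPlus03},~\ref{LemOmegaPlus07} and~\ref{LemOmegaPlus08}, then pin down $X\cap(M\cap Y)$ and finish with the index count $|X|/|X\cap Y|=|Z|/|Y|$. The gap is in how you justify the middle step. You propose to get $X\cap(M\cap Y)=\SL_2(q)$ from Lemma~\ref{LemSymplectic07}, but that lemma computes the intersection of $\G_2(q)$ with the \emph{orthogonal} subgroups $\Omega_6^\varepsilon(q)$ and $\Omega_5(q)$ of $\Sp_6(q)$, and none of the groups $M\cap Y$ arising here is of that form. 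For $M=T$ and $M=\SU(V_\sharp)$ one needs $\G_2(q)\cap\SL_5(q)$ inside $\SL_6(q)$ and $\G_2(q)\cap\SU_5(q)$ inside $\SU_6(q)$, where $\SL_5(q)=T_{e_1,U_1}$ is a point--hyperplane stabilizer, not an orthogonal subgroup; the paper quotes \cite[Lemma~4.11]{LWX-Linear} and \cite[Lemma~4.7]{LWX-Unitary} for exactly these. For $M=\Sp_2(q)\otimes\Sp_6(q)$, Lemma~\ref{LemOmegaPlus08} gives $M_1\cap Y=\Sp_4(q)<\N_2[M_1]$, i.e.\ the \emph{reducible} $\Sp_4(q)$ acting on the natural module as $4\oplus 2$; this is not conjugate in $\Sp_6(q)$ to the subgroup $\Omega_5(q)<\Omega_6^\varepsilon(q)$ of Lemma~\ref{LemSymplectic07}(b), whose restriction of the $6$-dimensional module is uniserial. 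Both intersections happen to equal $\SL_2(q)$, but the justification you give applies to the wrong conjugacy class; the paper cites \cite[Lemma~4.10]{LWX-Linear} for the $\N_2$-type copy. So as written the key intersection computations are not actually supplied by the lemma you invoke.

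Two smaller points. First, your concluding chain $Z=MY=X(M\cap Y)Y=XY$ presupposes $M=X(M\cap Y)$, which is again an order count inside $M$; the paper avoids this by going straight to the global count once $X\cap Y=X\cap(M\cap Y)=\SL_2(q)$ is known. Second, your displayed ratio is miscomputed: $|\G_2(q)|=q^6(q^6-1)(q^2-1)$, so $|\G_2(q)|/|\SL_2(q)|=q^5(q^6-1)$, which is what matches $|\Omega_{12}^+(q)|/|\Sp_{10}(q)|$; the expression $q^5(q^6-1)(q^4-1)/(q(q^2-1))$ does not.
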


\begin{proof}
Clearly, $Y=\N_1[Z]=\Sp_{10}(q)$. If $M=T$ or $\SU(V_\sharp)$, then Lemmas~\ref{LemOmegaPlus03} and~\ref{LemOmegaPlus07} show that $M\cap Y=\SL_5(q)$ or $\SU_5(q)$, respectively, and so by~\cite[Lemma~4.11]{LWX-Linear} and~\cite[Lemma~4.7]{LWX-Unitary} we have $X\cap(M\cap Y)=\SL_2(q)$.
If $M=\Sp_2(q)\otimes\Sp_6(q)$, then $X<M_1<M$ with $M_1=\Sp_6(q)$, and by Lemma~\ref{LemOmegaPlus08} and~\cite[Lemma~4.10]{LWX-Linear} we have $X\cap(M_1\cap Y)=\SL_2(q)$. In either case, it follows that $X\cap Y=\SL_2(q)$ and so
\[
\frac{|X|}{|X\cap Y|}=\frac{|\G_2(q)|}{|\SL_2(q)|}=q^5(q^6-1)=\frac{|\Omega_{12}^+(q)|}{|\Sp_{10}(q)|}=\frac{|Z|}{|Y|}.
\]
Hence $Z=XY$.
\end{proof}

\begin{remark}
If we let $X=\G_2(q)'$ in Lemma~\ref{LemOmegaPlus06} then the conclusion $Z=XY$ would not hold for $q=2$.
\end{remark}

Recall the notation $u=e_1+e_2+\mu f_2$ introduced in Section~\ref{SecOmegaPlus01}. The following lemma gives the pair $(X,Y)$ in row~7 of Table~\ref{TabOmegaPlus}.

\begin{lemma}\label{LemOmegaPlus13}
Let $G=\Omega(V)=\Omega_{2m}^+(q)$, let $H=R{:}T$, let $K=G_{e_1+f_1,u}$, and let $(Z,X,Y)=(\overline{G},\overline{H},\overline{K})$. Then $T\cap K=\SL_{m-2}(q)$, $H\cap K=(q^{(m-2)(m-3)/2}.q^{2m-4}){:}\SL_{m-2}(q)$, and $Z=XY$ with $Z=\POm_{2m}^+(q)$, $X=\lefthat(q^{m(m-1)/2}{:}\SL_m(q))$ and $Y\cong K=\Omega_{2m-2}^-(q)$.
\end{lemma}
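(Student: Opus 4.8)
The plan is to follow the same template as Lemmas~\ref{LemOmegaPlus02}--\ref{LemOmegaPlus13} above: compute the intersection $H\cap K$ precisely, verify the numerical identity $|H|/|H\cap K|=|G|/|K|$, conclude $G=HK$, and then pass to the quotient modulo scalars to read off $Z=XY$. The new feature here, compared with Lemma~\ref{LemOmegaPlus02}, is that $K=G_{e_1+f_1,u}$ is the stabilizer of \emph{two} vectors: the nonsingular vector $e_1+f_1$ and the vector $u=e_1+e_2+\mu f_2$ with $Q(u)=\mu$. Since $Q(e_1+f_1)=\beta(e_1+f_1,u)=1$ (recorded in Section~\ref{SecOmegaPlus01}), the pair $\{e_1+f_1,\,u\}$ spans a nondegenerate $2$-space of minus type, so $K=G_{e_1+f_1,u}=\Omega((\langle e_1+f_1,u\rangle_{\bbF_q})^\perp)=\Omega_{2m-2}^-(q)$; this identifies $Y\cong K=\Omega_{2m-2}^-(q)$, and $Z=\POm_{2m}^+(q)$, $X=\lefthat(q^{m(m-1)/2}{:}\SL_m(q))$ are immediate.

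First I would compute $T\cap K$. An element $t\in T\cap K$ lies in $T\cap G_{e_1+f_1}=\SL_{m-1}(q)$ by Lemma~\ref{LemOmegaPlus01}(c), i.e. $t$ fixes $e_1,f_1$ and stabilizes $U_1=\langle e_2,\dots,e_m\rangle_{\bbF_q}$ and $W_1=\langle f_2,\dots,f_m\rangle_{\bbF_q}$; imposing in addition $u^t=u$ forces (since $e_1^t=e_1$) that $e_2^t=e_2$ and $\mu f_2^t=\mu f_2$, hence $f_2^t=f_2$. So $T\cap K$ fixes $e_1,f_1,e_2,f_2$ pointwise and stabilizes $\langle e_3,\dots,e_m\rangle_{\bbF_q}$, giving $T\cap K=\SL_{m-2}(q)$. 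Next I would compute $R\cap K$, the kernel of $M\cap K$ acting on $U$: an element of $R$ fixing $e_1+f_1$ and $u$ fixes $\langle e_1,f_1,e_2,f_2\rangle_{\bbF_q}$ pointwise (as in the proof of Lemma~\ref{LemOmegaPlus01}(a), using $R$ acts trivially on $U$ and the fixing of $e_1+f_1$ forces fixing $f_1$, then fixing $u-e_1-e_2=\mu f_2$ forces fixing $f_2$), so $R\cap K$ is the pointwise stabilizer of a $2$-space in $\Omega_{2m-4}^+$-type space, i.e. $R\cap K=q^{(m-2)(m-3)/2}$. For the full $H\cap K$ I would argue, paralleling Lemma~\ref{LemOmegaPlus01}, that $(M\cap K)^U$ is the stabilizer in $\SL(U)$ of the appropriate flag (the subspaces $\langle e_2\rangle$, $U_1$, and affine conditions from $e_1+f_1$ and from $u$), which has order $q^{2m-4}|\SL_{m-2}(q)|$; combined with $|R\cap K|=q^{(m-2)(m-3)/2}$ this yields $|H\cap K|=q^{(m-2)(m-3)/2}q^{2m-4}|\SL_{m-2}(q)|$, matching the claimed structure $(q^{(m-2)(m-3)/2}.q^{2m-4}){:}\SL_{m-2}(q)$.

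With these orders in hand, the numerical check is routine:
\[
\frac{|H|}{|H\cap K|}=\frac{q^{m(m-1)/2}|\SL_m(q)|}{q^{(m-2)(m-3)/2}q^{2m-4}|\SL_{m-2}(q)|}=q^{2m-2}(q^m-1)(q^{m-1}+1)=\frac{|\Omega_{2m}^+(q)|}{|\Omega_{2m-2}^-(q)|}=\frac{|G|}{|K|},
\]
so $|G|=|H||K|/|H\cap K|$ and therefore $G=HK$; applying the bar map as in Lemma~\ref{LemOmegaPlus02} gives $Z=\overline{G}=\overline{H}\,\overline{K}=XY$. The main obstacle is the careful bookkeeping for $(M\cap K)^U$ — one must check that imposing the stabilization of $u$ (not a vector of $U$ nor of $W$, but a mixed vector) on $\SL(U)_{U_1,e_1+U_1}$ really cuts the affine part down from $q^{m-1}$ to $q^{2m-4}\cdot(\text{affine part of }\SL_{m-2})$ correctly, equivalently that the lower bound $|M\cap K|\geqslant|M||K|/|G|$ from the factorization forces equality. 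I would handle this exactly as in Lemma~\ref{LemOmegaPlus02}: get the upper bound from the explicit flag stabilizer, get the matching lower bound from the order identity above applied to $M=R{:}T$ in place of $H$, and conclude equality, which simultaneously proves $G=MK$ and pins down $H\cap K=(R\cap K).S_{\dots}$ via Lemma~\ref{LemOmegaPlus01}(d)-style reasoning with $S=T$.
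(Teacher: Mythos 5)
Your proposal follows essentially the same route as the paper's proof: identify $K=\Omega_{2m-2}^-(q)$ from the minus-type $2$-space $\langle e_1+f_1,u\rangle_{\bbF_q}$, compute $T\cap K=\SL_{m-2}(q)$ and $R\cap K=q^{(m-2)(m-3)/2}$ by the same vector-fixing arguments, bound $(H\cap K)^U$ above by $q^{2m-4}{:}\SL_{m-2}(q)$ via the stabilized flag, and force equality with the lower bound $|H\cap K|\geqslant|H||K|/|G|$. One small slip: the middle term of your displayed order computation should read $q^{2m-2}(q^m-1)(q^{m-1}-1)$ rather than $q^{2m-2}(q^m-1)(q^{m-1}+1)$; since both the leftmost and rightmost ratios equal the former, the conclusion is unaffected.
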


\begin{proof}
It is evident that $Z=\POm_{2m}^+(q)$ and $X=\lefthat(q^{m(m-1)/2}{:}\SL_m(q))$.
Since $\langle e_1+f_1,u\rangle_{\bbF_q}$ is a nondegenerate $2$-subspace of minus type in $V$, we obtain $Y\cong K=\Omega_{2m-2}^-(q)$.
For each $t\in T\cap K$, we have $e_1^t\in U^t=U$, $f_1^t\in W^t=W$, $(e_1+e_2)^t\in U^t=U$ and $(\mu f_2)^t\in W^t=W$. Then it follows from
\[
e_1^t+f_1^t=(e_1+f_1)^t=e_1+f_1\ \text{ and }\ (e_1+e_2)^t+(\mu f_2)^t=u^t=(e_1+e_2)+\mu f_2
\]
that $e_1^t=e_1$, $f_1^t=f_1$, $(e_1+e_2)^t=e_1+e_2$ and $(\mu f_2)^t=\mu f_2$. This implies that $t$ fixes $e_1,e_2,f_1,f_2$.
Hence $T\cap K=T_{e_1,e_2,f_1,f_2}=\SL_{m-2}(q)$.

Next we calculate $R\cap K$, the kernel of $H\cap K$ acting on $U$. For each $r\in R\cap K$, since $r$ fixes $e_1$, $e_2$, $e_1+f_1$ and $u$, we deduce that $r$ fixes $\langle e_1,e_2,e_1+f_1,u\rangle_{\bbF_q}=\langle e_1,e_2,f_1,f_2\rangle_{\bbF_q}$ pointwise. Hence $R\cap K$ is isomorphic to the pointwise stabilizer of $U_2$ in $\Omega(\langle e_3,f_3,\dots,e_m,f_m\rangle_{\bbF_q})$, and so $R\cap K=q^{(m-2)(m-3)/2}$.

Now we consider the action of $H\cap K$ on $U$. Since $K\leqslant G_{e_1+f_1}$, Lemma~\ref{LemOmegaPlus01}(b) shows that $H\cap K$ stabilizes $U_1$ and $e_1+U_1$. Since $K$ fixes $u$, it stabilizes $u^\perp$. Hence $H\cap K$ stabilizes $U_1\cap u^\perp=\langle e_3,\dots,e_m\rangle_{\bbF_q}=U_2$.
For arbitrary $h\in H\cap K$, write $e_1^h=e_1+\zeta e_2+e$ and $e_2^h=\eta e_2+e'$ with $\zeta,\eta\in\bbF_q$ and $e,e'\in U_2$ (note that $h$ stabilizes $e_1+U_1$ and $U_2$). Then
\begin{align*}
&\zeta\mu=\beta(e_1+\zeta e_2+e,e_1+e_2+\mu f_2)=\beta(e_1^h,(e_1+e_2+\mu f_2)^h)=\beta(e_1,e_1+e_2+\mu f_2)=0,\\
&\eta\mu=\beta(\eta e_2+e',e_1+e_2+\mu f_2)=\beta(e_2^h,(e_1+e_2+\mu f_2)^h)=\beta(e_2,e_1+e_2+\mu f_2)=\mu,
\end{align*}
and so $\zeta=0$ and $\eta=1$. Therefore,
\[
(H\cap K)^U\leqslant\left\{
\begin{pmatrix}
1&&A_{13}\\
&1&A_{23}\\
&&A_{33}
\end{pmatrix}
\,\middle|\,A_{13},A_{23}\in\bbF_q^{m-2},\A_{33}\in\SL_{m-2}(q)\right\}.
\]
It follows that
\[
(H\cap K)/(R\cap K)\cong(H\cap K)^U\leqslant q^{2m-4}{:}\SL_{m-2}(q),
\]
which combined with the fact
\begin{align*}
|H\cap K|\geqslant\frac{|H||K|}{|G|}&=\frac{|q^{m(m-1)/2}{:}\SL_m(q)||\Omega_{2m-2}^-(q)|}{|\Omega_{2m}^+(q)|}\\
&=q^{(m-2)(m+1)/2}|\SL_{m-2}(q)|=|R\cap K||q^{2m-4}{:}\SL_{m-2}(q)|
\end{align*}
yields $H\cap K=(R\cap K).(q^{2m-4}{:}\SL_{m-2}(q))=(q^{(m-2)(m-3)/2}.q^{2m-4}){:}\SL_{m-2}(q)$ and $|H\cap K|=|H||K|/|G|$. As a consequence, $G=HK$, and so $Z=\overline{G}=\overline{H}\,\overline{K}=XY$.
\end{proof}

In the next lemma we construct the pair $(X,Y)$ in row~8 of Table~\ref{TabOmegaPlus}. Recall the definition of $\gamma$ in Section~\ref{SecOmegaPlus01}.

\begin{lemma}\label{LemOmegaPlus14}
Let $Z=\langle\Omega(V),\gamma\rangle$ with $q=2$, let $X=T{:}\langle\gamma\rangle$, and let $Y=Z_{e_1+f_1,u}$. Then $Z=XY$ with $Z=\Omega_{2m}^+(2).(2,m-1)$,
$X=\SL_m(2).2$, $Y=\Omega_{2m-2}^-(2).(2,m-1)$ and $X\cap Y=\SL_{m-2}(2)$.
\end{lemma}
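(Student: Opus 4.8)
The plan is to mimic closely the argument used for Lemma~\ref{LemOmegaPlus13}, specializing to $q=2$ and enlarging the ambient group by the graph automorphism $\gamma$. First I would record the easy structural facts: since $\gamma\in\GO(V)\setminus\Omega(V)$ and $\gamma$ induces the transpose-inverse automorphism on $T=\SL_m(2)$, we get $Z=\langle\Omega(V),\gamma\rangle$ with $Z/\Omega(V)$ of order $(2,m-1)$ (trivial when $m$ is odd, since then $\gamma\in\Omega(V)$ already — one should check this against the stated conclusion $Z=\Omega_{2m}^+(2).(2,m-1)$, and indeed for $m$ even $\gamma\notin\Omega(V)$, while for $m$ odd one needs to verify $\gamma\in\Omega(V)$ via the spinor norm / Dickson invariant computation). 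Correspondingly $X=T{:}\langle\gamma\rangle=\SL_m(2).2$ and $Y=Z_{e_1+f_1,u}$ has socle $\Omega(\langle e_1+f_1,u\rangle^\perp)=\Omega_{2m-2}^-(2)$ extended by $Z/\Omega(V)$, giving $Y=\Omega_{2m-2}^-(2).(2,m-1)$.

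Next I would compute $X\cap Y$. By Lemma~\ref{LemOmegaPlus13} (applied with $H=R{:}T\supseteq T$, so that $T\cap G_{e_1+f_1,u}=\SL_{m-2}(2)$), we already know $T\cap Y\supseteq T\cap\Omega(V)_{e_1+f_1,u}=\SL_{m-2}(2)$. The task is to show that $\gamma$-coset elements of $X$ do not lie in $Y$, i.e.\ $(X\setminus T)\cap Y=\emptyset$, so that $X\cap Y=T\cap Y=\SL_{m-2}(2)$. For this I would argue as in the proof of Lemma~\ref{LemOmegaPlus12}: an element $t\gamma\in Y$ with $t\in T$ would have to fix both $e_1+f_1$ and $u=e_1+e_2+\mu f_2$; but $\gamma$ swaps $e_i\leftrightarrow f_i$, so $t\gamma$ sends $U$ to $W$-ish combinations, and chasing the images of $e_1,f_1,e_2,f_2$ through $\gamma$ then $t$ forces incompatible relations on the coefficient $\mu$ (recall $x^2+x+\mu$ is irreducible over $\bbF_2$, so $\mu\neq0$ and $\mu\neq1$ as $1^2+1+\mu=\mu\neq0$). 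Concretely, $t\gamma$ fixing $e_1+f_1$ means $t$ fixes $(e_1+f_1)^\gamma=f_1+e_1$, which is automatic, so the real constraint comes from $u$: $t$ must send $u^\gamma=f_1+f_2+\mu e_2$ to $u$, and combined with $t$ preserving $U$ and $W$ (as $t\in T=\SL_m(q)$ stabilizes both) this is impossible since $u$ has nonzero components in both $U$ and $W$ that get transposed.

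Finally I would verify $Z=XY$ by an order count, exactly paralleling Lemma~\ref{LemOmegaPlus13}:
\[
\frac{|X|}{|X\cap Y|}=\frac{|\SL_m(2).2|}{|\SL_{m-2}(2)|}=2\cdot\frac{|\SL_m(2)|}{|\SL_{m-2}(2)|},
\]
and this should match $|Z|/|Y|=|\Omega_{2m}^+(2).(2,m-1)|/|\Omega_{2m-2}^-(2).(2,m-1)|=|\Omega_{2m}^+(2)|/|\Omega_{2m-2}^-(2)|$; since the $.(2,m-1)$ factors cancel and one checks $2\,|\SL_m(2)|/|\SL_{m-2}(2)|=|\Omega_{2m}^+(2)|/|\Omega_{2m-2}^-(2)|$ by a direct computation with the order formulas (the factor $2$ on the left accounts for the discrepancy between $\Omega_{2m-2}^-(2)$ inside $\Omega_{2m}^+(2)$ versus the full reflection stabilizer). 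Because $X\cap Y$ has the minimal possible order forced by $|X\cap Y|\geqslant|X||Y|/|Z|$, equality yields $Z=XY$ by the standard counting criterion.

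The main obstacle I anticipate is the bookkeeping with the index-$2$ extensions: one must be careful about whether $\gamma\in\Omega(V)$ (hence whether $Z$ is really a proper extension of $\Omega(V)$), whether $Y$ genuinely picks up the graph automorphism of its socle $\Omega_{2m-2}^-(2)$, and whether the stray factor of $2$ lands in $|X\cap Y|$ or in the index — the Dickson-invariant / spinor-norm computations for $\gamma$ and for $r_{e_1+f_1}$-type elements restricted to the minus-type subspace are exactly the delicate point, and getting the parity $(2,m-1)$ to appear consistently in $Z$, $Y$, and nowhere in $X\cap Y$ is where the proof could go wrong if done carelessly. Everything else is a faithful transcription of the $q=2$, $H=T$ case of Lemma~\ref{LemOmegaPlus13} together with the non-fusion argument of Lemma~\ref{LemOmegaPlus12}.
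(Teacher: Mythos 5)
Your overall strategy coincides with the paper's: get $T\cap Y=\SL_{m-2}(2)$ from Lemma~\ref{LemOmegaPlus13}, show the coset $T\gamma$ meets $Y$ trivially, and conclude by the order count (which you compute correctly). But the central step, ruling out $t\gamma\in Y$ for $t\in T$, is not correctly argued. You dismiss the condition $(e_1+f_1)^{t\gamma}=e_1+f_1$ as ``automatic''; it is not. Since $t$ preserves $U$ and $W$ while $\gamma$ swaps them, comparing $U$- and $W$-components forces $e_1^t=e_1$ and $f_1^t=f_1$, a genuine restriction on $t$. Conversely, the condition $u^{t\gamma}=u$ \emph{alone} is not impossible: it forces $e_2^t=e_1+e_2$ and $(f_1+f_2)^t=\mu f_2$, and over $\bbF_2$ (where necessarily $\mu=1$, contrary to your parenthetical claim that $\mu\neq1$ --- evaluating $x^2+x+\mu$ at $x=1$ gives $\mu$, so irreducibility only requires $\mu\neq0$, forcing $\mu=1$) these two requirements are compatible under the duality between the $T$-actions on $U$ and $W$: one checks that ``row $2$ of $t|_U$ equals $(1,1,0,\dots,0)$'' and ``column $2$ of $t|_U$ equals $(1,1,0,\dots,0)^{\mathrm{T}}$'' are simultaneously realizable in $\SL_m(2)$. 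So your stated reason (``$u$ has nonzero components in both $U$ and $W$ that get transposed'') yields no contradiction --- $u^\gamma$ also has nonzero components in both $U$ and $W$. The actual contradiction, as in the paper, requires \emph{both} constraints together with invariance of $\beta$: $t$ must send $e_1\mapsto e_1$ and $f_1+f_2\mapsto\mu f_2$, whence $1=\beta(e_1,f_1+f_2)=\beta(e_1^t,(f_1+f_2)^t)=\beta(e_1,\mu f_2)=0$.

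A secondary error: you have the parity of $\gamma$ backwards. Writing $\gamma=r_{e_1+f_1}\cdots r_{e_m+f_m}$ as a product of $m$ reflections shows $\gamma\in\Omega(V)$ if and only if $m$ is \emph{even}; since $(2,m-1)=2$ exactly when $m$ is odd, this is what makes $Z=\Omega_{2m}^+(2).(2,m-1)$ come out right. Your parenthetical asserts the opposite in both directions.
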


\begin{proof}
Clearly, $X=\SL_m(2).2$. As $\gamma=r_{e_1+f_1}\cdots r_{e_m+f_m}$ is a product of $m$ reflections, we have $\gamma\in\Omega(V)$ if and only if $m$ is even.
Hence $Z=\Omega_{2m}^+(2).(2,m-1)$. Then since $\langle e_1+f_1,u\rangle_{\bbF_q}$ is a nondegenerate $2$-subspace of minus type in $V$, we obtain $Y=\Omega_{2m-2}^-(2).(2,m-1)$. Moreover, $T\cap Y=T\cap(\Omega(V)\cap Y)=T\cap\Omega(V)_{e_1+f_1,u}=\SL_{m-2}(2)$ by Lemma~\ref{LemOmegaPlus13}.

Suppose that $X\cap Y\neq T\cap Y$, that is, there exists $t\in T$ with $\gamma t\in Y=Z_{e_1+f_1,u}$. Then
\begin{align*}
&e_1+f_1=(e_1+f_1)^{\gamma t}=(f_1+e_1)^t=e_1^t+f_1^t,\\
&(e_1+e_2)+\mu f_2=u=u^{\gamma t}=(e_1+e_2+\mu f_2)^{\gamma t}=(f_1+f_2+\mu e_2)^t=\mu e_2^t+(f_1+f_2)^t,
\end{align*}
and thereby we deduce $e_1=e_1^t$ and $\mu f_2=(f_1+f_2)^t$ as $t\in T$ stabilizes $U$ and $W$. This yields
\[
0=\beta(e_1,\mu f_2)=\beta(e_1^t,(f_1+f_2)^t)=\beta(e_1,f_1+f_2)=1,
\]
a contradiction.

Thus we conclude that $X\cap Y=T\cap Y=\SL_{m-2}(2)$. Consequently,
\[
\frac{|X|}{|X\cap Y|}=\frac{|\SL_m(2).2|}{|\SL_{m-2}(2)|}=2^{2m-2}(2^m-1)(2^{m-1}-1)=\frac{|\Omega_{2m}^+(2).(2,m-1)|}{|\Omega^-_{2m-2}(2).(2,m-1)|}=\frac{|Z|}{|Y|},
\]
which implies $Z=XY$.
\end{proof}

The pairs $(X,Y)$ in rows~9 and~10 of Table~\ref{TabOmegaPlus} will be constructed in the following two lemmas respectively.

\begin{lemma}\label{LemOmegaPlus15}
Let $Z=\Omega(V)=\Omega_{2m}^+(2)$ with $q=2$, let $X=T$, and let $Y=Z_{\{e_1+f_1+u,u\}}$. Then $Z=XY$ with $X=\SL_m(2)$, $Y=\Omega_{2m-2}^-(2).2$ and $X\cap Y=\SL_{m-2}(2)$.
\end{lemma}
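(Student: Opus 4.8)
The plan is to mimic the structure of the proof of Lemma~\ref{LemOmegaPlus14}, using the previously constructed factorization $G=\Omega_{2m}^+(2)=HK$ from Lemma~\ref{LemOmegaPlus13} (with $H=R{:}T$ and $K=G_{e_1+f_1,u}$) together with an order count. First I would identify the groups: since $X=T=\SL_m(2)$ by the conventions of Section~\ref{SecOmegaPlus01}, and since $\langle e_1+f_1,u\rangle_{\bbF_q}$ is a nondegenerate $2$-subspace of minus type in $V$ (this is recorded where $u$ is introduced, and the vector $e_1+f_1+u$ is the other nonsingular vector of that $2$-space since $Q(e_1+f_1)=Q(u)=1$... actually $Q(u)=\mu\neq1$ in general, so I need to check $Q(e_1+f_1+u)$ and verify that $\{e_1+f_1+u,u\}$ is a genuine unordered pair of vectors that the stabilizer can permute — over $\bbF_2$ the minus-type $2$-space has exactly the right structure for the setwise stabilizer $Z_{\{e_1+f_1+u,u\}}$ to be $Z_{\langle e_1+f_1,u\rangle}.2 = \Omega_{2m-2}^-(2).2$, the extra $.2$ coming from an element of $\mathrm{O}(V)$ realizing the swap). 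So $Y=\Omega_{2m-2}^-(2).2$, which is twice the size of the group $K$ of Lemma~\ref{LemOmegaPlus13}.

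Next I would compute $X\cap Y$. Since $K\leqslant Y$ and $Z=XK$ already by Lemma~\ref{LemOmegaPlus13} with $X\cap K=T\cap K=\SL_{m-2}(2)$, we certainly have $X\cap Y\supseteq \SL_{m-2}(2)$, and it suffices to show equality, i.e.\ that no element of $T$ swaps $e_1+f_1+u$ with $u$. Suppose $t\in T$ with $(e_1+f_1+u)^t=u$ and $u^t=e_1+f_1+u$. Writing out $u=e_1+e_2+\mu f_2$ and $e_1+f_1+u=f_1+e_2+\mu f_2$ (using $e_1+f_1=e_1+f_1$, so $e_1+f_1+u=f_1+e_2+\mu f_2$), and using that $t\in T$ stabilizes both $U=\langle e_1,\dots,e_m\rangle$ and $W=\langle f_1,\dots,f_m\rangle$, I can split each equation into its $U$-part and $W$-part. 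From $u^t=e_1+f_1+u$: the $U$-component gives $(e_1+e_2)^t = e_2$ and the $W$-component gives $(\mu f_2)^t = f_1+\mu f_2$. From $(e_1+f_1+u)^t=u$: the $U$-component gives $e_2^t = e_1+e_2$ and the $W$-component gives $(f_1+\mu f_2)^t=\mu f_2$. These force $e_1^t=e_1$ (from $(e_1+e_2)^t=e_2$ and $e_2^t=e_1+e_2$), $e_2^t=e_1+e_2$, while on the $W$-side $(\mu f_2)^t=f_1+\mu f_2$ gives $f_2^t=\mu^{-1}f_1+f_2$. Then I would derive a contradiction from a bilinear-form computation such as $\beta(e_1,f_2)=0$ versus $\beta(e_1^t,f_2^t)=\beta(e_1,\mu^{-1}f_1+f_2)=\mu^{-1}\neq0$ — exactly analogous to the contradiction $0=1$ obtained in Lemma~\ref{LemOmegaPlus14}.

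Having established $X\cap Y=\SL_{m-2}(2)$, I finish with the order argument:
\[
\frac{|X|}{|X\cap Y|}=\frac{|\SL_m(2)|}{|\SL_{m-2}(2)|}=2^{2m-3}(2^m-1)(2^{m-1}-1)=\frac{|\Omega_{2m}^+(2)|}{|\Omega_{2m-2}^-(2).2|}=\frac{|Z|}{|Y|},
\]
whence $|X||Y|=|Z||X\cap Y|$, and since $X\cap Y$ must contain $XY$... more precisely since $|XY|=|X||Y|/|X\cap Y|=|Z|$ and $XY\subseteq Z$, we get $Z=XY$. The main obstacle I anticipate is the bookkeeping at the start: correctly checking that $Z_{\{e_1+f_1+u,u\}}$ really equals $\Omega_{2m-2}^-(2).2$ — i.e.\ that the setwise stabilizer of this particular $2$-element subset of the minus-type plane picks up exactly one extra involution over $\Omega_{2m-2}^-(2)$, and that this involution lies in $\Omega(V)$ (not just $\mathrm{O}(V)$) so that $Y\leqslant Z=\Omega(V)$. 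Over $\bbF_2$ this should come down to the elementary fact that a nondegenerate minus-type $2$-space has exactly two nonsingular vectors, whose sum is the unique nonzero singular vector spanned... — in any case the verification is short and local to the $2$-space, and once it is in place the rest is the standard "reuse a known factorization plus order count" pattern already used repeatedly in this section.
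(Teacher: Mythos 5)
Your proposal is correct and follows essentially the same route as the paper's proof: identify $Y=\Omega_{2m-2}^-(2).2$ via the minus-type plane $\langle e_1+f_1,u\rangle_{\bbF_q}$, rule out an element of $T$ swapping $e_1+f_1+u$ and $u$ by splitting into $U$- and $W$-components and deriving a contradiction from the bilinear form (the paper uses $\beta(e_2,\mu f_2)=\mu$ versus $1+\mu$, you use $\beta(e_1,f_2)=0$ versus $\mu^{-1}$ --- both work), and then conclude by the order count. The only point left implicit in both your write-up and the paper is the verification that the swap of $e_1+f_1+u$ and $u$ is realized inside $\Omega(V)$; this holds since $Q(e_1+f_1+u)=Q(u)=\mu$ and one may compose the reflection interchanging them with a suitable non-$\Omega$ element of the pointwise stabilizer of the plane.
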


\begin{proof}
It is clear that $X=\SL_m(2)$. Since $\langle e_1+f_1+u,u\rangle_{\bbF_q}=\langle e_1+f_1,u\rangle_{\bbF_q}$ is a nondegenerate $2$-subspace of minus type in $V$ and $Q(e_1+f_1+u)=\mu=Q(u)$, we have $Z_{e_1+f_1+u,u}=\Omega_{2m-2}^-(2)$ and $Z_{\{e_1+f_1+u,u\}}=Z_{e_1+f_1+u,u}{:}\langle s\rangle$ for some $s$ swapping $e_1+f_1+u$ and $u$. In particular, $X=\Omega_{2m-2}^-(2).2$. As Lemma~\ref{LemOmegaPlus13} asserts, $T\cap Z_{e_1+f_1,u}=\SL_{m-2}(2)$.

Suppose that $T_{\{e_1+f_1+u,u\}}\neq T_{e_1+f_1+u,u}$, that is, there exists $t\in T$ swapping $e_1+f_1+u$ and $u$. Then
\begin{align*}
&e_2^t+(f_1+\mu f_2)^t=(e_2+f_1+\mu f_2)^t=(e_1+e_2+u)^t=u=(e_1+e_2)+\mu f_2,\\
&(e_1+e_2)^t+(\mu f_2)^t=(e_1+e_2+\mu f_2)^t=u^t=e_1+f_1+u=e_2+(f_1+\mu f_2),
\end{align*}
from which we obtain $e_2^t=e_1+e_2$ and $(\mu f_2)^t=f_1+\mu f_2$ as $t\in T$ stabilizes $U$ and $W$. However, this leads to
\[
\mu=\beta(e_2,\mu f_2)=\beta(e_2^t,(\mu f_2)^t)=\beta(e_1+e_2,f_1+\mu f_2)=1+\mu,
\]
a contradiction.

Thus we conclude that $T_{\{e_1+f_1+u,u\}}=T_{e_1+f_1+u,u}$. Consequently,
\[
X\cap Y=T\cap Z_{\{e_1+f_1+u,u\}}=T\cap Z_{e_1+f_1,u}=\SL_{m-2}(2).
\]
It follows that
\[
\frac{|X|}{|X\cap Y|}=\frac{|\SL_m(2)|}{|\SL_{m-2}(2)|}=2^{2m-3}(2^m-1)(2^{m-1}-1)=\frac{|\Omega_{2m}^+(2)|}{|\Omega^-_{2m-2}(2).2|}=\frac{|Z|}{|Y|},
\]
and so $Z=XY$.
\end{proof}

Recall the notation $u'=(1-\mu^2)e_1+(\mu^2-1+\mu^{-1})e_2+\mu^2f_1+\mu^2f_2$ introduced in Section~\ref{SecOmegaPlus01}. Also recall that we have some $y\in\mathrm{O}(V)$ with $(e_1+f_1)^y=e_1+f_1$ and $u^y=u'$, and such $y$ exists both in $\Omega(V)$ and $\mathrm{O}(V)\setminus\Omega(V)$.

\begin{lemma}\label{LemOmegaPlus16}
Let $Z=\Omega(V){:}\langle\rho\rangle$ with $q=4$ and $\rho\in\{\phi,\gamma\phi\}$, let $X=T{:}\langle\rho\rangle$, and let $Y=\langle\Omega(V)_{e_1+f_1,u},y\phi\rangle$ such that $y\phi\in Z$. Then $Z=XY$ with $Z=\Omega_{2m}^+(4).2$, $X=\SL_m(4).2$, $Y=\Omega_{2m-2}^-(4).4$ and $X\cap Y=\SL_{m-2}(4)$.
\end{lemma}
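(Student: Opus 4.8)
The plan is to follow the template already established in Lemmas~\ref{LemOmegaPlus14} and~\ref{LemOmegaPlus15}: identify the isomorphism types of $Z$, $X$, $Y$; pin down $X\cap Y$ by reducing it to the analogous intersection inside $\Omega(V)$ (namely $T\cap\Omega(V)_{e_1+f_1,u}=\SL_{m-2}(q)$ from Lemma~\ref{LemOmegaPlus13}); and then verify the numerical criterion $|X||Y|=|Z||X\cap Y|$, which forces $Z=XY$.

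First I would read off the easy identifications. Since $q=4$ we have $f=2$, so $\langle\phi\rangle$ has order $2$ and $\Omega(V).\langle\rho\rangle=\Omega_{2m}^+(4).2$ whether $\rho=\phi$ or $\rho=\gamma\phi$ (note $\gamma\phi$ still acts on the bottom as a field automorphism since $\gamma\in\mathrm{O}(V)$ and in fact one must check $\gamma\phi\notin\Omega(V)$; as in Lemma~\ref{LemOmegaPlus14}, $\gamma$ is a product of $m$ reflections, and $\phi$ has determinant $1$ and spinor norm governed by $q$, so for $q=4$ the coset is the right one — this is the content of the hypothesis ``$y\phi\in Z$'' being consistent). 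Then $X=T{:}\langle\rho\rangle=\SL_m(4).2$. For $Y$: the pair $(e_1+f_1,u)$ spans a nondegenerate minus-type $2$-space, so $\Omega(V)_{e_1+f_1,u}=\Omega_{2m-2}^-(4)$; the element $y$ with $(e_1+f_1)^y=e_1+f_1$, $u^y=u'$ normalizes this subgroup (it fixes the ambient $2$-space setwise since $u'\in\langle e_1+f_1,u\rangle_{\bbF_q}^{\perp\perp}$... more precisely $u'$ lies in the same $2$-space, which should be checked from the explicit coordinates and $Q(u')=\mu=Q(u)$), and $\phi$ also normalizes it because $\phi$ fixes $e_1+f_1$ and, one checks, maps $u$ into the same minus-type $2$-space. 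Combining the contributions of $y\phi$ and of the $\Omega_{2m-2}^-(4)$-part, $Y=\Omega_{2m-2}^-(4).4$, the ``.$4$'' arising because $y\phi$ has order a multiple of $4$ modulo $\Omega(V)_{e_1+f_1,u}$ (the field automorphism $\phi$ contributes order $2$, and $y$ contributes the graph-type involution swapping the two isotropic points of the minus-type $2$-space, or $y\phi$ itself has order $4$ — this order computation is where some care is needed, exactly as the ``.$4$'' in Table~\ref{TabOmegaPlus} row~10 demands).

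Next, the core step: computing $X\cap Y$. I would argue $X\cap Y=T\cap\Omega(V)_{e_1+f_1,u}=\SL_{m-2}(q)$ by showing that no element of $X\setminus T$ lies in $Y$ and no element of $Y$ outside $\Omega(V)$ lies in $X$. Concretely, suppose $z=t\rho\in Y$ with $t\in T$ (or more generally $z\in X\cap Y$ projecting nontrivially to the field-automorphism coset); then $z$ fixes or permutes the distinguished vectors, and one derives a contradiction from a bilinear-form identity of the type $\beta(e_1,f_2)=\beta(e_1^z,f_2^z)$ evaluated using that $t$ stabilizes $U$ and $W$ while $\rho$ acts by the $p$-power map on coordinates — the Frobenius twist $\lambda\mapsto\lambda^q$ will clash with $\lambda^q+\lambda=1$ (or with $\mu\ne\mu^q$), exactly the contradiction mechanism used in Lemma~\ref{LemOmegaPlus12} and Lemma~\ref{LemOmegaPlus15}. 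The upshot is $X\cap Y=\SL_{m-2}(4)$.

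Finally, the index check:
\[
\frac{|X|}{|X\cap Y|}=\frac{|\SL_m(4).2|}{|\SL_{m-2}(4)|}=2\cdot q^{2m-3}(q^m-1)(q^{m-1}-1)\Big|_{q=4}=q^{2m-2}(q^m-1)(q^{m-1}-1),
\]
and one compares this with $|Z|/|Y|=|\Omega_{2m}^+(4).2|/|\Omega_{2m-2}^-(4).4|$, which equals the same quantity after cancelling $|\Omega_{2m-2}^-(4)|$ against the relevant factor of $|\Omega_{2m}^+(4)|$ (the standard identity $|\Omega_{2m}^+(q)|/|\Omega_{2m-2}^-(q)|=q^{2m-2}(q^m-1)(q^{m-1}+1)$ — I should double-check the sign here, as the minus-type stabilizer gives $q^{m-1}+1$ rather than $q^{m-1}-1$, so the bookkeeping with the ``.$2$'' and ``.$4$'' must absorb precisely this discrepancy). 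Equality of these two indices yields $|X||Y|=|Z||X\cap Y|$, and since $X\cap Y\leqslant X$ always, this gives $Z=XY$ by the standard counting criterion.

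\textbf{Main obstacle.} The genuine difficulty is not the factorization itself but the precise structure of $Y$ — specifically justifying the outer ``.$4$'' and checking that $y\phi$ indeed normalizes $\Omega(V)_{e_1+f_1,u}$ and lies in $\Omega_{2m}^+(4).2$ but not in $\Omega(V)$. This requires tracking determinants and spinor norms of $y$, $\gamma$, and $\phi$ over $\bbF_4$, and confirming from the explicit coordinates that $u'\in\langle e_1+f_1,u\rangle_{\bbF_q}$ with $Q(u')=Q(u)$, so that $y$ genuinely realizes the graph automorphism of the $2$-dimensional minus-type space. Once the structure of $Y$ is nailed down, the intersection computation and the index arithmetic are routine in the style of the preceding lemmas.
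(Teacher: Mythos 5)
Your overall architecture matches the paper's proof: identify $Z$, $X$, $Y$, reduce $X\cap Y$ to $T\cap\Omega(V)_{e_1+f_1,u}=\SL_{m-2}(4)$ from Lemma~\ref{LemOmegaPlus13}, and close with the order count. You also correctly flag the two delicate points (the outer ``$.4$'' on $Y$ and the intersection). But there is a gap in your case analysis for $X\cap Y$. You propose to show that ``no element of $X\setminus T$ lies in $Y$ and no element of $Y$ outside $\Omega(V)$ lies in $X$.'' This dichotomy misses the decisive case. Writing $S=\Omega(V)_{e_1+f_1,u}$, the group $Y=S{:}\langle y\phi\rangle$ contains the coset $S(y\phi)^2$, whose elements lie in $\mathrm{O}(V)$ (since $(y\phi)^2=yy^\phi$) and project \emph{trivially} to the field-automorphism quotient, yet lie outside $S$. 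In the paper every problematic pair of cosets $(T\rho^i, S(y\phi)^j)$ with $(i,j)\in\{(0,2),(1,1),(1,3)\}$ is reduced, by squaring, to ruling out $T\cap S(y\phi)^2\neq\emptyset$; the contradiction there is not a Frobenius clash with $\lambda^q+\lambda=1$ or $\mu\neq\mu^q$ (note $\mu\in\bbF_4$, so $\mu^q=\mu$), but the fact that $(y\phi)^2$ sends $u$ to $\mu(e_1+f_1)+u$, which no element of $T$ can do because $T$ preserves $U\oplus W$ — yielding $0=\beta(e_1,\mu f_2)=\beta(e_1,f_1+\mu f_2)=1$. Your sketch never isolates this coset, so as written the intersection computation would not go through.

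Two further points. First, the order-$4$ claim for $y\phi$ modulo $S$ — which you defer — is not a side issue: it rests on the explicit identity $\mu^2+\mu+1=0$ over $\bbF_4$ and the computation $(a(e_1+f_1)+bu)^{y\phi}=(a^2+b^2\mu)(e_1+f_1)+b^2u$, which is also exactly the formula reused in the contradiction above; without it neither the structure of $Y$ nor the intersection argument can be completed. Second, your index computation contains a slip: $2\cdot q^{2m-3}\big|_{q=4}=2^{4m-5}$, not $q^{2m-2}\big|_{q=4}=2^{4m-4}$; the correct identity is $|\Omega_{2m}^+(q)|/|\Omega_{2m-2}^-(q)|=q^{2m-2}(q^m-1)(q^{m-1}-1)$ (minus sign, since $(q^{m-1}+1)$ cancels out of $(q^{2m-2}-1)$), after which both sides equal $2^{4m-5}(4^m-1)(4^{m-1}-1)$ and the count does close.
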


\begin{proof}
It is clear that $Z=\Omega_{2m}^+(4).2$ and $X=\SL_m(4).2$. Let $M=Z_{\langle e_1+f_1,u\rangle_{\bbF_4}}$ and $S=\Omega(V)_{e_1+f_1,u}$. Then $S=\Omega_{2m-2}^-(4)$ as $\langle e_1+f_1,u\rangle_{\bbF_4}$ is a nondegenerate $2$-subspace of minus type in $V$. Note that $\mu\notin\{0,1\}$ as $x^2+x+\mu$ is irreducible over $\bbF_4$.
Thus $\mu^3=1$ and $\mu^2+\mu+1=(\mu^3-1)/(\mu-1)=0$. Accordingly,
\[
u'=\mu e_1+e_2+\mu^2f_1+\mu^2f_2.
\]
For $a,b\in\bbF_4$ we have
\begin{align}\label{EqnOmegaPlus07}
(a(e_1+f_1)+bu)^{y\phi}&=(a(e_1+f_1)+bu')^\phi\nonumber\\
&=((a+b+b\mu^2)e_1+be_2+(a+b\mu^2)f_1+b\mu^2f_2)^\phi\nonumber\\
&=(a^2+b^2+b^2\mu)e_1+b^2e_2+(a^2+b^2\mu)f_1+b^2\mu f_2\nonumber\\
&=(a^2+b^2\mu)(e_1+f_1)+b^2u\in\langle e_1+f_1,u\rangle_{\bbF_4}.
\end{align}
This implies that $y\phi$ stablizes $\langle e_1+f_1,u\rangle_{\bbF_4}$ and induces a permutation of order $4$ on it. Consequently, $y\phi$ lies in $M$ and hence normalizes $M^{(\infty)}=S$. Since $S$ is the kernel of $M$ acting on $\langle e_1+f_1,u\rangle_{\bbF_4}$, it follows that
\[
Y=\langle S,y\phi\rangle=S\langle y\phi\rangle=S.4=\Omega_{2m-2}^-(4).4.
\]
As Lemma~\ref{LemOmegaPlus13} asserts that $T\cap S=\SL_{m-2}(4)$, we have
\[
\frac{|X|}{|T\cap S|}=\frac{|\SL_m(4).2|}{|\SL_{m-2}(4)|}=2^{4m-5}(4^m-1)(4^{m-1}-1)=\frac{|\Omega_{2m}^+(4).2|}{|\Omega^-_{2m-2}(4).4|}=\frac{|Z|}{|Y|}.
\]
Thus it suffices to prove $X\cap Y=T\cap S$.

Suppose for a contradiction that $X\cap Y\neq T\cap S$, that is, there exist $i\in\{0,1\}$, $j\in\{0,1,2,3\}$ and $z\in T\rho^i\cap S(y\phi)^j$ with $(i,j)\neq(0,0)$. It follows that
\[
z\in\mathrm{O}(V)t\rho^i\cap\mathrm{O}(V)s(y\phi)^j=\mathrm{O}(V)\phi^i\cap\mathrm{O}(V)\phi^j,
\]
and so $(i,j)\in\{(0,2),(1,1),(1,3)\}$. If $(i,j)=(1,1)$ or $(1,3)$, then
\[
z^2\in T\rho^{2i}\cap S(y\phi)^{2j}=T\cap S(y\phi)^2.
\]
Thus for all $(i,j)\in\{(0,2),(1,1),(1,3)\}$, there exist $t\in T$ and $s\in S$ such that $t=s(y\phi)^2$.
In view of~\eqref{EqnOmegaPlus07} and $\mu^2+\mu+1=0$, one obtains
\begin{align*}
&e_1^t+f_1^t=(e_1+f_1)^t=(e_1+f_1)^{s(y\phi)^2}=(e_1+f_1)^{(y\phi)^2}=(e_1+f_1)^{y\phi}=e_1+f_1,\\
&(e_1+e_2)^t+(\mu f_2)^t=u^t=u^{s(y\phi)^2}=u^{(y\phi)^2}=(\mu(e_1+f_1)+u)^{y\phi}=e_2+(f_1+\mu f_2),
\end{align*}
from which we deduce $e_1^t=e_1$ and $(\mu f_2)^t=f_1+\mu f_2$ as $t\in T$ stabilizes $U$ and $W$. This yields
\[
0=\beta(e_1,\mu f_2)=\beta(e_1^t,(\mu f_2)^t)=\beta(e_1,f_1+\mu f_2)=1,
\]
a contradiction, which completes the proof.
\end{proof}

Now we construct the pair $(X,Y)$ in row~11 of Table~\ref{TabOmegaPlus}.

\begin{lemma}\label{LemOmegaPlus17}
Let $G=\Omega(V)=\Omega_{2m}^+(q)$ with even $m$, let $H=G_{e_1}$, let $K=\SU(V_\sharp)$, and let $(Z,X,Y)=(\overline{G},\overline{H},\overline{K})$.
Then $H\cap K=(q.q^{2m-4}){:}\SU_{m-2}(q)$, and $Z=XY$ with $Z=\POm_{2m}^+(q)$, $X\cong H=q^{2m-2}{:}\Omega_{2m-2}^+(q)$ and $Y=\lefthat\SU_m(q)$.
\end{lemma}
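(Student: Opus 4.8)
The plan is to follow the same pattern as Lemmas~\ref{LemOmegaPlus02}, \ref{LemOmegaPlus03} and~\ref{LemOmegaPlus07}: first read off the three quotient groups $Z$, $X$, $Y$; then pin down $H\cap K$ together with its order; and finally deduce $G=HK$ (whence $Z=\overline{G}=\overline{H}\,\overline{K}=XY$) from a comparison of indices.

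Identifying the quotients is routine. Since $G=\Omega(V)=\Omega_{2m}^+(q)$, we have $Z=\overline{G}=\POm_{2m}^+(q)$. The subgroup $H=G_{e_1}$ is the stabilizer of a nonzero singular vector; it consists of the elements of the parabolic $\Pa_1[G]$ that induce the identity on $\langle e_1\rangle$, and the action on the plus-type space $e_1^\perp/\langle e_1\rangle$ exhibits it as $q^{2m-2}{:}\Omega_{2m-2}^+(q)$. As the only nontrivial scalar of $\GL(V)$ that can lie in $\Omega(V)$, namely $-1$, does not fix $e_1$, we get $X=\overline{H}\cong H=q^{2m-2}{:}\Omega_{2m-2}^+(q)$. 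Finally $Y=\overline{K}=\lefthat\SU_m(q)$ by the meaning of the notation.

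The main point is the intersection. By~\eqref{EqnOmegaPlus01} the vector $e_1=\lambda E_1$ is singular in $V_\sharp$ with respect to $\beta_\sharp$, and it is nonzero, so
\[
H\cap K=\SU(V_\sharp)\cap G_{e_1}=\SU(V_\sharp)_{e_1}=\SU(V_\sharp)_{E_1},
\]
the stabilizer in $\SU_m(q)$ of a singular vector. This sits inside the parabolic $\SU(V_\sharp)_{\langle E_1\rangle_{\bbF_{q^2}}}$; fixing the vector $E_1$ rather than the point removes the $\GL_1(q^2)$ acting on $\langle E_1\rangle_{\bbF_{q^2}}$, which via the determinant-one condition forces the Levi factor acting on $\langle E_1,F_1\rangle_{\bbF_{q^2}}^\perp$ into $\SU_{m-2}(q)$, leaving the unipotent radical $q^{1+2(m-2)}$ (centre of order $q$, the part sending $F_1$ into $\langle E_1\rangle_{\bbF_{q^2}}$; quotient $\cong\bbF_{q^2}^{m-2}$ of order $q^{2m-4}$, sending $\langle E_1,F_1\rangle_{\bbF_{q^2}}^\perp$ into $\langle E_1\rangle_{\bbF_{q^2}}$). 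Hence $H\cap K=(q.q^{2m-4}){:}\SU_{m-2}(q)$, of order $q^{2m-3}|\SU_{m-2}(q)|$.

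With $H\cap K$ in hand, the proof closes by an index count: $\Omega_{2m}^+(q)$ is transitive on its nonzero singular vectors and, by Witt's theorem, $\SU_m(q)$ is transitive on its nonzero isotropic vectors, so that (using that $m$ is even)
\[
|G:H|=(q^m-1)(q^{m-1}+1)=\bigl(q^m-(-1)^m\bigr)\bigl(q^{m-1}-(-1)^{m-1}\bigr)=|K:H\cap K|,
\]
whence $|H||K|/|H\cap K|=|G|$, giving $G=HK$ and therefore $Z=\overline{G}=\overline{H}\,\overline{K}=XY$. (Equivalently one verifies the numerical identity $|\Omega_{2m}^+(q)|\,|\SU_{m-2}(q)|=q\,|\Omega_{2m-2}^+(q)|\,|\SU_m(q)|$.) I expect the only delicate step to be the precise structure of $\SU(V_\sharp)_{E_1}$ — in particular isolating the unipotent part as $q.q^{2m-4}$ and checking that no torus factor survives once one fixes the vector rather than merely the line; the index bookkeeping and the passage through the bar map are then entirely parallel to the earlier lemmas of this section.
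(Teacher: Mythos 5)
Your proof is correct and follows essentially the same route as the paper's: identify $Z$, $X$, $Y$; observe that $e_1=\lambda E_1$ is singular for both $Q$ and $\beta_\sharp$ so that $H\cap K=\SU(V_\sharp)_{e_1}=(q.q^{2m-4}){:}\SU_{m-2}(q)$; and conclude $G=HK$ from the index count $|K:H\cap K|=(q^m-1)(q^{m-1}+1)=|G:H|$. The paper simply asserts the structure of the vector stabilizer that you work out explicitly.
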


\begin{proof}
It is evident that $Z=\POm_{2m}^+(q)$ and $Y=\lefthat\SU_m(q)$. Since $e_1$ is a singular vector in both $V$ (with respect to $Q$) and $V_\sharp$ (with respect to $\beta_\sharp$), we have $X\cong H=G_{e_1}=q^{2m-2}{:}\Omega_{2m-2}^+(q)$ and
\[
H\cap K=G_{e_1}\cap\SU(V_\sharp)=\SU(V_\sharp)_{e_1}=(q.q^{2m-4}){:}\SU_{m-2}(q).
\]
It follows that
\[
\frac{|K|}{|H\cap K|}=\frac{|\SU_m(q)|}{|(q.q^{2m-4}){:}\SU_{m-2}(q)|}=(q^m-1)(q^{m-1}+1)=\frac{|\Omega_{2m}^+(q)|}{|q^{2m-2}{:}\Omega_{2m-2}^+(q)|}=\frac{|G|}{|H|},
\]
and so $G=HK$. Hence $Z=\overline{G}=\overline{H}\,\overline{K}=XY$.
\end{proof}

The pairs $(X,Y)$ in rows~12--14 of Table~\ref{TabOmegaPlus} will be constructed in the following three lemmas. 
Recall the definition of $\xi$ in Section~\ref{SecOmegaPlus01}.

\begin{lemma}\label{LemOmegaPlus19}
Let $Z=\Omega(V){:}\langle\phi\rangle$ with $m$ even and $q\in\{2,4\}$, let $X=Z_{e_1,f_1}$, and let $Y=\SU(V_\sharp){:}\langle\xi\rangle$. Then $Z=XY$ with $Z=\Omega_{2m}^+(q).f$, $X=\Omega_{2m-2}^+(q).f$, $Y=\SU_m(q).(2f)$ and $X\cap Y=\SU_{m-2}(q)$.
\end{lemma}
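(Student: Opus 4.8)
The plan is to follow the template of the proof of Lemma~\ref{LemOmegaPlus12}: reduce $Z=XY$ to an order count together with the computation of a single intersection, and then settle that intersection by a short argument by contradiction that exploits the fact that every element of $X$ fixes both $e_1$ and $f_1$.

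First I would record the identifications. Since $\langle e_1,f_1\rangle_{\bbF_q}$ is a nondegenerate $2$-subspace of plus type in $V$ and $\phi$ fixes $e_1$ and $f_1$, we have $X=Z_{e_1,f_1}=\Omega_{2m-2}^+(q).f$ (acting on $\langle e_1,f_1\rangle_{\bbF_q}^\perp$), while $Z=\Omega_{2m}^+(q).f$ and $Y=\SU(V_\sharp){:}\langle\xi\rangle=\SU_m(q).(2f)$ since $\xi$ has order $2f$. Put $S=\SU(V_\sharp)$, so that $S\trianglelefteq Y$ with $Y/S$ cyclic of order $2f$, and $S\leqslant\Omega(V)\leqslant Z$ (in particular $XY\subseteq Z$). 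Because $e_1=\lambda E_1$ with $\lambda\neq0$ and $f_1=F_1$, an element of $S$ fixes $e_1$ and $f_1$ if and only if it fixes the hyperbolic pair $(E_1,F_1)$ of $V_\sharp$; hence $S\cap X=S_{e_1,f_1}$ is the special unitary group on the perpendicular of $\langle E_1,F_1\rangle$ in $V_\sharp$, namely $\SU_{m-2}(q)$.

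Next comes the order count. As $q\in\{2,4\}$ we have $q=2f$, so from $|\SU_m(q)|/|\SU_{m-2}(q)|=q^{2m-3}(q^m-1)(q^{m-1}+1)$ and $|\Omega_{2m}^+(q)|/|\Omega_{2m-2}^+(q)|=q^{2m-2}(q^m-1)(q^{m-1}+1)$ one obtains
\[
\frac{|X|\,|Y|}{|\SU_{m-2}(q)|}=\frac{2f^2}{q}\,|\Omega_{2m}^+(q)|=f\,|\Omega_{2m}^+(q)|=|Z|.
\]
Since $S\cap X\leqslant X\cap Y$ and $XY\subseteq Z$, this yields $|XY|=|X|\,|Y|/|X\cap Y|\leqslant|Z|$, with equality precisely when $X\cap Y=S\cap X$. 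So $Z=XY$ will follow once we show $(Y\setminus S)\cap X=\emptyset$.

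I would prove this last claim — that the field-automorphism part of $Y$ contributes nothing to $X\cap Y$ — by contradiction; this is the one place where I expect real care to be needed, although the argument itself is short. Assume $z\in(Y\setminus S)\cap X$. Writing $Y=S\cup\xi S\cup\cdots\cup\xi^{2f-1}S$: if $z\in\xi^fS$, put $z_0=z$; otherwise $q=4$, so $2f=4$ and $z\in\xi S\cup\xi^3S$, and then $z_0:=z^2\in\xi^2S=\xi^fS$. In either case $z_0\in X$ and $z_0=\xi^fs$ for some $s\in S$. Now $\xi^f$ raises $\bbF_{q^2}$-coordinates to the $q$-th power, so it fixes $F_1=f_1$ and $E_1$, whence $e_1^{\xi^f}=(\lambda E_1)^{\xi^f}=\lambda^qE_1$. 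Since $z_0\in X=Z_{e_1,f_1}$ fixes $e_1$ and $f_1$, and $s$ is $\bbF_{q^2}$-linear, we get $F_1^s=F_1$ and $\lambda^qE_1^s=\lambda E_1$, so $E_1^s=\lambda^{1-q}E_1$. Applying $\beta_\sharp(E_1^s,F_1^s)=\beta_\sharp(E_1,F_1)=1$ then forces $\lambda^{q-1}=1$, i.e.\ $\lambda\in\bbF_q$; but $\lambda+\lambda^q=1$ with $q$ even forces $\lambda\notin\bbF_q$, a contradiction. Hence $(Y\setminus S)\cap X=\emptyset$, so $X\cap Y=\SU_{m-2}(q)$, and the order count gives $Z=XY$.
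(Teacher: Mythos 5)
Your proposal is correct and follows essentially the same route as the paper's proof: identify the groups, observe that the order count reduces everything to showing $X\cap Y=\SU(V_\sharp)_{e_1,f_1}=\SU_{m-2}(q)$, and rule out elements of $(Y\setminus\SU(V_\sharp))\cap X$ by using that such an element would force $\lambda^q=\lambda$ (equivalently $\lambda\in\bbF_q$), contradicting $\lambda+\lambda^q=1$ in even characteristic. Your explicit squaring step reducing an element of $\xi S\cup\xi^3S$ (for $q=4$) to one of $\xi^fS$ is a detail the paper leaves implicit, and is a welcome clarification.
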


\begin{proof}
It is evident that $Z=\Omega_{2m}^+(q).f$, $X=\Omega_{2m-2}^+(q).f$, and $Y=\SU_m(q).(2f)$.

Suppose that $X\cap Y\neq X\cap\SU(V_\sharp)$. Then there exists $t\in\SU(V_\sharp)$ such that $\xi^ft\in X=Z_{e_1,f_1}$. This means that $\xi^ft$ fixes $e_1=\lambda E_1$ and $f_1=F_1$, which together with $t\in\SU(V_\sharp)$ yields
\[
\lambda=\beta_\sharp(\lambda E_1,F_1)=\beta_\sharp((\lambda E_1)^{\xi^ft},F_1^{\xi^ft})
=\beta_\sharp((\lambda^qE_1)^t,F_1^t)=\beta_\sharp(\lambda^qE_1,F_1)=\lambda^q,
\]
contradicting the condition $\lambda+\lambda^q=1$.

Thus we conclude that $X\cap Y=X\cap\SU(V_\sharp)$. Accordingly,
\[
X\cap Y=\Omega(V)_{e_1,f_1}\cap\SU(V_\sharp)=\SU(V_\sharp)_{e_1,f_1}=\SU(V_\sharp)_{\lambda E_1,F_1}=\SU_{m-2}(q).
\]
Observe $q=2f$ as $q\in\{2,4\}$. We then derive that
\[
\frac{|Y|}{|X\cap Y|}=\frac{|\SU_m(q).(2f)|}{|\SU_{m-2}(q)|}=2fq^{2m-3}(q^m-1)(q^{m-1}+1)=\frac{|\Omega^+_{2m}(q).f|}{|\Omega_{2m-2}^+(q).f|}=\frac{|Z|}{|X|},
\]
and so $Z=XY$.
\end{proof}

\begin{lemma}\label{LemOmegaPlus18}
Let $Z=\Omega(V)=\Omega_{2m}^+(2)$ with $m$ even and $q=2$, let $X=Z_{\{e_1,f_1\}}$, and let $Y=\SU(V_\sharp)$. Then $Z=XY$ with $X=\Omega_{2m-2}^+(2).2$, $Y=\SU_m(2)$ and $X\cap Y=\SU_{m-2}(2)$.
\end{lemma}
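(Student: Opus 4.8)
The plan is to adapt the argument of the companion Lemma~\ref{LemOmegaPlus19}, the net effect being to transfer an index-$2$ factor from the unitary side to the orthogonal side. First I would identify $X$ and $Y$: here $Y=\SU(V_\sharp)=\SU_m(2)$ by definition, while for $X=Z_{\{e_1,f_1\}}$ I observe that $\langle e_1,f_1\rangle_{\bbF_2}$ is a nondegenerate $2$-subspace of plus type, so its pointwise stabilizer acts faithfully on $\langle e_1,f_1\rangle^\perp$ and equals $Z_{e_1,f_1}=\Omega_{2m-2}^+(2)$, exactly as in the proof of Lemma~\ref{LemOmegaPlus19}. The setwise stabilizer $X$ contains $Z_{e_1,f_1}$ with index at most $2$ (via the quotient map into $\Sym(\{e_1,f_1\})$), and the element $r_{e_1+f_1}r_{e_2+f_2}$ --- a product of two reflections, hence in $\Omega(V)=Z$ --- interchanges $e_1$ with $f_1$ while fixing $e_3,f_3,\dots,e_m,f_m$, so it lies in $X\setminus Z_{e_1,f_1}$. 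Therefore $X=\Omega_{2m-2}^+(2).2$ and $|X|=2\,|\Omega_{2m-2}^+(2)|$.

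Next I would compute $X\cap Y$. Since $\SU(V_\sharp)\leqslant\Omega(V)=Z$ (as already used in Lemmas~\ref{LemOmegaPlus07} and~\ref{LemOmegaPlus19}), we have $X\cap Y=\SU(V_\sharp)_{\{e_1,f_1\}}$, which contains the pointwise stabilizer $\SU(V_\sharp)_{e_1,f_1}=\SU(V_\sharp)_{\lambda E_1,F_1}=\SU_{m-2}(2)$ with index at most $2$. It remains to rule out an element $g\in\SU(V_\sharp)$ interchanging $e_1$ and $f_1$; such a $g$ would yield
\[
\lambda=\beta_\sharp(e_1,f_1)=\beta_\sharp(e_1^g,f_1^g)=\beta_\sharp(f_1,e_1)=\lambda^q,
\]
contradicting $\lambda+\lambda^q=1$ --- the same mechanism as in the proof of Lemma~\ref{LemOmegaPlus19}. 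Hence $X\cap Y=\SU_{m-2}(2)$.

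Finally I would conclude $Z=XY$ by a counting argument. Taking $q=2$ in Lemma~\ref{LemOmegaPlus19} gives $|\Omega_{2m}^+(2)|=|\Omega_{2m-2}^+(2)|\cdot|\SU_m(2).2|/|\SU_{m-2}(2)|=2\,|\Omega_{2m-2}^+(2)|\cdot|\SU_m(2)|/|\SU_{m-2}(2)|$, so (using that $m$ is even)
\[
\frac{|Y|}{|X\cap Y|}=\frac{|\SU_m(2)|}{|\SU_{m-2}(2)|}=2^{2m-3}(2^m-1)(2^{m-1}+1)=\frac{|\Omega_{2m}^+(2)|}{|\Omega_{2m-2}^+(2).2|}=\frac{|Z|}{|X|},
\]
whence $|XY|=|X|\,|Y|/|X\cap Y|=|Z|$ and so $Z=XY$. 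I expect no genuine obstacle: the two steps needing a little care are the parity (Dickson invariant) check that makes $[X:Z_{e_1,f_1}]$ equal to $2$ rather than $1$, and the observation that $\beta_\sharp(e_1,f_1)=\lambda\notin\bbF_q$, which is what stops $\SU(V_\sharp)$ from containing an interchange of $e_1$ and $f_1$; both are inherited from Lemma~\ref{LemOmegaPlus19}.
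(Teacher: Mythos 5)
Your proposal is correct and follows essentially the same route as the paper's proof: identify $X=\Omega_{2m-2}^+(2).2$ via a swapping element in $\Omega(V)$ (the paper merely asserts its existence, while you exhibit $r_{e_1+f_1}r_{e_2+f_2}$ explicitly, which is consistent with the Dickson-invariant computation used in Lemma~\ref{LemOmegaPlus14}), rule out an element of $\SU(V_\sharp)$ interchanging $e_1$ and $f_1$ by the same $\lambda=\lambda^q$ contradiction, and conclude by the identical order count. No gaps.
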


\begin{proof}
Clearly, $Y=\SU_m(2)$. Since $(e_1,f_1)$ is a hyperbolic pair with respect to $Q$, we have $Z_{e_1,f_1}=\Omega_{2m-2}^+(q)$ and $Z_{\{e_1,f_1\}}=Z_{e_1,f_1}{:}\langle s\rangle$ for some $s$ swapping $e_1$ and $f_1$. In particular, $X=\Omega_{2m-2}^+(2).2$.

Suppose that $Y_{\{e_1,f_1\}}\neq Y_{e_1,f_1}$, that is, there exists $t\in Y$ swapping $e_1=\lambda E_1$ and $f_1=F_1$.
Then as $t\in Y=\SU(V_\sharp)$, we obtain
\[
\lambda=\beta_\sharp(\lambda E_1,F_1)=\beta_\sharp((\lambda E_1)^t,F_1^t)=\beta_\sharp(F_1,\lambda E_1)=\lambda^q,
\]
contradicting the condition $\lambda+\lambda^q=1$.

Thus we conclude that $Y_{\{e_1,f_1\}}=Y_{e_1,f_1}$. Consequently,
\[
X\cap Y=Z_{\{e_1,f_1\}}\cap Y=Y_{\{e_1,f_1\}}=Y_{e_1,f_1}=\SU(V_\sharp)_{e_1,f_1}=\SU(V_\sharp)_{\lambda E_1,F_1}=\SU_{m-2}(2).
\]
It follows that
\[
\frac{|Y|}{|X\cap Y|}=\frac{|\SU_m(2)|}{|\SU_{m-2}(2)|}=2^{2m-3}(2^m-1)(2^{m-1}+1)=\frac{|\Omega_{2m}^+(2)|}{|\Omega^+_{2m-2}(2).2|}=\frac{|Z|}{|X|},
\]
which yields $Z=XY$.
\end{proof}

\begin{lemma}\label{LemOmegaPlus22}
Let $Z=\Omega(V){:}\langle\phi\rangle=\Omega_{2m}^+(4){:}2$ with $m$ even and $q=4$, let $X=\Omega(V)_{e_1,f_1}{:}\langle r_{e_1+f_1}r_{e_2+f_2}\phi\rangle$, and let $Y=\SU(V_\sharp){:}\langle\xi\rangle$. Then $Z=XY$ with $X=\Omega_{2m-2}^+(4).2$, $Y=\SU_m(4).4$ and $X\cap Y=\SU_{m-2}(4)$.
\end{lemma}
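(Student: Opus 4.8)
The plan is to follow the template of Lemma~\ref{LemOmegaPlus19}: first identify the isomorphism types of $X$ and $Y$, then reduce the factorization $Z=XY$ to the single equality $X\cap Y=\SU_{m-2}(4)$ by an order count, and finally establish that equality by using the relation $\lambda+\lambda^q=1$ to show that no element of $X$ outside $\Omega(V)_{e_1,f_1}$, and no element of $\Omega(V)_{e_1,f_1}$ outside $\SU(V_\sharp)$, can lie in $Y$. For the structure of $X$: since $(e_1,f_1)$ is a hyperbolic pair, $\langle e_1,f_1\rangle^\perp=\langle e_2,f_2,\dots,e_m,f_m\rangle_{\bbF_q}$ is a nondegenerate space of plus type, whence $\Omega(V)_{e_1,f_1}=\Omega_{2m-2}^+(q)$. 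Set $g_0=r_{e_1+f_1}r_{e_2+f_2}\phi$. The reflections $r_{e_1+f_1}$ and $r_{e_2+f_2}$ are involutions with matrices over $\bbF_2$; they commute with each other (their supports lie in the mutually orthogonal hyperbolic planes $\langle e_1,f_1\rangle_{\bbF_q}$ and $\langle e_2,f_2\rangle_{\bbF_q}$) and with $\phi$, so $g_0^2=1$. Being a product of two reflections, $r_{e_1+f_1}r_{e_2+f_2}\in\Omega(V)$, so $g_0\in\Omega(V)\phi\subseteq Z\setminus\Omega(V)$; and $g_0$ interchanges $e_1$ and $f_1$ while $r_{e_2+f_2}$ and $\phi$ fix them both. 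Thus $g_0$ normalizes $\Omega(V)_{e_1,f_1}$ without fixing $e_1$, which gives $X=\Omega(V)_{e_1,f_1}{:}\langle g_0\rangle=\Omega_{2m-2}^+(4).2$. As for $Y$, we have $\SU(V_\sharp)<\Omega(V)<Z$ and $\xi$ of order $2f=4$, so $Y=\SU(V_\sharp){:}\langle\xi\rangle=\SU_m(4).4$, exactly as in Lemma~\ref{LemOmegaPlus19}.

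For the reduction, observe that $\langle e_1,f_1\rangle_{\bbF_{q^2}}=\langle E_1,F_1\rangle_{\bbF_{q^2}}$ is a nondegenerate unitary $2$-subspace of $V_\sharp$, and since $\SU(V_\sharp)<\Omega(V)$,
\[
X\cap\SU(V_\sharp)=\Omega(V)_{e_1,f_1}\cap\SU(V_\sharp)=\SU(V_\sharp)_{\lambda E_1,F_1}=\SU_{m-2}(4).
\]
A routine computation (in the style of Lemma~\ref{LemOmegaPlus19}) then gives
\[
\frac{|Y|}{|\SU_{m-2}(4)|}=\frac{|\SU_m(4).4|}{|\SU_{m-2}(4)|}=4^{2m-2}(4^m-1)(4^{m-1}+1)=\frac{|\Omega_{2m}^+(4)|}{|\Omega_{2m-2}^+(4)|}=\frac{|Z|}{|X|},
\]
so $|X||Y|=|Z|\cdot|\SU_{m-2}(4)|$. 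Hence $Z=XY$ will follow as soon as $X\cap Y=\SU_{m-2}(4)$, that is, $X\cap Y=X\cap\SU(V_\sharp)$.

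To prove this last equality, suppose for a contradiction that some $g\in X\cap Y$ lies outside $\SU(V_\sharp)$, and write $g=\xi^is$ with $s\in\SU(V_\sharp)$ and $i\in\{1,2,3\}$. Recall that $\xi$ raises the $\bbF_{q^2}$-coordinates to the $p$-th power and fixes $F_1$, that $s$ acts $\bbF_{q^2}$-linearly and preserves $\beta_\sharp$, that $e_1=\lambda E_1$, $f_1=F_1$, and that $\beta_\sharp(E_1,F_1)=1$. If $g\in\Omega(V)_{e_1,f_1}$, then $g$ fixes $e_1$ and $f_1$; comparing $e_1^{\xi^is}=\lambda^{p^i}(E_1^s)$ with $e_1$ and $f_1^{\xi^is}=F_1^s$ with $f_1$ yields $E_1^s=\lambda^{1-p^i}E_1$ and $F_1^s=F_1$, and evaluating $\beta_\sharp$ on this pair (using its Hermitian symmetry) forces $\lambda^{p^i}=\lambda$, so $\lambda\in\bbF_{p^{\gcd(i,4)}}\subseteq\bbF_q$, contradicting $\lambda+\lambda^q=1$ (in characteristic $2$ a scalar $\lambda\in\bbF_q$ satisfies $\lambda+\lambda^q=2\lambda=0$). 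Otherwise $g\in\Omega(V)_{e_1,f_1}g_0$; then $g\notin\Omega(V)$ since $g_0\notin\Omega(V)$, so $\xi^i\notin\Omega(V)$, and as $\xi^2\in\Omega(V)$ (because $[Z:\Omega(V)]=2$) this forces $i$ odd. Moreover $g$ interchanges $e_1$ and $f_1$, so the analogous comparison gives $E_1^s=\lambda^{-p^i}F_1$ and $F_1^s=\lambda E_1$, and evaluating $\beta_\sharp$ now yields a congruence on $\lambda$ which, $i$ being odd, places $\lambda$ in $\bbF_p\subseteq\bbF_q$, again contradicting $\lambda+\lambda^q=1$. Therefore $X\cap Y=\SU_{m-2}(4)$, and $Z=XY$ follows.

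The main obstacle is the second case of the intersection computation: one has to combine correctly the Galois twist introduced by $\xi^i$ with the interchange of $e_1$ and $f_1$ coming from $g_0$, keep the Hermitian bilinearity bookkeeping straight, and verify that the resulting exponent condition on $\lambda$ always collapses $\lambda$ into $\bbF_q$ — the parity restriction $\xi^i\notin\Omega(V)\Rightarrow i$ odd being exactly what makes this collapse unavoidable. A secondary point, dealt with in the first paragraph, is the verification that $g_0$ is an involution lying in $Z\setminus\Omega(V)$ and normalizing $\Omega(V)_{e_1,f_1}$, which is what makes the two reflections (rather than one) necessary.
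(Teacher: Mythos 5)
Your proof is correct and follows essentially the same route as the paper: identify $X$ and $Y$, reduce via the order count to showing $X\cap Y=X\cap\SU(V_\sharp)=\SU_{m-2}(4)$, and exclude extraneous intersection elements using $\lambda+\lambda^q=1$. The only divergence is in handling the coset $\Omega(V)_{e_1,f_1}r_{e_1+f_1}r_{e_2+f_2}\phi$: the paper squares such an element to land in $\Omega(V)_{e_1,f_1}\cap\big(Y\setminus\SU(V_\sharp)\big)$ and invokes Lemma~\ref{LemOmegaPlus19}, whereas you compute directly with the swapped hyperbolic pair $(e_1,f_1)$ — both arguments are valid.
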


\begin{proof}
It clear that $Y=\SU_m(4).4$. Since $(e_1,f_1)$ is a hyperbolic pair with respect to $Q$, we have $\Omega(V)_{e_1,f_1}=\Omega_{2m-2}^+(4)$ and so $X=\Omega_{2m-2}^+(4).2$. By Lemma~\ref{LemOmegaPlus19} we obtain
\begin{equation}\label{EqnOmegaPlus08}
\Omega(V)_{e_1,f_1}\cap Y=Z_{e_1,f_1}\cap Y=Z_{e_1,f_1}\cap\SU(V_\sharp)=\SU_{m-2}(4).
\end{equation}

Suppose that $X\cap Y\nleqslant\Omega(V)_{e_1,f_1}$. Since $X=\Omega(V)_{e_1,f_1}{:}\langle r_{e_1+f_1}r_{e_2+f_2}\phi\rangle$, this means that there exists $s\in\Omega(V)_{e_1,f_1}$ with $r_{e_1+f_1}r_{e_2+f_2}\phi s\in Y$. Since $r_{e_1+f_1}r_{e_2+f_2}\phi s\notin\mathrm{O}(V)$ and $\SU(V_\sharp){:}\langle\xi^2\rangle<\mathrm{O}(V)$, it follows that $r_{e_1+f_1}r_{e_2+f_2}\phi s\in\SU(V_\sharp)\{\xi,\xi^3\}$, and so $(r_{e_1+f_1}r_{e_2+f_2}\phi s)^2\in\SU(V_\sharp)\xi^2$. In particular, $(r_{e_1+f_1}r_{e_2+f_2}\phi s)^2\notin\SU(V_\sharp)$. However, as $X=\Omega(V)_{e_1,f_1}.2$, we have $(r_{e_1+f_1}r_{e_2+f_2}\phi s)^2\in\Omega(V)_{e_1,f_1}$. This shows that the element $(r_{e_1+f_1}r_{e_2+f_2}\phi s)^2$ of $Y$ lies in $\Omega(V)_{e_1,f_1}\setminus\SU(V_\sharp)$, contradicting~\eqref{EqnOmegaPlus08}.

Thus $X\cap Y\leqslant\Omega(V)_{e_1,f_1}$, which in conjunction with~\eqref{EqnOmegaPlus08} gives $X\cap Y=\SU_{m-2}(4)$. Hence
\[
\frac{|Y|}{|X\cap Y|}=\frac{|\SU_m(4).4|}{|\SU_{m-2}(4)|}=4^{2m-2}(4^m-1)(4^{m-1}+1)=\frac{|\Omega_{2m}^+(4).2|}{|\Omega^+_{2m-2}(4).2|}=\frac{|Z|}{|X|},
\]
and so $Z=XY$.
\end{proof}

The next result follows from Lemma~B and its proof in~\cite[Page~105]{LPS1990}.

\begin{lemma}\label{LemOmegaPlus28}
Let $Z=\POm_8^+(q)$ with $m=4$ and $q$ square, let $X=\Omega_8^-(q^{1/2})$ be a $\calC_9$-subgroup of $Z$, and let $Y=\N_1[Z]=\Omega_7(q)$. Then $Z=XY$ with $X\cap Y=\G_2(q^{1/2})$.
\end{lemma}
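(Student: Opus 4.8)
The statement we must prove is: for $Z=\POm_8^+(q)$ with $q$ a square, $X=\Omega_8^-(q^{1/2})$ a $\calC_9$-subgroup and $Y=\N_1[Z]=\Omega_7(q)$, we have $Z=XY$ with $X\cap Y=\G_2(q^{1/2})$. The reference given is Lemma~B and its proof on page~105 of \cite{LPS1990}. The plan is to extract from that source the two facts we need and then verify the factorization by the usual order argument. First I would record from \cite[Lemma~B]{LPS1990} (equivalently, from the maximal factorization tables, row for $\POm_8^+(q)$ with factors $\N_1$ and a $\calC_9$-subgroup of type $\Omega_8^-(q^{1/2})$) that the pair $(\Omega_7(q),\Omega_8^-(q^{1/2}))$ does constitute a maximal factorization of $\POm_8^+(q)$, and that the stated intersection is $\Omega_7(q)\cap\Omega_8^-(q^{1/2})=\G_2(q^{1/2})$. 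The appearance of $\G_2$ here is natural: the $\calC_9$-subgroup $\Omega_8^-(q^{1/2})$ is defined over the subfield, and intersecting it with the point stabilizer $\Omega_7(q)$ one lands inside $\Omega_7(q^{1/2})$, whose factorization with the ambient group forces the $\G_2(q^{1/2})$ as in the $\POm_8^+$-triality picture (compare Lemma~\ref{LemOmegaPlus26}).

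Granting $X\cap Y=\G_2(q^{1/2})$, the factorization follows by comparing orders. I would compute
\[
\frac{|X|}{|X\cap Y|}=\frac{|\Omega_8^-(q^{1/2})|}{|\G_2(q^{1/2})|},
\]
simplify using $|\Omega_8^-(r)|=\tfrac{1}{(2,r-1)}r^{12}(r^4+1)(r^6-1)(r^4-1)(r^2-1)$ and $|\G_2(r)|=r^6(r^6-1)(r^2-1)$ with $r=q^{1/2}$, and check that this equals
\[
\frac{|\POm_8^+(q)|}{|\Omega_7(q)|}=\frac{|Z|}{|Y|}.
\]
Since $|XY|=|X||Y|/|X\cap Y|$ for the product \emph{set}, equality of this quantity with $|Z|$ gives $Z=XY$ (this is exactly the argument used in Lemmas~\ref{LemOmegaPlus02}, \ref{LemOmegaPlus03}, \ref{LemOmegaPlus26}). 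One should be slightly careful about the effect of $(2,q-1)$ factors and about which exact group $Z$ is (i.e.\ whether we work in $\POm_8^+(q)$ or a cover), but since $q$ is a square it is odd only when $p$ is odd, and in all cases the $2$-parts match up as in Lemma~\ref{LemOmegaPlus26}; alternatively one simply quotes the factorization directly from \cite[Lemma~B]{LPS1990} rather than re-deriving it.

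The main obstacle is not the arithmetic but pinning down the intersection $X\cap Y$ precisely as $\G_2(q^{1/2})$ rather than an extension of it by outer-type automorphisms. The cleanest route is to invoke \cite[Lemma~B and its proof, p.~105]{LPS1990} verbatim, where this intersection is computed; there \cite{LPS1990} uses the embedding $\Omega_8^-(q^{1/2})<\POm_8^+(q)$ together with the fact that $\Omega_7(q^{1/2})<\Omega_8^-(q^{1/2})$ is a point stabilizer and $\Omega_7(q^{1/2})\cap\Omega_7(q)=\G_2(q^{1/2})$ (the triality-type factorization of $\Omega_7$ over the subfield, cf.\ Lemma~\ref{LemOmegaPlus26}). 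So the proof reduces to: (i) cite \cite[Lemma~B]{LPS1990} for $X\cap Y=\G_2(q^{1/2})$; (ii) carry out the order computation above to conclude $Z=XY$. I expect the write-up to be only a few lines, essentially parallel to the proof of Lemma~\ref{LemOmegaPlus26}.
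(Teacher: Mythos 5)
Your proposal is correct and matches the paper, which gives no proof at all for this lemma beyond the sentence "The next result follows from Lemma~B and its proof in~\cite[Page~105]{LPS1990}" — exactly the citation you fall back on. Your optional order-count check (with $r=q^{1/2}$, $|X|/|X\cap Y|=r^6(r^8-1)/(2,r-1)=q^3(q^4-1)/(2,q-1)$, which equals $|Z|/|Y|$) is a sound extra verification in the style of Lemma~\ref{LemOmegaPlus26}, but the paper does not carry it out.
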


The following result, which is from~\cite[Appendix~3]{LPS1990}, gives the pair $(X,Y)$ in row~6 of Table~\ref{TabOmegaPlus}.

\begin{lemma}\label{LemOmegaPlus29}
Let $Z=\POm_{16}^+(q)$ with $m=8$, let $X=\Omega_9(q)$ be a $\calC_9$-subgroup of $Z$, and let $Y=\N_1[Z]=\Omega_{15}(q)$. Then $Z=XY$ with $X\cap Y=\Omega_7(q).(2,q-1)<\N_2[X]$.
\end{lemma}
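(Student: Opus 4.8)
The plan is to import the factorization $Z=XY$ from~\cite[Appendix~3]{LPS1990} and then to pin down the intersection $X\cap Y$ by an order count, following the pattern of the other lemmas in this section.

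First I would recall the geometry: the $\calC_9$-subgroup $X=\Omega_9(q)$ of $Z=\POm_{16}^+(q)$ is the image of $\mathrm{Spin}_9(q)$ under its $16$-dimensional spin representation, which preserves a quadratic form of plus type, and $Y=\N_1[Z]=\Omega_{15}(q)$ stabilizes a nonsingular $1$-space of the ambient $16$-space. This factorization is precisely the maximal factorization of $\POm_{16}^+(q)$ treated in~\cite[Appendix~3]{LPS1990}; transferring the analysis there to the groups $X$ and $Y$ as normalised in this lemma, and using the subgroup structure of $\Omega_9(q)$ from~\cite{KL1990}, one reads off that $X\cap Y$ is, up to conjugacy in $X$, a subgroup $\Omega_7(q).(2,q-1)$ of $\N_2[X]$ (with $\Omega_7(q)$ acting naturally on a nondegenerate $7$-space), which for $q$ even is just $\Omega_7(q)\cong\Sp_6(q)$.

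Next I would confirm $Z=XY$ by matching orders, which also doubles as a consistency check. By the index computation already carried out in the proof of Lemma~\ref{LemOmegaPlus08} (with $m=8$),
\[
\frac{|Z|}{|Y|}=\frac{|\POm_{16}^+(q)|}{|\Omega_{15}(q)|}=\frac{q^{7}(q^{8}-1)}{(2,q-1)},
\]
whereas, using $|\Omega_9(q)|=q^{16}(q^2-1)(q^4-1)(q^6-1)(q^8-1)/(2,q-1)$ and $|\Omega_7(q).(2,q-1)|=q^{9}(q^2-1)(q^4-1)(q^6-1)$,
\[
\frac{|X|}{|X\cap Y|}=\frac{|\Omega_9(q)|}{|\Omega_7(q).(2,q-1)|}=\frac{q^{7}(q^{8}-1)}{(2,q-1)}.
\]
Therefore $|X||Y|/|X\cap Y|=(|X|/|X\cap Y|)\,|Y|=(|Z|/|Y|)\,|Y|=|Z|$, and since $XY\subseteq Z$ this forces $Z=XY$.

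The main obstacle is the factorization $Z=XY$ itself, equivalently the transversality statement that $\mathrm{Spin}_9(q)$ meets an $\N_1$-subgroup of $\POm_{16}^+(q)$ in a subgroup of order exactly $|X||Y|/|Z|$. Establishing this from first principles would require analysing how the $16$-dimensional spin module of $\mathrm{Spin}_9(q)$ restricts to the relevant natural subgroups; since this is exactly what~\cite[Appendix~3]{LPS1990} supplies, the remainder is routine bookkeeping with orders and with the subgroup lattice of $\Omega_9(q)$.
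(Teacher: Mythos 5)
Your proposal is correct and matches the paper's treatment: the paper offers no proof of this lemma at all, simply attributing both the factorization and the intersection $X\cap Y=\Omega_7(q).(2,q-1)<\N_2[X]$ to~\cite[Appendix~3]{LPS1990}, which is exactly the source you invoke. Your additional order count (which checks out: both $|Z|/|Y|$ and $|X|/|X\cap Y|$ equal $q^7(q^8-1)/(2,q-1)$) is a sound and worthwhile consistency verification, though the substantive content still rests on the cited reference.
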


\section{Sporadic cases of $(X,Y)$ in Table~\ref{TabOmegaPlus}}\label{SecOmegaPlus03}

Computation in \magma~\cite{BCP1997} gives the following four lemmas.

\begin{lemma}\label{LemOmegaPlus30}
Let $Z=\Omega_8^+(2)$, and let $Y$ be a subgroup of $Z$ isomorphic to $\A_9$, $\Sp_6(2)$ or $\SU_4(2)$. Then for each isomorphism type in the corresponding $X$ rows of the following tables, there are precisely two conjugacy classes of subgroups $X$ of $Z$ such that $Z=XY$ and that the intersection $X\cap Y$ is described in the $X\cap Y$ rows of the tables.

\[
\begin{array}{|c|ccccccc|}
\hline
X & \Sy_5 & \A_5{:}4 & 2^4{:}\A_5 & \A_6 & 2^5{:}\A_6 & \A_7 & 2^6{:}\A_7 \\
Y & \Sp_6(2) & \Sp_6(2) & \Sp_6(2) & \Sp_6(2) & \Sp_6(2) & \Sp_6(2) & \Sp_6(2) \\
X\cap Y & 1 & 2 & \Q_8 & 3 & 4^2{:}3{:}2 & 7{:}3 & 2^3.\SL_3(2) \\
\hline
\end{array}
\]
\[
\begin{array}{|c|cccccc|}
\hline
X & \A_8 & \A_8 & \A_9 & 2^6{:}\A_7  & \A_8 & \A_8 \\
Y & \Sp_6(2) & \Sp_6(2) & \Sp_6(2) & \SU_4(2) & \SU_4(2) & \SU_4(2).2 \\
X\cap Y & \SL_3(2) & \AGaL_1(8) & \PGaL_2(8) & \SL_2(3) & 3 & \Sy_3 \\
\hline
\end{array}
\]
\[
\begin{array}{|c|ccccccc|}
\hline
X & \Sy_8 & \A_9 & 2^4{:}\A_5 & 2^5{:}\A_6 & 2^6{:}\A_7 & \A_8 & 2^6{:}\A_8 \\
Y & \SU_4(2) & \SU_4(2) & \A_9 & \A_9 & \A_9 & \A_9 & \A_9 \\
X\cap Y & \Sy_3 & 9{:}3 & 1 & \A_4 & \SL_3(2) & 7{:}3 & \AGL_3(2) \\
\hline
\end{array}
\]
\end{lemma}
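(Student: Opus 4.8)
The plan is to verify the statement entirely by machine computation in \magma~\cite{BCP1997}, as it concerns only the single group $Z=\Omega_8^+(2)$, of order $174182400$. First I would realise $Z$ concretely --- either as $\Omega(Q)$ for a plus-type quadratic form on $\bbF_2^8$, or, more conveniently, through its primitive permutation representation of degree $120$ on the singular $1$-spaces --- and record $|Z|$. Next, for each isomorphism type of $Y$ occurring in the tables ($\A_9$, $\Sp_6(2)$, $\SU_4(2)$ and $\SU_4(2).2$) I would fix one representative subgroup $Y\leqslant Z$ of that type together with its index; these all lie inside maximal subgroups of $Z$, so they are readily extracted from the list of maximal subgroups of $Z$ (recall $\Sp_6(2)\cong\Omega_7(2)=\N_1[Z]$ is itself maximal, and the $Z$-classes of such subgroups are permuted by triality, so for this lemma it suffices to pin down a single $Y$ of each type). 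The multiplicity ``two'' recorded in the tables is the expected one, since the stabiliser in $\Out(Z)\cong\Sym_3$ of each of the relevant $Y$-classes is an involution, which then pairs up the factorising $X$-classes.

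For each candidate isomorphism type of $X$ listed in the three tables I would determine the conjugacy classes of subgroups of $Z$ of that type, using the \magma\ command \texttt{Subgroups} restricted by order (and, where it helps to cut down the search, by composition factors). For every pair $(X,Y)$ of class representatives so obtained I would test whether $Z=XY$; the cleanest criterion, and the one least sensitive to the choice of representatives, is to check that $X$ acts transitively on the coset space $Z/Y$, equivalently that $|X\backslash Z/Y|=1$. Whenever $Z=XY$ holds, I would compute $X\cap Y$ for a suitable conjugate and identify it up to isomorphism (via \texttt{IdentifyGroup} or a structure description), confirming agreement with the corresponding table entry. Finally, I would count for each (type of $X$, choice of $Y$) how many $Z$-classes of subgroups $X$ give a factorisation, verifying that this number is exactly two in every case, and that no subgroup of $Z$ of any other isomorphism type factorises with one of these $Y$; the last point ties the lemma to the ambient classification, although only the exhibited pairs are needed for the lemma itself.

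The obstacle here is computational rather than conceptual. Enumerating all conjugacy classes of subgroups of $\Omega_8^+(2)$ of a prescribed isomorphism type --- particularly the small ones, where there can be many classes --- is potentially expensive in time and memory, so in practice one constrains the search by order and composition type and performs the factorisation test inside the degree-$120$ permutation action rather than forming double cosets of the abstract group. Once that is managed, identifying the intersections $X\cap Y$ (groups such as $4^2{:}3{:}2$, $2^3.\SL_3(2)$, $\PGaL_2(8)$ and $\AGL_3(2)$) is routine, and the whole proof reduces to a finite, well-structured \magma\ run.
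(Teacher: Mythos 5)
Your proposal matches the paper exactly: the lemma is one of four that the authors justify solely by the sentence ``Computation in \magma~\cite{BCP1997} gives the following four lemmas,'' so a direct machine verification of the factorizations, class counts and intersections in $\Omega_8^+(2)$ is precisely the intended proof. Your additional detail on how to organise the search (degree-$120$ action, transitivity on $Z/Y$, constraining \texttt{Subgroups} by order) is a sensible implementation of that same computation.
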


\begin{lemma}\label{LemOmegaPlus31}
Let $Z=\POm_8^+(3)$, and let $Y$ be a subgroup of $Z$ isomorphic to $\Omega_7(3)$, $3^6{:}\PSL_4(3)$ or $\Omega_8^+(2)$. Then for each isomorphism type in the corresponding $X$ rows of the following tables, there are precisely $k$ conjugacy classes of subgroups $X$ of $Z$ such that $Z=XY$, where $k$ is given in the $k$ rows of the tables. For each such pair $(X,Y)$, the intersection $X\cap Y$ is described in the $X\cap Y$ rows of the tables.
\[
\begin{array}{|c|ccccccc|}
\hline
X & 3^4{:}\Sy_5 & 3^4{:}(4\times\A_5) & (3^5{:}2^4){:}\A_5 & \A_9 & \SU_4(2) & \Sp_6(2) & \Omega_8^+(2) \\
k & 4 & 4 & 4 & 4 & 8 & 4 & 2 \\
Y & \Omega_7(3) & \Omega_7(3) & \Omega_7(3) & \Omega_7(3) & \Omega_7(3) & \Omega_7(3) & \Omega_7(3) \\
X\cap Y & 3^2 & 3\times\Sy_3 & \AGL_3(2) & \SL_3(2) & \SL_2(3) & 2^3.\SL_3(2) & 2^6{:}\A_7 \\
\hline
\end{array}
\]
\[
\begin{array}{|c|ccccc|}
\hline
X & 2^6{:}\A_7 & \A_8 & 2^6{:}\A_8 & \A_9 & 2.\PSL_3(4) \\
k & 4 & 8 & 4 & 8 & 2 \\
Y & 3^6{:}\PSL_4(3) & 3^6{:}\PSL_4(3) & 3^6{:}\PSL_4(3) & 3^6{:}\PSL_4(3) & 3^6{:}\PSL_4(3) \\
X\cap Y & (3\times\SL_2(3)){:}2 & 3\times\Sy_3 & (2^3{:}\Sy_4){:}\Sy_3 & 3^3{:}\Sy_3 & 3^2{:}4 \\
\hline
\end{array}
\]
\[
\begin{array}{|c|ccccc|}
\hline
X & \Sp_6(2) & \Omega_6^-(3) & [3^6]{:}\SL_3(3) & 3^{6+3}{:}\SL_3(3) & 3^6{:}\PSL_4(3) \\
k & 8 & 2 & 6 & 3 & 3 \\
Y & 3^6{:}\PSL_4(3) & 3^6{:}\PSL_4(3) & \Omega_8^+(2) & \Omega_8^+(2) & \Omega_8^+(2) \\
X\cap Y & 3.\AGL_2(3) & 3^{3+2}{:}\SL_2(3) & (3\times\SL_2(3)){:}2 & 3^2.\AGL_2(3) & (3\times\PSp_4(3)){:}2 \\
\hline
\end{array}
\]
\end{lemma}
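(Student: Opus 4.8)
This is a purely computational statement, to be verified in \magma~\cite{BCP1997} by the same method as Lemma~\ref{LemOmegaPlus30}. First I would realise $Z=\POm_8^+(3)$ concretely --- for instance as the permutation group of degree $1120$ on the singular $1$-spaces of the natural $8$-dimensional orthogonal space of plus type over $\bbF_3$, or as \texttt{OmegaPlus(8,3)} modulo its centre --- and fix one representative up to conjugacy of each of the three maximal subgroups $Y\in\{\Omega_7(3),\ 3^6{:}\PSL_4(3),\ \Omega_8^+(2)\}$. Here I use that $Z$ has three $Z$-classes of subgroups isomorphic to $\Omega_7(3)$ and three $Z$-classes of parabolic subgroups of type $q^6{:}\SL_4(q)$, each triple being fused by the triality automorphism, so that by Lemma~\ref{LemXia04} a single representative $Y$ suffices in each of the three cases.

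For a fixed $Y$ the task is to enumerate conjugacy-class representatives of the candidate subgroups $X$ of the prescribed isomorphism types and to decide which satisfy $Z=XY$. Since $Z$ is simple, every proper $X$ is core-free; moreover $Z=XY$ forces $|X|\geqslant|Z|/|Y|$ and implies $X\leqslant M$ for some maximal subgroup $M$ of $Z$ with $Z=MY$, and by~\cite[Theorem~A]{LPS1990} and~\cite{LPS1996} the list of such $M$ is short (for example, when $Y=\Omega_7(3)$ one may take $M$ among the two other classes of $\Omega_7(3)$, a parabolic subgroup, and $\Omega_8^+(2)$). Hence instead of searching the subgroup lattice of $Z$ directly I would call \texttt{Subgroups} (or \texttt{LowIndexSubgroups} and \texttt{MaximalSubgroups} recursively) only inside those finitely many $M$, keeping the subgroups of the required isomorphism types. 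For each such representative $X$ I then test $Z=XY$ by checking $|X|\,|Y|=|Z|\,|X\cap Y|$ --- equivalently, that $X$ is transitive on the coset space of $Y$ --- use \texttt{IsConjugate} to count the number $k$ of pairwise non-$Z$-conjugate $X$ that work, and finally compute the isomorphism type of $X\cap Y$ with \texttt{IdentifyGroup} together with a chief series.

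The main obstacle is one of feasibility rather than of mathematics: $|\POm_8^+(3)|$ is of order about $5\times10^{12}$, so a blind enumeration of all its subgroups is infeasible, and one must exploit the maximal-subgroup structure and triality to confine the search to subgroups of the handful of proper subgroups $M$ --- the parabolic $3^6{:}\PSL_4(3)$ of order about $4.4\times10^9$, the subgroup $\Omega_8^+(2)$ of order about $1.7\times10^8$, and $\Omega_7(3)$ --- inside which the computations are routine. A secondary point requiring care is the bookkeeping of conjugacy classes: triality genuinely produces several $Z$-classes of a given $X$ that become fused only under $\Aut(Z)$, and it is precisely these finer counts that are recorded in the rows labelled $k$, so one must work with $Z$-conjugacy throughout, not $\Aut(Z)$-conjugacy.
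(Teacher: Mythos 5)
Your proposal matches the paper exactly: the paper offers no written argument for this lemma beyond the statement that it is verified by computation in \magma, and your outline (restricting the search for $X$ to the few maximal $M$ with $Z=MY$, testing $|X|\,|Y|=|Z|\,|X\cap Y|$, and counting $Z$-classes rather than $\Aut(Z)$-classes) is a reasonable way to organise that computation. The only caveat is your parenthetical claim that each of the three types of $Y$ forms exactly three $Z$-classes fused by triality, which is not needed for the argument and should be checked against the actual class counts in $\POm_8^+(3)$ rather than asserted.
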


\begin{remark}
There are two isomorphism types of the form $[3^6]{:}\SL_3(3)$ in the above table; one is $3^6{:}\SL_3(3)$ (three conjugacy classes) and the other is $3^{3+3}{:}\SL_3(3)$ (three conjugacy classes).
\end{remark}

\begin{lemma}\label{LemOmegaPlus25}
Let $Z=\Omega(V){:}\langle\phi\rangle=\Omega_8^+(4){:}2$ with $m=q=4$, let $Y=\N_1[Z]=\GaSp_6(4)<Z$, and let $M=\Omega_4^+(16).2^2.2$ be a $\calC_3$-subgroup of $Z$. There are a unique conjugacy class of subgroups $X$ of $M$ isomorphic to $\GaL_2(16)$ and a unique conjugacy class of subgroups $X$ of $M$ isomorphic to $\SL_2(16).8$ such that $Z=XY$. For each such pair $(X,Y)$ we have
\[
X\cap Y=
\begin{cases}
1&\textup{if }X=\GaL_2(16)\\
2&\textup{if }X=\SL_2(16).8.
\end{cases}
\]
\end{lemma}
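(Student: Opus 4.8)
The plan is to verify this finite statement directly by computation in \magma{}~\cite{BCP1997}, arranging matters so that all the serious group theory takes place inside the small subgroup $M$.

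First I would realise $Z=\Omega(V){:}\langle\phi\rangle=\Omega_8^+(4){:}2$ through a small faithful permutation representation, for instance its action on the $5525$ singular $1$-spaces of $V$, and pick out two subgroups: $Y=\N_1[Z]=Z_{e_1+f_1}$, the stabiliser of the nonsingular vector $e_1+f_1$, which equals $\GaSp_6(4)$ because $\Sp_6(4)\cong\Omega_7(4)$; and $M$, the normaliser in $Z$ of $\mathrm{O}(V_\sharp)$ for the plus-type quadratic form $Q_\sharp$ on $V_\sharp$ fixed in Section~\ref{SecOmegaPlus01}, which is the $\calC_3$-subgroup $\Omega_4^+(16).2^2.2$. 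Here $|M|=8\,|\Omega_4^+(16)|=8\cdot 4080^2$ is small. (In $Z$ there are three conjugacy classes of subgroups isomorphic to $\GaSp_6(4)$, cyclically permuted by triality; the lemma is about the genuine point stabiliser $\N_1$, so one must select that class and not one of its triality twists.)

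Next, since $\Omega_4^+(16)\cong\SL_2(16)\times\SL_2(16)$, the subgroups of $M$ of the two relevant isomorphism types are transparent — they come from the diagonal $\SL_2(16)$ and from a direct-factor $\SL_2(16)$ of this product, suitably extended by the factor swap and the field automorphisms present in $M$ — but in practice it is simplest to let \magma{} enumerate the conjugacy classes of subgroups of $M$ of the corresponding orders. For each representative $X$ so obtained I would compute $X\cap Y$ and test whether $Z=XY$, which is equivalent to the numerical identity $|X|\,|Y|=|Z|\,|X\cap Y|$ (that is, to $|XY|=|Z|$); this both singles out the classes yielding a factorisation and records the intersection. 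Finally, using \texttt{IsConjugate} inside $Z$, I would check that for each of the two isomorphism types exactly one conjugacy class of such $X\leqslant M$ survives, with $X\cap Y=1$ when $X\cong\GaL_2(16)$ and $X\cap Y=2$ when $X\cong\SL_2(16).8$.

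The only real difficulty is organisational, not mathematical: one must be certain that the correct geometric subgroups $Y$ (nonsingular $1$-space stabiliser) and $M$ ($\bbF_{16}$-field-extension subgroup) have been picked out among the several maximal subgroups of $Z$ of comparable order, and that the enumeration of subgroups of $M$ isomorphic to $\GaL_2(16)$ or $\SL_2(16).8$ is exhaustive; once that is in place the verification is immediate because $M$ is so small. A more structural route would be to establish the factorisation $Z=MY$ first and then factor $M$ through $M\cap Y$, but identifying $M\cap Y$ and enumerating the diagonal-type subgroups of $M$ by hand is fiddlier than simply running the computation.
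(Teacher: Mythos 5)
Your proposal is correct and coincides with the paper's treatment: this is one of four lemmas the paper justifies solely by the sentence ``Computation in \magma{} gives the following four lemmas,'' so a direct machine verification inside the small subgroup $M=\Omega_4^+(16).2^2.2$, testing $|X|\,|Y|=|Z|\,|X\cap Y|$ for each candidate class and checking conjugacy, is exactly what is intended (and your write-up is in fact more explicit than the paper's). Your cautions about selecting the reducible class of $\GaSp_6(4)$ rather than a triality twist, and about enumerating the candidate subgroups of $M$ by order rather than trusting the structural guess, are the right ones.
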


\begin{lemma}\label{LemOmegaPlus32}
Let $G=\GaO_4^+(q)$ with $q\in\{4,16\}$, let $H=\Nor_G(\SL_2(q^{1/2})\times\SL_2(q^{1/2}))$, and let $K=\GaU_2(q)<G$. Then $H\cap K<\Soc(G)$.
\end{lemma}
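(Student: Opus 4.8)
The plan is to first reduce the claim to a statement about field automorphisms, and then to settle that statement by direct computation in the two small cases, while keeping the underlying structure explicit. Write $q_0=q^{1/2}\in\{2,4\}$, so that $q$ and $q_0$ are both even. Then $\Soc(G)=\Omega_4^+(q)=\SL_2(q)\times\SL_2(q)$ (trivial centres make the central product direct), and $G/\Soc(G)\cong C_2\times C_f$ with $q=p^f$, where $C_2$ interchanges the two $\SL_2(q)$-factors $A,B$ and $C_f$ acts diagonally as field automorphisms. Since $|K|=|\GaU_2(q)|<|\Omega_4^+(q)|=|\Soc(G)|$ for $q\in\{4,16\}$ (a direct order comparison), it suffices to prove $H\cap K\leqslant\Soc(G)$, that is, that $H\cap K$ contains no element inducing an outer automorphism of $\Omega_4^+(q)$.

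I would then locate $H$ and $K$ inside $\Omega_4^+(q)=A\times B$. By a Goursat argument $K^{(\infty)}=\SU_2(q)\cong\SL_2(q)$ is either a factor or a diagonal of $A\times B$; it must be a factor, say $B$, since its centraliser in $\Omega_4^+(q)$ contains the nontrivial centre $Z(\GU_2(q))\cong C_{q+1}$ of $\GU_2(q)$, whereas a diagonal $\SL_2(q)$ has trivial centraliser. Hence $C_{q+1}=Z(\GU_2(q))$ is a non-split torus of $A$, $\GU_2(q)=C_{q+1}\times B$, and, because $K=\GaU_2(q)$ normalises $B$ and therefore also $\Cen_{\Omega_4^+(q)}(B)=A$, the group $K$ normalises each of $A$ and $B$; thus no element of $K$, hence none of $H\cap K$, induces the graph automorphism. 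On the other side, after conjugating we may take the subfield subgroup $H^{(\infty)}=\Omega_4^+(q_0)=\SL_2(q_0)\times\SL_2(q_0)$ in standard position, with an $\SL_2(q_0)$ inside each of $A$ and $B$, and $\Nor_{\SL_2(q)}(\SL_2(q_0))=\SL_2(q_0)$ because $q_0$ is even with $[\bbF_q:\bbF_{q_0}]=2$. The problem is then reduced to this: no $x=\gamma\Phi^k\in K$ — where $\gamma\in\GU_2(q)=C_{q+1}\times B$, $\Phi$ is the Frobenius of $\bbF_{q^2}$, and $f\nmid k$ — can normalise $\SL_2(q_0)\times\SL_2(q_0)$.

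The constraint from $H$, worked out on the $A$-factor, would force a $C_{q+1}$-element to combine with a fixed ``discrepancy'' element into $\SL_2(q_0)$; since $\gcd(q+1,|\SL_2(q_0)|)=1$ for even $q_0$, this kills the $C_{q+1}$-part, leaving $x=(1,b)\Phi^k$ with $b$ in a conjugate of $\SL_2(q_0)$, after which one must still rule out that such an $x$ (with $f\nmid k$) normalises the subfield subgroup. What this really amounts to is that the $\bbF_{q^2}$-coordinate frame underlying the unitary form $\beta_\sharp$ on $V_\sharp$ and the $\bbF_q$-basis underlying the subfield subgroup $\Omega_4^+(q_0)$ on $V$ are not aligned — the relevant discrepancy does not lie in the $\mathrm{O}_4^+(q_0)$-coset that would be needed — and the bases and constants $\lambda,\mu$ fixed in Section~\ref{SecOmegaPlus01} make this a finite, unambiguous check. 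For $q\in\{4,16\}$ the cleanest way to certify it is computation in \magma: realise $G\leqslant\GaL_4(q)$ together with its field automorphism, form the subfield subgroup and its normaliser $H$, form $\GU_2(q)$ from $\beta_\sharp$ via $Q(v)=\beta_\sharp(v,v)$ together with its normaliser $K=\GaU_2(q)$, and verify $H\cap K\leqslant\Omega_4^+(q)$; the ambient group has order $14400$ when $q=4$ and about $1.3\times10^8$ when $q=16$, so the check is immediate. I expect the main obstacle to be exactly this last point — tracking, across the nested extensions $\bbF_{q_0}\subset\bbF_q\subset\bbF_{q^2}$, which cosets modulo $\Omega_4^+(q)$ meet both $H$ and $K$ — and it is precisely this that the \magma\ verification resolves in the two relevant cases.
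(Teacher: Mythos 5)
Your proposal is correct and ends up in essentially the same place as the paper: the paper establishes this lemma purely by computation in \magma, and your argument, after a correct structural preamble (the socle is $\SL_2(q)\times\SL_2(q)$, $K$ normalises both factors because $K^{(\infty)}=\SU_2(q)$ must be one of them, so only field-automorphism cosets remain), likewise defers the decisive step to the same \magma\ verification. The one point to watch is the Remark following the lemma in the paper: the conclusion is needed for \emph{all} subgroups $H$ and $K$ of the given form (all conjugacy classes and relative positions), so the computation should range over these rather than a single pair in standard position.
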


\begin{remark}
The conclusion of Lemma~\ref{LemOmegaPlus32} holds for all subgroups $H$ and $K$ of the given form, not only for one of these subgroups in a conjugacy class.
\end{remark}

Lemma~\ref{LemOmegaPlus32} is needed in the proof of the following lemma.

\begin{lemma}\label{LemOmegaPlus33}
Let $Z=\Omega(V){:}\langle\phi\rangle=\Omega_8^+(q){:}f$ with $m=4$ and $q\in\{4,16\}$, let $X=\Omega_8^-(q^{1/2}).f$ be a $\mathcal{C}_5$-subgroup of $Z$, and let $Y=\GU_4(q).(2f)$ be a $\mathcal{C}_3$-subgroup of $Z$. Then $Z=XY$ with $X\cap Y=X\cap Y\cap\Soc(Z)=\SL_2(q^{1/2})\times\D_{2(q+1)}$.
\end{lemma}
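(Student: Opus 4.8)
Write $r=q^{1/2}$, so $r\in\{2,4\}$ and, since $p=2$, the field automorphism $\phi$ has order $f=r$; thus $\Soc(Z)=\Omega(V)=\Omega_8^+(q)$, and $|Z|=r|\Omega_8^+(q)|$, $|X|=r|\Omega_8^-(r)|$, $|Y|=2r|\GU_4(q)|$. The plan is to reduce the whole statement to the intersection $X\cap Y$. Since $XY\subseteq Z$ and $|XY|=|X|\,|Y|/|X\cap Y|$ always holds, we have $Z=XY$ as soon as $|X\cap Y|=|X|\,|Y|/|Z|$; and a routine computation with the order formulas (the relevant greatest common divisors being trivial as $q$ is even) gives
\[
\frac{|X|\,|Y|}{|Z|}=2r(q^2-1)=r(r^2-1)\cdot2(q+1)=\bigl|\SL_2(r)\times\D_{2(q+1)}\bigr|.
\]
Hence it suffices to prove the two assertions $X\cap Y\cong\SL_2(r)\times\D_{2(q+1)}$ and $X\cap Y\leqslant\Soc(Z)$: the first then forces $|X\cap Y|=|X|\,|Y|/|Z|$, whence $Z=XY$, and together they yield the displayed chain $X\cap Y=X\cap Y\cap\Soc(Z)=\SL_2(r)\times\D_{2(q+1)}$.

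Set $X_0=X\cap\Soc(Z)=\Omega_8^-(r)$ and $Y_0=Y\cap\Soc(Z)$, the latter a $\calC_3$-subgroup of $\Omega_8^+(q)$ with $Y_0^{(\infty)}=\SU_4(q)$, so that $X\cap Y\cap\Soc(Z)=X_0\cap Y_0$. From the classification of maximal factorizations of $\POm_8^+(q)$ in~\cite{LPS1990} (and, for $q=4$, directly by \magma~\cite{BCP1997}) one gets $\Omega_8^+(q)=X_0Y_0$ with $X_0\cap Y_0\cong\SL_2(r)\times\D_{2(q+1)}$, together with the following description of how this subgroup sits inside $V$, which is what the argument will use: there is an orthogonal decomposition $V=V_1\perp V_2$ into nondegenerate plus-type $\bbF_q$-subspaces of dimension $4$ such that (i) the $\calC_3$ unitary structure on $V$ restricts to a $\GU_2(q)$-unitary structure on each $V_i$, so $\GU_2(q)\perp\GU_2(q)$ is a $\calC_2$-subgroup of $Y_0$ containing $X_0\cap Y_0$; (ii) the $\calC_5$ subfield structure gives $\bbF_r$-forms $V_{i,0}$ with $V_1,V_{1,0}$ of plus type and $V_{2,0}$ of minus type, so $\Omega(V_{1,0})\perp\Omega(V_{2,0})=\Omega_4^+(r)\perp\Omega_4^-(r)$ lies in $X_0$; and (iii) the direct factor $\SL_2(r)$ of $X_0\cap Y_0$ acts faithfully on $V_1$ and trivially on $V_2$ (so $V_2=\Cen_V(\SL_2(r))$ is determined by $X_0\cap Y_0$ and the form), while the restriction to $V_1$ of the $V_1$-stabiliser in $X_0$ is $\Omega_4^+(r)=\SL_2(r)\times\SL_2(r)$ and that of the $V_1$-stabiliser in $Y_0$ is $\GU_2(q)$.

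It remains to prove $X\cap Y\leqslant\Soc(Z)$. Suppose not and pick $g\in(X\cap Y)\setminus\Soc(Z)$. As $\Soc(Z)\trianglelefteq Z$, the element $g$ normalises $X_0$ and $Y_0$, hence $X_0\cap Y_0=\SL_2(r)\times\D_{2(q+1)}$, hence its characteristic subgroup $\SL_2(r)$ (characteristic since for $r=4$ it is the generalised Fitting quotient, and for $r=2$ one checks $\SL_2(2)=\Sy_3$ is characteristic in $\Sy_3\times\D_{10}$). Therefore $g$ stabilises $V_2=\Cen_V(\SL_2(r))$ and hence $V_1=V_2^\perp$, so $g$ restricts to $g|_{V_1}\in\GaO(V_1)=\GaO_4^+(q)$. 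Since $g\in\Stab_X(V_1)$ and $X_0\trianglelefteq X$, restriction to $V_1$ shows (using (iii)) that $g|_{V_1}$ normalises $\SL_2(r)\times\SL_2(r)$; and since $g\in\Stab_Y(V_1)$ with $Y_0\trianglelefteq Y$, it lies in the $\calC_3$-subgroup $\GaU_2(q)$ of $\GaO_4^+(q)$. Thus $g|_{V_1}\in H\cap K$ with $H=\Nor_{\GaO_4^+(q)}(\SL_2(r)\times\SL_2(r))$ and $K=\GaU_2(q)$. But $g\notin\Soc(Z)$ means $g$ induces a nontrivial power of $\phi$ on $V$, hence $g|_{V_1}$ induces a nontrivial field automorphism of $\bbF_q$, so $g|_{V_1}\notin\GO_4^+(q)$ and in particular $g|_{V_1}\notin\Omega_4^+(q)=\Soc(\GaO_4^+(q))$ --- contradicting Lemma~\ref{LemOmegaPlus32}. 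This gives $X\cap Y\leqslant\Soc(Z)$, and with Paragraphs~1--2 the proof is complete.

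I expect the main obstacle to be Paragraph~2: producing one orthogonal decomposition $V=V_1\perp V_2$ in standard position simultaneously for the $\calC_5$-subfield subgroup $\Omega_8^-(q^{1/2})$ and the $\calC_3$-field-extension subgroup $\GU_4(q)$ --- in particular reconciling that the subfield form on $V_{2,0}$ is of minus type over $\bbF_r$ while $V_2$ is of plus type over $\bbF_q$ and also carries an $\bbF_{q^2}$-unitary structure --- and then reading off how $X_0\cap Y_0$ distributes across the two summands. Once this geometry is in place (extracted from~\cite{LPS1990} or verified case-by-case), the rest is the order count of Paragraph~1 and the short reduction to Lemma~\ref{LemOmegaPlus32}.
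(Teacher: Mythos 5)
Your overall strategy coincides with the paper's: reduce $Z=XY$ to the order count $|X\cap Y|=|X||Y|/|Z|=2q^{1/2}(q^2-1)$, identify $X\cap Y\cap\Soc(Z)\cong\SL_2(q^{1/2})\times\D_{2(q+1)}$ sitting inside the stabiliser of a perpendicular decomposition into two $4$-dimensional (over $\bbF_q$) summands, and then rule out elements of $X\cap Y$ outside $\Soc(Z)$ by restricting to the summand carrying the $\SL_2(q^{1/2})$ and invoking Lemma~\ref{LemOmegaPlus32}. Your mechanism for showing that $X\cap Y$ preserves the decomposition (characteristicity of $\SL_2(q^{1/2})$ in $\SL_2(q^{1/2})\times\D_{2(q+1)}$, then $V_2=\Cen_V(\SL_2(q^{1/2}))$) is a legitimate variant of the paper's (which argues that any $z\in X\cap Y$ forces $I$ to stabilise $V_i^z\cap V_j$ and uses that the two restricted actions $\SL_2(q^{1/2})$ and $\D_{2(q+1)}$ are incompatible), and your final contradiction via the nontrivial field automorphism induced on $V_1$ is exactly the paper's.

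The genuine gap is your Paragraph~2, and you have correctly diagnosed it yourself. The paper does \emph{not} extract the structure and position of $I=X\cap Y\cap\Soc(Z)$ from~\cite{LPS1990}: it obtains the decomposition $I=C\times D<\GaU(V_1)\times\GaU(V_2)$ (with $V_1,V_2$ nondegenerate $2$-subspaces of the unitary space $V_\sharp$, i.e.\ your $4$-dimensional $\bbF_q$-summands) together with $I^{V_1}=\SL_2(q^{1/2})$ and $I^{V_2}=\D_{2(q+1)}$ by direct \magma{} computation, after first replacing $X$ by an $L$-conjugate. Your items (i)--(iii) — simultaneous compatibility of the decomposition with both the $\calC_5$ subfield structure and the $\calC_3$ unitary structure, the type distribution $V_{1,0}^+\perp V_{2,0}^-$ over $\bbF_{q^{1/2}}$, and the faithfulness/triviality of the $\SL_2(q^{1/2})$-action on $V_1$/$V_2$ — are all plausible but are asserted, not proved; \cite{LPS1990} records the maximal factorization and (in its proof) the isomorphism type of the intersection, but not this geometric positioning. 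Since only $q\in\{4,16\}$ occur, the gap is fillable by finite computation, which is precisely what the paper does. One further point you should address: there are two $Z$-classes of $\calC_3$-subgroups $\GU_4(q).(2f)$, and the statement is for either; the paper disposes of this at the outset by noting that the involutory graph automorphism fixes the $\calC_5$-class while swapping the two $\calC_3$-classes, so that one may work with a single explicit $Y=\GaU(V_\sharp)$. Your argument implicitly fixes one class and should say why the other follows.
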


\begin{proof}
Let $L=\Soc(Z)$, and let $\sigma$ be the involutory graph automorphism of $L$. Then the conjugacy class of $\mathcal{C}_5$-subgroups $\Omega_8^-(q^{1/2}).f$ of $Z$ is stabilized by $\sigma$, while the two conjugacy classes of $\mathcal{C}_3$-subgroups $\GU_4(q).(2f)$ of $Z$ are swapped by $\sigma$.
Since $\sigma$ commutes with the field automorphism of $L$, we have $Z^\sigma=Z$. Thus to prove the lemma we may take $Y$ to be a subgroup in any of the two conjugacy classes of $\mathcal{C}_3$-subgroups $\GU_4(q).(2f)$ of $Z$.
Hence we may assume that $Y=\GaU(V_\sharp)$ for some $4$-dimensional vector space $V_\sharp$ over $\bbF_{q^2}$ equipped with a nondegenerate Hermitian form $\beta_\sharp$ such that $Q(w)=\beta_\sharp(w,w)$ for all $w\in V$.

Let $J=X\cap Y$ and $I=X\cap Y\cap L$. According to computation in \magma~\cite{BCP1997}, replacing $X$ with some $L$-conjugate of $X$, we have $I=C\times D<\GaU(V_1)\times\GaU(V_2)$ for some nondegenerate $2$-subspaces $V_1$ and $V_2$ of $V_\sharp$, perpendicular to each other, such that
\[
C=I^{V_1}=\SL_2(q^{1/2})\ \text{ and }\ D=I^{V_2}=\D_{2(q+1)}.
\]
Let $E_i,F_i$ be a hyperbolic pair in $V_i$ for $i=1,2$, and let $\phi_\sharp\in\GaU(V_\sharp)$ such that
\[
\phi_\sharp\colon a_1E_1+b_1F_1+a_2E_2+b_2F_2\mapsto a_1^2E_1+b_1^2F_1+a_2^2E_2+b_2^2F_2
\]
for $a_1,b_1,a_2,b_2\in\bbF_{q^2}$.

For each $z\in J=X\cap Y$, since $z$ normalizes $I$ while $I$ stabilizes $V_1$ and $V_2$, it follows that $I=I^z$ stabilizes $V_1^z$ and $V_2^z$.
Hence $I$ stabilizes $V_i^z\cap V_j$ for $i,j\in\{1,2\}$.
Since $I^{V_1}=\SL_2(q^{1/2})$ and $I^{V_2}=\D_{2(q+1)}$, we conclude that $V_1^z=V_1$ and $V_2^z=V_2$. Thus $J$ stabilizes $V_1$ and $V_2$. Let
\[
X_0=X_{V_1,V_2},\quad Y_0=Y_{V_1,V_2},\quad Z_0=Z_{V_1,V_2}.
\]
Then $X\cap Y\leqslant X_0\cap Y_0$, and $I^{V_i}\leqslant J^{V_i}\leqslant X_0^{V_i}\leqslant Z_0^{V_i}\leqslant\GaO_4^+(q)$ for $i\in\{1,2\}$.
Note that $I^{V_1}=\SL_2(q^{1/2})$, $I^{V_2}=\D_{2(q+1)}$, $X_0=X_{V_1,V_2}$, and $X=\Omega_8^-(q^{1/2}).f$ is a $\mathcal{C}_5$-subgroup of $Z$.
We conclude $(X_0^{V_1})^{(\infty)}=\Omega_4^+(q^{1/2})$ and $(X_0^{V_2})^{(\infty)}=\Omega_4^-(q^{1/2})$.

Suppose that $X\cap Y\neq X\cap Y\cap L$. Then there exists $g\in X\cap Y\cap L$ such that $g\phi_\sharp^f\in X\cap Y$.
It follows that $(g\phi_\sharp^f)|_{V_1}\in\GaO(V_1)\setminus\GO(V_1)$. However, as
\[
(X_0^{V_1})^{(\infty)}=\Omega_4^+(q^{1/2}),\quad Y_0^{V_1}=\GaU(V_1)=\GaU_2(q),\quad Z_0^{V_1}\leqslant\GaO_4^+(q),
\]
Lemma~\ref{LemOmegaPlus32} asserts $X_0^{V_1}\cap Y_0^{V_1}<\Soc(Z_0^{V_1})$, which implies that
\[
(g\phi_\sharp^f)|_{V_1}\in(X\cap Y)^{V_1}\leqslant(X_0\cap Y_0)^{V_1}<\Soc(Z_0^{V_1})<\GO(V_1),
\]
a contradiction. Thus $X\cap Y=X\cap Y\cap L=I=\SL_2(q^{1/2})\times\D_{2(q+1)}$, and so
\[
\frac{|X|}{|X\cap Y|}=\frac{|\Omega_8^-(q^{1/2}).f|}{|\SL_2(q^{1/2})\times\D_{2(q+1)}|}=\frac{q^6(q^3-1)(q^4-1)}{2(q+1)}=\frac{|\Omega_8^+(q).f|}{|\GU_4(q).(2f)|}=\frac{|Z|}{|Y|}
\]
as $q\in\{4,16\}$. Therefore, $Z=XY$.
\end{proof}

Based on Lemma~\ref{LemOmegaPlus33}, we can establish the following result, which gives the pairs $(X,Y)$ in rows~23 and~24 of Table~\ref{TabOmegaPlus}.

\begin{lemma}\label{LemOmegaPlus23}
Let $Z=\Omega(V){:}\langle\phi\rangle=\Omega_8^+(q){:}f$ with $m=4$ and $q\in\{4,16\}$, let $X=\Omega_8^-(q^{1/2}).f$ be a $\mathcal{C}_5$-subgroup of $Z$, and let $Y=\SU_4(q).(2f)<\GU_4(q).(2f)$ be a $\mathcal{C}_3$-subgroup of $Z$. Then $Z=XY$ with $X\cap Y=\SL_2(q^{1/2})\times2$.
\end{lemma}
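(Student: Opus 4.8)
The plan is to derive the lemma from Lemma~\ref{LemOmegaPlus33} by descending from the $\mathcal{C}_3$-subgroup $Y_1:=\GU_4(q).(2f)$ to its index-$(q+1)$ subgroup $Y=\SU_4(q).(2f)$ through an order count. Up to conjugacy in $Z$ (which is harmless by Lemma~\ref{LemXia04}) we may take $X=\Omega_8^-(q^{1/2}).f$ and $Y_1\geqslant Y$ to be as in Lemma~\ref{LemOmegaPlus33}, so that $Z=XY_1$ with $J_1:=X\cap Y_1=\SL_2(q^{1/2})\times\D_{2(q+1)}$. Since $Y\leqslant Y_1$, the intersection $X\cap Y$ equals $J_1\cap Y$, and since $|\GU_4(q):\SU_4(q)|=q+1$ we have $[Y_1:Y]=q+1$; from $|X||Y_1|=|Z||J_1|$ (a restatement of $Z=XY_1$) we then get $|X||Y|=|Z||J_1|/(q+1)$. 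Hence $Z=XY$ will follow once I show $|X\cap Y|=|J_1|/(q+1)=2\,|\SL_2(q^{1/2})|$, i.e. that $X\cap Y$ has index $q+1$ in $J_1$; note that the trivial bound $|XY|\leqslant|Z|$ already gives $[J_1:X\cap Y]\leqslant q+1$, so only the reverse inequality needs work.

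To pin down $J_1\cap Y$ I would use the explicit description of $J_1$ in the proof of Lemma~\ref{LemOmegaPlus33}: writing $V_\sharp=V_1\perp V_2$ with perpendicular nondegenerate summands, the first direct factor is $C=I^{V_1}\cong\SL_2(q^{1/2})$ acting trivially on $V_2$, and the second is $D=I^{V_2}\cong\D_{2(q+1)}$ acting trivially on $V_1$. The factor $C$ lies in $Y$ --- for $q=16$ because $C\cong\SL_2(4)=\A_5$ is perfect and $Y^{(\infty)}=\SU_4(q)$, and for $q=4$ from the explicit form of $I$ in Lemma~\ref{LemOmegaPlus33} --- so $J_1\cap Y=C\times(D\cap Y)$ and $[J_1:X\cap Y]=[D:D\cap Y]$. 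The rotation subgroup $\langle r\rangle\cong C_{q+1}$ of $D$ lies in $\GU(V_2)\leqslant\GU(V_\sharp)=\GU_4(q)$ with no semilinear part, and since $Y\cap\GU_4(q)=\SU_4(q)$ we have $r\in Y$ if and only if $\det r=1$ over $\bbF_{q^2}$.

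Finally I would establish that $\det r\neq1$, which completes the proof: as $q+1$ is prime for both $q=4$ and $q=16$, every subgroup of $\D_{2(q+1)}$ of order prime to $q+1$ has order $1$ or $2$, so $r\notin Y$ forces $|D\cap Y|\leqslant2$, hence $[D:D\cap Y]\geqslant q+1$; combined with $[D:D\cap Y]\leqslant q+1$ this gives $|D\cap Y|=2$, so $X\cap Y=\SL_2(q^{1/2})\times2$ and $|X\cap Y|=2\,|\SL_2(q^{1/2})|=|X||Y|/|Z|$, whence $Z=XY$. The fact that $\det r\neq1$ is read off from the concrete subgroup $I=C\times D$ of $\Omega_8^-(q^{1/2})$ constructed in the proof of Lemma~\ref{LemOmegaPlus33}: one checks that a generator of the cyclic part of $D$, viewed as a unitary transformation of $V_\sharp$, has determinant generating $\GU_4(q)/\SU_4(q)\cong C_{q+1}$; for $q=4$ this is immediate by computation in \magma, and for $q=16$ it follows from the same construction carried out within $\GU_4(q)$ (or within $\Omega_8^-(4)$). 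The main obstacle is exactly this verification --- that $D$ meets $\SU_4(q)$ in index $q+1$ rather than lying inside it --- since for $q=16$ a direct \magma\ computation in $\Omega_8^+(16)$ is infeasible and one must argue with the explicit generators produced in Lemma~\ref{LemOmegaPlus33}.
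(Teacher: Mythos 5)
Your argument is correct and is essentially the paper's: both deduce the result from Lemma~\ref{LemOmegaPlus33} by trapping $X\cap Y$ between the index bound $[X\cap Y_1:X\cap Y]\leqslant[Y_1:Y]=q+1$ and the upper bound $|X\cap Y|\leqslant 2|\SL_2(q^{1/2})|$ coming from the fact that the dihedral direct factor $\D_{2(q+1)}$ of $X\cap Y_1$ meets $Y$ in a subgroup of order at most $2$. The paper disposes of what you call the ``main obstacle'' by a single \magma\ verification that $X\cap\GU_4(q)^{(\infty)}=\SL_2(q^{1/2})$, carried out for both $q=4$ and $q=16$ (so the $q=16$ computation is not treated as infeasible there); that computed fact plays exactly the role of your two conditions $C\leqslant Y$ and $\det r\neq1$, which you would otherwise still need to pin down explicitly for $q=16$.
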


\begin{proof}
Let $L=\Soc(Z)$, and let $M=\GU_4(q).(2f)<Z$ such that $Y<M$. As Lemma~\ref{LemOmegaPlus33} asserts, $X\cap M=X\cap M\cap L=\SL_2(q^{1/2})\times\D_{2(q+1)}$. Hence $X\cap Y=X\cap Y\cap L\leqslant\SL_2(q^{1/2})\times\D_{2(q+1)}$. Computation in \magma~\cite{BCP1997} shows that $X\cap M^{(\infty)}=\SL_2(q^{1/2})$. Since $M^{(\infty)}=\SU_4(q)$ has index $2$ in $Y\cap L=\SU_m(q).2$, the index of $X\cap M^{(\infty)}$ in $X\cap Y\cap L=X\cap Y$ is at most $2$. Moreover, since $Y$ has index $q+1$ in $M$, the index of $X\cap Y$ in $X\cap M$ is at most $q+1$. Then we derive from the subgroup sequence
\[
\SL_2(q^{1/2})=X\cap M^{(\infty)}\leqslant X\cap Y\leqslant X\cap M=\SL_2(q^{1/2})\times\D_{2(q+1)}
\]
that $X\cap Y=\SL_2(q^{1/2})\times2$. As $q\in\{4,16\}$, it follows that
\[
\frac{|X|}{|X\cap Y|}=\frac{|\Omega_8^-(q^{1/2}).f|}{|\SL_2(q^{1/2})\times2|}=\frac{q^6(q^3-1)(q^4-1)}{2}=\frac{|\Omega_8^+(q).f|}{|\SU_4(q).(2f)|}=\frac{|Z|}{|Y|},
\]
and so $Z=XY$.
\end{proof}

Now we construct the pairs $(X,Y)$ in row~25 of Table~\ref{TabOmegaPlus}.

\begin{lemma}\label{LemOmegaPlus34}
Let $Z=\Omega_{12}^+(2)$, let $M=\SU_6(2)<Z$, let $X$ be a subgroup of $M$ isomorphic to $3.\PSU_4(3)$ or $3.\M_{22}$, and let $Y=\N_1[Z]=\Sp_{10}(2)$. Then $Z=XY$ with
\[
X\cap Y=
\begin{cases}
3^4{:}\A_5&\textup{if }X=3.\PSU_4(3)\\
\PSL_2(11)&\textup{if }X=3.\M_{22}.
\end{cases}
\]
\end{lemma}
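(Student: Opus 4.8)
The plan is to descend from $Z=\Omega_{12}^+(2)$ to the field-extension subgroup $M=\SU(V_\sharp)=\SU_6(2)$ containing $X$, and then read off the result from the much smaller group $\SU_6(2)$. First I would apply Lemma~\ref{LemOmegaPlus07} with $m=6$ and $q=2$: with $K=Z_{e_1+f_1}$ we have $Z=MK$ and $M\cap K=\SU(V_\sharp)_{e_1+f_1}=\SU_5(2)$, while $K=\Omega_{11}(2)\cong\Sp_{10}(2)=\N_1[Z]=Y$ because $e_1+f_1$ is a nonsingular vector of $V$. Hence $Z=MY$ with $M\cap Y=\SU_5(2)$, and since $X\leqslant M$ it follows that $Z=XY$ if and only if $M=X(M\cap Y)=X\cdot\SU_5(2)$, in which case $X\cap Y=X\cap(M\cap Y)=X\cap\SU_5(2)$. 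Thus the lemma reduces to establishing the factorization $\SU_6(2)=X\cdot\SU_5(2)$ and identifying $X\cap\SU_5(2)$ for $X=3.\PSU_4(3)$ and for $X=3.\M_{22}$, where throughout $\SU_5(2)$ denotes the point stabilizer $\SU(V_\sharp)_{e_1+f_1}$.

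For this reduced problem I would work modulo $N:=Z(\SU_6(2))\cong3$. The point stabilizer $\SU_5(2)$ meets $N$ trivially (a nontrivial scalar fixes no nonzero vector), so its image in $\PSU_6(2)=\SU_6(2)/N$ is isomorphic to $\SU_5(2)$ and equals the nonsingular-point stabilizer $\PSU_5(2)$ there; on the other hand $X$, being the full preimage in $\SU_6(2)$ of $\PSU_4(3)$, respectively $\M_{22}$, contains $N$. The factorizations $\PSU_6(2)=\PSU_4(3)\cdot\PSU_5(2)$ and $\PSU_6(2)=\M_{22}\cdot\PSU_5(2)$ are recorded in~\cite{LPS1990} (and are in any case routinely confirmed in \magma~\cite{BCP1997} via the action of $\PSU_6(2)$ on its $672$ nonsingular points), with intersections $\PSU_4(3)\cap\PSU_5(2)=3^4{:}\A_5$ of order $4860$ and $\M_{22}\cap\PSU_5(2)=\PSL_2(11)$ of order $660$. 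Since $N\leqslant X$, Lemma~\ref{LemXia01} lifts these to $\SU_6(2)=X\cdot\SU_5(2)$; and comparing orders, using $\SU_5(2)\cap N=1$, shows that the natural quotient map sends $X\cap\SU_5(2)$ isomorphically onto $\PSU_4(3)\cap\PSU_5(2)$, respectively $\M_{22}\cap\PSU_5(2)$, so $X\cap\SU_5(2)=3^4{:}\A_5$ if $X=3.\PSU_4(3)$ and $X\cap\SU_5(2)=\PSL_2(11)$ if $X=3.\M_{22}$. Tracing back through the first paragraph then yields $Z=XY$ with the stated intersections.

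The step demanding the most care is the bookkeeping with the centre $N\cong3$: one must keep $\SU_5(2)=\SU(V_\sharp)_{e_1+f_1}$ (which avoids $N$) separate from the preimage $\GU_5(2)\cong3\times\SU_5(2)$ of $\PSU_5(2)$, verify that the triple covers $3.\PSU_4(3)$ and $3.\M_{22}$ sitting inside $\SU_6(2)$ genuinely contain $N$, and check the order arithmetic that forces the intersections in $\SU_6(2)$ to agree with those in $\PSU_6(2)$. Should one prefer to sidestep this, the factorization $\SU_6(2)=X\cdot\SU_5(2)$ together with the intersection $X\cap\SU_5(2)$ can instead be obtained by a single direct computation in $\SU_6(2)$, which is entirely routine notwithstanding the size of $Z$.
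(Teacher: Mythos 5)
Your proof is correct and follows essentially the same route as the paper: both reduce via Lemma~\ref{LemOmegaPlus07} to the factorization $\SU_6(2)=X\cdot\SU_5(2)$ with $M\cap Y=\SU_5(2)$, and both exploit the fact that this $\SU_5(2)$ meets the centre of $\SU_6(2)$ trivially to identify $X\cap Y$. The only difference is that the paper quotes the resulting unitary factorization and intersection directly from~\cite[Lemma~5.6]{LWX-Unitary}, whereas you re-derive it from the maximal factorizations of $\PSU_6(2)$ in~\cite{LPS1990} by lifting through the centre.
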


\begin{proof}
Since $\SU_5(2)$ intersects trivially with the center of $\SU_6(2)$, we conclude from Lemmas~\ref{LemOmegaPlus07} and~\cite[Lemma~5.6]{LWX-Unitary} that
\[
X\cap Y=X\cap(M\cap Y)=X\cap\SU_5(2)=
\begin{cases}
3^4{:}\A_5&\textup{if }X=3.\PSU_4(3)\\
\PSL_2(11)&\textup{if }X=3.\M_{22}.
\end{cases}
\]
Consequently,
\[
\frac{|X|}{|X\cap Y|}=2^5\cdot3^2\cdot7=\frac{|\Omega_{12}^+(2)|}{|\Sp_{10}(2)|}=\frac{|Z|}{|Y|},
\]
and hence $Z=XY$.
\end{proof}

The next lemma presents the pairs $(X,Y)$ in rows~27 and~28 of Table~\ref{TabOmegaPlus}. Recall the definition of $\psi$ in Section~\ref{SecOmegaPlus01}.

\begin{lemma}\label{LemOmegaPlus35}
Let $Z=\Omega(V){:}\langle\phi\rangle=\Omega_{16}^+(q){:}f$ with $m=8$ and $q\in\{2,4\}$, and let $Y=Z_{e_1+f_1}=\GaSp_{14}(q)$.
\begin{enumerate}[{\rm (a)}]
\item If $M=\Omega(V_\sharp){:}\langle\psi\rangle=\Omega_8^+(q^2).(2f)$ and $X$ is a $\calC_9$-subgroup of $M$ isomorphic to $\Sp_6(q^2).(2f)$ or $\Omega_8^-(q).(2f)$, then $Z=XY$ with 
\[
X\cap Y=
\begin{cases}
\G_2(q^2)&\textup{if }X=\Sp_6(q^2).(2f)\\
\G_2(q)&\textup{if }X=\Omega_8^-(q).(2f).
\end{cases}
\]
\item If $M=\Sp_6(q^2).(2f)<Z$ such that $M$ is a $\calC_9$-subgroup of $\Omega(V_\sharp){:}\langle\psi\rangle=\Omega_8^+(q^2).(2f)$ and $X$ is a subgroup of $M$ isomorphic to $\Omega_6^+(q^2).(2f)$, $\Omega_6^-(q^2).(2f)$ or $\Sp_4(q^2).(2f)$, then $Z=XY$ with
\[
X\cap Y=
\begin{cases}
\SL_3(q^2)&\textup{if }X=\Omega_6^+(q^2).(2f)\\
\SU_3(q^2)&\textup{if }X=\Omega_6^-(q^2).(2f)\\
\SL_2(q^2)&\textup{if }X=\Sp_4(q^2).(2f).
\end{cases}
\]
\item If $M=\Sp_8(q).f$ is a $\calC_9$-subgroup of $Z$ and $X=\Sp_4(q^2).(2f)<M$ then $Z=XY$ with $X\cap Y=\Sp_2(q^2)$.
\end{enumerate}
\end{lemma}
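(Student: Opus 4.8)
The plan is to handle the three parts uniformly by the ``reduction through an intermediate subgroup'' method that has been used repeatedly in this section (cf.\ Lemmas~\ref{LemOmegaPlus12}, \ref{LemOmegaPlus45}, \ref{LemSymplectic07}): in each case $X<M<Z$ for a suitable $\mathcal C_3$ or $\mathcal C_5$ subgroup $M$, so we first establish the factorization $Z=MY$ with a specified intersection $M\cap Y$, and then factorize $M$ using the factorization $M=X(M\cap Y)$ coming from a lemma already proved for the smaller group. Concretely, for part~(a) we take $M=\Omega(V_\sharp){:}\langle\psi\rangle=\Omega_8^+(q^2).(2f)$; since $e_1+f_1$ is nonsingular in both $V$ and $V_\sharp$ (this was set up in Section~\ref{SecOmegaPlus01}), Lemma~\ref{LemOmegaPlus12} gives $Z=MY$ with $M\cap Y=\Omega_7(q^2).(2f)$ (the stabilizer of a nonsingular point inside $\Omega_8^+(q^2).(2f)$, realized as $Z_{e_1+f_1}\cap M$). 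Then $M\cap Y$ is a conjugate of $\N_1[\Omega_8^+(q^2)].(2f)=\Omega_7(q^2).(2f)$, so Lemma~\ref{LemOmegaPlus26} (applied over $\bbF_{q^2}$, and noting $\G_2(q^2)=\Omega_7(q^2)^\tau\cap\Omega_7(q^2)$) together with Lemma~\ref{LemOmegaPlus27} gives the factorization of $M$ by its $\mathcal C_9$-subgroup $\Sp_6(q^2).(2f)$ (this is $\Omega_7(q^2)^\tau$ inside $\Omega_8^+(q^2)$ after the triality, with $\Sp_6\cong\Omega_7$) or by $\Omega_8^-(q).(2f)$ (a $\mathcal C_5$-subgroup), with intersection $\G_2(q^2)$ in the first case and $\G_2(q)$ in the second (the latter from Lemma~\ref{LemOmegaPlus28} over $\bbF_{q^{1/2}}$, i.e.\ $\Omega_8^-(q)\cap\Omega_7(q^2)=\G_2(q)$). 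Combining, $Z=MY=X(M\cap Y)Y=XY$ with $X\cap Y=X\cap(M\cap Y)$ equal to $\G_2(q^2)$ or $\G_2(q)$ respectively; the factorization then follows from the usual order count $|X||Y|=|Z||X\cap Y|$, which I would record explicitly.

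For part~(b) I would chain one more level down: take $M=\Sp_6(q^2).(2f)$ as in part~(a), so by part~(a) we have $Z=MY$ with $M\cap Y=\G_2(q^2)$. Then $X$ is $\Omega_6^+(q^2).(2f)$, $\Omega_6^-(q^2).(2f)$ or $\Sp_4(q^2).(2f)$, and Lemma~\ref{LemSymplectic07} (applied with $q$ replaced by $q^2$, and carrying the outer $(2f)$ along) gives $M=X(M\cap Y)$ with $X\cap(M\cap Y)=X\cap\G_2(q^2)$ equal to $\SL_3(q^2)$, $\SU_3(q^2)$ or $\SL_2(q^2)$ according as $X$ is of plus, minus or $\Sp_4$ type. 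Hence $Z=XY$ with $X\cap Y=X\cap(M\cap Y)$ as stated, again confirmed by the order formula. One small point to check here is that the $(2f)$-part of $X$ is not absorbed into $\Omega(V_\sharp)$, i.e.\ that $X\nleqslant\Soc$; but since $X$ is a $\mathcal C_9$ (or $\mathcal C_2$) subgroup of $M=\Sp_6(q^2).(2f)$ with $X$ containing the field automorphism of order $f$ and the graph-type involution, the argument of Lemma~\ref{LemSymplectic07} transfers verbatim because over even-or-odd $q^2$ the relevant $\Omega_6^\varepsilon(q^2)$ has no subgroup $\SL_3^\varepsilon(q^2).2$ only when $q$ is even; when $q$ is odd a parallel argument (or a direct intersection computation) pins down $X\cap Y$. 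Since here $q\in\{2,4\}$ this is moot and the even-characteristic argument of Lemma~\ref{LemSymplectic07} applies directly.

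For part~(c) the intermediate subgroup is $M=\Sp_8(q).f$, a $\mathcal C_9$-subgroup of $Z=\Omega_{16}^+(q){:}f$ (this is the spin embedding $\Sp_8(q)\cong\Omega_{9}(q)$... in fact $\Omega_{16}^+$ contains $\Sp_8$ via the half-spin module); since $e_1+f_1$ is a nonsingular point and $M$ is of orthogonal type inside the natural module after the $\mathcal C_9$ embedding, I would first argue $Z=MY$ with $M\cap Y=\Sp_6(q).f$ — this is exactly a factorization of $\Omega_{16}^+(q){:}f$ of the shape appearing in row~1 / Lemma~\ref{LemOmegaPlus06} with $m=8$, or alternatively it follows because the half-spin subgroup $\Sp_8(q)$ meets the point stabilizer $\GaSp_{14}(q)$ in $\Sp_6(q).f$ (a known maximal factorization of $\Omega_{16}^+$ from \cite[Appendix~3]{LPS1990}, analogous to Lemma~\ref{LemOmegaPlus29}). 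Granting $M\cap Y=\Sp_6(q).f$, the field-extension subgroup $X=\Sp_4(q^2).(2f)$ of $M=\Sp_8(q).f$ factorizes $M$ by Lemma~\ref{LemSymplectic13} (with $m=4$ there, $\GaSp_8(q)$ replaced by the appropriate $\Sp_8(q).f$-subgroup), giving $M=X(M\cap Y)$ with $X\cap(M\cap Y)=\Sp_2(q^2)$; hence $Z=XY$ with $X\cap Y=\Sp_2(q^2)$, checked by the order count $|X||Y|=|Z||X\cap Y|$. \textbf{The main obstacle} I anticipate is not the outer $(2f)/f$-bookkeeping (routine, as in Lemmas~\ref{LemOmegaPlus11} and~\ref{LemOmegaPlus19}) but rather pinning down the intersection $M\cap Y$ precisely in each case — i.e.\ verifying that the nonsingular-point stabilizer of $Z$ cuts the $\mathcal C_3$- or $\mathcal C_9$-subgroup $M$ in exactly $\Omega_7(q^2).(2f)$, $\G_2(q^2)$, or $\Sp_6(q).f$ and no larger; for $q=2$ this can be settled by \magma, and for $q=4$ either by \magma\ or by the explicit vector-stabilizer computations of the type carried out in Lemmas~\ref{LemOmegaPlus13}--\ref{LemOmegaPlus16}, using that the relevant subgroup of $\Omega_8^+(q^2).(2f)$ or $\Sp_8(q).f$ containing the field automorphism does not collapse because $e_1+f_1$ has distinct $Q$- and $Q_\sharp$- (resp.\ $\beta_\sharp$-) values, exactly as in the contradiction step $\lambda=\lambda^q$ used repeatedly above.
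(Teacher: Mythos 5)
Your overall strategy --- factor $Z$ through an intermediate subgroup $M$ with $X<M<Z$, quote $Z=MY$ with a known $M\cap Y$, then factorize $M=X(M\cap Y)$ by a lemma for the smaller group --- is exactly the paper's, and your parts~(b) and~(c) reproduce the paper's argument: part~(b) chains through part~(a) with $M=\Sp_6(q^2).(2f)$ and $M\cap Y=\G_2(q^2)$, then applies Lemma~\ref{LemSymplectic07} over $\bbF_{q^2}$; part~(c) gets $Z=MY$ with $M\cap Y=\Sp_6(q).f<\N_2[M]$ from Lemma~\ref{LemOmegaPlus29} (your passing reference to Lemma~\ref{LemOmegaPlus06} is a slip, but the alternative you give is the one the paper uses) and then applies Lemma~\ref{LemSymplectic13}.

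The genuine problem is in part~(a): you assert that Lemma~\ref{LemOmegaPlus12} gives $M\cap Y=\Omega_7(q^2).(2f)$. It does not: its conclusion is $X\cap Y=\Omega_{m-1}(q^2)$, i.e.\ $M\cap Y=\Omega_7(q^2)\cong\Sp_6(q^2)$, contained in the socle $\Omega(V_\sharp)$ --- the whole point of the $\lambda=\lambda^q$ contradiction in that proof is that no element of $M\setminus\Omega(V_\sharp)$ fixes $e_1+f_1$. This is not cosmetic. It is precisely because $M\cap Y$ carries none of the outer $(2f)$ that $X$ must be the full $\Sp_6(q^2).(2f)$ (resp.\ $\Omega_8^-(q).(2f)$) rather than its socle, and that $X\cap Y=X\cap(M\cap Y)$ is computed inside $\Omega_8^+(q^2)$, where Lemmas~\ref{LemOmegaPlus26} and~\ref{LemOmegaPlus28} apply and give $\G_2(q^2)$ and $\G_2(q)$ with no outer part. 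Under your version of $M\cap Y$, the factorization $M=X(M\cap Y)$ would force $X\cap(M\cap Y)=\G_2(q^2).(2f)$ (compare the proof of Lemma~\ref{LemOmegaPlus40} at the level of the extended groups), and then your own order check fails by a factor of $2f=q$: one needs $|X|/|X\cap Y|=q^7(q^8-1)=|Z|/|Y|$, which holds for $X\cap Y=\G_2(q^2)$ but not for $\G_2(q^2).(2f)$. So part~(a) as written is internally inconsistent; correcting the citation of Lemma~\ref{LemOmegaPlus12} repairs it, after which the rest of part~(a) goes through as in the paper.
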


\begin{proof}
First assume that $M$ and $X$ are as in part~(a). It is shown in Lemma~\ref{LemOmegaPlus12} that $Z=MY$ with $M\cap Y=\Sp_6(q^2)$. Thus, for each subgroup $X$ of $M$, we have $Z=XY$ if and only if $M=X(M\cap Y)$. If $X=\Sp_6(q^2).(2f)$, then we see from Lemma~\ref{LemOmegaPlus26} that $M=X(M\cap Y)$ and $X\cap(M\cap Y)=\G_2(q^2)$. Similarly, if $X=\Omega_8^-(q).(2f)$, then by Lemma~\ref{LemOmegaPlus28} we have $M=X(M\cap Y)$ and $X\cap(M\cap Y)=\G_2(q)$. Note that $X\cap Y=X\cap(M\cap Y)$. This proves part~(a).

Next assume that $M$ and $X$ are as in part~(b). For each subgroup $X$ of $M$, since Lemma~\ref{LemOmegaPlus35} shows that $Z=MY$ with $M\cap Y=\G_2(q^2)$, we have $Z=XY$ if and only if $M=X(M\cap Y)$. If $X=\Omega_6^\varepsilon(q^2).(2f)$ with $\varepsilon\in\{+,-\}$, then it follows from Lemma~\ref{LemSymplectic07} that $M=X(M\cap Y)$ and $X\cap(M\cap Y)=\SL_3^\varepsilon(q^2)$. Similarly, if $X=\Sp_4(q^2).(2f)$, then by~\cite[Lemma~4.10]{LWX-Linear} and Lemma~\ref{LemSymplectic07} we have $M=X(M\cap Y)$ and $X\cap(M\cap Y)=\SL_2(q^2)$. Hence part~(b) holds.

Now assume that $M$ and $X$ are given in part~(c). From Lemma~\ref{LemOmegaPlus29} we deduce that $Z=MY$ with $M\cap Y=\Sp_6(q).f<\N_2[M]$. By Lemma~\ref{LemSymplectic13} we have $M=X(M\cap Y)$ with $X\cap(M\cap Y)=\Sp_2(q^2)$. Hence $Z=MY=X(M\cap Y)Y=XY$ with $X\cap Y=X\cap(M\cap Y)=\Sp_2(q^2)$, proving part~(c).
\end{proof}

The pairs $(X,Y)$ in rows~29 and~30 of Table~\ref{TabOmegaPlus} will be constructed in the following two lemmas.

\begin{lemma}\label{LemOmegaPlus36}
Let $Z=\Omega(V){:}\langle\phi\rangle=\Omega_{24}^+(q){:}f$ with $m=12$ and $q\in\{2,4\}$, and let $Y=Z_{e_1+f_1}=\GaSp_{22}(q)$.
\begin{enumerate}[{\rm (a)}]
\item If $M=T{:}\langle\rho\rangle=\SL_{12}(q){:}\langle\rho\rangle$ with $\rho\in\{\phi,\phi\gamma\}$ and $X=\G_2(q^2).(2f)<M$ is the group $H$ defined in~\cite[Lemmas~5.6~or~5.7]{LWX-Linear}, then $Z=XY$ with $X\cap Y=\SL_2(q^2)$.
\item If $M=\SU(V_\sharp){:}\langle\xi\rangle=\SU_{12}(q).(2f)$ and $X=\G_2(q^2).(2f)<M$ is the group $H$ defined in~\cite[Lemma~5.9]{LWX-Unitary} or~\cite[Lemma~5.10]{LWX-Unitary}, then $Z=XY$ with $X\cap Y=\SL_2(q^2)$.
\item If $M=\Sp_6(q^2).(2f)<\Sp_{12}(q).f<(\Sp_2(q)\otimes\Sp_{12}(q)).f<Z$ and $X=\G_2(q^2).(2f)<M$ such that $X\cap\Omega(V)=\G_2(q^2).2$, then $Z=XY$ with $X\cap Y=\SL_2(q^2)$.
\item If $M=\SU_6(q^2).(4f)<\Omega(V_\sharp){:}\langle\psi\rangle=\Omega_{12}^+(q^2).(2f)$ and $X=\G_2(q^2).(4f)<\Sp_6(q^2).(4f)<M$, then $Z=XY$ with $X\cap Y=\SL_2(q^2).2$.
\end{enumerate}
\end{lemma}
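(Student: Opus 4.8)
The plan is to prove all four parts by the same two-step reduction, following the template of the proofs of Lemmas~\ref{LemOmegaPlus11}, \ref{LemOmegaPlus10}, \ref{LemOmegaPlus06}, \ref{LemOmegaPlus45} and~\ref{LemOmegaPlus35}. In each part $X$ sits inside a larger subgroup $M$ of $Z$, so I would first establish a factorization $Z=MY$ with $M\cap Y$ identified precisely (using the base factorizations already in this section), and then import from~\cite{LWX-Linear} or~\cite{LWX-Unitary} the factorization of the smaller classical group $M$ by its $\G_2$-type subgroup $X$, namely $M=X(M\cap Y)$ with $X\cap(M\cap Y)$ computed. These combine by the uniform observation that if $Z=MY$ with $M\cap Y=J$ and $X\leqslant M$ with $M=XJ$ and $X\cap J=I$, then $Z=MY=XJY=XY$ and, since $X\leqslant M$, $X\cap Y=X\cap(M\cap Y)=X\cap J=I$. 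Throughout $Y=Z_{e_1+f_1}=\GaSp_{22}(q)$, and whenever a conjugacy adjustment is convenient I would use Lemma~\ref{LemXia04}.

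For part~(a) I would take $M=T{:}\langle\rho\rangle=\SL_{12}(q){:}\langle\rho\rangle$: Lemma~\ref{LemOmegaPlus03} gives $\Omega(V)=T\cdot\Omega(V)_{e_1+f_1}$, which (since $\rho$ normalizes $T$) upgrades to $Z=MY$ with $M\cap Y=\SL_{11}(q){:}\langle\rho\rangle$, and then~\cite[Lemmas~5.6~and~5.7]{LWX-Linear}, which by construction place $X$ in $M$, supply $M=X(M\cap Y)$ with $X\cap(M\cap Y)=\SL_2(q^2)$. Part~(b) is the exact analogue with $M=\SU(V_\sharp){:}\langle\xi\rangle=\SU_{12}(q).(2f)$: Lemma~\ref{LemOmegaPlus07} gives $Z=MY$ with $M\cap Y=\SU_{11}(q).(2f)$ (just as in the proof of Lemma~\ref{LemOmegaPlus11}(b)), and~\cite[Lemmas~5.9~and~5.10]{LWX-Unitary} supply $M=X(M\cap Y)$ with $X\cap(M\cap Y)=\SL_2(q^2)$. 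In both cases the composition step finishes the argument.

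For part~(c) I would first exploit the hypothesis $X\cap\Omega(V)=\G_2(q^2).2$: since $X=\G_2(q^2).(2f)$, the image $X\Omega(V)/\Omega(V)$ has order $f$, so $M=\Sp_6(q^2).(2f)$ surjects onto $Z/\Omega(V)$ of order $f$, forcing $M\cap\Omega(V)=\Sp_6(q^2).2$. This is exactly the hypothesis under which Lemma~\ref{LemOmegaPlus10} (with $a=3$, its ``$X$'' being our $M<\Sp_{12}(q){:}f<(\Sp_2(q)\otimes\Sp_{12}(q)){:}f$) applies, yielding $Z=MY$ with $M\cap Y=\Sp_4(q^2)<\N_2[M']$, the subgroup $\Sp_4(q^2)$ being the pointwise stabilizer in $M'=\Sp_6(q^2)$ of a hyperbolic pair. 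I would then invoke~\cite[Lemma~4.10]{LWX-Linear} over $\bbF_{q^2}$, extended by the outer part via Lemma~\ref{LemXia01}, to get $M=X(M\cap Y)$ with $X\cap(M\cap Y)=\SL_2(q^2)$, and compose. For part~(d) the outer base factorization comes for free: Lemma~\ref{LemOmegaPlus45}(d) (with $a=3$) gives $Z=MY$ with $M=\SU_6(q^2).(4f)<\Omega(V_\sharp){:}\langle\psi\rangle=\Omega_{12}^+(q^2).(2f)$ and $M\cap Y=\SU_5(q^2).2$, and then the chain $\G_2(q^2)<\Sp_6(q^2)<\SU_6(q^2)$ together with~\cite[Lemma~4.7]{LWX-Unitary} over $\bbF_{q^2}$ (with Lemma~\ref{LemXia01} accounting for the $.(4f)$ and the extra $.2$) gives $M=X(M\cap Y)$ with $X\cap(M\cap Y)=\SL_2(q^2).2$.

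The substantive input — the factorizations of $\SL_{12}(q)$, $\SU_{12}(q)$, $\Sp_6(q^2)$ and $\SU_6(q^2)$, and of their almost simple extensions, by $\G_2$-type subgroups — is already carried out in~\cite{LWX-Linear} and~\cite{LWX-Unitary}, so no essentially new argument is needed here. The one point that will require care, and where an error is most likely, is the outer-automorphism bookkeeping: checking that in each part $M\cap\Omega(V)$ is exactly the stated ``$.2$''-extension (this is precisely where the hypothesis on $X\cap\Omega(V)$ in~(c) is used), that the nested subfield and field-extension subgroups $\G_2(q^2)<\Sp_6(q^2)<\SL_{12}(q)$ and $\G_2(q^2)<\Sp_6(q^2)<\SU_6(q^2)$ are legitimate under $q\in\{2,4\}$ (so that $q^2\in\{4,16\}$ is even, which is what makes $\G_2(q^2)<\Sp_6(q^2)$ available), and that $M\cap Y$ is of precisely the subgroup type ($\N_2$-type, or pair-stabilizer) for which the imported factorization of $M$ is formulated, passing to an $\Omega(V)$-conjugate if necessary.
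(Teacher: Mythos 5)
Your proposal is correct and follows essentially the same two-step reduction as the paper: establish $Z=MY$ with $M\cap Y$ identified via Lemmas~\ref{LemOmegaPlus03}, \ref{LemOmegaPlus07}, \ref{LemOmegaPlus10} and~\ref{LemOmegaPlus45}(d) (including the same use of the hypothesis $X\cap\Omega(V)=\G_2(q^2).2$ in part~(c) to pin down $M\cap\Omega(V)$), then import the factorization of $M$ by its $\G_2$-type subgroup from~\cite{LWX-Linear} and~\cite{LWX-Unitary}. The only cosmetic deviations are that the paper closes parts~(b) and~(c) with an order count rather than the composition $Z=X(M\cap Y)Y$, and in part~(d) it cites the direct factorization $\SU_6(q^2)=\G_2(q^2)\cdot\SU_5(q^2)$ from~\cite[Lemma~4.8]{LWX-Unitary} instead of passing through $\Sp_6(q^2)$ as you suggest.
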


\begin{proof}
First assume that $M$ and $X$ are given in part~(a). By Lemma~\ref{LemOmegaPlus03} we have $Z=TZ_{e_1+f_1}$ with $T_{e_1+f_1}=T\cap Z_{e_1+f_1}=\SL_{m-1}(q)$. Hence $Z=T\langle\rho\rangle Z_{e_1+f_1}=MY$ with $M\cap Y=T_{e_1+f_1}{:}\langle\rho\rangle$. Then it follows from~\cite[Lemmas~5.6~and~5.7]{LWX-Linear} that $M=X(M\cap Y)$ with $X\cap(M\cap Y)=X\cap(T_{e_1+f_1}\langle\rho\rangle)=\SL_2(q^2)$, and so $Z=XY$ with $X\cap Y=X\cap(M\cap Y)=\SL_2(q^2)$.

Next assume that $M$ and $X$ are given in part~(b). From Lemma~\ref{LemOmegaPlus07} we obtain $M'\cap Y=\SU_{11}(q)$ and thus $M\cap Y=\SU_{11}(q).(2f)$.
If $X$ is the group $H$ defined in \cite[Lemma~5.9]{LWX-Unitary}, then $X<M'$, and so $X\cap Y=X\cap(M'\cap Y)=\SL_2(q^2)$ by~\cite[Lemma~5.9]{LWX-Unitary}.
If $X$ is the group $H$ defined in~\cite[Lemma~5.10]{LWX-Unitary}, then $X\cap Y=X\cap(M\cap Y)=\SL_2(q^2)$ by~\cite[Lemma~5.10]{LWX-Unitary}.
In either case, we have $X\cap Y=\SL_2(q^2)$. As $q\in\{2,4\}$, it follows that
\begin{equation}\label{EqnOmegaPlus16}
\frac{|X|}{|X\cap Y|}=\frac{|\G_2(q^2).(2f)|}{|\SL_2(q^2)|}=2fq^{10}(q^{12}-1)=q^{11}(q^{12}-1)=\frac{|\Omega_{24}^+(q).f|}{|\GaSp_{22}(q)|}=\frac{|Z|}{|Y|}.
\end{equation}
Therefore, $Z=XY$.

Now assume that $M$ and $X$ are as in part~(c). From $X\cap\Omega(V)=\G_2(q^2).2$ we deduce that $M\Omega(V)/\Omega(V)\geqslant X\Omega(V)/\Omega(V)=f$ and thus $M\Omega(V)=Z$. This implies that $M\cap\Omega(V)=\Sp_6(q^2).2$, and so Lemma~\ref{LemOmegaPlus10} gives $M\cap Y=\Sp_4(q^2)<\N_2[M']$. Then by~\cite[Lemma~4.10]{LWX-Linear} we obtain $X\cap Y=X\cap(M\cap Y)=\SL_2(q^2)$. Again, by~\eqref{EqnOmegaPlus16}, this leads to $Z=XY$.

Finally, let $M$ and $X$ be as in part~(d). By Lemma~\ref{LemOmegaPlus45}(d), $Z=MY$ with $M\cap Y=\SU_5(q^2).2$. Moreover,~\cite[Lemma~4.8]{LWX-Unitary} implies that $M=X(M\cap Y)$ with $X\cap(M\cap Y)=\SL_2(q^2).2$. Thus $Z=XY$ with $X\cap Y=\SL_2(q^2).2$.
\end{proof}

\begin{lemma}\label{LemOmegaPlus38}
Let $Z=\Omega(V)=\Omega_{24}^+(2)$ with $m=12$ and $q=2$, and let $Y=\N_1[Z]=\Sp_{22}(2)$. 
\begin{enumerate}[{\rm (a)}]
\item If $M=\SU(V_\sharp)=\SU_{12}(2)$ and $X=3.\Suz$ is a maximal subgroup of $M$, then $Z=XY$ with $X\cap Y=3^5.\PSL_2(11)$.
\item If $M=\Co_1$ is a maximal subgroup of $Z$ and $X$ is a subgroup of $M$ isomorphic to $\Co_1$, $3.\Suz$ or $\G_2(4).2$, then $Z=XY$ with
\[
X\cap Y=
\begin{cases}
\Co_3&\textup{if }X=\Co_1\\
3^5{:}\PSL_2(11)&\textup{if }X=3.\Suz\\
\A_5&\textup{if }X=\G_2(4).2.
\end{cases}
\]
\end{enumerate}
\end{lemma}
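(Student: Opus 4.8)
The plan is to prove both parts by the same two–step reduction already used for Lemmas~\ref{LemOmegaPlus34}--\ref{LemOmegaPlus36}: first exhibit a factorization $Z=MY$ together with the isomorphism type of $M\cap Y$, and then, for $X\leqslant M$, use the identity $Z=MY=X(M\cap Y)Y=XY$ — valid as soon as $M=X(M\cap Y)$ — together with $X\cap Y=X\cap(M\cap Y)$. So the work splits into producing $Z=MY$ and then a factorization of $M$ through the relevant subgroup of $M\cap Y$.

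For part~(a) I would invoke Lemma~\ref{LemOmegaPlus07} with $m=12$, $q=2$, $G=Z=\Omega_{24}^+(2)$, $H=M=\SU(V_\sharp)=\SU_{12}(2)$ and $K=Y=Z_{e_1+f_1}$; since $\Omega_{23}(2)=\Sp_{22}(2)=\N_1[Z]$, this gives $Z=MY$ with $M\cap Y=\SU(V_\sharp)_{e_1+f_1}=\SU_{11}(2)$. It then remains to record the factorization $\SU_{12}(2)=(3.\Suz)\SU_{11}(2)$ — equivalently, that $3.\Suz$ is transitive on the $2^{11}(2^{12}-1)$ nonisotropic vectors of $V_\sharp$ — with intersection $3^5.\PSL_2(11)$. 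This factorization is in \cite{LPS1990} and is refined in the earlier paper \cite{LWX-Unitary} of the series (the point stabilizer in $\Suz$ being its unique class of subgroups of the relevant order inside the maximal $3^5{:}M_{11}$, namely $3^5{:}\PSL_2(11)$); it can equally be checked in \magma~\cite{BCP1997}. Then $Z=XY$ with $X\cap Y=X\cap(M\cap Y)=3^5.\PSL_2(11)$, and as a cross-check $|X|/|3^5.\PSL_2(11)|=2^{11}(2^{12}-1)=|Z|/|Y|$.

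For part~(b) the relevant maximal subgroup is $M=\Co_1$, acting on $\Lambda/2\Lambda\cong\bbF_2^{24}$ (Leech lattice modulo $2$) and preserving a quadratic form of plus type. I would take $Z=MY$ with $M\cap Y=\Co_3$ from the maximal factorizations of \cite{LPS1990}; this can also be seen directly, since $\Co_1$ is transitive on the $2^{24}-1-(2^{11}+1)(2^{12}-1)=8386560$ nonsingular vectors of $\bbF_2^{24}$ — the images of the norm-$6$ lattice vectors — with stabilizer $\Co_3$, while $Y=\N_1[Z]=\Sp_{22}(2)$ is exactly such a vector stabilizer. For $X\leqslant M=\Co_1$ one then has $Z=XY$ if and only if $\Co_1=X\Co_3$, so the assertion reduces to the three factorizations $\Co_1=\Co_1\cdot\Co_3$ (intersection $\Co_3$), $\Co_1=(3.\Suz)\Co_3$ (intersection $3^5{:}\PSL_2(11)$) and $\Co_1=(\G_2(4).2)\Co_3$ (intersection $\A_5$). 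These are among the factorizations of the sporadic group $\Co_1$ classified in \cite{Giudici2006}, and can be re-verified in \magma~\cite{BCP1997} via the $24$-dimensional $\bbF_2$-representation; for each choice of $X$ the reduction then yields $Z=MY=X(M\cap Y)Y=XY$ with $X\cap Y=X\cap\Co_3$ exactly as listed.

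The routine steps here are the applications of Lemma~\ref{LemOmegaPlus07} and the order arithmetic; the substantive input — and hence the main obstacle — is the structure of the factorizations $\SU_{12}(2)=(3.\Suz)\SU_{11}(2)$ and $\Co_1=X\Co_3$, in particular pinning down the isomorphism types $3^5.\PSL_2(11)$, $3^5{:}\PSL_2(11)$ and $\A_5$ of the intersections. But this is imported wholesale from \cite{LPS1990}, \cite{LWX-Unitary} and \cite{Giudici2006} (or confirmed in \magma), so no new group-theoretic argument is required. One minor point to keep in view is that the two occurrences of $3.\Suz$ in parts~(a) and~(b) are not $Z$-conjugate — they lie in the distinct maximal subgroups $\SU_{12}(2)$ and $\Co_1$ of $Z$ — which is why their intersections with the respective conjugates of $\Sp_{22}(2)$ are recorded separately and need not be literally identified.
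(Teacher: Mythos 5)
Your proposal is correct and follows essentially the same route as the paper: part~(a) via Lemma~\ref{LemOmegaPlus07} giving $M\cap Y=\SU_{11}(2)$ combined with the factorization of $\SU_{12}(2)$ through $3.\Suz$ from the unitary paper of the series, and part~(b) via the maximal factorization $Z=\Co_1\cdot\N_1[Z]$ with $\Co_1\cap Y=\Co_3$ from~\cite{LPS1990} combined with the factorizations $\Co_1=X\Co_3$ from~\cite{Giudici2006}. The only detail the paper makes explicit that you elide is that $\SU_{11}(2)$ meets the centre of $\SU_{12}(2)$ trivially (needed to transfer the intersection $3^5.\PSL_2(11)$ cleanly from the unitary setting), but this is minor.
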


\begin{proof}
First assume that $M$ and $X$ are as in part~(a). Since $\SU_{11}(2)$ intersects trivially with the center of $\SU_{12}(2)$, we conclude from Lemma~\ref{LemOmegaPlus07} and~\cite[Lemma~5.8]{LWX-Unitary} that
\[
X\cap Y=X\cap(M\cap Y)=X\cap\SU_{11}(2)=3^5.\PSL_2(11).
\]
Therefore,
\[
\frac{|X|}{|X\cap Y|}=\frac{|3.\Suz|}{|3^5.\PSL_2(11)|}=2^{11}(2^{12}-1)=\frac{|\Omega_{24}^+(2)|}{|\Sp_{22}(2)|}=\frac{|Z|}{|Y|},
\]
and so $Z=XY$.

Next assume that $M$ and $X$ are as in part~(b). It is proved in~\cite[Page~79,~Lemma~B]{LPS1990} that $Z=MY$ with $M\cap Y=\Co_3$. Thus the conclusion for $X=\Co_1$ holds. If $X=3.\Suz$ or $\G_2(4).2$, then it is shown in~\cite[Pages~319--320]{Giudici2006} that $M=X(M\cap Y)$ and
\[
X\cap(M\cap Y)=
\begin{cases}
3^5{:}\PSL_2(11)&\textup{if }X=3.\Suz\\
\A_5&\textup{if }X=\G_2(4).2.
\end{cases}
\]
Hence $Z=MY=X(M\cap Y)Y=XY$, and $X\cap Y=X\cap(M\cap Y)$ is as described in part~(b).
\end{proof}





Finally, we construct the pairs $(X,Y)$ in rows~31 and~32 of Table~\ref{TabOmegaPlus} in the following lemma.

\begin{lemma}\label{LemOmegaPlus09}
Let $Z=\Omega(V){:}\langle\phi\rangle=\Omega_{32}^+(q){:}f$ with $m=16$ and $q\in\{2,4\}$, let $M=\Omega(V_\sharp){:}\langle\psi\rangle=\Omega_{16}(q^2).(2f)$, let $X=\Sp_8(q^2).(2f)$ be a $\calC_9$-subgroup of $M$, and let $Y=Z_{e_1+f_1}=\GaSp_{30}(q)$. Then $Z=XY$ with $X\cap Y=\Sp_6(q^2)$.
\end{lemma}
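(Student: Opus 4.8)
The plan is to strip off the structure one layer at a time along the chain $Z\geqslant M\geqslant X$, reducing everything to a known factorization of $M^{(\infty)}=\Omega_{16}^+(q^2)$. First I would apply Lemma~\ref{LemOmegaPlus12} with $2m=32$ (so the integer called ``$m$'' in that lemma is $16$) and $\rho=\psi$: this gives $Z=MY$ with $M\cap Y=\Omega(V_\sharp)_{e_1+f_1}=\Omega_{15}(q^2)$, the stabilizer in $M^{(\infty)}=\Omega_{16}^+(q^2)$ of the vector $e_1+f_1$, which is nonsingular with respect to $Q_\sharp$ since $Q_\sharp(e_1+f_1)=\lambda\neq0$. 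As this intersection lies inside $M^{(\infty)}$, by Lemma~\ref{LemXia01} it now suffices to prove $M=X(M\cap Y)$, for then $Z=MY=X(M\cap Y)Y=XY$.

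Next I would pass down to $M^{(\infty)}$. Because $q\in\{2,4\}$, the field $q^2$ is even, so $\Omega_{2n+1}(q^2)\cong\Sp_{2n}(q^2)$ for all $n$; in particular $X^{(\infty)}=\Sp_8(q^2)\cong\Omega_9(q^2)$ is (up to conjugacy) the $\Omega_9$-type $\calC_9$-subgroup of $\Omega_{16}^+(q^2)$, and $M\cap Y=\Omega_{15}(q^2)=\N_1[\POm_{16}^+(q^2)]$, noting $\Omega=\POm$ here since $\gcd(4,q^{16}-1)=1$. Applying Lemma~\ref{LemOmegaPlus29} with $q$ replaced by $q^2$ then yields $M^{(\infty)}=X^{(\infty)}(M\cap Y)$ with $X^{(\infty)}\cap(M\cap Y)=\Omega_7(q^2).(2,q^2-1)=\Omega_7(q^2)\cong\Sp_6(q^2)$.

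Now I would reassemble. Since $X=\Sp_8(q^2).(2f)$ is a $\calC_9$-subgroup of $M$ carrying the full outer $(2f)$, it covers $M/M^{(\infty)}=\langle\psi\rangle$, whence $X\cap M^{(\infty)}=X^{(\infty)}$ and $XM^{(\infty)}=M$. Combining this with the previous paragraph gives $M=XM^{(\infty)}=X\cdot X^{(\infty)}(M\cap Y)=X(M\cap Y)$, hence $Z=XY$. For the intersection, $X\cap Y=X\cap(M\cap Y)$ because $X\leqslant M$; and as $M\cap Y\leqslant M^{(\infty)}$ while $X\cap M^{(\infty)}=X^{(\infty)}$, this equals $X^{(\infty)}\cap(M\cap Y)=\Sp_6(q^2)$. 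As a consistency check one has $|X|/|X\cap Y|=2fq^{14}(q^{16}-1)=q^{15}(q^{16}-1)=|Z|/|Y|$, using $2f=q$ for $q\in\{2,4\}$.

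There is no genuinely hard step here: the proof is a short composition of Lemmas~\ref{LemOmegaPlus12} and~\ref{LemOmegaPlus29} together with the even-characteristic isomorphisms $\Omega_{2n+1}(q^2)\cong\Sp_{2n}(q^2)$. The only points needing a little care are (i) identifying $X^{(\infty)}\cong\Sp_8(q^2)$ with the $\Omega_9$-type $\calC_9$-subgroup appearing in Lemma~\ref{LemOmegaPlus29}, and (ii) bookkeeping the outer $(2f)$ so that $X\cap M^{(\infty)}=X^{(\infty)}$ and $XM^{(\infty)}=M$; neither is a serious obstacle.
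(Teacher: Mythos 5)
Your proposal is correct and follows essentially the same route as the paper: both first apply Lemma~\ref{LemOmegaPlus12} to get $Z=MY$ with $M\cap Y=\Omega_{15}(q^2)\cong\Sp_{14}(q^2)$ inside $M'$, then invoke Lemma~\ref{LemOmegaPlus29} over $\bbF_{q^2}$ to factorize $M'$ as $(X\cap M')(M\cap Y)$ with intersection $\Sp_6(q^2)$, and finally reassemble using $M=XM'$ with $X\cap M'=\Sp_8(q^2)$. Your extra bookkeeping of the outer $(2f)$ and the order consistency check are harmless refinements of what the paper asserts without comment.
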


\begin{proof}
By Lemma~\ref{LemOmegaPlus12} we have $Z=MY$ with $M\cap Y=\Sp_{14}(q^2)<M'$. Note that $M=XM'$ with $X\cap M'=\Sp_8(q^2)$. We derive from Lemma~\ref{LemOmegaPlus29} that $M'=(X\cap M')(M\cap Y)$ with $(X\cap M')\cap(M\cap Y)=\Sp_6(q^2)$. It follows that $M=X((X\cap M')(M\cap Y))=X(M\cap Y)$ and 
\[
X\cap Y=X\cap(M\cap Y)=X\cap(M'\cap(M\cap Y))=(X\cap M')\cap(M\cap Y)=\Sp_6(q^2).
\] 
Hence $Z=MY=X(M\cap Y)Y=XY$, and the lemma holds.
\end{proof}

\section{Proof of Theorem~\ref{ThmOmegaPlus}}

Let $G$ be an almost simple group with socle $L=\POm_{2m}^+(q)$, and let $H$ and $K$ be subgroups of $G$ not containing $L$ such that both $H$ and $K$ have a unique nonsolvable composition factor. For $L=\Omega_8^+(2)$ or $\POm_8^+(3)$, computation in \magma~\cite{BCP1997} shows that $G=HK$ if and only if $G/L=(HL/L)(KL/L)$ and one of parts~(a)--(c) of Theorem~\ref{ThmOmegaPlus} holds. Thus we assume that $(m,q)\neq(4,2)$ or $(4,3)$ for the rest of this section.

Recall that if $q$ is odd, then $L$ has two orbits on the set of nonsingular $1$-spaces, which are fused by certain similarity of the orthogonal space.
Also recall the notation introduced in Section~\ref{SecOmegaPlus01}.

\begin{lemma}\label{LemOmegaPlus43}
Suppose that $H^{(\infty)}=\lefthat(P.S)\leqslant\Pa_m[L]$ with $P\leqslant R$ and $S\leqslant T$, and $K^{(\infty)}=\Omega_{2m-1}(q)\leqslant\N_1[G]$. Then $G=HK$ if and only if $G/L=(HL/L)(KL/L)$, $HK\supseteq R$, $S=\SL_a(q^b)$ ($m=ab$), $\Sp_a(q^b)'$ ($m=ab$), $\G_2(q^b)'$ ($m=6b$, $q$ even) or $\SL_2(13)$ ($m=6$, $q=3$), and $S$ is defined over $\bbF_{q^b}$ ($b$ is taken to be $1$ if $S=\SL_2(13)$).
\end{lemma}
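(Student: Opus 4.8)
The plan is to work inside $L=\Soc(G)$ and to use Lemma~\ref{LemXia01} with $N=L$, so that $G=HK$ is equivalent to the two conditions $G/L=(HL/L)(KL/L)$ and $HK\supseteq L$. Since $R\leqslant\Pa_m[L]\leqslant L$, the condition $HK\supseteq L$ immediately forces $HK\supseteq R$; the substance of the argument is the converse, namely that $HK\supseteq R$ together with the correct shape of $S$ already yields $HK\supseteq L$. By Lemmas~\ref{LemXia03} and~\ref{LemXia04} I may assume the subgroups are in the standard position of Section~\ref{SecOmegaPlus01}, that is, $H^{(\infty)}=P.S$ with $P\leqslant R$ and $S\leqslant T$ (so in particular $S\leqslant H^{(\infty)}$), and $K^{(\infty)}$ is an $\N_1$-subgroup conjugate to the image of $\Omega(V)_{e_1+f_1}$; for odd $q$ the two $L$-classes of nonsingular $1$-spaces are interchanged by a similarity normalizing $\Pa_m[L]$, so this normalization costs nothing.

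For the forward implication, suppose $G=HK$. Lemma~\ref{LemXia01} gives $G/L=(HL/L)(KL/L)$ and $HK\supseteq L\supseteq R$, so it only remains to identify $S$. As $P$ is a $p$-group, the unique nonsolvable composition factor of $H$ coincides with that of $S$; since moreover $H^{(\infty)}\leqslant\Pa_m[L]$ and $K^{(\infty)}=\Omega_{2m-1}(q)=\N_1[L]$, feeding this into the classification of the triples $(L,H^{(\infty)},K^{(\infty)})$ for classical $L$ obtained in~\cite{LWX} (which rests on the maximal factorizations of~\cite{LPS1990} together with~\cite{LPS1996}) leaves exactly the possibilities $S=\SL_a(q^b)$, $\Sp_a(q^b)'$, $\G_2(q^b)'$ or $\SL_2(13)$, in each case a subgroup of a field-extension subgroup $\SL_a(q^b)\leqslant T$ defined over $\bbF_{q^b}$ (with $b=1$ in the last case). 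This is precisely the asserted list.

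For the reverse implication, assume $G/L=(HL/L)(KL/L)$, $HK\supseteq R$, and $S$ of the stated form. Lemma~\ref{LemOmegaPlus02}, applied to the standard copy and transported into our situation by Lemma~\ref{LemXia04} (and, for odd $q$, by the similarity interchanging the two classes of nonsingular $1$-spaces), gives the factorization
\[
L=(R{:}S)\,K^{(\infty)}=S\,R\,K^{(\infty)},
\]
where the last equality uses $R\trianglelefteq R{:}S\ni S$. Now pick any $\ell\in L$ and write $\ell=s\,r\,k'$ with $s\in S$, $r\in R$ and $k'\in K^{(\infty)}$; since $r\in R\subseteq HK$ we may further write $r=h_r k_r$ with $h_r\in H$ and $k_r\in K$, and then
\[
\ell=(s h_r)(k_r k')\in HK,
\]
because $s\in S\subseteq H^{(\infty)}\subseteq H$ and $k'\in K^{(\infty)}\subseteq K$. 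Hence $L\subseteq HK$, and Lemma~\ref{LemXia01} yields $G=HK$.

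The main obstacle is the forward implication: extracting the exact list of admissible $S$ genuinely requires the classification in~\cite{LWX}, i.e.\ knowing which subgroups of $\Pa_m[L]$ with a single nonsolvable composition factor can stand opposite $\Omega_{2m-1}(q)$ in a factorization of $L$, together with ruling out spurious candidates (other geometric subgroups of the Levi, or $\calC_9$-type subgroups sitting inside $\Pa_m[L]$) and confirming that $S$ lies in a field-extension subgroup over the correct field $\bbF_{q^b}$ rather than in some other copy of the abstract group. The reverse implication, by contrast, is the short product manipulation above once Lemma~\ref{LemOmegaPlus02} is in hand.
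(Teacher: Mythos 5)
Your forward implication takes a different route from the paper's: you invoke the classification of the triples $(L,H^{(\infty)},K^{(\infty)})$ from~\cite{LWX}, whereas the paper argues directly that $K\leqslant\N_1[G]$ forces $G=H\N_1[G]$, hence $H$ is transitive on a $G$-orbit of nonsingular $1$-spaces, and then applies \cite[Lemmas~4.5 and~3.1]{LPS2010} (Hering's theorem) to pin down $S$ together with its field of definition. Your route is workable but outsources more, and you are right that the delicate point is confirming the embedding of $S$ over $\bbF_{q^b}$ rather than just its isomorphism type; the transitivity argument delivers exactly that.

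The reverse implication, however, has a genuine gap at the parenthetical claim ``so in particular $S\leqslant H^{(\infty)}$''. The hypothesis only says $H^{(\infty)}=\lefthat(P.S)$ with $P\leqslant R$ and $S\leqslant T$: this is an extension of $P$ by $S$, i.e.\ $H^{(\infty)}\cap R=P$ and the image of $H^{(\infty)}$ in the Levi quotient is $S$. The dot (as opposed to a colon) is deliberate --- the extension need not split, and even when it does, the complement inside $H^{(\infty)}$ need not be the chosen subgroup $S$ of $T$ (think of a ``twisted diagonal'' subgroup of $R{:}S$ with $H^{(\infty)}\cap R=P$ projecting onto $S$). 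So the element $s$ in your splice $\ell=(sh_r)(k_rk')$ need not lie in $H$, and the displayed computation breaks down. The paper's proof is structured precisely to avoid this: it only uses that $HR/R$ contains a copy of $\overline{S}$, deduces from the linear-group factorizations of~\cite{LWX-Linear} that the Levi quotient factorizes as $\overline{T}=(HR/R)\big((M\cap K)R/R\big)$ where $M=\overline{R{:}T}$, and then reassembles $M=(H\cap M)\,R\,(M\cap K)\subseteq H(HK)K=HK$, with the left-hand factor an honest element of $H$. Your argument can be repaired along the same lines --- replace $s$ by a preimage $h\in H^{(\infty)}$ of $s$ modulo $R$ and push the resulting $R$-discrepancy into the middle factor, using that $H^{(\infty)}$ normalizes $R$ so that conjugating the error term by $h$ keeps it inside $R\subseteq HK$ --- but as written the step is not justified.
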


\begin{proof}
Suppose $G=HK$. Then we obviously have $G/L=(HL/L)(KL/L)$ and $HK\supseteq R$. Moreover, we have $G=H\N_1[G]$ as $K\leqslant\N_1[G]$. Thus $H$ is transitive on a $G$-orbit on the set of nonsingular $1$-spaces. As $(m,q)\neq(4,2)$ or $(4,3)$, it follows from~\cite[Lemmas~4.5~and~3.1]{LPS2010} that $S=\SL_a(q^{2b})$ ($m=ab$), $\Sp_a(q^b)'$ ($m=ab$), $\G_2(q^b)'$ ($m=6b$, $q$ even) or $\SL_2(13)$ ($m=6$, $q=3$), and $S$ is defined over $\bbF_{q^b}$ ($b$ is taken to be $1$ if $S=\SL_2(13)$).

Conversely, suppose that $G/L=(HL/L)(KL/L)$, $HK\supseteq R$, and $S$ is as above.
Let $M=\overline{R{:}T}\cong R.\overline{T}$. Then by Lemmas~\ref{LemOmegaPlus01} and~\ref{LemOmegaPlus02} we have $MK\supseteq MK^{(\infty)}\supseteq L$ with
\[
(M\cap K)R/R\geqslant q^{m-1}{:}\SL_{m-1}(q).
\]
Since $\overline{S}\lesssim HR/R$, we then conclude from~\cite[Lemmas~4.1,~4.2,~4.4,~5.5]{LWX-Linear} that
\[
\overline{T}\subseteq(HR/R)((M\cap K)R/R).
\]
This together with the condition $HK\supseteq R$ implies that $HK\supseteq R.\overline{T}=M$ and so $HK\supseteq MK\supseteq L$,
Thus by Lemma~\ref{LemXia01} we obtain $G=HK$ as $G/L=(HL/L)(KL/L)$.
\end{proof}

Results in Sections~\ref{SecOmegaPlus02}--\ref{SecOmegaPlus03} together with Lemmas~\ref{LemXia02} and~\ref{LemOmegaPlus43} imply that if $G/L=(HL/L)(KL/L)$ and $(H,K)$ is described in parts~(a)--(c) of Theorem~\ref{ThmOmegaPlus}, then $G=HK$.

In what follows, suppose that $G=HK$. By~\cite{LWX}, interchanging $H$ and $K$ if necessary, the triple $(L,H^{(\infty)},K^{(\infty)})$ lies in Table~\ref{TabInftyOmegaPlus}, and in particular,
\begin{equation}\label{EqnOmegaPlus17}
K^{(\infty)}=\Omega_{2m-1}(q),\ \ \Omega_{2m-2}^-(q)\ \text{ or }\ \lefthat\SU_m(q).
\end{equation}
Let $A$ and $B$ be $\max^-$ subgroups of $G$ containing $H$ and $K$, respectively. By~\cite[Theorem~A]{LPS1990}\footnote{See~\cite[Remark~1.5]{GGP} for a correction.} and~\cite{LPS1996}, the triple $(L,A\cap L,B\cap L)$ lies in Table~\ref{TabMaxOmegaPlus2} or~\ref{TabMaxOmegaPlus1} (with $A$ and $B$ possibly interchanged). We complete the proof by discussing the three cases in~\eqref{EqnOmegaPlus17} for $K^{(\infty)}$.

\begin{table}[htbp]
\captionsetup{justification=centering}
\caption{$(L,H^{(\infty)},K^{(\infty)})$ for orthogonal groups of plus type, $L\neq\Omega_8^+(2)$, $\POm_8^+(3)$}\label{TabInftyOmegaPlus}
\begin{tabular}{|l|l|l|l|l|l|}
\hline
 & $L$ & $H^{(\infty)}$ & $K^{(\infty)}$ & Conditions\\
\hline
1 & $\POm_{2m}^+(q)$ & $\lefthat(P.\SL_a(q^b))$ ($m=ab$), & $\Omega_{2m-1}(q)$ & $P\leqslant q^{m(m-1)/2}$\\
 & & $\lefthat(P.\Sp_a(q^b)')$ ($m=ab$), & & \\
 & & $\lefthat(P.\G_2(q^b)')$ ($m=6b$, $q$ even), & & \\
2 & $\POm_{2m}^+(q)$ & $\lefthat\SU_m(q)$ ($m$ even), $\PSp_m(q) $& $\Omega_{2m-1}(q)$ & \\
\hline
3 & $\Omega_{2m}^+(2)$ & $\SU_{m/2}(4)$, $\Omega_m^+(4)$, $\SU_{m/4}(16)$, $\Omega_{m/2}^+(16)$ & $\Sp_{2m-2}(2)$ & \\
4 & $\Omega_{2m}^+(4)$ & $\SU_{m/2}(16)$, $\Omega_m^+(16)$ & $\Sp_{2m-2}(4)$ & \\
5 & $\POm_8^+(q)$ & $\Omega_8^-(q^{1/2})$ ($q$ square), $\Omega_7(q)$ &  $\Omega_7(q)$ & \\
6 & $\Omega_{12}^+(q)$ & $\G_2(q)'$ & $\Sp_{10}(q)$ & $q$ even\\
7 & $\POm_{16}^+(q)$ & $\Omega_9(q)$ & $\Omega_{15}(q)$ & \\
\hline
8 & $\POm_{2m}^+(q)$ & $\lefthat(q^{m(m-1)/2}{:}\SL_m(q))$ & $\Omega_{2m-2}^-(q)$ & \\
9 & $\Omega_{2m}^+(2)$ & $\SL_m(2)$ & $\Omega_{2m-2}^-(2)$ & \\
10 & $\Omega_{2m}^+(4)$ & $\SL_m(4)$ & $\Omega_{2m-2}^-(4)$ & \\
\hline
11 & $\POm_{2m}^+(q)$ & $q^{2m-2}{:}\Omega_{2m-2}^+(q)$ & $\lefthat\SU_m(q)$ & $m$ even\\
12 & $\Omega_{2m}^+(2)$ & $\Omega^+_{2m-2}(2)$ & $\SU_m(2)$ & $m$ even\\
13 & $\Omega_{2m}^+(4)$ & $\Omega^+_{2m-2}(4)$ & $\SU_m(4)$ & $m$ even\\
\hline
14 & $\Omega_8^+(4)$ & $\Omega_8^-(2)$ & $\SU_4(4)$ & \\
15 & $\Omega_8^+(16)$ & $\Omega_8^-(4)$ & $\SU_4(16)$ & \\
16 & $\Omega_{12}^+(2)$ & $3.\PSU_4(3)$, $3.\M_{22}$ & $\Sp_{10}(2)$ & \\
17 & $\POm_{12}^+(3)$ & $P.\PSL_2(13)$ & $\Omega_{11}(3)$ & $P\leqslant3^{15}$\\
18 & $\Omega_{16}^+(2)$ & $\Omega_8^-(2)$, $\Sp_6(4)$ & $\Sp_{14}(2)$ & \\
19 & $\Omega_{16}^+(4)$ &  $\Omega_8^-(4)$, $\Sp_6(16)$ & $\Sp_{14}(4)$ & \\
20 & $\Omega_{24}^+(2)$ & $\G_2(4)$, $3.\Suz$, $\Co_1$ & $\Sp_{22}(2)$ & \\
21 & $\Omega_{24}^+(4)$ & $\G_2(16)$ & $\Sp_{22}(4)$ & \\
22 & $\Omega_{32}^+(2)$ & $\Omega_8^-(4)$, $\Sp_6(16)$ & $\Sp_{30}(2)$ & \\
23 & $\Omega_{48}^+(2)$ & $\G_2(16)$ & $\Sp_{46}(2)$ & \\
\hline
\end{tabular}
\vspace{3mm}
\end{table}

\begin{table}[htbp]
\caption{Maximal factorizations with socle $L=\POm_8^+(q)$, $q\geqslant4$}\label{TabMaxOmegaPlus2}
\centering
\begin{tabular}{|l|l|l|l|}
\hline
Row & $A\cap L$ & $B\cap L$ & Remark\\
\hline
1 & $\Omega_7(q)$ & $\Omega_7(q)$ & $A=B^\tau$ for some triality $\tau$ \\
2 & $\Pa_1$, $\Pa_3$, $\Pa_4$ & $\Omega_7(q)$ & \\
3 & $\lefthat((q+1)/d\times\Omega_6^-(q)).2^d$ & $\Omega_7(q)$ & $d=(2,q-1)$, $A$ in $\mathcal{C}_1$ or $\mathcal{C}_3$ \\
4 & $\lefthat((q-1)/d\times\Omega_6^+(q)).2^d$ & $\Omega_7(q)$ & $d=(2,q-1)$, $A$ in $\mathcal{C}_1$ or $\mathcal{C}_2$, \\
 & & & $q>2$ \\
5 & $(\PSp_2(q)\otimes\PSp_4(q)).2$ & $\Omega_7(q)$ & $q$ odd, $A$ in $\mathcal{C}_1$ or $\mathcal{C}_4$ \\
6 & $\Omega_8^-(q^{1/2})$ & $\Omega_7(q)$ & $q$ square, $A$ in $\mathcal{C}_5$ or $\mathcal{S}$ \\
7 & $\Pa_1$, $\Pa_3$, $\Pa_4$ & $\lefthat((q+1)/d\times\Omega_6^-(q)).2^d$ & $d=(2,q-1)$, $B$ in $\mathcal{C}_1$ or $\mathcal{C}_3$ \\
\hline
8 & $(\SL_2(16)\times\SL_2(16)).2^2$ & $\Omega_7(4)$ & $q=4$ \\
9 & $(3\times\Omega_6^+(4)).2$, $\Omega_8^-(2)$ & $(5\times\Omega_6^-(4)).2$ & $q=4$ \\
10 & $\Omega_8^-(4)$ & $(17\times\Omega_6^-(16)).2$ & $q=16$ \\
\hline
\end{tabular}
\end{table}

\begin{table}[htbp]
\caption{Maximal factorizations with socle $L=\POm_{2m}^+(q)$, $m\geqslant5$}\label{TabMaxOmegaPlus1}
\centering
\begin{tabular}{|l|l|l|l|}
\hline
Row & $A\cap L$ & $B\cap L$ & Remark\\
\hline
1 & $\Pa_m$, $\Pa_{m-1}$ & $\N_1$ & \\
2 & $\lefthat\GU_m(q).2$ & $\N_1$ & $m$ even \\
3 & $(\PSp_2(q)\otimes\PSp_m(q)).a$ & $\N_1$ & $m$ even, $q>2$, $a=\gcd(2,m/2,q-1)$ \\
4 & $\Pa_m$, $\Pa_{m-1}$ & $\N_2^-$ & \\
5 & $\lefthat\GU_m(q).2$ & $\Pa_1$ & $m$ even \\
6 & $\lefthat\GL_m(q).2$ & $\N_1$ & \\
7 & $\Omega_m^+(4).2^2$ & $\N_1$ & $m$ even, $q=2$ \\
8 & $\Omega_m^+(16).2^2$ & $\N_1$ & $m$ even, $q=4$ \\
9 & $\lefthat\GL_m(2).2$ & $\N_2^-$ & $q=2$ \\
10 & $\lefthat\GL_m(4).2$ & $\N_2^-$ & $q=4$ \\
11 & $\lefthat\GU_m(4).2$ & $\N_2^+$ & $m$ even, $q=4$ \\
12 & $\Omega_9(q).a$ & $\N_1$ & $m=8$, $a\leqslant2$ \\
13 & $\Co_1$ & $\N_1$ & $m=12$, $q=2$ \\
\hline
\end{tabular}
\end{table}

\begin{lemma}
Suppose $K^{(\infty)}=\Omega_{2m-1}(q)$. Then either $(H,K)$ tightly contains $(X^\alpha,Y^\alpha)$ for some $(X,Y)$ in rows~\emph{1--6, 22 or 25--32} of Table~$\ref{TabOmegaPlus}$ and $\alpha\in\Aut(L)$, or $(H,K)$ satisfies part~\emph{(b)} or~\emph{(c)} of Theorem~$\ref{ThmOmegaPlus}$.
\end{lemma}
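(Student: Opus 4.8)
The plan is to leverage the classification already in place: since $G=HK$ with $K^{(\infty)}=\Omega_{2m-1}(q)$, we know $K\leqslant\N_1[G]$, and from Table~\ref{TabInftyOmegaPlus} (row~1, 2, 5, 6, 7 and the relevant sporadic rows, all having $K^{(\infty)}=\Omega_{2m-1}(q)$) the factor $H^{(\infty)}$ is already confined to a short list. So the real work is to promote the information on $H^{(\infty)}$ to information on $(H,K)$ as a factor pair, case by case, and to check that each case lands either in the claimed rows of Table~\ref{TabOmegaPlus} or in part~(b)/(c) of Theorem~\ref{ThmOmegaPlus}. First I would fix $A$ and $B$ to be $\max^-$ subgroups of $G$ containing $H$ and $K$; since $K\leqslant\N_1[G]$ and $B$ contains $K$, the pair $(A\cap L, B\cap L)$ is one of the rows of Table~\ref{TabMaxOmegaPlus2} (for $m=4$) or Table~\ref{TabMaxOmegaPlus1} (for $m\geqslant5$) with $B\cap L$ on the $\N_1$ side — i.e.\ rows~1--3, 6--8, 12, 13 of Table~\ref{TabMaxOmegaPlus1}, and rows~1--6 of Table~\ref{TabMaxOmegaPlus2}.

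The argument then splits according to $A\cap L$. If $A\cap L=\Pa_m$ or $\Pa_{m-1}$, then (after applying a graph automorphism to assume $A=\Pa_m[G]$) we have $H\leqslant\Pa_m[G]$, so $H^{(\infty)}=\lefthat(P.S)$ with $P\leqslant R$ and $S\leqslant T$; Lemma~\ref{LemOmegaPlus43} then tells us exactly when $G=HK$: we need $HK\supseteq R$ and $S$ one of $\SL_a(q^b)$, $\Sp_a(q^b)'$, $\G_2(q^b)'$ ($q$ even) or $\SL_2(13)$ ($m=6$, $q=3$). When $P<R$ this is precisely part~(b) of Theorem~\ref{ThmOmegaPlus}; when $P=R$, so $H^{(\infty)}=\lefthat(R.S)=\lefthat(q^{m(m-1)/2}{:}S)$, this is row~1 of Table~\ref{TabOmegaPlus} (for the linear/symplectic/$\G_2$ choices of $S$) or row~26 (for $\SL_2(13)$), and Lemma~\ref{LemXia02} with Lemmas~\ref{LemOmegaPlus02}--\ref{LemOmegaPlus08} gives the tight containment of $(X^\alpha,Y^\alpha)$. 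If $A\cap L=\lefthat\GL_m(q).2$, $\lefthat\GU_m(q).2$ or $(\PSp_2(q)\otimes\PSp_m(q)).a$, then $H^{(\infty)}$ is $\lefthat\SL_m(q)$, $\lefthat\SU_m(q)$ ($m$ even) or $\PSp_m(q)$ (using row~2 of Table~\ref{TabInftyOmegaPlus} and the structure of $H$ as a subgroup with a unique nonsolvable composition factor), which is again row~1 of Table~\ref{TabOmegaPlus}, with tight containment from Lemmas~\ref{LemOmegaPlus03}, \ref{LemOmegaPlus07}, \ref{LemOmegaPlus44}, \ref{LemOmegaPlus08}. The cases $A\cap L=\Omega_8^-(q^{1/2})$ ($m=4$, $q$ square), $\Omega_9(q).a$ ($m=8$) and $\Co_1$ ($m=12$, $q=2$) give rows~4, 6 and~29 respectively via Lemmas~\ref{LemOmegaPlus28}, \ref{LemOmegaPlus29}, \ref{LemOmegaPlus38}. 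The field-extension cases $\Omega_m^+(4).2^2$ or $\Omega_m^+(16).2^2$ feed into rows~2--3 and the $\GaSp_6(4)$ row~22 (and, for $m=12$, $q$ even, the $\G_2$ row~5 via Lemma~\ref{LemOmegaPlus06}, together with rows~25, 27--32) using Lemmas~\ref{LemOmegaPlus11}--\ref{LemOmegaPlus12}, \ref{LemOmegaPlus45}, \ref{LemOmegaPlus06}, \ref{LemOmegaPlus09}, \ref{LemOmegaPlus34}--\ref{LemOmegaPlus36}. The remaining case $m=4$ with $A\cap L=\Omega_7(q)$ on the triality-twisted side (row~1 of Table~\ref{TabMaxOmegaPlus2}) is where part~(b) or~(c) of Theorem~\ref{ThmOmegaPlus} comes in: then $A=B^\tau$ for a triality $\tau$ and Lemma~\ref{LemOmegaPlus42} gives exactly the list in part~(b) (and part~(c) when $q=4$).

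The main obstacle I anticipate is the bookkeeping in the $\Pa_m$ case: one must carefully match the requirement $HK\supseteq R$ from Lemma~\ref{LemOmegaPlus43} against the hypothesis $G/L=(HL/L)(KL/L)$ and show that no factorization is lost, and one must rule out spurious choices of $S\leqslant T$ (e.g.\ a subgroup of $\SL_a(q^b)$ smaller than the field-extension subgroup, or a non-maximal classical subgroup of $T$) that do not give a factorization, invoking Hering's theorem on transitive linear groups (\cite[Lemma~3.1]{LPS2010}) exactly as in the proof of Lemma~\ref{LemOmegaPlus43}. A secondary subtlety is that when $m$ is odd and $q$ is odd, the two $L$-orbits of nonsingular $1$-spaces are fused only by a similarity, so one must be slightly careful that $Y=\Omega_{2m-1}(q)$ really is a single $\Aut(L)$-class of factors; but this has already been noted in the excerpt. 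Once these two points are handled, the case analysis is mechanical: each $(A\cap L, B\cap L)$ pair from Tables~\ref{TabMaxOmegaPlus2}--\ref{TabMaxOmegaPlus1} with $K^{(\infty)}=\Omega_{2m-1}(q)$ matches exactly one family in the statement, and the relevant lemma in Sections~\ref{SecOmegaPlus02}--\ref{SecOmegaPlus03} supplies the tight containment $(X^\alpha,Y^\alpha)\leqslant(H,K)$ together with the description of $X\cap Y$.
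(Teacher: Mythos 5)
Your skeleton matches the paper's: reduce to $B\cap L=\N_1$, run through the possible $\max^-$ overgroups $A\cap L$ from Tables~\ref{TabMaxOmegaPlus2}--\ref{TabMaxOmegaPlus1}, and in each case use $A=H(A\cap B)$ together with the companion classifications and the lemmas of Sections~\ref{SecOmegaPlus02}--\ref{SecOmegaPlus03}. But there is one genuine missing idea and two places where the case analysis as you describe it would not close. The missing idea: to conclude that $(H,K)$ \emph{tightly contains} a pair $(X,Y)$ in which $X$ is a proper extension such as $\Omega_m^+(4).2$, $\SL_{m/2}(4).2$, $\Sp_{m/2}(q^2).(2f)$ or $\Sp_8(q^2).(2f)$ (rows~2, 3, 27, 28, 30, 31, 32 and part of 22), you must prove that $H$ actually contains the requisite outer field/graph elements --- the subgroup $\Omega_m^+(4)$ alone does \emph{not} factorize with $\N_1$, which is precisely why Table~\ref{TabOmegaPlus} records $\Omega_m^+(4).2$ and not $\Omega_m^+(4)$. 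The paper does this via a preliminary Claim ($H\nleqslant\mathrm{O}(V_\sharp){:}\langle\psi^2\rangle$ when $q$ is even, proved by comparing $2$-parts of $|G|/|B|$ with $|M|/|M\cap B|$), which is then invoked repeatedly to force $H$ to contain, up to conjugacy, $\Omega(V_\sharp){:}\langle\psi\rangle$ or $\Omega(V_\sharp){:}\langle\psi r'_{E_1+F_1}\rangle$, and to rule out $H\cap A^{(\infty)}\leqslant\Omega_{m/2}^+(q^4).2^2$. Your proposal never addresses this, and without it the conclusion of the lemma simply does not follow from knowing $H^{(\infty)}$.

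Two further points. First, in the cases $A\cap L=\lefthat\GL_m(q).2$, $\lefthat\GU_m(q).2$ and $(\PSp_2(q)\otimes\PSp_m(q)).a$ you assert that $H^{(\infty)}$ is the full $\lefthat\SL_m(q)$, $\lefthat\SU_m(q)$ or $\PSp_m(q)$; this is false. Applying~\cite[Theorem~1.2]{LWX-Linear}, \cite[Theorem~1.2]{LWX-Unitary} and~\cite[Theorem~5.1]{LWX} to $A=H(A\cap B)$ also yields $H^{(\infty)}=\Sp_{m/2}(q^2)$, $\SL_{m/2}(q^2)$, $\G_2(q)$, $\G_2(q^2)$, $3.\PSU_4(3)$, $3.\M_{22}$, $3.\Suz$ and subgroups of $\Pa_{m/2}[\lefthat\SU_m(q)]$ (feeding back into part~(b)); deferring these to the $\calC_3$ case $\Omega_m^+(q^2).2^2$ is not automatic, since such an $H$ need not lie in that subgroup, and in any event the $\calC_3$ case itself requires the long recursive descent (through $\GU_{m/2}(q^2).2$, $\Omega_{m/2}^+(q^4).2^2$, $\Sp_6(q^2)$, $\Omega_8^-(q)$, $\Sp_8(q^2)$) that you compress into one sentence. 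Second, your list of relevant rows of Table~\ref{TabMaxOmegaPlus2} omits row~8, $A\cap L=(\SL_2(16)\times\SL_2(16)).2^2$ with $q=4$, which is exactly where the subgroups $\GaL_2(16)$ and $\SL_2(16).4$ of row~22 of Table~\ref{TabOmegaPlus} arise via Lemma~\ref{LemOmegaPlus25}.
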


\begin{proof}
Since $K^{(\infty)}=\Omega_{2m-1}(q)$, we see from Tables~\ref{TabMaxOmegaPlus2} and~\ref{TabMaxOmegaPlus1} that, replacing $H$ and $K$ by their images under some automorphism of $L$ if necessary, $B\cap L=\N_1$.

\textbf{Claim}: $H\nleqslant\mathrm{O}(V_\sharp){:}\langle\psi^2\rangle$ if $q$ is even. Suppose for a contradiction that $H\leqslant M:=\mathrm{O}(V_\sharp){:}\langle\psi^2\rangle$ with $q$ even. Then $G=HB=MB$. Since $|G|_2/|B|_2\geqslant|L|_2/|B\cap L|_2=|\Omega_{2m}^+(q)|_2/|\Sp_{2m-2}(q)|_2=q^{m-1}$ and
\[
M\cap B\geqslant\mathrm{O}(V_\sharp)\cap B=\mathrm{O}(V_\sharp)\cap\N_1[G]\geqslant\N_1[\mathrm{O}(V_\sharp)]=\N_1[\mathrm{O}_m^+(q^2)]=\Sp_{m-2}(q^2)\times2,
\]
it follows that
\[
q^{m-1}\leqslant\frac{|G|_2}{|B|_2}=\frac{|M|_2}{|M\cap B|_2}=\frac{|\mathrm{O}_m^+(q^2).f|_2}{|M\cap B|_2}\leqslant\frac{|\mathrm{O}_m^+(q^2).f|_2}{|\Sp_{m-2}(q^2)\times2|_2}=q^{m-2}f,
\]
a contradiction. This proves the claim.

\textsc{Case~1}: $(A\cap L,B\cap L)$ lies in row~1 of Table~\ref{TabMaxOmegaPlus2}. In this case, Lemma~\ref{LemOmegaPlus42} implies that either $(H,K)$ tightly contains $(X^\alpha,Y^\alpha)$ for some $(X,Y)$ in row~4 or~22 Table~\ref{TabOmegaPlus} and $\alpha\in\Aut(L)$, or $(H,K)$ satisfies part~(c) of Theorem~\ref{ThmOmegaPlus}.

\textsc{Case~2}: $(A\cap L,B\cap L)$ lies in row~2 of Table~\ref{TabMaxOmegaPlus2} or row~1 of Table~\ref{TabMaxOmegaPlus1}. Applying an involutory graph automorphism of $L$ if necessary, we may assume $A\cap L=\Pa_m$. Then Lemma~\ref{LemOmegaPlus43} shows that $(H,K)$ either tightly contains some $(X,Y)$ in rows~1 of Table~\ref{TabOmegaPlus} or satisfies part~(b) of Theorem~\ref{ThmOmegaPlus}.

\textsc{Case~3}: $(A\cap L,B\cap L)$ lies in row~3 of Table~\ref{TabMaxOmegaPlus2} or row~2 of Table~\ref{TabMaxOmegaPlus1}. Then $m$ is even, and $A\cap L=\lefthat\GU_m(q).2$ is a $\calC_3$-subgroup of $L$. By~\cite[Table~3.5.E]{KL1990}, applying an involutory graph automorphism of $L$ if necessary, we may assume $A\cap L=\lefthat\GU(V_\sharp).2$. Note that Lemma~\ref{LemOmegaPlus07} implies $(A\cap B)^{(\infty)}=\SU_{m-1}(q)$. Thus, by considering the factorization $A=H(A\cap B)$ (and its quotient modulo $\Rad(A)$), we derive from~\cite[Theorem~1.2]{LWX-Unitary} that either $H^{(\infty)}\leqslant\Pa_{m/2}[\lefthat\SU_m(q)]$ or $H^{(\infty)}$ is one of:
\begin{align}
&\lefthat\SU_m(q),\quad \G_2(q)\text{ with $m=6$ and $q$ even},\quad 3.\PSU_4(3)\text{ with $(m,q)=(6,2)$},\label{EqnOmega07}\\
&\PSp_{2m}(q),\quad 3.\M_{22}\text{ with $(m,q)=(6,2)$},\quad 3.\Suz\text{ with $(m,q)=(12,2)$},\\
&\SL_{m/2}(q^2)\text{ with $q\in\{2,4\}$ and $HA^{(\infty)}\geqslant\SU_m(q).(2f)$},\\
&\Sp_{m/2}(q^2)\text{ with $q\in\{2,4\}$ and $HA^{(\infty)}\geqslant\SU_m(q).(2f)$},\\
&\G_2(q^2)\text{ with $m=12$, $q\in\{2,4\}$ and $HA^{(\infty)}\geqslant\SU_m(q).(2f)$}\label{EqnOmega10}.
\end{align}
If $H^{(\infty)}\leqslant\Pa_{m/2}[\lefthat\SU_m(q)]$, then $H^{(\infty)}\leqslant\Pa_m[L]$ and so Lemma~\ref{LemOmegaPlus43} leads to part~(b) of Theorem~\ref{ThmOmegaPlus}. If $H^{(\infty)}$ lies in~\eqref{EqnOmega07}--\eqref{EqnOmega10}, then $(H,K)$ tightly contains some $(X,Y)$ in rows~1--3,~5,~25,~29 or~30 of Table~\ref{TabOmegaPlus}.

\textsc{Case~4}: $(A\cap L,B\cap L)$ lies in row~4 of Table~\ref{TabMaxOmegaPlus2} or row~6 of Table~\ref{TabMaxOmegaPlus1}. Then $A\cap L=\lefthat\GL_m(q).2$ is a $\calC_2$-subgroup of $L$. By~\cite[Table~3.5.E]{KL1990}, applying an involutory graph automorphism of $L$ if necessary, we may assume $A^{(\infty)}=\overline{T}$. Since Lemma~\ref{LemOmegaPlus03} gives $(A\cap B)^{(\infty)}=\SL_{m-1}(q)$, by considering the factorization $A=H(A\cap B)$ (and its quotient modulo $\Rad(A)$), we derive from~\cite[Theorem~1.2]{LWX-Linear} that $H^{(\infty)}$ is one of:
\begin{align}
&\lefthat\SL_m(q),\quad\PSp_{2m}(q),\quad \G_2(q)\text{ with $m=6$ and $q$ even},\label{EqnOmegaPlus11}\\
&\SL_{m/2}(q^2)\text{ with $q\in\{2,4\}$},\quad\Sp_{m/2}(q^2)\text{ with $q\in\{2,4\}$},\label{EqnOmegaPlus12}\\
&\G_2(q^2)\text{ with $m=12$ and $q\in\{2,4\}$}.\label{EqnOmegaPlus13}
\end{align}
If $H^{(\infty)}$ lies in~\eqref{EqnOmegaPlus11}, then $(H,K)$ tightly contains some $(X,Y)$ in row~1 or~5 of of Table~\ref{TabOmegaPlus}. If $H^{(\infty)}$ lies in~\eqref{EqnOmegaPlus12}--\eqref{EqnOmegaPlus13}, then our Claim implies that $(H,K)$ tightly contains some $(X,Y)$ in rows~2--4 or~29--30 of Table~\ref{TabOmegaPlus}.

\textsc{Case~5}: $(A\cap L,B\cap L)$ lies in row~5 of Table~\ref{TabMaxOmegaPlus2} or row~3 of Table~\ref{TabMaxOmegaPlus1}. In this case,
$q\geqslant3$ and $A^{(\infty)}=A_1\times A_2$ with $A_1=\PSp_2(q)^{(\infty)}$ (this is $1$ if $q=3$) and $A_2=\PSp_m(q)$.
By Lemma~\ref{LemOmegaPlus08} we have $(A_2\cap B)^{(\infty)}=\PSp_{m-2}(q)$. Note that $A/\Cen_A(A_2)$ is an almost simple group with socle $\PSp_m(q)$. Then considering the factorization $A=H(A\cap B)$ (and its quotient modulo $\Cen_A(A_2)$), we conclude from~\cite[Theorem~5.1]{LWX} that $H^{(\infty)}$ is one of:
\begin{align}
&\PSp_m(q),\quad\G_2(q)\text{ with $m=6$ and $q$ even},\label{EqnOmegaPlus10}\\
&\Sp_{m/2}(16)\text{ with $q=4$},\quad\G_2(16)\text{ with $m=12$ and $q=4$}\label{EqnOmegaPlus09}.
\end{align}
If $H^{(\infty)}$ lies in~\eqref{EqnOmegaPlus10}, then $(H,K)$ tightly contains some $(X,Y)$ in row~1 or~5 of Table~\ref{TabOmegaPlus}.
If $H^{(\infty)}$ lies in~\eqref{EqnOmegaPlus09}, then we see from~\cite[Theorem~A]{LPS1990} that $A/\Cen_A(A_2)=\GaSp_m(4)$ and $H\Cen_A(A_2)/\Cen_A(A_2)\nleqslant\Soc(A/\Cen_A(A_2))$, whence $(H,K)$ tightly contains some $(X,Y)$ in row~3 or~30 of Table~\ref{TabOmegaPlus}.

\textsc{Case~6}: $(A\cap L,B\cap L)$ lies in row~6 of Table~\ref{TabMaxOmegaPlus2}. Then $A^{(\infty)}=\Omega_8^-(q^{1/2})$, and Lemma~\ref{LemOmegaPlus28} shows that $(A\cap B)^{(\infty)}=\G_2(q^{1/2})$. Since $A=H(A\cap B)$, we derive from~\cite[Theorem~1.2]{LWX-OmegaMinus} that $H^{(\infty)}=A^{(\infty)}$. Thus $(H,K)$ tightly contains $(X,Y)=(\Omega_8^-(q^{1/2}),\Omega_7(q))$ in row~4 of Table~\ref{TabOmegaPlus}.

\textsc{Case~7}: $(A\cap L,B\cap L)$ lies in row~12 of Table~\ref{TabMaxOmegaPlus1}. In this case, $A^{(\infty)}=\Omega_9(q)$, and Lemma~\ref{LemOmegaPlus29} shows that $(A\cap B)^{(\infty)}=\Omega_7(q)<\N_2[A^{(\infty)}]$. Since $A=H(A\cap B)$, it follows from~\cite[Theorem~1.2]{LWX-Omega} and~\cite[Theorem~A]{LPS1990} that either $H^{(\infty)}=A^{(\infty)}$, or $H\leqslant\Sp_4(q^2).(2f)$ with $q\in\{2,4\}$ and $A=\GaSp_8(q)$. For the former, $(H,K)$ tightly contains $(X,Y)=(\Omega_9(q),\Omega_{15}(q))$ in row~6 of Table~\ref{TabOmegaPlus}. Now assume that $H\leqslant\Sp_4(q^2).(2f)$ with $q\in\{2,4\}$ and $A=\GaSp_8(q)$. Then since $A=H(A\cap B)$ with $(A\cap B)^{(\infty)}=\Omega_7(q)<\N_2[A^{(\infty)}]$, computation in \magma~\cite{BCP1997} shows that $H=\Sp_4(q^2).(2f)$, as in row~27 or~38 of Table~\ref{TabOmegaPlus}.

\textsc{Case~8}: $(A\cap L,B\cap L)$ lies in row~13 of Table~\ref{TabMaxOmegaPlus1}. Then $A^{(\infty)}=\Co_1$, and Lemma~\ref{LemOmegaPlus38}(b) shows that $(A\cap B)^{(\infty)}=\Co_3$. Since $A=H(A\cap B)$, we derive from~\cite{Giudici2006} that computation in \magma~\cite{BCP1997} shows that $(H,K)$ tightly contains some $(X,Y)$ in row~29 of Table~\ref{TabOmegaPlus}.

\textsc{Case~9}: $(A\cap L,B\cap L)$ lies in row~8 of Table~\ref{TabMaxOmegaPlus2} or rows~7--8 of Table~\ref{TabMaxOmegaPlus1}. Then $q\in\{2,4\}$, $m$ is even, and $A\cap L=\Omega_m^+(q^2).2^2$ is a $\calC_3$-subgroup of $L$. By~\cite[Table~3.5.E]{KL1990}, applying an involutory graph automorphism of $L$ if necessary, we may assume $A^{(\infty)}=\Omega(V_\sharp)$. If $m=4$ then $(H,K)$ tightly contains some $(X,Y)$ in row~3 or~22 of Table~\ref{TabOmegaPlus}.
Thus we assume $m\geqslant6$.

By Lemma~\ref{LemOmegaPlus12} we have $(A\cap B)^{(\infty)}=\Sp_{m-2}(q^2)=\N_1[A^{(\infty)}]$ and $A\cap B\ngeqslant\GaSp_{m-2}(q^2)$. Note that $A$ is almost simple with socle $A^{(\infty)}=\Omega(V_\sharp)=\Omega_m^+(q^2)$. Appealing to~\cite[Theorem~A]{LPS1990}, we deduce from the factorization $A=H(A\cap B)$ that either $H\geqslant A^{(\infty)}$, or $H\cap A^{(\infty)}$ is contained in one of the groups:
\begin{align}
&\Pa_{m/2-1}[A^{(\infty)}],\quad\Pa_{m/2}[A^{(\infty)}],\quad\GL_{m/2}(q^2).2,\quad\Sp_2(q^2)\otimes\Sp_{m/2}(q^2),\label{EqnOmegaPlus14}\\
&\Omega_{m/2}^+(q^4).2^2\text{ with $q=2$},\quad\GU_{m/2}(q^2).2\text{ with $m/2$ even},\\
&\Sp_6(q^2)\text{ with $m=8$},\quad\Omega_8^-(q)\text{ with $m=8$},\quad\Sp_8(q^2)\text{ with $m=16$}.
\end{align}
Here $\Sp_6(q^2)$, $\Omega_8^-(q)$ and $\Sp_8(q^2)$ are $\calC_9$-subgroups of $A^{(\infty)}$.
If $H\geqslant A^{(\infty)}=\Omega(V_\sharp)$, then since $H\nleqslant\mathrm{O}(V_\sharp){:}\langle\psi^2\rangle$, it follows that $H$ contains (up to conjugate in $G$) either $\Omega(V_\sharp){:}\langle\psi\rangle$ or $\Omega(V_\sharp){:}\langle\psi r'_{E_1+F_1}\rangle$, and so $(H,K)$ tightly contains $(X,Y)=(\Omega_m^+(q^2).(2f),\GaSp_{2m-2}(q))$ in row~2 or~3 of Table~\ref{TabOmegaPlus}.
If $H\cap A^{(\infty)}$ is contained in one of the groups in~\eqref{EqnOmegaPlus14}, then $H\cap L$ is contained in $\Pa_{m-1}$, $\Pa_m$, $\GU_m(q).2$, $\GL_m(q).2$ or $\Sp_2(q)\otimes\Sp_m(q)$, which is dealt with in the previous cases.

Suppose that $H\cap A^{(\infty)}$ is contained in $\Omega_{m/2}^+(q^4).2^2$ with $q=2$. From Lemma~\ref{LemOmegaPlus12} we see that $(\Omega(V_\sharp){:}\langle\psi\rangle)\cap B$ and $(\Omega(V_\sharp){:}\langle\psi r'_{E_1+F_1}\rangle)\cap B$ are both contained in $\Omega(V_\sharp)$. Then since
\[
A\leqslant\Omega(V_\sharp){:}\langle\psi,r'_{E_1+F_1}\rangle=(\Omega(V_\sharp){:}\langle\psi\rangle)\cup(\Omega(V_\sharp){:}\langle\psi r'_{E_1+F_1}\rangle)\cup(\Omega(V_\sharp){:}\langle r'_{E_1+F_1}\rangle),
\]
it follows that $A\cap B\leqslant \Omega(V_\sharp){:}\langle r'_{E_1+F_1}\rangle=\mathrm{O}(V_\sharp)$. Hence the factorization $A=H(A\cap B)$ yields $A\cap\mathrm{O}(V_\sharp)=(H\cap\mathrm{O}(V_\sharp))(A\cap B)$. However, our Claim applied to the factorization $A\cap\mathrm{O}(V_\sharp)=(H\cap\mathrm{O}(V_\sharp))(A\cap B)$ shows that it is impossible.

Assume that $H\cap A^{(\infty)}$ is contained in $\GU_{m/2}(q^2).2$ with $m/2$ even. Write $m=4\ell$. Then $H$ is contained in a subgroup $A_1$ of $A$ such that $A_1^{(\infty)}=\SU_{2\ell}(q^2)$. By Lemma~\ref{LemOmegaPlus45}(d) we have $(A_1\cap B)^{(\infty)}=\SU_{2\ell-1}(q^2)$. Considering the factorization $A_1=H(A_1\cap B)$, we then derive from~\cite[Theorem~1.2]{LWX-Unitary} that either $H^{(\infty)}\leqslant\Pa_\ell[\SU_{2\ell}(q^2)]$, or $H^{(\infty)}\leqslant\SL_\ell(q^4)$ with $q=2$, or $H^{(\infty)}$ is one of the groups:
\begin{equation}\label{EqnOmegaPlus15}
\SU_{2\ell}(q^2),\quad\Sp_{2\ell}(q^2),\quad\G_2(q^2)\text{ with }\ell=3.
\end{equation} 
If $H^{(\infty)}\leqslant\Pa_\ell[\SU_{2\ell}(q^2)]$, then $H^{(\infty)}\leqslant\Pa_m[L]$ and so Lemma~\ref{LemOmegaPlus43} leads to part~(b) of Theorem~\ref{ThmOmegaPlus}. If $H^{(\infty)}\leqslant\SL_\ell(q^4)$ with $q=2$, then $H\cap A^{(\infty)}$ is contained in $\Omega_{m/2}^+(q^4).2^2$, which is impossible by the previous paragraph. If $H^{(\infty)}$ is one of the groups in~\eqref{EqnOmegaPlus15}, then by the Claim, $(H,K)$ tightly contains some pair $(X,Y)$ in rows~2--3 or~29--30 of Table~\ref{TabOmegaPlus}.

Assume that $H\cap A^{(\infty)}$ is contained in $\Sp_6(q^2)$ with $m=8$. Here $H$ is contained in a $\calC_9$-subgroup $A_1$ of $A$ such that $\Sp_6(q^2)\leqslant A_1\leqslant\GaSp_6(q^2)$. Note that $\Pa_1[A_1]\leqslant\Pa_i[A]$ for some $i\in\{3,4\}$ and that $A_1\leqslant\Omega(V_\sharp){:}\langle\psi\rangle$ (see~\cite[Table~8.50]{BHR2013}).
Since $A\cap B\ngeqslant\GaSp_6(q^2)$, Lemma~\ref{LemOmegaPlus42} applied to the factorization $A=H(A\cap B)$ shows that either $H$ is contained in $\Pa_1[A_1]$ or $H^{(\infty)}\in\{\Sp_6(q^2),\Omega_6^+(q^2),\Omega_6^-(q^2),\Sp_4(q^2)\}$.
For the latter, since $H\nleqslant\mathrm{O}(V_\sharp){:}\langle\psi^2\rangle$, we conclude that $(H,K)$ tightly contains some $(X,Y)$ in row~27 or~28 of Table~\ref{TabOmegaPlus}.
If $H\leqslant\Pa_1[A_1]$, then $H\leqslant\Pa_i[A]\leqslant\Pa_j[G]$ for some $j\in\{7,8\}$, which is dealt with in Case~2.

Assume that $H\cap A^{(\infty)}$ is contained in $\Omega_8^-(q)$ with $m=8$. The conclusion in Case~6 applied to the factorization $A=H(A\cap B)$ asserts that $H^{(\infty)}=\Omega_8^-(q)$. Hence we see from~\cite[Table~8.50]{BHR2013} that $H\leqslant\Omega(V_\sharp){:}\langle\psi\rangle$. This together with $H\nleqslant\mathrm{O}(V_\sharp){:}\langle\psi^2\rangle$ implies that $(H,K)$ tightly contains $(X,Y)=(\Omega_8^-(q).(2f),\GaSp_{14}(q))$ in row~27 or~28 of Table~\ref{TabOmegaPlus}.

Assume that $H\cap A^{(\infty)}$ is contained in $\Sp_8(q^2)$ with $m=16$. The conclusion in Case~7 applied to the factorization $A=H(A\cap B)$ shows that either $H^{(\infty)}=\Sp_8(q^2)$, or $H=\Sp_4(16).4$ with $q=2$. The latter implies that $H\leqslant M$ for some subgroup $M$ of $A$ with $M\cap A^{(\infty)}=\Omega_8^+(16).2^2$ and so $G=MB$, which is shown above to be impossible. Hence $H^{(\infty)}=\Sp_8(q^2)$, and according to~\cite[Table~7.8]{Rogers2017}, $H\leqslant\Omega(V_\sharp){:}\langle\psi\rangle$. This together with $H\nleqslant\mathrm{O}(V_\sharp){:}\langle\psi^2\rangle$ implies that $(H,K)$ tightly contains $(X,Y)=(\Sp_8(q^2).(2f),\GaSp_{30}(q))$ in row~31 or~32 of Table~\ref{TabOmegaPlus}.
\end{proof}

Next we handle the case $K^{(\infty)}=\Omega_{2m-2}^-(q)$, which corresponds to the candidates $(H^{(\infty)},K^{(\infty)})$ in rows~8--10 of Table~\ref{TabInftyOmegaPlus}.

\begin{lemma}
Suppose that $(H^{(\infty)},K^{(\infty)})$ lies in rows~\emph{8--10} of Table~$\ref{TabInftyOmegaPlus}$. Then $(H,K)$ tightly contains $(X^\alpha,Y^\alpha)$ for some $(X,Y)$ in rows~\emph{7--10} of Table~$\ref{TabOmegaPlus}$ and $\alpha\in\Aut(L)$.
\end{lemma}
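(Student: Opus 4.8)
The plan is to locate $H$ and $K$ inside maximal subgroups via Tables~\ref{TabMaxOmegaPlus2} and~\ref{TabMaxOmegaPlus1}, and then to pin down their precise isomorphism types by comparing orders with the construction Lemmas~\ref{LemOmegaPlus13}--\ref{LemOmegaPlus16}. Since $K^{(\infty)}=\Omega_{2m-2}^-(q)$, the group $K$ normalises a subgroup of $L$ of this type; up to conjugacy in $\Aut(L)$ — invoking a triality when $m=4$, which also absorbs the exceptional isomorphism $\Omega_6^-(q)\cong\PSU_4(q)$ and the extra $\max^-$ classes it produces — every such subgroup is the solvable residual $\Omega(V)_{e_1+f_1,u}$ of $\N_2^-[L]$, with $u=e_1+e_2+\mu f_2$ as in Section~\ref{SecOmegaPlus01}. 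Replacing $(H,K)$ by an $\Aut(L)$-image (permissible by Lemma~\ref{LemXia03}, at the cost of changing the $\alpha$ in the conclusion), we may assume $K\leqslant B:=\N_2^-[G]$ and $K^{(\infty)}=\Omega(V)_{e_1+f_1,u}$. Taking a $\max^-$ subgroup $A$ of $G$ containing $H$, the factorization $G=HK$ gives $G=AB$, so $(L,A\cap L,B\cap L)$ appears in Table~\ref{TabMaxOmegaPlus2} or~\ref{TabMaxOmegaPlus1}; the rows there with $B\cap L=\N_2^-$ (row~7 of Table~\ref{TabMaxOmegaPlus2}; rows~4,~9,~10 of Table~\ref{TabMaxOmegaPlus1}) force, after an involutory graph automorphism or triality, $A\cap L\in\{\Pa_m,\lefthat\GL_m(q).2\}$, with $\lefthat\GL_m(q).2$ (a $\calC_2$-subgroup) occurring only for $q\in\{2,4\}$.

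If $(H^{(\infty)},K^{(\infty)})$ is as in row~8 of Table~\ref{TabInftyOmegaPlus}, then $H^{(\infty)}=\lefthat(q^{m(m-1)/2}{:}\SL_m(q))=\Pa_m[L]^{(\infty)}$ is too large to lie in $\lefthat\GL_m(q).2$, so $A\cap L=\Pa_m$. Since $L=\Pa_m[L]\,\N_2^-[L]$, the group $\Pa_m[L]$ is transitive on nondegenerate minus-type $2$-subspaces of $V$, so — absorbing a further conjugation into $\alpha\in\Aut(L)$ — we may arrange $H^{(\infty)}=\overline{R{:}T}$ and $K^{(\infty)}=\Omega(V)_{e_1+f_1,u}$ simultaneously. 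Put $(X,Y)=(\lefthat(q^{m(m-1)/2}{:}\SL_m(q)),\Omega_{2m-2}^-(q))$, the pair in row~7 of Table~\ref{TabOmegaPlus}, which is a genuine entry by Lemma~\ref{LemOmegaPlus13}; as $X$ and $Y$ are perfect we have $H\geqslant H^{(\infty)}=X=X^{(\infty)}$ and $K\geqslant K^{(\infty)}=Y=Y^{(\infty)}$, so $(H,K)$ tightly contains the $\alpha^{-1}$-conjugate of $(X,Y)$, as required.

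If instead $(H^{(\infty)},K^{(\infty)})$ is as in rows~9--10, then $q\in\{2,4\}$ and $H^{(\infty)}=\SL_m(q)$, which (after conjugation) I take to be the Levi subgroup $T$ of Section~\ref{SecOmegaPlus01}; using $\lefthat\GL_m(q).2[L]\,\N_2^-[L]=L$ we may again fix $K^{(\infty)}=\Omega(V)_{e_1+f_1,u}$ at the same time. Since $H$ normalises $T$ and the only non-inner automorphisms of $L$ normalising $T$ are the Frobenius $\phi$ and the transpose-inverse $\gamma$, we get $H\leqslant T{:}\langle\gamma,\phi\rangle$; likewise $K$ lies in $\Nor_G(K^{(\infty)})$, which admits only a short list of extensions of $K^{(\infty)}$. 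To finish, I would use $G=HK$ to pin these down: Lemma~\ref{LemXia01} gives $G/L=(HL/L)(KL/L)$, so the outer parts of $H$ and $K$ jointly cover $G/L$, and the identity $|H|\,|K|=|G|\,|H\cap K|$ with $H\cap K\cong\SL_{m-2}(q)$ (as in Lemmas~\ref{LemOmegaPlus14}--\ref{LemOmegaPlus16}, and as follows from Lemma~\ref{LemOmegaPlus13}) then forces: for $q=4$, $(H,K)=(\SL_m(4).2,\Omega_{2m-2}^-(4).4)$, row~10 of Table~\ref{TabOmegaPlus}; for $q=2$ with $\gamma\in H$, $(H,K)\supseteq(\SL_m(2).2,\Omega_{2m-2}^-(2).c)$, row~8, with $c=(2,m-1)$ dictated by whether $\gamma\in L$; and for $q=2$ with $\gamma\notin H$, $H=\SL_m(2)$ and, as $\SL_m(2)\,\Omega_{2m-2}^-(2)$ has index $2$ in $L$ (from the order computation in Lemma~\ref{LemOmegaPlus15}), necessarily $K\geqslant\Omega_{2m-2}^-(2).2$, giving row~9. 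In each case $(H,K)$ tightly contains the appropriate $\alpha^{-1}$-conjugate.

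The main obstacle is this last step: showing from $G=HK$ alone that $H$ and $K$ cannot be proper subgroups of the relevant table pairs, equivalently that Lemmas~\ref{LemOmegaPlus14}--\ref{LemOmegaPlus16} are exhaustive. The constructive direction — exhibiting the factorizations and computing $X\cap Y=\SL_{m-2}(q)$ by following the action on $\langle e_1+f_1,u\rangle_{\bbF_q}$ — is carried out in those lemmas; what is needed here is the converse, a careful order-and-parity bookkeeping using $(H\cap K)^{(\infty)}\cong\SL_{m-2}(q)$ together with the structures of $\Nor_G(T)$ and $\N_2^-[G]$.
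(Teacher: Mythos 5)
Your setup (locating $H$ and $K$ in $\max^-$ subgroups via Tables~\ref{TabMaxOmegaPlus2} and~\ref{TabMaxOmegaPlus1}, dispatching row~8 of Table~\ref{TabInftyOmegaPlus} immediately, and reducing rows~9--10 to $H^{(\infty)}=T$ with $K\leqslant\N_2^-[G]$) matches the paper, and your predicted outcomes are the correct ones. But the decisive step --- showing that $K$ must contain the \emph{full} extension $\Omega_{2m-2}^-(q).c$ named in rows~8--10 of Table~\ref{TabOmegaPlus} rather than a smaller group such as $\Omega_{2m-2}^-(2)$, $\Omega_{2m-2}^-(2).3$ or $\Omega_{2m-2}^-(4).2$ --- is exactly the part you leave as ``order-and-parity bookkeeping,'' and it does not go through as described. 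Your key input, $H\cap K\cong\SL_{m-2}(q)$, is not available a priori: it is a \emph{conclusion} of Lemmas~\ref{LemOmegaPlus13}--\ref{LemOmegaPlus16} for the specific pairs already known to factorize, not a fact about an arbitrary $K\leqslant\N_2^-[G]$. Indeed, in the case $q=2$, $m$ odd, $\gamma\in H$, the paper shows that $\N_2^-[G]$ has three index-$2$ subgroups $M_1,M_2,M_3$ with $H\cap M_3=\SL_{m-2}(2).6$, so the intersection genuinely varies with the candidate $K$, and the whole point is to prove $K\nleqslant M_3$; a bare order identity cannot distinguish $M_2$ from $M_3$.

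Concretely, the paper closes the gap with three separate arguments, none of which is routine: for $q=2$ with $\gamma\notin H$, an index computation showing $H\cap(K\cap L)$ would be a subgroup of $\SL_{m-2}(2).3$ of index $6/|\calO|$, which rules out $|\calO|\in\{1,3\}$ because $\SL_{m-2}(2).3$ has no subgroup of index $2$ or $6$; for $q=2$ with $\gamma\in H$ and $m$ odd, the $M_1,M_2,M_3$ analysis just mentioned, which uses Lemma~\ref{LemOmegaPlus14} to certify $G=HM_2$ and a covering argument $(H\cap M_1)\cup(H\cap M_2)\cup(H\cap M_3)=H\cap\N_2^-[G]$ to compute $H\cap M_3$ and deduce $G\neq HM_3$; and for $q=4$, the citation of~\cite[p.~69,~(i)]{LPS1990} that no such factorization exists inside $\mathrm{O}(V)$, which is what forces both $HL$ and $KL$ out of $\mathrm{O}(V)$ and hence $K\geqslant\Omega_{2m-2}^-(4).4$ (using that $\GaO_{2m-2}^-(4)/\Omega_{2m-2}^-(4)$ is cyclic of order $4$). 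You would need to supply arguments of this kind --- or equivalent ones --- before the final ``forces'' in your third paragraph is justified. A minor additional omission: to rule out genuinely new behaviour when $A\cap L=\Pa_m$, the paper invokes~\cite[Table~4.5]{CPS1975} to see that $\Pa_m$ has a unique class of complements $\SL_m(q)$, so that $H^{(\infty)}$ is always conjugate into $\lefthat\GL_m(q).2$; your ``which (after conjugation) I take to be the Levi subgroup $T$'' silently assumes this.
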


\begin{proof}
Since $(H^{(\infty)},K^{(\infty)})$ lies in rows~8--10 of Table~\ref{TabInftyOmegaPlus}, we have $K^{(\infty)}=\Omega_{2m-2}^-(q)$, and either $H^{(\infty)}=\lefthat(q^{m(m-1)/2}{:}\SL_m(q))$, or $H^{(\infty)}=\SL_m(q)$ with $q\in\{2,4\}$, If $H^{(\infty)}=\lefthat(q^{m(m-1)/2}{:}\SL_m(q))$, then $(H,K)$ tightly contains $(X^\alpha,Y^\alpha)$ for some $(X,Y)$ in row~7 of Table~\ref{TabOmegaPlus} and $\alpha\in\Aut(L)$. For the rest of the proof, assume $H^{(\infty)}=\SL_m(q)$ with $q\in\{2,4\}$.

By Tables~\ref{TabMaxOmegaPlus2} and~\ref{TabMaxOmegaPlus1}, replacing $H$ and $K$ by their images under some automorphism of $L$ if necessary, we have $A\cap L=\Pa_m$ or $\lefthat\GL_m(q).2$. By~\cite[Table~4.5]{CPS1975}, $\Pa_m$ has a unique conjugacy class of subgroups isomorphic to $\SL_m(q)$. Hence it always holds that $H^{(\infty)}<\lefthat\GL_m(q).2$, and so we may assume $A\cap L=\lefthat\GL_m(q).2$. Without loss of generality, assume $H^{(\infty)}=T$. Moreover, $K\leqslant\N_2^-[G]$. Note that $\gamma=r_{e_1+f_1}\cdots r_{e_m+f_m}$ lies in $\Omega(V)$ if and only if $m$ is even. We divide our argument into three cases as follows.

\textsc{Case~1}: $q=2$ and $\gamma\notin H$. Then $H=\SL_m(2)<L$, and so we have $L=H(K\cap L)$. Write $K\cap L=\Omega_{2m-2}^-(2).\calO$ with $|\calO|$ dividing $|\N_2^-|/|\Omega_{2m-2}^-(2)|=6$. From Lemma~\ref{LemOmegaPlus15} we deduce that $L=H\N_2^-$ with $H\cap\N_2^-=\SL_{m-2}(2).3$. Hence
\[
|H\cap(K\cap L)|=\frac{|H||K\cap L|}{|L|}=\frac{|H||\N_2^-|}{|L|(|\N_2^-|/|K\cap L|)}=\frac{|H\cap\N_2^-|}{|\N_2^-|/|K\cap L|}=\frac{|\SL_{m-2}(2).3|}{|\N_2^-|/|K\cap L|},
\]
which means that $H\cap(K\cap L)$ is a subgroup of $\SL_{m-2}(2).3$ of index $|\N_2^-|/|K\cap L|=6/|\calO|$. As a consequence, $|\calO|=2$ or $6$, and so $K\cap L\geqslant\Omega_{2m-2}^-(2).2$. Since all involutions in $\N_2^-/\Omega_{2m-2}^-(2)\cong\mathrm{O}_2^-(2)$ are conjugate, it follows that (up to conjugation in $L$) the pair $(H,K)$ tightly contains the pair $(X,Y)=(\SL_m(2),\Omega_{2m-2}^-(2).2)$ in row~9 of Table~\ref{TabOmegaPlus}.

\textsc{Case~2}: $q=2$ and $\gamma\in H$. Then $H=T{:}\langle\gamma\rangle=\SL_m(2).2$. If $m$ is even, then $(H,K)$ tightly contains the pair $(X,Y)=(\SL_m(2).2,\Omega_{2m-2}^-(2))$ in row~8 of Table~\ref{TabOmegaPlus}. Now assume that $m$ is odd. Then $\gamma\notin\Omega(V)$, and so $G=\mathrm{O}(V)=\Omega_{2m}^+(2).2$. Write $K=\Omega_{2m-2}^-(2).\calO$ with $|\calO|$ dividing $|\N_2^-[G]|/|\Omega_{2m-2}^-(2)|=12$. Since Lemma~\ref{LemOmegaPlus15} implies the existence of a factorization $G=H\N_2^-[G]$ with $H\cap\N_2^-[G]=\SL_{m-2}(2).[6]$, we conclude that $H\cap K$ is a subgroup of $\SL_{m-2}(2).[6]$ of index $|\N_2^-[G]|/|K|=12/|\calO|$. This implies that $|\calO|$ is divisible by $2$, and so $K\geqslant\Omega_{2m-2}^-(2).2$.

Since $\N_2^-[G]=\mathrm{O}_2^-(2)\times\mathrm{O}_{2m-2}^-(2)$, there are exactly three maximal subgroups of index $2$ in $\N_2^-[G]$, say, $M_1$, $M_2$ and $M_3$, and exactly one of them, say $M_1$, is contained in $L$. Let $N=\Omega_2^-(2)\times\Omega_{2m-2}^-(2)<\N_2^-[G]$. Then $N$ is the common normal subgroup of index $2$ in $M_1$, $M_2$ and $M_3$. From Lemma~\ref{LemOmegaPlus15} we derive that $G=HM_1$ with $H\cap M_1=H\cap N=\SL_{m-2}(2).3$. By Lemma~\ref{LemOmegaPlus14}, at least one of $M_2$ or $M_3$, say $M_2$, satisfies $G=HM_2$ with $H\cap M_2=H\cap N=\SL_{m-2}(2).3$. It follows that
\[
\SL_{m-2}(2).3=H\cap N\leqslant H\cap M_3\leqslant H\cap\N_2^-[G]=\SL_{m-2}(2).[6].
\]
Then since $(H\cap M_1)\cup(H\cap M_2)\cup(H\cap M_3)=H\cap(M_1\cup M_2\cup M_3)=H\cap\N_2^-[G]$, we conclude that $H\cap M_3=\SL_{m-2}(2).6$. This implies that $|H\cap M_3|\neq|H||M_3|/|G|$, and so $G\neq HM_3$.

Now that $G=HK$ and $G\neq HM_3$, we obtain $K\nleqslant M_3$. Thus either $K=\N_2^-[G]$, or $K\leqslant M_1$ or $M_2$. Since $K\geqslant\Omega_{2m-2}^-(2).2$ and all involutions in $\mathrm{O}_2^-(2)$ are conjugate, it follows that $(H,K)$ tightly contains the pair $(X,Y)=(\SL_m(2).2,\Omega_{2m-2}^-(2).2)$ in row~8 of Table~\ref{TabOmegaPlus}.

\textsc{Case~3}: $q=4$. It is shown in~\cite[Page~69,~(i)]{LPS1990} that there is no such factorization $G=HK$ for $G=\mathrm{O}(V)$. Hence there is no such factorization $G=HK$ for $G=\Omega(V)$. Therefore, neither $HL$ nor $KL$ is contained in $\mathrm{O}(V)$. Since $\GaO_{2m-2}^-(4)/\Omega_{2m-2}^-(4)$ is cyclic of order $4$, this implies that $(H,K)$ tightly contains the pair $(X,Y)=(\SL_m(4).2,\Omega_{2m-2}^-(4).4)$ in row~10 of Table~\ref{TabOmegaPlus}.
\end{proof}

Corresponding to the candidates $(H^{(\infty)},K^{(\infty)})$ in rows~11--15 of Table~\ref{TabInftyOmegaPlus}, the final lemma of this section deals with the case $K^{(\infty)}=\lefthat\SU_m(q)$.

\begin{lemma}
Suppose that $(H^{(\infty)},K^{(\infty)})$ lies in rows~\emph{11--15} of Table~$\ref{TabInftyOmegaPlus}$. Then $(H,K)$ tightly contains $(X^\alpha,Y^\alpha)$ for some $(X,Y)$ in rows~\emph{11--14 or 23--24} of Table~$\ref{TabOmegaPlus}$ and $\alpha\in\Aut(L)$.
\end{lemma}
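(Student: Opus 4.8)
The plan is to follow the template of the two preceding lemmas of this section: embed $H$ and $K$ into $\max^-$ subgroups $A$ and $B$, normalise the pair $(A,B)$ by a single automorphism of $L$, split according to the isomorphism type of $H^{(\infty)}$, and in each branch quote the relevant construction lemma to read off the precise structure of $(H,K)$. First I would record the consequences of the hypothesis: since $(H^{(\infty)},K^{(\infty)})$ lies in rows~11--15 of Table~\ref{TabInftyOmegaPlus}, the integer $m$ is even, $K^{(\infty)}=\lefthat\SU_m(q)$, and $H^{(\infty)}$ is one of $q^{2m-2}{:}\Omega_{2m-2}^+(q)$, $\Omega_{2m-2}^+(q)$ with $q\in\{2,4\}$, or $\Omega_8^-(q^{1/2})$ with $m=4$ and $q\in\{4,16\}$. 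Taking $\max^-$ subgroups $A\geqslant H$ and $B\geqslant K$ we have $G=AB$; by Tables~\ref{TabMaxOmegaPlus2}--\ref{TabMaxOmegaPlus1} together with Lemmas~\ref{LemXia03}--\ref{LemXia04}, and after replacing $H$ and $K$ by their images under a common automorphism of $L$ if necessary, we may assume $B\cap L=\lefthat\GU_m(q).2$ is the $\calC_3$-subgroup of $L$ with $K^{(\infty)}=B^{(\infty)}=\overline{\SU(V_\sharp)}$ (for $m=4$ this subgroup is also the $\N_2^-$-subgroup, via $\SU_4(q)\cong\Omega_6^-(q)$), while $A\cap L$ is $\Pa_1$, $\N_2^+$, or (only for $m=4$) a $\calC_5$ or $\calS$ subgroup with socle $\Omega_8^-(q^{1/2})$, according to the isomorphism type of $H^{(\infty)}$.

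The core is the case analysis on $H^{(\infty)}$. If $H^{(\infty)}=q^{2m-2}{:}\Omega_{2m-2}^+(q)$, then (up to conjugacy, $\Pa_1$ forming a single class) $A^{(\infty)}=H^{(\infty)}=\overline{G_{e_1}}$, both $H^{(\infty)}$ and $K^{(\infty)}$ are perfect, and Lemma~\ref{LemOmegaPlus17} gives $\overline{G_{e_1}}\cdot\overline{\SU(V_\sharp)}=L$, so $(H,K)$ tightly contains the row~11 pair of Table~\ref{TabOmegaPlus}, hence $(X^\alpha,Y^\alpha)$ for the appropriate $\alpha\in\Aut(L)$. If $H^{(\infty)}=\Omega_{2m-2}^+(q)$ with $q\in\{2,4\}$, then $A\cap L=\N_2^+$ and, up to conjugacy, $A^{(\infty)}=\Omega(V)_{e_1,f_1}$; Lemma~\ref{LemOmegaPlus19} (with $f=1$ when $q=2$) supplies a factorisation with $(A\cap B)^{(\infty)}=\SU_{m-2}(q)$, and I would then determine the exact cyclic extensions of $H$ and $K$ over their socles. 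For $q=2$ the identity $|\Omega_{2m}^+(2)|\,|\SU_{m-2}(2)|=|\Omega_{2m-2}^+(2)|\,|\SU_m(2).2|$ coming out of Lemma~\ref{LemOmegaPlus19}, together with $|H\cap K|=|H||K|/|G|$, forces $H\geqslant\Omega_{2m-2}^+(2).2$ or $K\geqslant\SU_m(2).2$; as these are precisely $\Nor_G(\Omega(V)_{e_1,f_1})$ and $\Nor_G(\overline{\SU(V_\sharp)})$ inside $\Omega_{2m}^+(2)$, and Lemmas~\ref{LemOmegaPlus18}--\ref{LemOmegaPlus19} furnish the corresponding factor pairs, $(H,K)$ tightly contains $(X^\alpha,Y^\alpha)$ for a pair $(X,Y)$ in row~12 or~13. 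For $q=4$ the analogous analysis --- using, as in the proof of Lemma~\ref{LemOmegaPlus22}, that neither $HL$ nor $KL$ is contained in $\mathrm{O}(V)$, together with the cyclicity of the relevant quotients of $\GaO_{2m-2}^+(4)$ and $\GaU_m(4)$ over $\Omega_{2m-2}^+(4)$ and $\SU_m(4)$ --- gives that $(H,K)$ tightly contains the pair in row~14. Finally, if $m=4$ and $H^{(\infty)}=\Omega_8^-(q^{1/2})$ with $q\in\{4,16\}$, then $A^{(\infty)}=\Omega_8^-(q^{1/2})$ is a $\calC_5$ or $\calS$ subgroup, Lemma~\ref{LemOmegaPlus23} gives $(A\cap B)^{(\infty)}=\SL_2(q^{1/2})$, and a short extension argument --- essentially the field-automorphism computation inside the proofs of Lemmas~\ref{LemOmegaPlus33} and~\ref{LemOmegaPlus23} --- shows $(H,K)$ tightly contains the pair $(\Omega_8^-(q^{1/2}).f,\SU_4(q).(2f))$ in row~23 or~24. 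Collecting the three cases completes the proof.

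The step I expect to be the main obstacle is the bookkeeping of the exact cyclic extensions over the socles in the last two cases --- deciding, via the order identities and the $\mathrm{O}(V)$-versus-$\Omega(V)$ and field-automorphism analysis, which of the closely paired rows actually occurs (row~12 versus~13, and, when $m=4$, row~14 versus rows~23--24) --- together with keeping straight the triality identifications at $m=4$, where the $\calC_3$-subgroup $\lefthat\GU_4(q).2$, the $\N_2^-$-subgroup, and the $\Omega_6^-(q)$-subgroup coincide up to the exceptional isomorphism $\SU_4(q)\cong\Omega_6^-(q)$.
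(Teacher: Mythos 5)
Your overall route is the same as the paper's: pass to $\max^-$ overgroups, normalise so that $B\cap L=\lefthat\GU_m(q).2$, split on the isomorphism type of $H^{(\infty)}$, and use Lemmas~\ref{LemOmegaPlus17}, \ref{LemOmegaPlus19}, \ref{LemOmegaPlus18}, \ref{LemOmegaPlus22} and~\ref{LemOmegaPlus23} together with order/index bookkeeping to pin down the cyclic extensions over the socles. Two steps, however, are asserted rather than proved, and each one requires a specific external input that your sketch does not supply. First, in the case $H^{(\infty)}=\Omega_{2m-2}^+(q)$ with $q\in\{2,4\}$ you simply write ``$A\cap L=\N_2^+$ and, up to conjugacy, $A^{(\infty)}=\Omega(V)_{e_1,f_1}$''. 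But a subgroup with solvable residual $\Omega_{2m-2}^+(q)$ can equally well lie inside a maximal subgroup with $A\cap L=\Pa_1$, as a complement to the unipotent radical inside $q^{2m-2}{:}\Omega_{2m-2}^+(q)$, and rows~2 and~5 of Table~\ref{TabMaxOmegaPlus1} permit exactly this. To conclude that $H$ is nonetheless contained in $\N_2^+[G]$ one needs to know that $\Pa_1$ has a \emph{unique} conjugacy class of subgroups isomorphic to $\Omega_{2m-2}^+(q)$; the paper imports this from the cohomology computation in \cite[Proposition~2.1]{Sah1977}. Without that uniqueness the identification of $H^{(\infty)}$ with the standard copy $\Omega(V)_{e_1,f_1}$, on which your whole index argument rests, is not available.

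Second, for $q=4$ (row~14) and for $H^{(\infty)}=\Omega_8^-(q^{1/2})$ with $m=4$ (rows~23--24), the decisive fact is that the relevant maximal factorization does \emph{not} occur for $G=L$, but only when $G$ contains the field automorphism; it is this non-existence statement that forces the full extensions $\SU_m(4).4$, respectively $\Omega_8^-(q^{1/2}).f$ and $\SU_4(q).(2f)$, into \emph{both} factors. The paper obtains it from \cite[Theorem~A]{LPS1990} and \cite[Remark~1.5]{GGP}. Your ``analogous analysis'' and ``short extension argument'' presuppose precisely this statement (that neither $HL$ nor $KL$ lies in $\mathrm{O}(V)$), but the proofs of Lemmas~\ref{LemOmegaPlus22}, \ref{LemOmegaPlus33} and~\ref{LemOmegaPlus23} that you point to are constructions of particular factor pairs and do not establish it; an order count alone does not rule out a hypothetical factorization inside $L$. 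With these two citations inserted, your argument closes up and coincides with the paper's proof.
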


\begin{proof}
Since $(H^{(\infty)},K^{(\infty)})$ lies in rows~11--15 of Table~\ref{TabInftyOmegaPlus}, we have $K^{(\infty)}=\lefthat\SU_m(q)$ with $m$ even, and $H^{(\infty)}=q^{2m-2}{:}\Omega_{2m-2}^+(q)$, or $\Omega_{2m-2}^+(q)$ with $q\in\{2,4\}$, or $\Omega_8^-(q^{1/2})$ with $m=4$ and $q\in\{4,16\}$.

\textsc{Case~1}: $H^{(\infty)}=q^{2m-2}{:}\Omega_{2m-2}^+(q)$. In this case, $(H,K)$ tightly contains $(X^\alpha,Y^\alpha)$ for some $(X,Y)$ in row~11 of Table~\ref{TabOmegaPlus} and $\alpha\in\Aut(L)$.

\textsc{Case~2}: $H^{(\infty)}=\Omega_{2m-2}^+(q)$ with $q\in\{2,4\}$. By Tables~\ref{TabMaxOmegaPlus2} and~\ref{TabMaxOmegaPlus1}, replacing $H$ and $K$ by their images under some automorphism of $L$ if necessary, we have $A\cap L=\Pa_1$ or $\N_2^+$. By~\cite[Proposition~2.1]{Sah1977}, $\Pa_1$ has a unique conjugacy class of subgroups isomorphic to $\Omega_{2m-2}^+(q)$. Thus it always holds that $H^{(\infty)}<\N_2^+$, and so $H\leqslant\N_2^+[G]$. Since $K^{(\infty)}=\SU_m(q)$, we have $K\leqslant B<\Omega(V){:}\langle\phi\rangle$ (see~\cite[Table~3.5.E]{KL1990}). Note that $\GU_m(q).(2f)$ has a unique conjugacy class of subgroups isomorphic to $\SU_m(q).(2f)$ as $q\in\{2,4\}$.

First assume that $q=2$ and $H\cap L=\Omega_{2m-2}^+(2)$. Since $K<\Omega(V){:}\langle\phi\rangle=L$, we have $L=(H\cap L)K$. Write $K=\SU_m(2).\calO$ with $|\calO|$ dividing $|B|/|\SU_m(2)|=6$. From Lemma~\ref{LemOmegaPlus19} we deduce that $L=(H\cap L)B$ with $(H\cap L)\cap B=\SU_{m-2}(2).3$. Hence
\[
|(H\cap L)\cap K|=\frac{|(H\cap L)||K|}{|L|}=\frac{|(H\cap L)||B|}{|L|(|B|/|K|)}=\frac{|(H\cap L)\cap B|}{|B|/|K|}=\frac{|\SU_{m-2}(2).3|}{|B|/|K|},
\]
which means that $(H\cap L)\cap K$ is a subgroup of $\SU_{m-2}(2).3$ of index $|B|/|K|=6/|\calO|$. Consequently, $|\calO|=2$ or $6$, and so $K\geqslant\SU_m(2).2$. Since all involutions in $B/\SU_m(2)$ are conjugate, it follows that (up to conjugate in $L$) the pair $(H,K)$ tightly contains the pair $(X,Y)=(\Omega_{2m-2}^+(2),\SU_m(2).2)$ in row~13 of Table~\ref{TabOmegaPlus}.

Next assume that $q=2$ and $H\cap L>\Omega_{2m-2}^+(2)$. Then $H\cap L=\N_2^+[L]$ as $H\cap L\leqslant\N_2^+[L]=\Omega_{2m-2}^+(2).2$. Hence $(H,K)$ tightly contains the pair $(X,Y)=(\N_2^+[L],\SU_m(2))$ in row~12 of Table~\ref{TabOmegaPlus}.

Now assume that $q=4$. By~\cite[Theorem~A]{LPS1990}, such a factorization does not exist for $G=L$. This together with the conclusion $K<\Omega(V){:}\langle\phi\rangle$ implies that $G\geqslant\Omega(V){:}\langle\phi\rangle$ and $\Omega(V){:}\langle\phi\rangle=(H\cap(\Omega(V){:}\langle\phi\rangle))K$ with neither $H\cap(\Omega(V){:}\langle\phi\rangle)$ nor $K$ contained in $L$. Thus $(H,K)$ tightly contains the pair $(X,Y)=(\Omega_{2m-2}^+(4).2,\SU_m(4).4)$ in row~14 of Table~\ref{TabOmegaPlus}.

\textsc{Case~3}: $\Omega_8^-(q^{1/2})$ with $m=4$ and $q\in\{4,16\}$. By~\cite[Remark~1.5]{GGP}, such a factorization only exists for $G=\Omega_8^+(q).f$. Thus $HL=KL=G=\Omega_8^+(q).f$. Since $\GU_4(q).(2f)$ has a unique conjugacy class of subgroups isomorphic to $\SU_4(q).(2f)$, it follows that $(H,K)$ tightly contains $(X^\alpha,Y^\alpha)$ for some $(X,Y)$ in rows~23--24 of Table~\ref{TabOmegaPlus} and $\alpha\in\Aut(L)$.
\end{proof}

\section*{Acknowledgments}
The first author acknowledges the support of NNSFC grants no.~11771200 and no.~11931005. The second author acknowledges the support of NNSFC grant no.~12061083.

\end{document}